\newcommand{\cuthere}{%
\noindent
\raisebox{-2.8pt}[0pt][0.95\baselineskip]{\ding{34}}
\unskip{\tiny\dotfill}
}
\newcolumntype{M}[1]{>{\centering\arraybackslash}m{#1}}
\newcommand{\ca}{{\mathcal A}}
\newcommand{\cb}{{\mathcal B}}
\newcommand{\cc}{{\mathcal C}}
\newcommand{\cf}{{\mathcal F}}
\newcommand{\cg}{{\mathcal G}}
\newcommand{\ch}{{\mathcal H}}
\newcommand{\ci}{{\mathcal I}}
\newcommand{\cn}{{\mathcal N}}
\newcommand{\cm}{{\mathcal M}}
\newcommand{\cmt}{{\boldsymbol{\mathcal M}}}
\newcommand{\cp}{{\mathcal P}}
\newcommand{\cq}{{\mathcal Q}}
\newcommand{\cR}{{\mathcal R}}
\newcommand{\Tau}{{\mathcal T}}
\newcommand{\ct}{{\mathcal T}}
\newcommand{\Bz}{{\mathrm B}_0}
\newcommand{\Bnz}{{\mathrm B}}
\newcommand{\E}{{\mathbb E}}
\newcommand{\M}{{\mathbb M}}
\newcommand{\N}{{\mathbb N}}
\renewcommand{\P}{{\mathbb P}}
\newcommand{\R}{{\mathbb R}}
\newcommand{\T}{{\mathbb T}}
\newcommand{\rN}{{\rm N}}
\newcommand{\rd}{{\rm d}}
\newcommand{\rh}{{\rm h}}
\newcommand{\bt}{{\mathbf t}}
\newcommand{\bv}{\mathbf v}
\newcommand{\bL}{\mathbf L}
\newcommand{\bw}{\mathbf w}
\newcommand{\bT}{{\mathbf T}}
\newcommand{\bTun}{{\mathbf T^\mathrm{unif}_n}}
\newcommand{\bTunn}{{\mathbf T^\mathrm{unif}_{n+1}}}
\newcommand{\fd}{{f_\mathrm{dens}}}
\newcommand{\fint}{f_{\mathrm{int}}}
\newcommand{\fT}{\mathfrak{T}}
\newcommand{\fTs}{\fT^{\mathrm{ske}}}
\newcommand{\ind}{{\bf 1}}
\renewcommand{\root}{{\varrho}}
\newcommand{\leaf}{{\mathrm{Lf}}}
\newcommand{\Br}{{\mathrm {Br}}}
\newcommand{\length}{{\mathscr L}}
\newcommand{\Span}{{\mathrm {Span}}}
\newcommand{\Split}{{\mathrm {Split}}}
\newcommand{\graft}{{\mathrm {Graft}}}
\newcommand{\tree}{{\mathrm {Tree}}}
\newcommand{\dgh}{{d_\mathrm {GH}}}
\newcommand{\dlgh}{{d_\mathrm {LGH}}}
\newcommand{\dlghz}{{d^{*}_\mathrm {LGH}}}
\newcommand{\dlghn}{{d^{(n)}_\mathrm {LGH}}}
\newcommand{\dlghzer}{{d^{(0)}_\mathrm {LGH}}}
\newcommand{\dlghnk}{{d^{(n+k)}_\mathrm {LGH}}}
\newcommand{\dlghk}{{d^{(k)}_\mathrm {LGH}}}
\newcommand{\dlghd}{{d^{[2]}_\mathrm {LGH}}}
\newcommand{\dghn}{{d^{(n)}_\mathrm{GH}}}
\newcommand{\dghnk}{{d^{(n+k)}_\mathrm{GH}}}
\newcommand{\dghk}{{d^{(k)}_\mathrm{GH}}}
\newcommand{\dghz}{{d^{(0)}_\mathrm{GH}}}
\newcommand{\dghd}{{d^{[2]}_\mathrm{GH}}}
\newcommand{\TL}{{\T_{\mathrm{loc-K}}}}
\newcommand{\TLd}{{\T^{[2]}_{\mathrm{loc-K}}}}
\newcommand{\TLz}{{\T^{*}_{\mathrm{loc-K}}}}
\newcommand{\TLb}{{\T^{0}_{\mathrm{loc-K}}}}
\newcommand{\TLzb}{{\T^{0,*}_{\mathrm{loc-K}}}}
\newcommand{\TLn}{{\T^{(n)}_{\mathrm{loc-K}}}}
\newcommand{\TLzer}{{\T^{(0)}_{\mathrm{loc-K}}}}
\newcommand{\TLnb}{{\T^{(n),0}_{\mathrm{loc-K}}}}
\newcommand{\TLunnb}{{\T^{(1),0}_{\mathrm{loc-K}}}}
\newcommand{\TLnn}{{\T^{(n+1)}_{\mathrm{loc-K}}}}
\newcommand{\TLnm}{{\T^{(n-1)}_{\mathrm{loc-K}}}}
\newcommand{\TLnk}{{\T^{(n+k)}_{\mathrm{loc-K}}}}
\newcommand{\TLnz}{{\T^{(n),*}_{\mathrm{loc-K}}}}
\newcommand{\TLk}{{\T^{(k)}_{\mathrm{loc-K}}}}
\newcommand{\TLkk}{{\T^{(k+1)}_{\mathrm{loc-K}}}}
\newcommand{\TLu}{{\T^{(1)}_{\mathrm{loc-K}}}}
\newcommand{\TLuzb}{{\T^{(1),0,*}_{\mathrm{loc-K}}}}
\newcommand{\TLnl}{{\T^{\mathrm{no\, leaf}}_{\mathrm{loc-K}}}}
\newcommand{\TLsb}{{\T^{\mathrm{spine},0}_{\mathrm{loc-K}}}}
\newcommand{\TLs}{{\T^{\mathrm{spine}}_{\mathrm{loc-K}}}}
\newcommand{\TLnzb}{{\T^{(n),0,*}_{\mathrm{loc-K}}}}
\newcommand{\TK}{{\T_{\mathrm{K}}}}
\newcommand{\TDn}{{\T^{(n)}_{\mathrm{dis}}}}
\newcommand{\TDk}{{\T^{(k)}_{\mathrm{dis}}}}
\newcommand{\TPn}{{\T^{(n)}_{\mathrm{plan}}}}
\newcommand{\TPk}{{\T^{(k)}_{\mathrm{plan}}}}
\newcommand{\TPu}{{\T^{(1)}_{\mathrm{plan}}}}
\newcommand{\TDnn}{{\T^{(2^n-1)}_{\mathrm{dis}}}}
\newcommand{\TDo}{{\T^{(n+1)}_{\mathrm{dis}}}}
\newcommand{\TDu}{{\T^{(1)}_{\mathrm{dis}}}}
\newcommand{\TDkk}{{\T^{(k+1)}_{\mathrm{dis}}}}
\newcommand{\TKd}{{\T^{[2]}_{\mathrm{K}}}}
\newcommand{\TKn}{{\T^{(n)}_{\mathrm{K}}}}
\newcommand{\TKk}{{\T^{(k)}_{\mathrm{K}}}}
\newcommand{\TKz}{{\T^{(0)}_{\mathrm{K}}}}
\newcommand{\TKu}{{\T^{(1)}_{\mathrm{K}}}}
\newcommand{\Tu}{\mathrm{T}_1}
\newcommand{\Tut}{\tilde{ \mathrm{T}}_1}
\newcommand{\Tz}{\mathrm{T}_0}
\newcommand{\ImM}{{\tilde \M(E)}}
\newcommand{\Card}{{\mathrm {Card}}\;}
\newcommand{\diam}{{\mathrm {diam}}\;}
\newcommand{\dist}{{\mathrm {dist}}\;}
\renewcommand{\Im}{{\mathrm {Im}}\,}
\newcommand{\val}[1]{\mathop{\left| #1 \right|}\nolimits}
\newcommand{\inv}[1]{\mathop{\frac{1}{ #1}}\nolimits}
\newcommand{\expp}[1]{\mathop {\mathrm{e}^{ #1}}}
\newcommand{\lb}{[\![}
\newcommand{\rb}{]\!]}
\theoremstyle{plain}
\newtheorem{theo}{Theorem}[section]
\newtheorem*{theo*}{Theorem}
\newtheorem{cor}[theo]{Corollary}
\newtheorem*{cor*}{Corollary}
\newtheorem{prop}[theo]{Proposition}
\newtheorem{lem}[theo]{Lemma}
\theoremstyle{remark}
\newtheorem{rem}[theo]{Remark}
\title{Brownian continuum random trees conditioned to be large}
\date{\today}
\author{Romain Abraham}
\address{Romain Abraham,
Institut Denis Poisson,
Universit\'{e} d'Orl\'{e}ans,
Universit\'e de Tours,
CNRS,
France}
\email{romain.abraham@univ-orleans.fr}
\author{Jean-Fran\c{c}ois Delmas}
\address{Jean-Fran\c{c}ois Delmas, CERMICS,
Ecole des Ponts,  France}
\email{delmas@cermics.enpc.fr}
\author{Hui He}
\address{Hui He,
Laboratory of Mathematics and Complex Systems, School of Mathematical Sciences, Beijing Normal University, P.R. China}
\email{hehui@bnu.edu.cn}
\thanks{The reseach of this work is supported by The National Key R\&D Program of China (No. 2020YFA0712900)}
\begin{document}
\begin{abstract}
  We  consider  a  Feller   diffusion  $(Z_s,s\ge  0)$  (with  diffusion
  coefficient  $\sqrt{2\beta}$   and  drift  $\theta\in  \R$)   that  we
  condition on  $\{Z_t=a_t\}$, where $a_t$ is  a deterministic function,
  and we study the limit in  distribution of the conditioned process and
  of  its genealogical  tree as  $t\to  +\infty$.  When  $a_t$ does  not
  increase too rapidly, we recover the standard size-biased process (and
  the associated  genealogical tree given  by the Kesten's  tree).  When
  $a_t$   behaves  as   $\alpha\beta^2  t^2$   when  $\theta=0$   or  as
  $\alpha \expp{2\beta|\theta| t}$  when $\theta\ne 0$, we  obtain a new
  diffusion,  as  already  proved  by  Overbeck  in  1994  in  the  case
  $\theta=0$. We  give a new  representation of this diffusion  using an
  elementary  SDE   with  a  Poisson  immigration.    The  corresponding
  genealogical tree is described by an infinite discrete skeleton (which
  does  not  satisfy the  branching  property)  decorated with  Brownian
  continuum random trees given by a Poisson point measure.

As a by-product of this study, we introduce several sets of trees endowed with a Gromov-type distance which are of independent interest and which allow here to define in a formal and measurable way the decoration of a backbone with a family of continuum random trees.
\end{abstract}

\subjclass[2010]{60J80, 54E50}

\keywords{Feller diffusion, continuum random trees, conditioning, local limits}

\maketitle

\section{Introduction}

\subsection{The discrete case motivation}

In  \cite{abd:llfgt},  for  the   geometric  reproduction  law,  and  in
\cite{ad:apegwt},  for  general  super-critical  reproduction  laws  with
finite mean and some special  sub-critical reproduction laws, the authors
consider  the limit  of a  Galton-Watson (GW)  process $(Z_n,  n\in \N)$
started at $Z_0=1$  conditionally on $Z_n=a_n$ as $n$  goes to infinity,
provided  the event  $\{Z_n=a_n\}$ has  positive probability.  They also
consider  more generally  the  local  limit of  the  GW  tree, which  in
particular  allows  to study  condensation  phenomenon  (on this  latter
subject, see  \cite{js:cnt,j:sgtcgwrac,ad:llcgwtcc}).  According  to the
different growth  rate of $a_n$  as $n$  goes to infinity,  they observe
different regimes for the limiting random tree: if $a_n=0$ for $n$ large,
the  limiting  tree  corresponds  to  the GW  tree  conditioned  on  the
extinction  event;  if  $a_n$  is strictly  positive  but  grows  slowly
(including the  case $a_n$  bounded), then  the limit  is the  so-called
Kesten tree,  which consists in an  infinite spine decorated with  independent GW
trees with  the initial reproduction law;  if $a_n$ grows at  a moderate
speed  (given  in   the  super-critical  case  of   finite  variance  by
$a_n\sim\alpha m^n$ with $\alpha>0$ and $m$ the mean of the reproduction
law), then the limit is a skeleton given by an immigration process decorated again
with independent  GW trees  with the  initial reproduction  law; if
$a_n$       grows      faster       than       $m^n$      (that       is
$\lim_{n\rightarrow  \infty }  m^{-n}  a_n=\infty $)  then 
results are known only for the geometric reproduction law (the limit exhibits
a condensation at the root, that is,  the root has an infinite number of
children, and then those children generate independent trees) and for
bounded reproduction  laws (the limit  is the regular $b$-ary tree,  with $b$
the possible maximum number of children).

We  mention that  the  local limit  distributions of  the  GW tree  with
geometric  reproduction also  appear when  considering the local limit
of trees  having $n$ 
vertices with a Gibbs distribution where the energy is the height of the
tree, see~\cite{meltem}.

\medskip

This work is a  first step to extend those results  to random real trees
called   L\'evy  trees   introduced   by  Duquesne   and   Le  Gall   in
\cite{DLG02,DLG05} which  are scaling  limits of (sub)critical  GW trees
and can be seen as genealogical trees for (sub)critical continuous state
branching  processes  (CSBP); see  also  \cite{ad:crtmp,  DW07} for  the
extension of this  latter representation to the  super-critical case. We
shall only  consider Feller diffusions,  which correspond to  CSBPs with
quadratic branching mechanism and whose genealogy can be described using
the   Brownian    continuum   random    tree   introduced    by   Aldous
\cite{a:crt3}. Our results belong also  to the family of works dedicated
to the description  of limits of conditioned random real  trees, in this
direction, see \cite{Li00, L07, D09, AD09}.

% where $Z_t$ models the ``size''  of the  population  at  time $t\geq  0$,
\subsection{Feller diffusion with Poisson immigration}
We consider a quadratic CSBP $Z=(Z_t,  t\geq 0)$  associated with the
branching mechanism:
\[
\psi_\theta(\lambda)=\beta\lambda^2+2\beta\theta\lambda,
\]
with  $\beta>0$ and
$\theta\in  \R$.
The process $Z$ is a solution to the 
stochastic differential equation (SDE):

\[
  \rd Z_t= \sqrt{2 \beta Z_t} \, \rd B_t -2 \beta \theta Z_t \rd t, \quad\text{ for $t\geq 0$},
\]
where $(B_t, t\geq  0)$ is some standard Brownian  motion.
The  CSBP is  sub-critical (resp.  super-critical) if  $\theta>0$ (resp.
$\theta<0$). The  time scaling parameter  $\beta$ will be fixed,  but we
shall stress in the notations  the size scaling parameter $\theta$, and
denote by $\P^\theta_x$ the distribution of $Z$ starting at $Z_0=x$. 

Let
$a=(a_t, t\geq  0)$ be  a non-negative function.  We shall  consider the
local limit  of the  process $Z$ conditionally  on $\{Z_t=a_t\}$  as $t$
goes  to  infinity,  that  is  the  possible  limiting  distribution  of
$Z_{[0,s]}=(Z_r, r\in [0, s])$, with  $s$ fixed, conditionally on $\{Z_t=a_t\}$ as
$t$ goes to infinity. We recall that this question is related to the description
  of the  Martin boundary  of Markov  processes and  extremal time-space
  harmonic  functions, see~\cite{d:78}. We have for $t\geq s\geq 0$ and
  $H_s$ a bounded  $\sigma(Z_{[0,s]})$-measurable random variable:
\[
  \E^\theta_x\left[H_s \, |\, Z_t =a_t\right]
  = \E^\theta_x\left[H_s  \, K(s, Z_s; t, a_t)\right],
\]
where $K$ is the so-called Martin kernel. Then, all the extremal time-space harmonic functions $h$ appear as the limit of:
\begin{equation}
  \label{eq:harmo}
  h(s,x)=\lim_{t\rightarrow +\infty } K(s,x; t, a_t) \quad \text{for
    all  $s,x\in \R_+$}
\end{equation}
for some non-negative function  $a=(a_t, t\geq 0)$. Overbeck~\cite{Ov94}
gives  all  the  extremal  time-space harmonic  functions  $h$  for  the
critical Feller diffusion (that is  $\theta=0$), and gives also the SDE
solved   by   the  Doob   $h$-transform   of   the  process   $Z$,   see
Lemma~\ref{lem:extremal}  for   the  extremal  harmonic   functions  and
Corollary~\ref{cor:Z=Ov}  for the  SDE  below for  $\theta\in \R$  which
includes  the sub-critical  and super-critical  cases.  For  keeping the
introduction as simple as possible, we  shall stick to the critical case
$\theta=0$  considered  in~\cite{Ov94}, and  choose
$\beta=1$ (the  general case  can be deduced  using a  deterministic time
change or a  Girsanov transformation of $Z$). In this  case, the extremal
harmonic functions $h$ are, with $\Bz $ defined in~\eqref{eq:def-B0}:
\begin{itemize}
\item\textbf{Extinction   case}    $a_t=0$ for $t$ large:    $h^\emptyset(s,x)=1$;
\item\textbf{Low regime} $a_t>0$ and
$a_t=o(t^2)$:  $h^0(s,x)=x$;
\item\textbf{Moderate                                           regime}
  $a_t\sim \alpha t^2$ with $\alpha\in (0, +\infty )$:
  $h^\alpha(s,x)=   \expp{-\alpha  s}\,   x  \,   \Bz (\alpha  x)$.
\end{itemize} 

For $\alpha\in [0, +\infty )$, the Doob $h$-transform of the process $Z$
using the harmonic function $h^\alpha$, denoted by
$Z^\alpha=(Z^\alpha_t, t\geq 0)$, satisfies
the following SDE according to~\cite[Theorem~3]{Ov94}:
\[
      \rd  Z_t^{\alpha}= \sqrt{2 Z_t^{\alpha}}\, \rd  B_t
    +2 g(Z_t^\alpha)\, \rd  t,\quad t\geq0, 
\]
where, with $\ch^\alpha (s,y)=\expp{-\alpha  s} \,  y\Bz (\alpha y)$, the
function    $g$  is equal to:
\[
  g(y)=y  \, \partial_y  \log( \ch^{\alpha} (\cdot,  y))= 1+ \alpha  y
  \frac{\Bz'(\alpha y)}{\Bz (\alpha  y)}\cdot
\]
 Motivated by  the backbone
decomposition of the  corresponding genealogical tree given  in the next
section, we provide a new representation of the process $Z^\alpha$ using
a Poisson immigration given in Corollary~\ref{cor:Z=Ov} which is stated
for the general case $\theta\in \R$. 
\begin{prop}[Representation using a Poisson immigration, case $\theta=0$]
  \label{propI:poisson}
  Let $\alpha\geq 0$ and $(S^{\alpha}_t,  t\geq 0)$ be a Poisson process
  with  intensity $\alpha  \rd t$,  independent of  the Brownian  motion
  $(B_t,  t\geq 0)$.  The process  $Z^\alpha$ starting  at $Z^\alpha_0=0$ is
  distributed as the solution $Y^\alpha=(Y^\alpha_t, t\geq 0)$ of:
\begin{equation}
    \label{eq:Y-intro}
      \rd Y^\alpha_t
      = \sqrt{2 Y^\alpha_t}\, \rd B_t + 2\, (S^{\alpha}_t+1)\, \rd t
      \quad\text{with}\quad
Y^\alpha_0=0    .
\end{equation} 
\end{prop}
When the process  $Z^\alpha$ starts at $Z^\alpha_0=x>0$, the constant 1 in the drift term of~\eqref{eq:Y-intro} must be replaced by a random constant independent of $B$ and $S^\alpha$, see the beginning of the proof of Corollary~\ref{cor:Z=Ov}
in Section~\ref{sec:proof-cor-Ov}.
The proof of  this result, given in  Section~\ref{sec:proof:Zaq}, uses a
result from  Rogers and  Pitman \cite{rp:mf} for  a transformation  of a
Markov process  to still be a Markov process.

\subsection{Decomposition of the Brownian CRT with respect to $n$ leaves taken at random at a given height}

We  denote   by  $\N^\theta$   the  canonical   $\sigma$-finite  measure
associated with the CSBP  $Z$ under $\P^\theta_\cdot$.  Intuitively, under
$\N^\theta$, the  population starts  with an infinitesimal  individual at
time $t=0$.  Let $\ct$ denote under $\N^\theta$ the genealogical tree of
the  process $Z$,  it is  the so-called  Brownian continuum  random tree
(CRT) introduced by Aldous. In this context, the random tree $\ct$ can be
easily built from a Brownian  excursion, and the measure $\N^\theta$ can
then be  identified with the  excursion measure of the reflected Brownian motion.
We write  $\root$ for the  root of $\ct$.  In~\cite[Theorem 4.5]{DLG05},
Duquesne  and  Le   Gall  give  a  decomposition  of   the  critical  or
sub-critical Brownian tree $\ct$ by taking a leaf uniformly at random at
level $t\geq  0$ and decorating  the branch from  the root to  this leaf
with independent  Brownian CRTs. There  is no difficulty to  extend this
result to  the supercritical  case, see  Corollary~\ref{cor:Thmbis}.  We
then  extend  this  representation  by giving  a  decomposition  of  the
Brownian CRT when taking $n$ leaves uniformly at random at level $t\geq 0$
and decorating the discrete tree spanned  by the $n$ leaves and the root
with independent  Brownian CRTs, see Theorem~\ref{Thmbis}.   This result
completes  the  description  of  \cite{DW07} where  one  chooses  theses
vertices at random without condition on their level.

Stating and  proving this result  relies on  a lengthy study  of various
spaces of trees  and the measurability of various maps  defined on those
sets  of trees,  which are  detailed in  Section~\ref{sec:topo-tree}. We
shall present informally the mathematical  objects and state the theorem
in the critical  case $\theta=0$ with $\beta=1$ for  simplicity; we also
write $\N$ for  $\N^0$.  Let $\Lambda_t$ denote the local  time at level
$t$ associated  with the Brownian tree  $\ct$ (its total mass  is equal to
$Z_t$ the size  of the population at level $t$):  this measure allows to
sample  random  individual  ``uniformly''  at  level  $t$;  the  measure
$\Lambda_t$ is  supported by the  leaves of  $\ct$ at level  $t$.  Under
$\N[\rd  \ct] \,  \Lambda_t^{\otimes n}(\rd  \bv^*)$ we  can sample  the
Brownian CRT $\ct$  with $n$ leaves $\bv^*=(v_1,  \ldots, v_n)\in \ct^n$
at level $t$.  To those $n$  distinguish vertices, we shall add the root
$\root$  of   $\ct$  and  set   $\bv=(\root,  \bv^*)$,  and   shall  see
$(\ct, \bv)$  as an element  of the Polish  metric space of  the locally
compact   of   $n+1$-pointed   trees,    $\TLn$,   equipped   with   the
local Gromov-Hausdorff distance  (and where all $n+1$-pointed  trees which are
isomorphic are identified),  see Section~\ref{sec:topo-tree} for precise
details.   We describe  the  rooted tree  spanned by  the  root and  the
distinguished  $\bv^*$ vertices  using a  combinatorial construction  on
growing  discrete  planar  trees  with   fixed  height  $t$  defined  in
Section~\ref{sec:bT} where  starting from one  branch of height  $t$, we
graft uniformly successively branches with their leaf at height $t$. Let
us stress that we use the planar  structure of the trees in this section
only, and that the  grafting of the new branch is  uniformly done on the
right  or on  the  left. After  $n$  such steps,  we  obtain the  random
$n+1$-pointed tree  $(\bTun, \bv_n)$,  where the  distinguished vertices
$\bv_n$ are first the root, and  then the leaves ranked in their arrival
order  (and  not   in  the  planar  order).   Then,  very informally,  on this  discrete tree,  for all
$i\in  I$ a countable set of indices,   we  graft  at   $x_i\in  \bTun$  a  subtree   $T_i$,  where
$\cm(\rd x,  \, \rd \ct)= \sum_{i\in  I} \delta_{(x_i, T_i)} (\rd  x, \,
\rd    \ct)$   is    a    Poisson   point    measure   with    intensity
$2  \, d\length(\rd  x)\, \N[\rd\ct]$,  where $d\length$  is the  length
measure on  $\bTun$.  This grafting  procedure is rigorously  defined in
Section~\ref{sec:discrete-app}  based  on  the technical  material  from
Section~\ref{sec:topo-tree}.    So   we   are    now   able   to   state
Theorem~\ref{Thmbis}    for    $\theta=0$   and    $\beta=1$.     Recall
$\bv=(\root, \bv^*)\in \ct^{n+1}$, with $\root$ the root of $\ct$.

 \begin{theo}[Generalized $n$-leaves decomposition, case $\theta=0$]
   \label{thmI:k-Bismut}
   Let  $t>0$ and $n\in \N^*$.
For every non-negative  measurable
  function $F$ defined on $\TLn$, we
  have:
\[
\N^{\theta}\left[\int_{\ct^{n}}\!\!\!\Lambda_t^{\otimes n}(\rd
   \bv^*)\, F (\ct, \bv)\right]
=n!\, t^{n-1} 
\, \E\left[F\Bigl(\graft_n\bigl((\bTun,\bv_n),\cm\bigr)\Bigr)\right].
\]
\end{theo}

Let   us  mention   here  that   there  have   been  several   works  on
skeletal/backbone decompositions  for (spatial) branching  processes and
their    corresponding    genealogical    trees,   for    example    see
\cite{AD09,BKM11prolific,CRY19LLN,DW07,              fekete2019skeletal,
  FFK2020skeletal,KR12backbone}   and   the  references   therein.    In
particular, in \cite{ fekete2019skeletal},  coupled systems of SDEs were
established    to   represent    the    skeletal   decompositions    for
continuous-state  branching processes  (conditioned on  survival), where
the   skeletons   are   determined  by   continuous-time   Galton-Watson
processes. And we  refer to \cite{DW07} for the reconstruction  of a L\'evy
tree from  a backbone tree,  which could be  formed by leaves  taken at
random  in a  Poissonnian  way from  the L\'evy  tree  according to  the
so-called  mass measure;  see Remark  5.4 there  and \cite{DLG02}.   For
representations of branching processes  (with immigrations) via SDEs, we
also refer to \cite{DaLi12} and references therein.

\subsection{Local limit of conditioned Brownian CRT}

The Brownian CRT $\ct$ gives the genealogical structure of the CSBP $Z$.
We shall  give a description of  the genealogical structure  of the
CSBP  associated  with the  Doob  $h$-transform  and prove  that it  appears
naturally as  local limit of  the Brownian  CRT $\ct$ conditioned  to be
large. We stress that the local limits obtained here are  different from the
one  obtained by  conditioning  on  the non  extinction  at large  time,
see~\cite{AD09} in this direction. We denote by $\TL=\TLzer$ the set (of
equivalence  classes)   of  locally  compact  rooted   real  trees,  see
Section~\ref{sec:topo-tree} for more details.

Recall that  in the critical case  $\theta=0$ the Brownian CRT  $\ct$ is
compact.  In the introduction, we simply denote by $\bt_t$ the real tree
$\bt$ truncated at  level $t$.  We denote by  $\cg_t$ the $\sigma$-field
generated by  $\ct_t$ for $t\geq  0$; in  particular the process  $Z$ is
adapted to  the filtration $(\cg_t, t\geq  0)$.  Let $F$ be  any bounded
continuous function defined on $\TL$.

\begin{itemize}
\item\textbf{Extinction case:} $a_t=0$ for $t$ large. We have:
  \[
    \lim_{t\rightarrow \infty } \N\bigl[F(\ct_s)\,
    \ind_{\{Z_t=a_t\}} \bigr]=
    \N \bigl[F(\ct_s)\bigr].
  \]
The  result  is  obvious  for  the    critical  case
  as   the   tree  $\ct$ is   compact
  $\N$-a.e., that  is $\ct_t=\ct$ for  $t$ large enough.
We obtain the same result in the sub-critical case $\theta>0$. 
  In the
  super-critical  case $\theta$, using  the
  Girsanov  transformation   from  \cite{ad:crtmp}  to   define  the
  super-critical  L\'evy  tree,   see  also~\eqref{eq:abs-cont2}, we get
  that:
  \[
    \lim_{t\rightarrow \infty } \N^\theta\bigl[F(\ct_s)\,
    \ind_{\{Z_t=a_t\}} \bigr]=
    \N^{|\theta|} \bigl[F(\ct_s)\bigr].
  \]   
  Those
  results hold also  in general  for any  compact L\'evy  trees.
\item\textbf{Low regime:}  $a$ is  positive and $a_t=o(t^2)$.  We recall
  that the Kesten  tree $\ct^*$ is informally  obtained by grafting
  the trees $(T_i,i\in  I)$ respectively at levels $(h_i,i\in  I)$ on an
  infinite       spine,       where        the       point       measure
  $\sum_{i\in I} \delta_{h_i, T_i}(\rd h, \rd  \ct) $ is a Poisson point
  measure  with  intensity  measure   $2\ind_{\{h>0\}}\rd  h  \,  \N[\rd
  \ct]$. See Lemma~\ref{lem:Kesten} for a  more formal definition of the
  Kesten tree.  The Kesten tree  appears already  as the local  limit of
  general compact L\'evy trees when conditioning instead by $\{Z_t>0\}$,
  see \cite{AD09}.  The next result  is Theorem~\ref{theo:main-result-2}
  restricted to the critical case $\theta=0$  with
  $\ct^{0,\theta}=\ct^*$ (notice  therein the difference of the limit
  between the super-critical case and the sub-critical one).
\begin{theo}
  \label{theoI:kesten}
  We have in the low regime,  $a_t=o(t^2)$ and $a_t>0$,  that:
   \[
    \lim_{t\rightarrow \infty } \N\bigl[F(\ct_s)\bigm|\, Z_t=a_t\bigr]=
    \E \left[F\left(\ct^*_s\right)\right].
  \]
  \end{theo}

\item\textbf{Moderate   regime:}  $   a_t\sim   \alpha   t^2  $,   where
  $\alpha\in  (0,  +\infty  )$.   We  first  consider  a  backbone  tree
  $\fT^{\alpha, 0}$ representing in  some sense the genealogy associated
  with    a    Poisson    immigration     with    rate    $\alpha$,    see
  Section~\ref{sec:main-result}   for   a  more   precise   description.
  Secondly,           let           the          point           measure
  $\sum_{i\in I} \delta_{x_i,  T_i}( \rd x, \rd  \ct)$ be, conditionally
  given $\fT^{\alpha, 0}$,  a Poisson point measure  with intensity rate
  $2 \length(\rd x) \, \N^{q}[\rd \ct]$ with $\length(\rd x)$ the length
  measure on $\fT^{\alpha, 0}$.  Then, the random tree $\ct^{\alpha, 0}$
  is obtained by grafting, for $i\in  I$, the tree $T_i$ at vertex $x_i$
  on  the   backbone  tree  $\fT^{\alpha,  0}$.   (As,  for  $\alpha=0$,
  $\fT^{0,0}$  can be  seen as  an infinite  spine, the  Kesten tree  is
  indeed                          distributed                         as
  $\ct^{0,  0}$.)   The  next  result  is  Theorem~\ref{theo:main-result}
  restricted to the critical case $\theta=0$.
 
\begin{theo}
  \label{theoI:gen-loc}
  We  have  in  the  moderate  regime, $  a_t\sim  \alpha  t^2  $  with
  $\alpha\in (0, +\infty )$, that:
   \[
    \lim_{t\rightarrow \infty } \N\bigl[F(\ct_s)\bigm|\, Z_t=a_t\bigr]=
    \E \left[F\left(\ct^{\alpha, 0}_s\right)\right].
  \]
\end{theo}
 Let
  us  stress  that  the  backbone  tree does  not  enjoy  the  branching
  property,  as already  observed  by \cite{abd:llfgt,  ad:apegwt} in  a
  discrete setting. In a forthcoming paper, we shall recover the branching structure in the backbone by considering a weighted tree.  

\item\textbf{High                                               regime:}
  $   \lim_{t\rightarrow   \infty   }   t^{-2}  a_t=+   \infty   $   (or
  $ \lim_{t\rightarrow \infty } \expp{-2  \beta |\theta| t} a_t=+ \infty
  $ if  $\theta\neq 0$). The description  of the possible limit  in this
  regime is still  an open question. As in the  discrete setting studied
  in  \cite{abd:llfgt},  one  could  ask  if  there  is  a  condensation
  phenomenon at  the root.  However,  to study such local  limits, which
  would not be locally compact (at least at or near the root), one would
  require a non trivial extension  of the current topology developed for
  locally compact trees.

\end{itemize}

\subsection{Outline of the paper}
Section~\ref{sec:not+csbp} is  devoted to some notations  and elementary
facts for the quadratic CSBP,  the transition kernel under the canonical
measure $\N^\theta$, and the Martin  boundary for the process $Z$ (under
$\P^\theta_x$ and the excursion measure  $\N^ \theta$).  We then present
families of martingales for the process $Z$ and then the local limits of
the  process $Z$  conditionally  on  $Z_t=a_t$ for  $t$  large and  some
deterministic  function  $(a_t,  t>  0)$ (under  $\P^\theta_x$  and  the
excursion  measure  $\N^  \theta$) in  Section~\ref{sec:loc-lim-Z}.   We
prove  Proposition~\ref{propI:poisson}  on  the  representation  of  the
Dood-$h$ transform of the process $Z$ with $h$ harmonic extremal using a
Poisson   immigration   in   the   general  case   $\theta\in   \R$   in
Section~\ref{sec:h-trans}, see Corollary~\ref{cor:Z=Ov}.  We provide the
backbone  decomposition  in Section~\ref{sec:backbone-intro},  with  the
  decomposition with respect to $n$ leaves   from     Theorem~\ref{thmI:k-Bismut}     in
Section~\ref{sec:backbone}  and   the  local   limit  of   Brownian  CRT
conditioned     to     $Z_t=a_t$    from     Theorems~\ref{theoI:kesten}
and~\ref{theoI:gen-loc} in Section~\ref{sec:main-result}.   We have made
the  choice   to  use  some   intuitive  (but  abusive)   definition  in
Section~\ref{sec:backbone-intro},   in    particular   considering   the
$\graft_n$ map in order to state the result without burdening the reader
with  too  much  technicalities;  we  clarify  all  the  definitions  in
Section~\ref{sec:appli}      using       the      lengthy      technical
Section~\ref{sec:topo-tree}.        We      also       believe      that
Section~\ref{sec:topo-tree}  helps  clarifying  some  previous  work  on
random trees  and could be  useful also  for further works  on grafting,
splitting, and decorating for trees.

Let us  mention that the  introduction is written with  the time scale 
parameter $\beta=1$  and for the  critical case $\theta=0$.  The general
cases $\beta>0$ and $\theta\in \R$ could  be deduced in finite time from
the  particular case  by  scaling or  using  Girsanov transformation  on
CSBP. However, if those computations  are not that complicated, they are
lengthy and treacherous;  so we  decided to  treat the
general cases but for the introduction.

An index of all the (numerous) relevant notations is provided at the end
of the document.

\section{General quadratic CSBP}
\label{sec:not+csbp}
\subsection{Notations}
  We set $\R_+=[0,+\infty)$, $\R_+^*=(0,\, +\infty)$,
$\N=\{0, 1,2,\cdots\}$ and $\N^*=\{1,2,\cdots\}$.

For $x\in \R$, we set $x_+=\max(0, x)$ and $x_-=\max(0, -x)$, so that $
x=x_+ - x_-$. We write $\delta_x$ for the Dirac mass at $x$.

Let $(E, d)$ be a metric  space. We denote by $\cm_+(E)$ the space
of non-negative measures on $E$ endowed with the vague topology.
For $\mu\in \cm_+(E)$ and $A$ a Borel subset of $E$, we denote by
$\mu_{|A}(\rd x)$ the measure $\ind_A (x) \mu(\rd x)$.
We write $\mu(f)=(f,\mu )=\int f \, \rd \mu =\langle f, \mu \rangle$
for the integral of the measurable real-valued function $f$ with respect
to the measure $\mu$, whenever it is meaningful.

We say that a function from a measurable space to a measurable space is
bi-measurable if it is measurable and the image of any measurable set is
a measurable set (when the function is one-to-one this is equivalent to
the function and its inverse being measurable).

\subsection{Quadratic CSBP}
\label{sec:2csbp}
Most results in this section can be found in
\cite{DLG02,jfd08,ad:crtmp,DW07}. 
Let $\beta>0$ be fixed. Let $\theta\in \R$. We consider the quadratic branching
mechanism $\psi_\theta$  given for $\lambda\in
\R$ by:
\begin{equation}
  \label{eq:def-psi}
\psi_\theta(\lambda)=\beta \lambda^2 + 2\beta\theta \lambda.
\end{equation}
The corresponding CSBP $Z=(Z_t, t\geq 0)$ is the unique strong solution to the following
stochastic differential equation (SDE):
\begin{equation}
   \label{eq:eds-Zq}
\rd Z_t= \sqrt{2 \beta Z_t} \, \rd B_t -2 \beta \theta Z_t \rd t \quad\text{ for $t\geq 0$},
\end{equation}
where $B=(B_t, t\geq 0)$ is a standard Brownian motion and $Z_0=x\geq 0$.
For $t\geq 0$, let $\cf_t$ be the $\sigma$-field
generated  by $(Z_s, t\in [0,
t])$. We write $\P^\theta_x$ to stress the value of the parameter $\theta$,
and the initial value of the process $Z$,  $Z_0=x$. We denote
by $\N^\theta$ the canonical measure of the process $Z$,
normalized in such a way that for $\lambda\geq 0$:
\[
  \N^\theta\left[1- \expp{-\lambda \sigma}\right]=\psi_\theta^{-1}(\lambda),
\]
where $\sigma=\int_0^\infty  Z_t\, \rd t$ is the total size of the
population under  the canonical
measure  $\N^\theta$ and $\psi_\theta^{-1}(\lambda)$ is the only solution $t$ to
$\psi_\theta(t)=\lambda$ such that $t\geq 2 \theta_- $. In particular, the process $(Z_t,t\ge 0)$ under $\P_x^\theta$ is distributed as the process $\left(\sum_{i\in I}Z_t^{(i)},t\ge 0\right)$ where $\sum_{i\in I}\delta_{Z^{(i)}}$ is a Poisson point measure with intensity $x\N^\theta(dZ)$. We refer to \cite{DLG02} for $\theta\geq 0$ (critical
and sub-critical case) and to
\cite{jfd08,ad:crtmp,DW07} for $\theta<0$ (super-critical case) for a detailed
presentation of the CSBP process $Z$ and the corresponding continuum
Brownian random tree $\ct$.

With a  slight abuse, we say  that two random  variables or
functionals $Y'$ and $Y''$ have  the same distribution under $\N^\theta$
if the  pushed forward  measures of  $\N^\theta$ by  $Y'$ and  $Y''$ are
equal.

\medskip

In order to recall the Laplace transform of $Z_t$, we introduce the
following positive functions $c^\theta$ and $\tilde c^\theta$ defined for $t\in
(0, +\infty )$ by:
\begin{equation}
   \label{eq:def-cqt}
  c^\theta_t=\frac{2\theta}{\expp{2\beta\theta t} -1}
  \quad\text{and}\quad
  \tilde c^\theta_t = \frac{2\theta}{1-\expp{-2\beta\theta t}},
\end{equation}
with  the convention  $c^0_t=\tilde c^0_t=1/{\beta t}$.
The functions   $c^\theta$ and
$\tilde c^\theta$ are decreasing with:
\begin{equation}
   \label{eq:lim-cq}
\lim_{t \rightarrow 0+}
c^\theta_t=\lim_{t \rightarrow 0+} \tilde c^\theta_t=+\infty,
\quad \lim_{t \rightarrow +\infty }
c^\theta_t=2 \theta_-
\quad\text{and}\quad
\lim_{t \rightarrow +\infty }
\tilde c^\theta_t=2 \theta_+ .
\end{equation}
We also have for $t>0$:
\begin{equation}
   \label{eq:tc=c}
  \tilde c^\theta_t =  c^{-\theta}_t= c^\theta_t+ 2\theta.
\end{equation}

\begin{rem}[Scaling property of $Z$]
  % 2.1  Remark~\ref{rem:q=0}
  \label{rem:q=0}
  In  this   remark,  we   write  $Z^{[\beta,\theta]}$  for   $Z$  under
  $\N^\theta$  or $\P_x^\theta$  in order  to stress  the dependence  in
  $\beta>0$  and $\theta\in  \R$.  Let  $Y=(Y_s, s\geq  0)$ be  a Feller
  diffusion   that is, $Y=Z^{[1, 0]}$. Under $\P_x$, it is given as the
  unique strong solution to the SDE, with initial condition $Y_0=x$:
\begin{equation}
   \label{eq:SDE-Feller}
\rd  Y_s
  = \sqrt{2   Y_s}\,
  \rd  B_s,\quad\text{for $s\geq 0$}.
\end{equation}
We denote by $(Q_t, t\geq 0)$
  the semi-group of the diffusion $Y$ and recall that
it is a Feller 
semi-group   with   the   so-called  following   branching   property:
\begin{equation}
  \label{eq:branching-prop}
  Q_t(x+x', \cdot)=Q_t(x , \cdot) *  Q_t(x', \cdot)
  \quad\text{for all}\quad
t\geq 0 \quad\text{and}\quad
 x, x'\in  \R_+. 
\end{equation}
We shall denote by $\N$ the canonical  measure of $Y$.

For  $\beta>0$ and
  $\theta\in \R$, the process  $Z^{[\beta,
    \theta]}$ under $\N^\theta$ (resp. $\P_x^\theta$) is distributed as
  the process:
\begin{equation}
  \label{eq:Z=Y-bq}
  \left( \expp{-2 \beta \theta t}
  Y_{1/c_t^\theta}, \, t\geq 0\right)
\end{equation}
under $\N$ (resp. $\P_x$). 
Notice that  the range of $1/c^\theta_t$ as $t$ runs in
$\R_+$ is $[0, 1/(2\theta_-))$.
Even though, using this scaling and time change, it is (almost) enough to state
the forthcoming results for the particular case $\beta=1$ and $\theta=0$, we shall
keep general values for the parameters in order to better understand
their role. 
\end{rem}

% by:
% \begin{equation}
%    \label{eq:time-changeY=0}
%   Y'_s= \expp{2 \beta\theta t}
%   Z_{t }^{[\beta,\theta]}
%   \quad\text{with}\quad
% s=\inv{c^\theta_t}\cdot
% \end{equation}
% Then, we get that $Y'$ is
% distributed as $\left(Y_s, s\in [0, 1/(2\theta_-))\right)$ under
% $\N^\theta$ (and under $\P_x^\theta$ provided $Y'_0= Y_0=x\geq 0$).

We define for $t>0$ and $\lambda> - \tilde c^\theta_t$:
\begin{equation}
   \label{eq:def-u}
u^\theta(\lambda, t)
=\frac{\lambda c^\theta_t}{\tilde c^\theta_t +\lambda}
= c^\theta_t -
\frac{c^\theta_t \, \tilde c^\theta_t}{\tilde c^\theta_t +\lambda},
\end{equation}
and set  $u^\theta(\lambda, 0)=\lambda$ for $t=0$. This gives that for
$t>0$ and $\lambda> - \tilde c^\theta_t$:
\[
u^\theta(\lambda, t) =
\begin{cases}
\displaystyle \frac{2\theta \lambda} { (2\theta +\lambda)
    \expp{2\beta\theta t} - \lambda},  & \text{if } \theta\ne 0,\\
    \lambda/(1+ \lambda \beta t) , & \text{if }\theta=0.
    \end{cases}
  \]
  For $r>0$ and $t\geq 0$, we have that:
  \[
    u^\theta (c^\theta_r, t)=c^\theta_{t+r}.
  \]
We recall from the above mentioned references (\cite{DLG02,jfd08,ad:crtmp,DW07}) for $\lambda\geq 0$ and
by  analytic continuation for $\lambda<0$, that  for  $t> 0$ and $x\geq 0$:
\begin{equation}
  \label{eq:NeZ}
  \N^\theta \left[1- \expp{-\lambda Z_t}\right]=u^\theta(\lambda, t)
  \quad\text{and}\quad
  \E^\theta_x \left[\expp{-\lambda Z_t}\right]=\expp{-xu^\theta(\lambda,
    t)}
  \quad\text{for all $\lambda > - \tilde c_t^\theta$.}
\end{equation}

\medskip

We denote by $\zeta=\inf\{t>0; \, Z_t=0\}$ the lifetime of the process
$Z$. We recall that for all $t>0$:
\[
  \N^\theta[\zeta>t]=\lim_{\lambda\rightarrow \infty } u^\theta(\lambda,
  t)=c^\theta(t).
\]
By considering
the series in $\lambda$ in \eqref{eq:def-u} and \eqref{eq:NeZ}, we deduce that for all $t> 0$ and $n\in \N^*$:
\begin{equation}
  \label{eq:N[Zn]}
  \N^\theta \left[\left(\tilde c_t^\theta Z_t\right)^n\right]= n! c_t^\theta.
\end{equation}

We now give a martingales related to the CSBP $Z$.
Since $u^\theta(\lambda, t)=u^{-\theta}(\lambda+ 2\theta,t) -2\theta$
for $\lambda \geq  c^\theta_t -2\theta$, 
we deduce that 
 the   process
  $\left(\expp{2\theta   Z_t},  \,  t\in  I\right)$  is  a
  martingale     with   respect   to  the   filtration
  $(\cf_t, t\geq 0)$ under  $\N^\theta$  with   $I=\R_+^*$  and  under
  $\P^\theta_x$   with  $I=\R_+$. Furthermore, according to \cite[Section
4]{ad:crtmp}, we have that for $\theta\in \R$, $t>0$ and $x\geq 0$:
\begin{equation}
   \label{eq:abs-cont}
   \N^{-\theta}[\rd Z] _{|\cf_t}=\expp{2\theta Z_t}  \N^{\theta}[\rd Z]
   _{|\cf_t}
   \quad\text{and}\quad
      \E_x^{-\theta}[\rd Z] _{|\cf_t}=\expp{2\theta (Z_t-x)} \,  \E_x^{\theta}[\rd Z]
   _{|\cf_t}.
\end{equation}  
Recall that $\tilde
c^\theta_t$ is decreasing in $t$, and thus $-\tilde c^\theta_{t+r}> -
\tilde c^\theta_r$.
The next lemma is an easy consequence of
 \eqref{eq:NeZ} and the following elementary equality:
\[
  u(- \tilde c_{t+r}^\theta,t)= - \tilde c_r^\theta \quad\text{for all
    $t\geq 0$ and $r>0$.}
\]

\begin{lem}
  \label{lem:}
  Let  $\theta\in  \R$,  $x\in  \R_+$,  $r>0$  and  the  quadratic  CSBP
  $(Z_t,t\geq   0)$   solution  of~\eqref{eq:eds-Zq}.    The   process
  $\left(\expp{\tilde  c^\theta_{t+r}  Z_t},  \,  t\in  I\right)$  is  a
  martingale     with   respect   to  the   filtration
  $(\cf_t, t\geq 0)$ under  $\N^\theta$  with   $I=\R_+^*$  and  under
  $\P^\theta_x$   with  $I=\R_+$.
\end{lem}

\subsection{Transition densities and Martin Kernel}

We first provide  the densities of the
entrance law $q_t^\theta(\rd x)$ and the  transition kernel $q_t^\theta(x,\rd
y)$ of the CSBP $Z$ under its excursion measure, where for $t,s>0$, $x>0$
and $y\geq 0$:
\[
  q_t^\theta(\rd x)
  =\rd \N^\theta[Z_t=x, \, \zeta>t]
  \quad\text{and}\quad
  q_{t,s}^\theta(x,\rd y)
=\rd \N^\theta[Z_{t+s}=y|\, Z_s=x].
\]
We shall consider the function $\Bz $ and $\Bnz$ on $ \R_+$ defined by:
\begin{equation}
   \label{eq:def-B0}
  \Bz (x)= \sum_{k\in \N} \frac{x^{k}}{k! (k+1)!}
  \quad\text{and}\quad
\Bnz(x)=x\Bz (x)= \sqrt{x}\,  I_1\big(2  \sqrt{x}\big),  
\end{equation}
where  $I_1(x)=\sum_{i\in \N} (x/2)^{2i+1}/i!(i+1)!$ is the Bessel
function. Notice that $\Bz (0)=1$.

\begin{lem}[Entrance law and transition densities of $Z$]
  \label{lem:density}
  Let $\theta\in \R$. Let $t,s>0$, $x>0$
  and $y\geq 0$. We have $q_{t,s}^\theta(0, \rd y) = \delta_0(\rd y)$
  and:
  \[
     q_t^\theta(\rd x)=q_t^\theta(x) \, \rd
     x \quad\text{and}\quad
     q_{t,s}^\theta(x, \rd y) = \expp{-x c^\theta_t} \delta_0(\rd y) +
     q_t^\theta(x, y)\, \rd y,
\]
where:
\begin{align}
      \label{eq:entrance-law}
  q_t^\theta(x)
  &= c_t^\theta \tilde c^\theta_t \expp{- \tilde c^\theta_t x },\\
   \label{eq:density}
  q_t^\theta(x, y)
  &= xc^\theta_t \tilde c ^\theta_t \,  \expp{ - (x+y) c^\theta_t -2\theta y}
    \Bz \left(xy c^\theta_t \tilde c ^\theta_t    \right).
\end{align}
\end{lem}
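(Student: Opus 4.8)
The plan is to recover both densities from the Laplace transforms recorded in~\eqref{eq:NeZ} by inverting explicitly. For the entrance law, start from $\N^\theta[1-\expp{-\lambda Z_t}]=u^\theta(\lambda,t)$ together with $\N^\theta[\zeta>t]=c^\theta_t$, so that $\N^\theta[\expp{-\lambda Z_t}\,;\,\zeta>t]=c^\theta_t-u^\theta(\lambda,t)$. Using the second expression in~\eqref{eq:def-u}, this equals $c^\theta_t\tilde c^\theta_t/(\tilde c^\theta_t+\lambda)$, which is precisely the Laplace transform of $x\mapsto c^\theta_t\tilde c^\theta_t\expp{-\tilde c^\theta_t x}$ on $\R_+$. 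Since a finite measure on $\R_+^*$ is determined by its Laplace transform on $(-\tilde c^\theta_t,\infty)$, this forces $q_t^\theta(\rd x)=c^\theta_t\tilde c^\theta_t\expp{-\tilde c^\theta_t x}\,\rd x$, giving~\eqref{eq:entrance-law}; in particular there is no atom at $0$, consistent with $\N^\theta[\zeta>t]=c^\theta_t=\int_0^\infty q_t^\theta(x)\,\rd x$.

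For the transition kernel, first identify the atom at $0$: under $\N^\theta$ conditioned on $Z_s=x$, the branching property gives $\N^\theta[Z_{t+s}=0\mid Z_s=x]=\E^\theta_x[\expp{-\infty\cdot Z_t}]=\expp{-x\lim_{\lambda\to\infty}u^\theta(\lambda,t)}=\expp{-xc^\theta_t}$, which yields the $\delta_0(\rd y)$ term. For the absolutely continuous part, use $\E^\theta_x[\expp{-\lambda Z_t}]=\expp{-xu^\theta(\lambda,t)}$ and subtract the atom: $\E^\theta_x[\expp{-\lambda Z_t}\,;\,Z_t>0]=\expp{-xu^\theta(\lambda,t)}-\expp{-xc^\theta_t}$. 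Then write $u^\theta(\lambda,t)=c^\theta_t-c^\theta_t\tilde c^\theta_t/(\tilde c^\theta_t+\lambda)$, so the right-hand side becomes $\expp{-xc^\theta_t}\bigl(\exp(xc^\theta_t\tilde c^\theta_t/(\tilde c^\theta_t+\lambda))-1\bigr)$. Expanding the exponential in powers of $xc^\theta_t\tilde c^\theta_t/(\tilde c^\theta_t+\lambda)$ gives
\[
\E^\theta_x\bigl[\expp{-\lambda Z_t}\,;\,Z_t>0\bigr]
=\expp{-xc^\theta_t}\sum_{k\geq 1}\frac{(xc^\theta_t\tilde c^\theta_t)^k}{k!}\,\frac{1}{(\tilde c^\theta_t+\lambda)^k}.
\]
Each $(\tilde c^\theta_t+\lambda)^{-k}$ is the Laplace transform of $y\mapsto \expp{-\tilde c^\theta_t y}y^{k-1}/(k-1)!$; substituting $\tilde c^\theta_t=c^\theta_t+2\theta$ from~\eqref{eq:tc=c} to split off the $\expp{-2\theta y}$ factor, reindexing $k\mapsto k+1$, and collecting terms gives the series in~\eqref{eq:density}, after checking that $\sum_k (xc^\theta_t\tilde c^\theta_t)^{k+1} y^k/(k!(k+1)!)$ matches $xc^\theta_t\tilde c^\theta_t\sum_k (xyc^\theta_t\tilde c^\theta_t)^k/(k!(k+1)!)$.

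The only genuine points requiring care — rather than a real obstacle — are interchanging the sum over $k$ with the Laplace-transform integral (justified by absolute convergence, since all terms are nonnegative and the $\lambda=0$ value is finite, or by monotone convergence), and the uniqueness argument that lets one read off a density from a Laplace transform on a half-line. The algebraic manipulation of $u^\theta$ via~\eqref{eq:def-u} and~\eqref{eq:tc=c} is routine, and the appearance of the Bessel-type series $\sum_k z^k/(k!(k+1)!)$ is exactly what the geometric-series expansion of $\exp(c/(\tilde c+\lambda))$ produces.
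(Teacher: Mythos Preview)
Your proposal is correct and follows essentially the same route as the paper: both invert the Laplace transforms from~\eqref{eq:NeZ}, using~\eqref{eq:def-u} to recognize $c^\theta_t\tilde c^\theta_t/(\tilde c^\theta_t+\lambda)$ for the entrance law, and expanding $\expp{-xu^\theta(\lambda,t)}=\expp{-xc^\theta_t}\exp\bigl(xc^\theta_t\tilde c^\theta_t/(\tilde c^\theta_t+\lambda)\bigr)$ termwise for the transition kernel, with the constant term $\expp{-xc^\theta_t}$ giving the atom at~$0$. The only cosmetic difference is that you single out the atom via $\lambda\to\infty$ before expanding, whereas the paper reads it off as the ``$1$'' in the exponential series.
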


\begin{proof}
  We  omit  the parameter $\theta$ in the proof.
   On one hand, from the definition of $q_t(\rd x)$, we get that for $\lambda\geq 0$:
\[
\int_0^{+\infty } \expp{-\lambda x} q_t(\rd x)
= \N\left[\expp{-\lambda Z_t}\ind_{\{\zeta>t\}}\right]
= - \N\left[1- \expp{-\lambda Z_t}\right] + \N\left[\zeta>t\right]
= c(t) -u(\lambda, t).
\]
On the other hand, using \eqref{eq:def-u}, we get:
\[
  \int_0^\infty  c _t \tilde c_t \expp{-(\tilde c_t
    + \lambda)x}\rd x
  =c(t) -u(\lambda, t).
\]
Then use that finite positive measures on $\R_+$ are characterized by
their Laplace transform to obtain that $   q_t(\rd x)=q_t(x) \, \rd
x$ with $q_t$ given by  \eqref{eq:entrance-law}.

From the  definition of $q_t(x,\rd
y)$, we get that for $\lambda\geq 0$:
\[
\int_0^{+\infty } \expp{-\lambda y} q_t(x, \rd y)
= \N\left[\expp{-\lambda Z_{t+s}}\Bigm|\, Z_s=x\right]
=\expp{-x u(\lambda, t)}= \expp{- \frac{a}{b}  + \frac{a}{b+ \lambda}},
\]
where, thanks to~\eqref{eq:def-u},
$a= x c _t \tilde c_t$ and $
b=\tilde c_t$.
Notice that:
\[
\expp{ \frac{a}{b+ \lambda}}
= 1 + \sum_{k\in \N} \inv{(k+1)!} \left(\frac{a}{b+ \lambda}\right)^{k+1}
= 1+ a\sum_{k\in \N} \int_0^{+\infty }
\frac{(ay)^{k}}{k! (k+1)! } \expp{-by -\lambda y}  \rd y.
\]
Using \eqref{eq:def-cqt}, we deduce  that $q_t(x, \rd y) = \expp{-x c_t} \delta_0(\rd y) +
q_t(x, y)\, \rd y$, with $q_t(x, y)$ given by \eqref{eq:density}.
\end{proof}

Let us notice that $  q_{t,s}^\theta(x,\rd y)$ is also the transition
kernel of the CSBP $Z$ under  $\P^\theta_{x_0}$ for every $x_0\geq 0$.  
The Martin kernel is  defined for $t\geq s\geq 0$ and $x,y\in \R_+$ by:
\begin{equation}
   \label{eq:Martin-kernel}
  K(s,x;t,y)=\frac{ q_{t-s,s}^\theta(x,\rd y)}{ q_{t, 0}^\theta(1,\rd
    y)}\cdot
\end{equation}

\subsection{Martin boundary}
According to Overbeck~\cite{Ov94}, see also~\cite[Section~10]{d:78}, all
extremal (non-negative)  time-space harmonic functions   for the CSBP
$Z$ appear  as the limit of the Martin kernel   $K(s,x;t,a_t)$,
see~\eqref{eq:Martin-kernel}, 
as $t$ goes  to infinity and
$(a_t, t\geq 0)$ is a non-negative function.
To  study the possible
limits as $t$ goes to infinity of:
\begin{equation}
  \label{eq:lim-K}
\lim_{t\rightarrow \infty }
 K(s,x;t,a_t),
\end{equation}
we
shall consider the functions on $\R^2_+$:
\begin{equation}
  \label{eq:h0a-2}
   H^{0, \theta}(s,x)=x \,  \expp{2 \beta \theta s}
  \quad\text{and}\quad
  H^{\alpha,\theta} (s,x)
%   = H^{\theta, 0}(s,x) \, 
% \expp{-\alpha/c_s^{\theta}}
% \Bz \left(\alpha x \expp{2 \beta \theta s}\right)
= \expp{-\alpha/c_s^{\theta}}
\, \frac{\Bnz \left(\alpha x \expp{2 \beta \theta s}\right)}{\alpha},
\end{equation}
for  $\alpha>0$  and with  $\Bnz$   defined  in~\eqref{eq:def-B0}.  Notice  that
$\lim_{\alpha\rightarrow  0}  H^{  \alpha, \theta}=H^{0,  \theta}$.

% (on $\R_+$ and $\R_+^2$)
% \begin{equation}
%   \label{eq:h00-2}
%   h^\emptyset (x)= \expp{-2\theta_- x}
%   \quad\text{and}\quad
%   H^{\theta, 0}(s,x)=x h^\emptyset (x)\,  \expp{2 \beta \theta s},
% \end{equation}
% and for $\alpha>0$:
% \begin{equation}
%   \label{eq:h0a-2}
%   H^{\theta, \alpha} (s,x)= H^{\theta, 0}(s,x) \, 
% \expp{-\alpha/c_s^{\theta}}
%   \Bz \left(\alpha x \expp{2 \beta \theta s}\right). 
% \end{equation}
We consider the following intermediary result.

\begin{lem}
  \label{lem:cv-q/q}
  Let $s\geq 0$ and  $x\geq 0$. If  $(a_t, t\geq 0)$ is positive and
  $\lim_{t\rightarrow +\infty } a_t \, c^\theta _t  \tilde
  c^\theta_t=\alpha\in [0, +\infty )$, then we have:
 \begin{equation}
   \label{eq:lim-q/q}
    \lim_{t\rightarrow \infty }
    \frac{q^\theta_{t-s}(x,a_t)}{q^\theta_t (a_t)} = \expp{-2\theta_- x}
    \,
    H^{\alpha, |\theta|} (s,x).  
  \end{equation} 
\end{lem}

\begin{proof}
  We   omit  the   super-script   $\theta$  in   the   proof.   We   get
  from~\eqref{eq:density} and~\eqref{eq:tc=c}  that    for
  $t\geq s>0$ and $y>0$:
\[
  \frac{q^\theta_{t-s}(x,y)}{q^\theta_t (y)}=
   x\expp{-x c_{t-s}}
  \, \expp{ -y (c_{t-s}  - c_t)}\, \frac{c_{t-s} \tilde c_{t-s}}{c_t
    \tilde c_t} \, \Bz \left(xy c_{t-s} 
    \tilde c _{t-s}    \right) .
\]
Recall from~\eqref{eq:lim-cq} that $\lim_{t \rightarrow +\infty }
c_t=2 \theta_-$ so that:
\begin{equation}
   \label{eq:lim-h0}
  \lim_{t\rightarrow \infty } \expp{-xc_{t-s} }= \expp{-2\theta_- x}.
\end{equation}
 It is elementary to check that:
\[
  \lim_{t\rightarrow \infty } c_{t-s}\tilde{c}_{t-s}/c_{t}\tilde{c}_{t}
  =\expp{2\beta|\theta|s}
  \quad\text{and}\quad
  \lim_{t\rightarrow \infty } (c_{t-s}- c_{t})/c_t \tilde c_t
 =  \frac{\expp{2\beta|\theta|s} -1}{2|\theta|}=\inv{c^{|\theta|}_s},
\]
where the latter limit is simply  $\beta s$ if $\theta=0$. The result is
then immediate.
\end{proof}

The result below for $\theta=0$ appears  in~\cite[Section~5]{Ov94}, and the
proof for general $\theta$ is similar.
\begin{lem}[Martin boundary]
  \label{lem:at=0+K}
  Let $s\geq 0$ and $x\geq 0$. The limit~\eqref{eq:lim-K}
  exists only in the following cases:
 \begin{enumerate}[(i)]
   \item Extinction case. If  $a_t=0$ for $t$
     large enough, then the limit~\eqref{eq:lim-K} exists and is equal
     to:
\[ %\begin{equation}  \label{eq:h00}
  h^{\emptyset,\theta} (s,x)=  h^ {\emptyset,\theta}(x)=\expp{-2
    \theta_-(x-1)}.  
\] %\end{equation}
\item \label{item:low-mod}
  Low and moderate regimes. If the sequence $(a_t, t\geq 0)$ is positive and
  $\lim_{t\rightarrow +\infty } a_t \, c^\theta _t  \tilde
  c^\theta_t=\alpha\in [0, +\infty )$, then the limit~\eqref{eq:lim-K} exists and is equal
     to:
\[% \begin{equation}  \label{eq:h00}
  h^{ \alpha,\theta} (s,x)=  h^{ \emptyset,\theta} (x)\,\,
  \frac{H^{\alpha,|\theta|}(s,x)}{H^{\alpha, |\theta|}(0,
    1)} \cdot
\]%\end{equation}

\item High  regime. If 
  $\lim_{t\rightarrow +\infty } a_t \, c^\theta _t  \tilde
  c^\theta_t= +\infty$, then the limit~\eqref{eq:lim-K} exists and is equal
  to:
  \[
  h^\infty (s,x)=  \ind_{\{s=0, \,x=1\}} .
  \]
 \end{enumerate}
\end{lem}

\begin{rem}[Equivalent condition for the moderate regime]
  \label{rem:equiv-cond}
 The   moderate regime condition
  $\lim_{t\rightarrow   +\infty   }   a_t    \,   c^\theta   _t   \tilde
  c^\theta_t=\alpha\in (0, +\infty )$, which appears in
  Lemma~\ref{lem:at=0+K}~\ref{item:low-mod},   is in fact equivalent to:
  \begin{equation}
    \label{eq:equiv-cond}
       a_t\sim \begin{cases}
      \alpha \beta^2 t^2 & \text{if
        $\theta= 0$}, \\
        \alpha (2\theta)^{-2} \expp{2 \beta |\theta| t} & \text{if
        $\theta\neq 0$}.
    \end{cases}
  \end{equation}
\end{rem}

\begin{proof}
  We  omit  the super-script  $\theta$  in  the  proof.
The low and moderate regimes are a direct consequence of
Lemma~\ref{lem:cv-q/q}. In the extinction case, use that $K(s,x;
t,0)=\expp{-xc_{t-s} + c_t}$ and~\eqref{eq:lim-h0} to get the result.  
For the high  regime, using that for $y>0$: 
\[
  K(s,x;t,y)=\expp{-xc_{t-s} + c_t}\,\times\, 
  \begin{cases}
  \displaystyle 0
  & \text{if $x=0$,}  \\
 \displaystyle \frac{\Bnz(xy\, c _{t-s} \tilde c_{t-s})}
  {\Bnz( y \, c _{t} \tilde c_{t})}\,  \expp{- y(c_{t-s} -
    c_t) }
  & \text{if $x>0$,}\\
\end{cases}
\]
Equation~\eqref{eq:def-B0}, the asymptotics of
the Bessel function $I_1(z)\sim \expp{z}/\sqrt{2\pi  z}$ as $z $ goes to
infinity and~\eqref{eq:lim-h0}, we deduce that
$ \lim_{t\rightarrow \infty }
 K(s,x;t,a_t)= \ind_{\{s=0,\, x=1\}}$.
\end{proof}

Using similar arguments as in~\cite[Section~5]{Ov94} stated for
$\theta=0$, Girsanov transform \eqref{eq:abs-cont} to reduce the cases $\theta<0$ to
$\theta>0$ and then Remark~\ref{rem:q=0} to reduce those latter  cases  to the
case $\theta=0$, we get the following result.

 \begin{lem}[Extremal harmonic functions]
   \label{lem:extremal}
   Let $\beta>0$ be fixed. Let  $\theta\in \R$.  The extremal time-space
   harmonic  functions  of  $Z^{[\beta,   \theta]}$  are  the  functions
   $h^{\alpha,\theta}$         for          $\alpha\in         \ca$         where
   $\ca= \{\emptyset\}\bigcup [0, +\infty )$.
 \end{lem}

 \begin{proof}
   Notice  that $h^\infty  $  is not  an  harmonic function.   According
   to~\cite[Section~10]{d:78},  the  functions $h^{\alpha,\theta}$  with
   $\alpha\in \ca$  are the  only possible extremal  harmonic functions.
   Thanks  to~\eqref{eq:abs-cont}  it is  enough  to  consider the  case
   $\theta\geq 0$.  Thanks to  Remark~\ref{rem:q=0}, for $\theta\geq 0$,
   we have that:
 \begin{align*}
     \E^\theta\left[ F(Z^{[\beta, \theta]}_{[0,t]})\,  h^{\alpha,\theta} (t,
       Z^{[\beta, \theta]}_t) \right]
&     =  \E\left[ F(\expp{-2 \beta \theta s} Y_{1/c^\theta_s} , s\in
       [0,t])\,  h^{\alpha, \theta} (t, \expp{-2 \beta \theta t} Y_{1/c^\theta_t} 
) \right] \\
&     = C_\alpha\, \E\left[ F(\expp{-2 \beta \theta s} Y_{1/c^\theta_s} , s\in
       [0,t])\,  \expp{- \alpha/c^\theta_t} \, \Bnz(\alpha  Y_{1/c^\theta_t} 
) \right],
 \end{align*}
 where for $\alpha=0$, $C_\alpha \Bnz(\alpha x)$ is simply replaced by
 $C_0\, x$, and $C_\alpha$ is a finite positive constant. 
Then      use     that      $1$,      $(Y_s,      s\geq     0)$      and
$(\expp{-\alpha     s}     \Bnz(\alpha      Y_s),     s\geq     0)$     for
$\alpha\in  (0, +\infty  )$  are extremal  martingales, see~\cite{Ov94},  to conclude when
$\theta\geq 0$.
 \end{proof}

\section{Local limits for the process $Z$}
\label{sec:loc-lim-Z}
\subsection{Some martingales}
We present in this section two martingales which will naturally appear
in the local limits for the Brownian continuum random tree (CRT). Let       $\alpha\in\R$. Define:
\begin{align}
\label{eqn:defhalpha}
  \ch^{\alpha}(s, y)=  \expp{-\alpha s} \, y \Bz (\alpha y)
    ,\quad s\geq 0,\,  y\geq 0,
\end{align}
where $\Bz $ is defined in~\eqref{eq:def-B0}.  Recall $\theta\in \R$.
Let 
$M^{\alpha, \theta}=(M^{\alpha, \theta}_t, t>     0)$ be   the
    process defined  by:
\begin{equation}
  \label{Mmartingale}
M^{\alpha, \theta}_t
 = \ch^{\alpha}(1/c_t^\theta,\, \expp{2\beta \theta t}\, Z_t).
\end{equation}
% \begin{rem}
%    \label{rem:Bessel-I}
%   Using the Bessel function $I_1(x)=\sum_{i\in \N} (x/2)^{2i+1}/i!(i+1)!$,
% we can rewrite $M^{\alpha, \theta}_t$ as:
% \[
%   M^{\alpha, \theta}_t
%   = \sqrt{\frac{\expp{2\beta \theta t}Z_t}{\alpha}}\, \expp{ -\alpha/c_t^\theta}
%   I_1\left( 2 \sqrt{\alpha \expp{2\beta \theta t} Z_t} \right).
% \]
% Using the process $Y'$ defined in \eqref{eq:time-changeY=0}, which is a
% time change of the process $Z$ (and which is distributed as  a CSBP with parameter $\beta=1$ and
% $\theta=0$), we have for $t>0$:
% \begin{equation}
%    \label{eq:M=f(Y)}
%    M^{\alpha, \theta}_t
%   = \sqrt{\frac{Y'_s}{\alpha}}\, \expp{ -\alpha s }
%   I_1\left( 2 \sqrt{\alpha Y'_s} \right)
%   = h^{\alpha}(s, Y'_s)\cdot
% \end{equation}
% This formula corresponds to \cite[Eq.~(19)]{Ov94} up to a normalizing 
% constant.
% \end{rem}

For $\theta=0$ and $\alpha\in [0, +\infty )$, this formula corresponds to \cite[Eq.~(19)]{Ov94} up to a normalizing 
constant.

Using Theorem 3 of \cite{Ov94} and Remark~\ref{rem:q=0}, we
 get the following result.

\begin{prop}\label{Prop:Mmartingale}
  Let   $\alpha,    \theta\in   \R$,    $x\in   \R_+$.     The   process
  $\left(M^{\alpha, \theta}_t,  \, t\in I\right)$ is  a non-negative    martingale under
  $\N^\theta$  with  $I=(0,  +\infty  )$ and  under  $\P^\theta_x$  with
  $I=\R_+$.
\end{prop}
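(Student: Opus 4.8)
The plan is to reduce the martingale property of $M^{\alpha,\theta}$ to a known result by exploiting the scaling relation~\eqref{eq:M=f(Y)}, which expresses $M^{\alpha,\theta}_t$ as a deterministic functional of the time-changed Feller diffusion $Y'$. Concretely, set $g_\alpha(y) = \sqrt{y/\alpha}\, I_1(2\sqrt{\alpha y}) = y\sum_{i\in\N} (\alpha y)^i/(i!(i+1)!)$ and $h_\alpha(s,y) = \expp{-\alpha s} g_\alpha(y)$; then~\eqref{eq:M=f(Y)} reads $M^{\alpha,\theta}_t = h_\alpha(s, Y'_s)$ with $s = 1/c^\theta_t$. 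Overbeck's Theorem~3 in~\cite{Ov94} identifies precisely the space-time harmonic functions of the Feller diffusion $Y$ (parameters $\beta=1$, $\theta=0$), i.e.\ the functions $h(s,y)$ such that $(h(s,Y'_s), s\ge 0)$ is a martingale; the function $h_\alpha$ is one of these (it is, up to normalisation, the extremal one associated with the parameter $\alpha$). This is exactly what Remark~\ref{rem:Bessel-I} is set up to make transparent.

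First I would recall from Remark~\ref{rem:q=0} that $Y' = (Y'_s, s\in[0,1/(2\theta_-)))$, defined by $Y'_s = \expp{2\beta\theta t} Z^{[\beta,\theta]}_t$ with $s = 1/c^\theta_t$, is distributed as the Feller diffusion $Y$ under both $\N^\theta$ and $\P^\theta_x$ (with $Y'_0 = x$), and that $t\mapsto s = 1/c^\theta_t$ is a continuous increasing bijection from $(0,+\infty)$ onto $(0, 1/(2\theta_-))$ (or onto $(0,+\infty)$ when $\theta\le 0$), and from $[0,+\infty)$ onto $[0, 1/(2\theta_-))$ under $\P^\theta_x$. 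Next I would invoke~\eqref{eq:M=f(Y)} to write $M^{\alpha,\theta}_t = h_\alpha(1/c^\theta_t,\, Y'_{1/c^\theta_t})$. Then I would quote Overbeck~\cite[Theorem~3]{Ov94} to the effect that $(h_\alpha(s, Y_s), s\ge 0)$ is a martingale for the Feller diffusion $Y$ — one may also verify this directly by checking that $h_\alpha$ solves the backward equation $\partial_s h_\alpha + y\,\partial_y^2 h_\alpha = 0$, which follows from $g_\alpha'' (y)= \alpha g_\alpha(y)/y \cdot$ something; more cleanly, from the fact that $g_\alpha$ satisfies $y g_\alpha''(y) = \alpha g_\alpha(y)$, together with a uniform-integrability/growth estimate using $\E_x[\expp{\lambda Y_s}]<\infty$ for small $\lambda$ and the exponential-type bound $I_1(x) = O(\expp{x}/\sqrt{x})$. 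Finally, composing with the deterministic time change $t\mapsto 1/c^\theta_t$ and using that a time change by a continuous increasing deterministic function preserves the martingale property with respect to the correspondingly time-changed filtration (which here is $(\cf_t)$, since $\sigma(Y'_r, r\le 1/c^\theta_t) = \sigma(Z_u, u\le t) = \cf_t$), I conclude that $(M^{\alpha,\theta}_t, t\in I)$ is a martingale under $\N^\theta$ with $I = (0,+\infty)$ and under $\P^\theta_x$ with $I = \R_+$.

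The main obstacle is the justification of integrability/true-martingale (as opposed to local-martingale) status together with the careful handling of the excursion measure $\N^\theta$, where $M^{\alpha,\theta}$ is only defined on $(0,+\infty)$ and one must check finiteness of $\N^\theta[M^{\alpha,\theta}_t]$ for $t>0$ and the correct renewal/entrance-law identity at small times. For the former, the series representation~\eqref{Mmartingale} together with the moment bound~\eqref{eq:N[Zn]} (giving $\N^\theta[Z_t^{i+1}] = (i+1)!\, c^\theta_t/(\tilde c^\theta_t)^{i+1}$) lets one sum termwise and show $\N^\theta[M^{\alpha,\theta}_t] < \infty$ whenever $\alpha < (\tilde c^\theta_t)^2 \expp{-2\beta\theta t}$ — and monotonicity of $\tilde c^\theta$ shows this holds, after shifting, for all relevant $t$; the martingale identity $\N^\theta[M^{\alpha,\theta}_t\mid\cf_s] = M^{\alpha,\theta}_s$ then follows from the branching property of $Z$ under $\N^\theta$ combined with~\eqref{eq:NeZ}, or simply by transporting Overbeck's statement as above. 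If one prefers to avoid re-deriving Overbeck's theorem, the cleanest write-up is the short one: cite~\cite[Theorem~3]{Ov94}, apply~\eqref{eq:M=f(Y)}, and note that the deterministic time change $s = 1/c^\theta_t$ and the identification of filtrations do the rest.
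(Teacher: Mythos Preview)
Your proposal is correct and follows essentially the same route as the paper: the paper's entire proof reads ``Using Theorem~3 of \cite{Ov94} and Remark~\ref{rem:Bessel-I}, we immediately get the following result,'' which is precisely your strategy of expressing $M^{\alpha,\theta}_t$ via~\eqref{eq:M=f(Y)} as $h_\alpha(s,Y'_s)$ and invoking Overbeck for the space-time harmonicity of $h_\alpha$. Your write-up is more detailed than the paper's one-line justification (the discussion of the deterministic time change, filtration identification, and integrability under $\N^\theta$ is useful elaboration), though note that the integrability condition you wrote is unnecessarily restrictive: summing termwise using~\eqref{eq:N[Zn]} gives an exponential series that converges for \emph{all} $\alpha\in\R$, and in fact yields $\N^\theta[M^{\alpha,\theta}_t]=1$ directly, as the paper records just after the proposition.
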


\begin{proof}
  We first consider the case  $\theta=0$. For $\alpha\in [0, +\infty )$,
  under  $\P_x$   this  result  is  in   \cite[Section~5]{Ov94}.   Using
  $|M_t^{\alpha,   \theta}|\leq   \expp{ 2|\alpha| /c_t^\theta}\, M_t^{|\alpha|,   \theta}$   to   get   the
  integrability  for $\theta=0$ and  that  $\Bz $  is analytic  to  get  the  martingale
  property, we deduce that the result holds also for $\alpha\leq 0$
  under $\P_x$.
For $\theta\neq 0$, use   Remark~\ref{rem:q=0} to get the result under
$\P_x$ for all $\alpha,\theta\in \R$. 

\medskip

Moreover,  for  all $t>0$, we have, using \eqref{eq:N[Zn]} and $\tilde
c^\theta_t/c^\theta_t=\expp{2\beta \theta t}$, and thus  that for
$\alpha, \theta\in \R$:
\[
\N^\theta \left[ M^{\alpha, \theta}_t\right]=1.
\]
Then use the Markov property under the excursion measure to conclude the
result also holds under $\N^\theta$. 
\end{proof}

% Since $u^\theta(-2\theta, t)=-2\theta$, see \eqref{eq:tc=c} and
% \eqref{eq:def-u}, and  $-2 \theta> -
% \tilde c^\theta_t$,  we deduce that $(\expp{2\theta Z_t}, t>0)$ is a
% martingale under $\N^\theta$.
%   According to \cite[Section
% 4]{ad:crtmp}, we have that for $\theta\in \R$, $t>0$:
% \begin{equation}
%    \label{eq:abs-cont}
%    \N^{-\theta}[\rd Z] _{|\cf_t}=\expp{2\theta Z_t}  \N^{\theta}[\rd Z]
%   _{|\cf_t}.
% \end{equation}  

We introduce an other family of related martingales. For $\alpha, \theta\in \R$, we set $\tilde M^{\alpha,  \theta}=(\tilde
M^{\alpha,  \theta}_t, t>0)$ with:
\begin{equation}
  \label{eq:M=tM}
  \tilde M^{\alpha,  \theta}_t= \expp{2\theta Z_t}\, M^{\alpha,  -
    \theta}_t
  =\ch^{\alpha}(1/\tilde{c}_t^\theta,\, \expp{-2\beta \theta t}\, Z_t)\,   \expp{2\theta Z_t} ,
\end{equation}
using  \eqref{Mmartingale} and $c_t^{-\theta}=\tilde c_t^\theta$ for the
second equality. 
We then deduce from Proposition \ref{Prop:Mmartingale} the following
corollary.

% \begin{equation}\label{tMmartingale}
% \tilde M^{\alpha,  \theta}_t
% = h^{\alpha}(1/\tilde{c}_t^\theta,\, \expp{-2\beta \theta t}\, Z_t)\,   \expp{2\theta Z_t} .
% \end{equation}

\begin{cor}
  \label{cor:RM-mart}
  Let $\theta, \alpha\in \R$. The process $\tilde M^{\alpha,
    \theta}$ is a martingale under
  $\N^\theta$, and for  $t>0$ and  any non-negative
  $\cf_t$-measurable random variable $H_t$, we have:
\begin{equation}
   \label{eq:NM=NtM}
  \N^\theta[H_t \, \tilde M^{\alpha, \theta}_t]  =  \N^{-\theta}
  \left[H_t \, M^{\alpha, -\theta}_t\right].
\end{equation}
\end{cor}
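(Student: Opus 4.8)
The plan is to derive Corollary~\ref{cor:RM-mart} as an essentially formal consequence of Proposition~\ref{Prop:Mmartingale} (the martingale property of $M^{\alpha,-\theta}$ under $\N^{-\theta}$), the Girsanov-type absolute continuity relation \eqref{eq:abs-cont} between $\N^{-\theta}$ and $\N^\theta$, and the definition \eqref{eq:M=tM} of $\tilde M^{\alpha,\theta}$.

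\begin{proof}
  First I would establish the change-of-measure identity \eqref{eq:NM=NtM}. Fix $t>0$ and let $H_t$ be a non-negative $\cf_t$-measurable random variable. By the definition \eqref{eq:M=tM}, we have $\tilde M^{\alpha,\theta}_t = \expp{2\theta Z_t}\, M^{\alpha,-\theta}_t$, so that $H_t\, \tilde M^{\alpha,\theta}_t = \left(H_t\, M^{\alpha,-\theta}_t\right)\expp{2\theta Z_t}$. Since $M^{\alpha,-\theta}_t$ is $\cf_t$-measurable and non-negative (being of the form \eqref{Mmartingale}, a sum of non-negative terms), the product $H_t\, M^{\alpha,-\theta}_t$ is again a non-negative $\cf_t$-measurable random variable. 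Applying the absolute continuity relation \eqref{eq:abs-cont}, which reads $\N^{-\theta}[\rd Z]_{|\cf_t} = \expp{2\theta Z_t}\,\N^{\theta}[\rd Z]_{|\cf_t}$, we obtain
  \[
    \N^\theta\bigl[H_t\, \tilde M^{\alpha,\theta}_t\bigr]
    = \N^\theta\bigl[\left(H_t\, M^{\alpha,-\theta}_t\right)\expp{2\theta Z_t}\bigr]
    = \N^{-\theta}\bigl[H_t\, M^{\alpha,-\theta}_t\bigr],
  \]
  which is exactly \eqref{eq:NM=NtM}.

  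Next I would deduce that $\tilde M^{\alpha,\theta}$ is a martingale under $\N^\theta$ with respect to $(\cf_t,t\geq 0)$. Let $0<s<t$. First, $\tilde M^{\alpha,\theta}_t$ is $\N^\theta$-integrable: by \eqref{eq:NM=NtM} with $H_t=1$, we get $\N^\theta[\tilde M^{\alpha,\theta}_t] = \N^{-\theta}[M^{\alpha,-\theta}_t] = 1$, the last equality being the normalization $\N^{-\theta}[M^{\alpha,-\theta}_t]=1$ noted just after Proposition~\ref{Prop:Mmartingale}. For the conditional expectation identity, let $G_s$ be an arbitrary non-negative $\cf_s$-measurable random variable; it suffices to check that $\N^\theta[G_s\,\tilde M^{\alpha,\theta}_t] = \N^\theta[G_s\,\tilde M^{\alpha,\theta}_s]$. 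Applying \eqref{eq:NM=NtM} at time $t$ with $H_t = G_s$ (which is also $\cf_t$-measurable since $\cf_s\subset\cf_t$) yields $\N^\theta[G_s\,\tilde M^{\alpha,\theta}_t] = \N^{-\theta}[G_s\, M^{\alpha,-\theta}_t]$. Since $M^{\alpha,-\theta}$ is a martingale under $\N^{-\theta}$ by Proposition~\ref{Prop:Mmartingale} and $G_s$ is $\cf_s$-measurable, the right-hand side equals $\N^{-\theta}[G_s\, M^{\alpha,-\theta}_s]$. Finally, applying \eqref{eq:NM=NtM} at time $s$ with $H_s = G_s$ in the reverse direction gives $\N^{-\theta}[G_s\, M^{\alpha,-\theta}_s] = \N^\theta[G_s\,\tilde M^{\alpha,\theta}_s]$. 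Combining these equalities shows $\tilde M^{\alpha,\theta}$ is a martingale under $\N^\theta$, and completes the proof.
\end{proof}

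The only point requiring a little care is the treatment of the $\sigma$-finite measure $\N^\theta$: one cannot test the martingale property against indicator functions of $\cf_s$-measurable sets of infinite mass, so I phrase everything through non-negative $\cf_s$-measurable test functions $G_s$ and use the identity \eqref{eq:NM=NtM} in both directions, which is legitimate since all the quantities involved are non-negative and the relevant total masses ($\N^\theta[\tilde M^{\alpha,\theta}_t]$ and $\N^{-\theta}[M^{\alpha,-\theta}_t]$) are finite and equal to $1$. I expect no genuine obstacle here: the statement is a packaging of \eqref{eq:abs-cont}, \eqref{eq:M=tM} and Proposition~\ref{Prop:Mmartingale}, and the content is purely the bookkeeping of the change of measure.
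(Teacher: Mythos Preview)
Your proof is correct and follows exactly the approach the paper intends: the paper gives no explicit proof, merely stating that the corollary is deduced from Proposition~\ref{Prop:Mmartingale}, and your argument is the natural unpacking of this, combining \eqref{eq:abs-cont}, the definition \eqref{eq:M=tM}, and the martingale property of $M^{\alpha,-\theta}$ under $\N^{-\theta}$.
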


\begin{rem}[The case $\theta=0$ and $\alpha=0$]
   \label{rem:M00}
Let $t>0$. For  $\theta=0$, we have:
\[
  \tilde M^{\alpha, 0}_t=
M^{\alpha, 0}_t=  \ch^{\alpha}(\beta t,\, Z_t)\cdot
\]
For  $\alpha=0$, we have:
\begin{equation}
   \label{eq:M-a=0}
  M^{0, \theta}_t= Z_t \expp{2\beta \theta t}
  \quad\text{and}\quad
  \tilde     M^{0, \theta}_t
    = Z_t \expp{2 \theta (Z_t -\beta t)}.
\end{equation}
Then for $\alpha=\theta=0$, we have:
\[
  \tilde M^{0, 0}_t=  M^{0, 0}_t= Z_t.
\]
\end{rem}

\subsection{Local limit}
  We  first consider  the
Poisson regime, whose name is inherited from the representation given in
Proposition \ref{prop:Zaq} based on a Poisson immigration.
Let  $a=(a_t, t>0)$  be  a  positive function.
\begin{prop}[Poisson regime]
  \label{prop:cv-At-P}
  Let $\theta\in \R$, $s>0$ and  $H_s$
  be a bounded non-negative $\cf_s$-measurable random variable.
  Let $\alpha\in (0, +\infty )$. Assume the function $a$ is such that
  as $t\rightarrow\infty$ large:
\[
    a_t\sim \begin{cases}
      \alpha \beta^2 t^2 & \text{if
        $\theta= 0$}, \\
        \alpha (2\theta)^{-2} \expp{2 \beta |\theta| t} & \text{if
        $\theta\neq 0$}.
    \end{cases}
\]
    Then we have:
\begin{equation}
   \label{eq:limA}
    \lim_{t\rightarrow \infty } \N^\theta[H_s |\,
    Z_{t}=a_{t}]  =\N^{|\theta|} \left[H_s \, M^{\alpha, |\theta|}_s\right]=
    \begin{cases}
   \N^\theta[H_s \, M^{\alpha, \theta}_s]   & \text{if
     $\theta\geq 0$}, \\
   \N^\theta[H_s \, \tilde M^{\alpha, \theta}_s]   & \text{if
     $\theta\leq 0$}.
 \end{cases}
\end{equation}
\end{prop}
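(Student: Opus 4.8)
The plan is to compute the conditional expectation $\N^\theta[H_s\mid Z_t=a_t]$ by a Bayes-type formula and identify the limit with the help of the explicit transition densities from Lemma~\ref{lem:density}. Since $H_s$ is $\cf_s$-measurable and, under $\N^\theta$, the process $Z$ is Markov, we may write
\[
\N^\theta[H_s\mid Z_t=a_t]=\frac{\N^\theta\bigl[H_s\, q_{t-s}^\theta(Z_s,a_t)\bigr]}{\N^\theta\bigl[q_{t-s}^\theta(Z_s,a_t)\bigr]}
=\frac{\N^\theta\bigl[H_s\, q_{t-s}^\theta(Z_s,a_t)\,\ind_{\{\zeta>s\}}\bigr]}{q_t^\theta(a_t)},
\]
where in the denominator we used the Markov property at time $s$ together with $\N^\theta[q_{t-s}^\theta(Z_s,\cdot)\ind_{\{\zeta>s\}}]=q_t^\theta(\cdot)$ (the entrance law at time $t$). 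The factor $\ind_{\{\zeta>s\}}$ may be dropped from $H_s$ since on $\{\zeta\le s\}$ we have $Z_s=0$ and $q_{t-s}^\theta(0,a_t)=0$ for $a_t>0$.

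Next I would substitute the explicit formulas \eqref{eq:entrance-law} and \eqref{eq:density}: with $x=Z_s$ and $y=a_t$,
\[
\frac{q_{t-s}^\theta(x,a_t)}{q_t^\theta(a_t)}
= \frac{x\,c_{t-s}^\theta\,\tilde c_{t-s}^\theta}{c_t^\theta\,\tilde c_t^\theta}\,
\expp{-(x+a_t)c_{t-s}^\theta-2\theta a_t+\tilde c_t^\theta a_t}
\sum_{k\in\N}\frac{(x\,a_t\,c_{t-s}^\theta\,\tilde c_{t-s}^\theta)^k}{k!(k+1)!}.
\]
Now I would analyze the asymptotics as $t\to\infty$ with $s$ fixed. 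The key elementary facts, from \eqref{eq:def-cqt} and \eqref{eq:tc=c}: $c_{t-s}^\theta\to 2\theta_-$ and $\tilde c_{t-s}^\theta\to 2\theta_+$, while $c_t^\theta\tilde c_t^\theta\sim$ an explicit exponential (resp. polynomial when $\theta=0$). One checks that under the scaling $a_t\sim\alpha(2\theta)^{-2}\expp{2\beta|\theta|t}$ (resp. $\alpha\beta^2 t^2$), the products $a_t\,c_{t-s}^\theta\to \alpha\expp{2\beta|\theta|s}\cdot(\text{const})$ type limits stabilize; more precisely the product $a_t\,c_{t-s}^\theta\,\tilde c_{t-s}^\theta$ converges to $\alpha\expp{2\beta|\theta|s}$ when $\theta<0$ (and symmetric bookkeeping for $\theta>0$ using $\tilde c^\theta$, $\theta=0$ by the convention), while the exponential prefactor $\expp{-(x+a_t)c_{t-s}^\theta-2\theta a_t+\tilde c_t^\theta a_t}$ converges to $\expp{-x\cdot 2\theta_-}\cdot\expp{-\alpha/c^\theta_s}$ after using $\tilde c_t^\theta-2\theta-c_{t-s}^\theta\to$ the right rate (this is the one genuinely fiddly computation, and I would organize it via the time-change $Y'$ of Remark~\ref{rem:q=0} so that everything reduces to the $\beta=1,\theta=0$ case with $s$ replaced by $1/c_s^\theta$ and $a_t$ replaced by $\expp{2\beta\theta t}a_t$). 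Matching constants, the ratio converges pointwise in $x$ to $x\expp{-\alpha/c_s^\theta}\sum_k(\alpha x)^k\expp{(k+1)2\beta\theta s}/k!(k+1)!\cdot(\text{sign-dependent }\expp{2\theta x}\text{ factor})$, which is exactly $M^{\alpha,\theta}_s$ when $\theta\ge 0$ and $\tilde M^{\alpha,\theta}_s$ when $\theta\le 0$ by \eqref{Mmartingale} and \eqref{tMmartingale}.

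Finally I would justify passing the limit inside $\N^\theta[H_s\,\cdot\,]$. Since $\N^\theta$ is $\sigma$-finite this needs care: I would use that $H_s$ is bounded and supported (effectively) on $\{\zeta>s\}$, on which $\N^\theta$ restricted is a finite measure weighted by $c_s^\theta$; combined with a domination bound for the ratio $q_{t-s}^\theta(Z_s,a_t)/q_t^\theta(a_t)$ uniform in $t$ large (the series is an $I_1$-Bessel expression, dominated by $C\,Z_s\expp{CZ_s}$ times an integrable-in-$\N^\theta$ weight, using \eqref{eq:N[Zn]} to control moments), dominated convergence applies. The middle equality in \eqref{eq:limA}, identifying $\N^{|\theta|}[H_s\,M^{\alpha,|\theta|}_s]$ with the two sign-dependent expressions, then follows from \eqref{eq:M=tM} and Corollary~\ref{cor:RM-mart} (the absolute-continuity relation \eqref{eq:abs-cont}). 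The main obstacle is the bookkeeping of the three competing exponential rates in the prefactor and securing a clean $t$-uniform domination valid under $\sigma$-finite $\N^\theta$; routing the computation through the scaling $Y'$ of Remark~\ref{rem:q=0} is what makes this manageable.
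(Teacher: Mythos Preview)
Your approach is essentially the same as the paper's: write the conditional expectation via the explicit entrance and transition densities of Lemma~\ref{lem:density}, identify the pointwise limit of the ratio as $M^{\alpha,\theta}_s$ or $\tilde M^{\alpha,\theta}_s$, and pass to the limit by dominated convergence; the middle equality is then Corollary~\ref{cor:RM-mart}.

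Two refinements in the paper are worth noting. First, the paper does the asymptotics directly by isolating three elementary limits (namely $c_t\tilde c_t/c_{t+s}\tilde c_{t+s}\to\expp{2\beta|\theta|s}$, $a_{t+s}(\tilde c_t-\tilde c_{t+s})\to\alpha/c_s^\theta$ or $\alpha/\tilde c_s^\theta$, and $a_{t+s}c_t\tilde c_t\to\alpha\expp{2\beta|\theta|s}$), and explicitly remarks just after the proposition that one \emph{cannot} reduce to the $\theta=0$ case via the time change of Remark~\ref{rem:q=0}, since the prescribed growth of $a_t$ jumps discontinuously at $\theta=0$; so your proposed routing through $Y'$ would not actually organize the computation cleanly across all $\theta$. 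Second, your domination by ``$C\,Z_s\expp{CZ_s}$'' needs care: this is $\N^\theta$-integrable only for $C<\tilde c_s^\theta$, so you must track the constant. The paper avoids this by observing that for $t$ large the ratio is bounded (up to a constant) by $M^{C,\theta}_s+\tilde M^{C,\theta}_s$ for some fixed $C$, and these are mean-one $\N^\theta$-martingales, hence automatically integrable --- a cleaner way to close the dominated convergence step.
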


 \begin{proof}
   Let
   $s>0$ and $H_s$ be fixed. For $t>0$,  thanks
 to \eqref{eq:entrance-law} and \eqref{eq:density}, we have:
 \[
\N^\theta[H_s |\, Z_{t+s}=a_{t+s}]
 =\frac{\N\left[H_sq_t(Z_s, a_{t+s})\right]}{q_{t+s}(a_{t+s})}\cdot
\]
Then use Lemma~\ref{lem:cv-q/q} to get the existence of the limit of $q_t(Z_s,
a_{t+s})/q_{t+s}(a_{t+s})$ and  Proposition~\ref{Prop:Mmartingale} to
get the convergence in $L^1$. To conclude, notice that
Lemma~\ref{lem:cv-q/q}  gives:
\[
  \lim_{t\rightarrow +\infty } \frac{ q_t(Z_s,
    a_{t+s})}{q_{t+s}(a_{t+s})}
  =
    \begin{cases}
    M^{\alpha, \theta}_s   & \text{if
     $\theta\geq 0$}, \\
    \tilde M^{\alpha, \theta}_s   & \text{if
     $\theta\leq 0$}.
 \end{cases}
\]

\end{proof}

The same proof can be used for  the Kesten regime.
\begin{prop}[Kesten regime]
  \label{prop:cv-At-K}
  Let $\theta\in \R$, $s>0$ and  $H_s$
  be a bounded non-negative $\cf_s$-measurable random variable.
  Assume the function $a$ is positive  ($a_t>0$) and such that
  as $t\rightarrow\infty$:
\[
    a_t=  \begin{cases}
      o(t^2) & \text{if
        $\theta= 0$}, \\
         o( \expp{2 \beta |\theta| t}) & \text{if
        $\theta\neq 0$}.
    \end{cases}
\]
   Then we have:
\begin{equation}
   \label{eq:limA-K}
    \lim_{t\rightarrow \infty } \N^\theta[H_s |\, Z_{t}=a_{t}]
    =
    \N^{|\theta|} \left[H_s \, Z_s \expp{2\beta |\theta| s} \right]
    =
    \begin{cases}
   \N^\theta[H_s \, M_s^{0, \theta}  ]    & \text{if
     $\theta\geq 0$},\\
   \N^\theta[H_s \,  \tilde M_s^{0, \theta}]    & \text{if
     $\theta\leq 0$}.
 \end{cases}
\end{equation}
\end{prop}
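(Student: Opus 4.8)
The plan is to mimic the proof of the Poisson regime (Proposition~\ref{prop:cv-At-P}) with $\alpha=0$, tracking the simplifications that occur in this degenerate case. As in that proof, set $A_t=\N^\theta[H_s\mid Z_{t+s}=a_{t+s}]$ and use the entrance law \eqref{eq:entrance-law} and transition density \eqref{eq:density} from Lemma~\ref{lem:density} to write
\[
A_t=\frac{c_t\tilde c_t}{c_{t+s}\tilde c_{t+s}}\,\expp{-a_{t+s}(c_t+2\theta-\tilde c_{t+s})}\,
\N\!\left[H_s\,Z_s\,\expp{-Z_s c_t}\sum_{i\in\N}\frac{(Z_s a_{t+s}c_t\tilde c_t)^i}{i!(i+1)!}\right].
\]
First I would record the elementary asymptotics $\tilde c_{t+r}c_{t+r}\sim(2\theta)^2\expp{-2\beta|\theta|t}\expp{-2\beta|\theta|r}$ for $\theta\neq0$ and $\sim(\beta t)^{-2}$ for $\theta=0$, which give $c_t\tilde c_t/c_{t+s}\tilde c_{t+s}\to\expp{2\beta|\theta|s}$ exactly as before, and $a_{t+s}(c_t+2\theta-\tilde c_{t+s})=a_{t+s}(\tilde c_t-\tilde c_{t+s})$. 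The key difference from the moderate regime is that now, under the hypothesis $a_t=o(t^2)$ (resp. $a_t=o(\expp{2\beta|\theta|t})$), both $a_{t+s}(\tilde c_t-\tilde c_{t+s})\to 0$ and $a_{t+s}c_t\tilde c_t\to 0$. Consequently the exponential factor tends to $1$ and the sum over $i$ collapses: only the $i=0$ term survives in the limit, so the bracketed expectation converges to $\N[H_s Z_s\expp{-Z_s c_t}]$ — and since $c_t\to 2\theta_-$, this in turn converges to $\N[H_s Z_s\expp{-2\theta_- Z_s}]$. Multiplying by the prefactor limit $\expp{2\beta|\theta|s}$ yields $\N^\theta[H_s Z_s\expp{2\beta|\theta|s-2\theta_- Z_s}]$, which by \eqref{eq:abs-cont} equals $\N^{|\theta|}[H_s Z_s\expp{2\beta|\theta|s}]$; identifying this with $\N^\theta[H_s M^{0,\theta}_s]$ for $\theta\ge0$ and $\N^\theta[H_s\tilde M^{0,\theta}_s]$ for $\theta\le0$ via \eqref{eq:M-a=0} and Corollary~\ref{cor:RM-mart} gives the stated formula.

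To make the passage to the limit rigorous I would produce a dominating function, again as in the Poisson regime. Since $a_{t+s}c_t\tilde c_t\to 0$, for $t$ large there is a constant $C$ with $a_{t+s}c_t\tilde c_t\le C\expp{2\beta|\theta|s}$; combined with $-c_t\le 2\theta$ (from $c_t+2\theta=\tilde c_t\ge0$) and the boundedness of $H_s$, one bounds the integrand $D_t$ inside $\N[\,\cdot\,]$ by a constant multiple of $M^{C,\theta}_s+\tilde M^{C,\theta}_s$, which is $\N^\theta$-integrable by Proposition~\ref{Prop:Mmartingale} and the normalization $\N^\theta[M^{\alpha,\theta}_t]=1$. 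Dominated convergence then applies and delivers the first equality in \eqref{eq:limA-K}; the second (the case split in $\theta$) follows from \eqref{eq:NM=NtM} exactly as in the Poisson case.

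I do not expect a genuine obstacle here: the statement is essentially the $\alpha=0$ specialization of Proposition~\ref{prop:cv-At-P}, and the only point requiring a little care is verifying that under the weaker growth hypothesis $a_t=o(\cdots)$ the two quantities $a_{t+s}(\tilde c_t-\tilde c_{t+s})$ and $a_{t+s}c_t\tilde c_t$ both vanish (rather than converging to the positive constants $\alpha/c_s$, $\alpha$ and $\alpha\expp{2\beta|\theta|s}$ of the moderate regime) — this is the elementary computation the authors rightly leave to the reader. Everything else is a verbatim transcription of the dominated-convergence argument already carried out.
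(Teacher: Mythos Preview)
Your proposal is correct and is precisely the approach the paper intends: the authors leave the proof to the reader, noting only that it is ``very similar to the proof in the Poisson regime when one takes $\alpha=0$ and uses~\eqref{eq:M-a=0}'', and you have carried out exactly that specialization (with the correct verification that $a_{t+s}(\tilde c_t-\tilde c_{t+s})\to0$ and $a_{t+s}c_t\tilde c_t\to0$ under the low-regime hypothesis, and the same dominated-convergence bound).
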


For completeness, we add the well known extinction case, that is the function $a_t=0$ for large $t$, which is a
direct consequence of \eqref{eq:abs-cont}. Since the event $\{Z_t=0\}$
has infinite measure under $\N^\theta$, we  consider the restriction
instead of the conditioning. 
\begin{prop}[Extinction  regime]
  \label{prop:cv-At-E}
  Let $\theta\in \R$, $s>0$ and  $H_s$
  be a bounded non-negative $\cf_s$-measurable random variable.
   Then we have:
\begin{equation}
   \label{eq:limA-E}
    \lim_{t\rightarrow \infty } \N^\theta \left[H_s \, \ind_{\{ Z_{t}=0\}}\right]
    = \N^{|\theta|}[H_s ]
    =
    \begin{cases}
   \N^\theta[H_s]     & \text{if
     $\theta\geq 0$},\\
   \N^{-\theta}[H_s]    & \text{if
     $\theta\leq 0$}.
 \end{cases}
\end{equation}
\end{prop}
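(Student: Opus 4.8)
The statement to prove is the Extinction regime (Proposition~\ref{prop:cv-At-E}): for bounded non-negative $\cf_s$-measurable $H_s$,
\[
  \lim_{t\to\infty}\N^\theta[H_s\mid Z_t=0]=\N^{|\theta|}[H_s].
\]
The plan is to reduce everything to the absolute continuity relation~\eqref{eq:abs-cont}, namely $\N^{-\theta}[\rd Z]_{|\cf_t}=\expp{2\theta Z_t}\N^\theta[\rd Z]_{|\cf_t}$, together with the explicit formula $\N^\theta[\zeta>t]=c^\theta(t)$ and the convention on branches $\theta\geq 0$ versus $\theta\leq 0$ in the stated identity. Since $\{Z_t=0\}=\{\zeta\le t\}$ up to a $\N^\theta$-null set (for the excursion measure, $Z_t=0$ and $\zeta>t$ are incompatible, and $\N^\theta[\zeta=t]=0$ because $t\mapsto c^\theta(t)$ is continuous), the conditional expectation unwinds as
\[
  \N^\theta[H_s\mid Z_t=0]
  =\frac{\N^\theta\bigl[H_s\,\ind_{\{\zeta\le t\}}\bigr]}{\N^\theta[\zeta\le t]}.
\]

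For $\theta\geq 0$ (critical or sub-critical) the total mass $\N^\theta[\sigma>0]$ is finite only in the sense that $\N^\theta[\zeta>t]=c^\theta_t\to 2\theta_-=0$, so $\N^\theta[\zeta\le t]\to\N^\theta[\,\cdot\,]$ — more precisely one works with the finite measure $\ind_{\{\zeta>s\}}\N^\theta$ and notes $\N^\theta[\zeta>t\,]\to 0$; hence for $s$ fixed and $t>s$, $H_s\ind_{\{\zeta\le t\}}\uparrow H_s$ as $t\to\infty$ off the event $\{\zeta=\infty\}$ which under $\N^\theta$, $\theta\geq0$, is null. By monotone (or dominated, using that $H_s$ is bounded and $\N^\theta[\,H_s \ind_{\{\zeta>s\}}]<\infty$) convergence the numerator tends to $\N^\theta[H_s]$ and the denominator to $\N^\theta[\zeta<\infty]$, which equals $\N^\theta[\,\cdot\,]$ restricted appropriately; after normalisation one gets $\N^\theta[H_s]=\N^{|\theta|}[H_s]$ since $|\theta|=\theta$. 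The point to be careful about is that $\N^\theta$ is $\sigma$-finite, not finite, so one must genuinely restrict to $\{\zeta>s\}$ (or use the martingale $(\expp{2\theta Z_t})$ as a change of measure as a cleaner alternative) before taking ratios; this bookkeeping is the only mild obstacle.

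For $\theta<0$ (super-critical) one uses~\eqref{eq:abs-cont} to transfer to the sub-critical picture: on $\cf_t$, $\N^\theta=\expp{-2|\theta|Z_t}\N^{|\theta|}$ (rewriting~\eqref{eq:abs-cont} with $\theta\rightsquigarrow|\theta|$), so
\[
  \N^\theta[H_s\,\ind_{\{Z_t=0\}}]
  =\N^{|\theta|}\bigl[H_s\,\expp{-2|\theta|Z_t}\ind_{\{Z_t=0\}}\bigr]
  =\N^{|\theta|}[H_s\,\ind_{\{Z_t=0\}}],
\]
and similarly $\N^\theta[Z_t=0]=\N^{|\theta|}[Z_t=0]$; dividing, $\N^\theta[H_s\mid Z_t=0]=\N^{|\theta|}[H_s\mid Z_t=0]$ exactly, for every $t>s$. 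Then applying the already-proved $\theta\geq 0$ case to $\N^{|\theta|}$ gives the limit $\N^{|\theta|}[H_s]$, which is the right-hand side on the branch $\theta\le 0$ after noting $\N^{|\theta|}=\N^{-\theta}$. I expect the main (and only real) obstacle to be the $\sigma$-finiteness housekeeping in the first case — making precise in what sense "$\N^\theta[\zeta\le t]\to$ total mass" when that total mass is infinite — which is resolved by conditioning on the $\cf_s$-measurable event $\{\zeta>s\}$ throughout, on which all quantities are finite, exactly as is implicitly done whenever one writes $\N^\theta[H_s\mid\cdot]$ for a fixed $s>0$.
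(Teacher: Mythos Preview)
Your proposal is correct and follows exactly the approach the paper has in mind: the paper offers no detailed argument, merely stating that the result is ``a direct consequence of~\eqref{eq:abs-cont}'', and you flesh this out in the natural way --- trivially for $\theta\ge 0$ (extinction is $\N^\theta$-a.e., so conditioning on $\{Z_t=0\}$ becomes vacuous as $t\to\infty$) and via the Girsanov identity for $\theta<0$, with your observation that the latter gives the \emph{exact} equality $\N^\theta[H_s\mid Z_t=0]=\N^{|\theta|}[H_s\mid Z_t=0]$ for every $t>s$ being a nice touch. One harmless typo: your rewriting of~\eqref{eq:abs-cont} should read $\N^\theta=\expp{+2|\theta|Z_t}\,\N^{|\theta|}$ on $\cf_t$ (substituting $\theta\rightsquigarrow|\theta|$ yields $\N^{-|\theta|}=\expp{2|\theta|Z_t}\N^{|\theta|}$, and $-|\theta|=\theta$), but the sign is immaterial since the exponential equals $1$ on $\{Z_t=0\}$; your care about the $\sigma$-finiteness bookkeeping (working on $\{\zeta>s\}$) is appropriate and is a point the paper leaves implicit.
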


\section{$h$-transform}
\label{sec:h-trans}
We give  a representation of the  distribution of the process  $Z$ under
the  $h$-transform given  by  the martingale  $M^{\alpha, \theta}$ using
a Poisson immigration; and we identify it with the solution of the SDE
from \cite[Theorem~3]{Ov94}. Even if Proposition~\ref{prop:Zaq} and
Corollary~\ref{cor:Z=Ov} below are a direct consequence of
Proposition~\ref{prop:Taq=TMaq} and Theorem~\ref{theo:main-result}
(see Remarks~\ref{rem:tree-Z} and~\ref{rem:over}), we provide an
independent proof which is interesting by itself.   The
proof  will  be done  for  $\beta=1$  and  $\theta=0$,  and then  use  a
time-change,  see Remark~\ref{rem:q=0},  to   get  $\theta\in  \R$.
\medskip

\subsection{SDE representation}

Let $\beta>0$  and $\theta\in \R$.  Let  $B=(B_t, t\geq 0)$ be  a standard
Brownian motion. Let $\alpha>0$ and  $S^{\alpha,\theta}(\rd t)$ be a Poisson point
measure           on           $\R_+$           with           intensity
$\alpha \beta \expp{2\beta\theta  t} \rd t$, independent  of the Brownian
motion $B$. We set $S^{\alpha,\theta}_t=S^{\alpha,\theta}([0, t])$ for $t\in \R_+$.
We define  the process $Z^{\alpha}=(Z_t^{\alpha},
t\geq 0)$ under  $\P^\theta$ as the unique strong solution  (conditionally on $S$) of the following SDE:
\begin{equation}
   \label{eq:Z+S}
  \rd Z_t^{\alpha}
  = \sqrt{2\beta Z_t^{\alpha}}\, \rd B_t
- 2\beta \theta Z_t^{\alpha} \rd t+ 2\beta \, (S^{\alpha,\theta}_t+1)\, \rd t
\quad
\text{for $t\geq 0$,}\quad\text{and $
Z_0^{\alpha}=0$}.
\end{equation}

\begin{prop}[An SDE with Poisson drift]
  \label{prop:Zaq}
 Let $\alpha>0$,  $\theta\in  \R$ and $t_0>0$. The     process
   $(Z_t,        t\in         [0,        t_0])$        under
$\N^\theta\left[  \bullet  \, M^{\alpha,\theta}_{t_0  }\right]$
(resp. under
$\N^\theta\left[ \bullet \, \tilde M^{\alpha,\theta}_{t_0 }\right]$)
 is distributed  as the
process     $(Z^\alpha_t, t\in  [0, t_0])$ under  $\P^\theta$ (resp.
$\P^{-\theta}$).
\end{prop}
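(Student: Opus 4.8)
The plan is to reduce the general $\theta\in\R$ case to the Feller case $\beta=1$, $\theta=0$, handle that core case via the Rogers--Pitman intertwining criterion, and finally transfer between $M$ and $\tilde M$ using the absolute continuity relation \eqref{eq:NM=NtM}. First I would observe that, by the time-change \eqref{eq:time-changeY=0} together with the identity \eqref{eq:M=f(Y)} expressing $M^{\alpha,\theta}_t$ as $Y'_s\expp{-\alpha s}\sum_i (\alpha Y'_s)^i/i!(i+1)!$ with $s=1/c^\theta_t$, the $h$-transform $\N^\theta[\bullet\,M^{\alpha,\theta}_{t_0}]$ pulls back to an $h$-transform of the standard Feller diffusion $Y$ under $\N^0$ by the martingale $h(Y_s)=Y_s\expp{-\alpha s}\sum_i(\alpha Y_s)^i/i!(i+1)!$; similarly, the Poisson immigration SDE for $Z^\alpha$ transforms, under the same deterministic change of time and the rescaling $Y'_s=\expp{2\beta\theta t}Z^{[\beta,\theta]}_t$, into the SDE $\rd Y_s=\sqrt{2Y_s}\,\rd B_s+(\text{immigration})\,\rd s$ driven by a rate-$\alpha$ homogeneous Poisson process. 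One needs to check that the inhomogeneous intensity $\alpha\beta\expp{2\beta\theta t}\rd t$ becomes $\alpha\,\rd s$ under $s=1/c^\theta_t$, which is exactly the point of the scaling remark; this is a routine but necessary computation.

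For the core case, I would set up the Rogers--Pitman framework: consider the two-dimensional Markov process $(\hat Z_s,S_s)$ where $S$ is a rate-$\alpha$ Poisson process and $\hat Z$ solves $\rd\hat Z_s=\sqrt{2\hat Z_s}\,\rd B_s+2(S_s+1)\,\rd s$, and take the ``lumping'' map $\Phi(z,n)=z$ which forgets the immigration. The Rogers--Pitman criterion \cite{rp:mf} says that if there is a Markov kernel $\Lambda(z,\rd n)$ (a candidate conditional law of $S_s$ given $\hat Z_s=z$) that is intertwined with the dynamics, then the image process $(\hat Z_s)$ is itself Markov, with an explicitly identified semigroup. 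The natural guess for $\Lambda(z,\cdot)$ is the size-biased Poisson-type law whose generating function matches the series $\sum_i (\alpha z)^i/i!(i+1)!$ appearing in $h$; concretely, $\Lambda(z,\{n\})\propto (\alpha z)^n/n!(n+1)!$. I would then verify the intertwining identity $\Lambda P_s=Q_s\Lambda$ (or its infinitesimal generator version $\Lambda\cg=\cl\Lambda$ where $\cg$ is the generator of $(\hat Z,S)$ and $\cl$ the candidate generator of the lumped process), and check that the lumped generator $\cl$ coincides with the generator of $Z$ under the $h$-transform by $h$ — equivalently, that $h$ is (up to the obvious normalization and the $\expp{-\alpha s}$ factor) a space-time harmonic function for $Z$, which is precisely Overbeck's result invoked in Proposition~\ref{Prop:Mmartingale}. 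This gives that the law of $(\hat Z_s,s\le s_0)$, i.e.\ of $(Z^\alpha_t,t\le t_0)$ after undoing the time change, equals the law of $Z$ under $\N^0[\bullet\,M^{\alpha,0}_{t_0}]$.

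Finally, for the $\tilde M$ statement I would not redo the analysis but simply combine the $M$-result with the Girsanov relation \eqref{eq:NM=NtM}, which reads $\N^\theta[H_t\,\tilde M^{\alpha,\theta}_t]=\N^{-\theta}[H_t\,M^{\alpha,-\theta}_t]$: applying the already-proved $M$-statement at parameter $-\theta$ identifies the right-hand side with the law of $Z^\alpha$ under $\P^{-\theta}$, hence $(Z_t,t\le t_0)$ under $\N^\theta[\bullet\,\tilde M^{\alpha,\theta}_{t_0}]$ is distributed as $(Z^\alpha_t,t\le t_0)$ under $\P^{-\theta}$, as claimed. One small wrinkle to address carefully is the behavior at $t=0$: the martingales $M^{\alpha,\theta}$ and the immigration SDE both start from the excursion-measure boundary $Z_0=0$, so the identification of one-dimensional marginals must be promoted to an identification of finite-dimensional laws on $(0,t_0]$ and then extended to $[0,t_0]$ using the entrance law computed in Lemma~\ref{lem:density} and the fact that $\N^\theta[\bullet\,M^{\alpha,\theta}_{t_0}]$ is a genuine probability measure (since $\N^\theta[M^{\alpha,\theta}_{t_0}]=1$). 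The main obstacle I anticipate is verifying the intertwining identity for the kernel $\Lambda$ cleanly — i.e.\ checking that the candidate conditional law of the hidden immigration given the observed mass is exactly the one dictated by the Bessel-type series, and that it propagates correctly under the coupled dynamics; everything else is bookkeeping with the scaling relations already recorded in Remark~\ref{rem:q=0} and the martingale property from Overbeck's theorem.
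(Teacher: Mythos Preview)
Your proposal is correct and follows the paper's strategy closely: reduce to $\beta=1$, $\theta=0$ by the time change of Remark~\ref{rem:q=0}, apply a Rogers--Pitman argument to show that forgetting the immigration $S$ leaves a Markov process whose law is the $h$-transform of the Feller diffusion, and deduce the $\tilde M$ statement from \eqref{eq:NM=NtM}.

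The one notable difference is in how the Rogers--Pitman step is executed. You propose to guess the kernel $\Lambda(z,\{n\})\propto (\alpha z)^n/n!(n+1)!$ and verify the intertwining relation (possibly at the generator level), then match generators with the $h$-transform. The paper instead computes the joint density of $(Y^\alpha_t,S^\alpha_t)$ explicitly (Lemma~\ref{lem:densityYS}), reads off that the conditional law of $S^\alpha_t$ given $Y^\alpha_t=y$ is time-independent (Lemma~\ref{lem:S|Y}) --- which is precisely the hypothesis of \cite[Lemma~1]{rp:mf} --- and then identifies the lumped process with the $h$-transform not via generators but by matching one-dimensional Laplace transforms and invoking the uniqueness of the Feller semigroup with given entrance law (Lemmas~\ref{lem:Y-Markov} and~\ref{lem:Y=Z}). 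Your generator route would work too, but the paper's computation of the joint density is quite short (a Poisson conditioning plus order statistics), and it simultaneously delivers both the kernel $\Lambda$ and the marginal density $q'_t$ needed for the final identification, so you may find it more economical than setting up and checking the infinitesimal intertwining directly.
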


The proof of this proposition is detailed in Section~\ref{sec:proof:Zaq}.
The process  $(Z^\alpha_t, t\geq 0)$ appears also in 
\cite[Theorem~3]{Ov94} when $\beta=1$ and $\theta=0$ (the function $h$
therein is given by $y^{-1}\, \ch^\alpha(s,y)$ up to a multiplicative
constant).
As  $  \partial_y \log ( \ch^\alpha (t, y) )$ does
 not depend on $t$, we simply write  $  \partial_y \log ( \ch^\alpha
 (\cdot, y) )$. 

\begin{cor}[The SDE with Poisson drift is a diffusion]
  \label{cor:Z=Ov}
   Let $\alpha>0$ and  $\theta\geq 0$.  The process  $(Z^\alpha_t, t\geq
   0 )$  satisfies the stochastic differential equation:
 \begin{equation}
      \label{eq:Ov-c>0}
      \rd  Z_t^{\alpha}= \sqrt{2\beta Z_t^{\alpha}}\, \rd  B_t
      -2\beta\theta Z_t^{\alpha} \, \rd t+{2\beta}\expp{2\beta\theta
        t} Z_t^{\alpha} \, \partial_y \log( \ch^{\alpha}
      (\cdot, \expp{2\beta\theta t}Z_t^{\alpha}))\, \rd  t,\quad t\geq0.  
    \end{equation}
\end{cor}
The proof of this corollary is detailed in Section~\ref{sec:proof-cor-Ov}.

\subsection{Proof of Proposition~\ref{prop:Zaq}}
\label{sec:proof:Zaq}
 Following Remark
\ref{rem:q=0}, we first use a scaling argument to remove the parameters
$\beta$ and $\theta$.
\medskip

Let  $\alpha>0$. Let  $S^\alpha=(S^\alpha_t, t\geq 0)$  be a  Poisson  process with  parameter
$\alpha$    independent   of    the    Brownian    motion   $B$.     Let
$Y^\alpha=(Y^\alpha_t, t\geq  0)$ be the  unique strong solution  (conditionally on $S$) of the
following SDE:
\begin{equation}
   \label{eq:eds-Ya}
  \rd Y^\alpha_t
  = \sqrt{2 Y^\alpha_t}\, \rd B_t + 2\, (S^{\alpha}_t+1)\, \rd t
\quad
\text{for $t\geq 0$,}\quad\text{and $
Y^\alpha_0=0$}.
\end{equation}
  Let $\beta, \alpha>0$ and
  $\theta\in \R$, and write $Z^{[\beta, \theta, \alpha]}$ for the
  process $Z^\alpha$ under $\N^\theta$ or $\P^\theta$ to stress the
  dependence in $\beta$ and $\theta$. Define the process
  $(Y'^\alpha, S'^\alpha)=\left((Y'^\alpha_s,S'^\alpha_s),
  s\in [0, 1/(2\theta_-))\right)$  by:
\begin{equation}
   \label{eq:time-changeYa=0}
  Y'^\alpha_s= \expp{2 \beta\theta t}
  Z_{t }^{[\beta,\theta,\alpha]}
  \quad\text{and}\quad
  S'^\alpha_s=S^{\alpha, \theta}_t,
  \quad\text{with}\quad
s=\inv{c^\theta_t} \cdot
\end{equation}

Then, it is  elementary that this deterministic time change yields the following result.
\begin{lem}
  \label{lem:YS}
  Let $\beta, \alpha>0$ and $\theta\in \R$.
The process  $\left(Y'^\alpha, S'^\alpha\right)$ under
$\P^\theta$ (whose law depends on $(\beta, \theta)$ and $\alpha$) is
distributed as $\Bigl((Y^\alpha_s, S^\alpha_s), s\in \bigl[0,
  1/(2\theta_-)\bigr)\Bigr)$.
\end{lem}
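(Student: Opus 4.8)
The plan is to perform the deterministic time change $t\mapsto s=1/c^\theta_t$ on the SDE defining $Z^\alpha$, in the spirit of the proof of Remark~\ref{rem:q=0}, simply carrying the additional immigration drift along. First, I would record the elementary facts about this time change: by~\eqref{eq:def-cqt} one has $1/c^\theta_t=(\expp{2\beta\theta t}-1)/(2\theta)$ (with the convention $1/c^0_t=\beta t$), so $t\mapsto s=1/c^\theta_t$ is a smooth increasing bijection from $(0,+\infty)$ onto $(0,1/(2\theta_-))$ with $\rd s=\beta\expp{2\beta\theta t}\,\rd t$, and the initial conditions are compatible since $Z^\alpha_0=0$ forces $Y'^\alpha_0=0$. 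Denote by $t(s)$ the inverse map.

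Since $t\mapsto\int_0^t 2\beta(S^{\alpha,\theta}_r+1)\,\rd r$ is continuous, the process $Z^\alpha$ is a continuous semimartingale, and integration by parts gives
\[
\rd\bigl(\expp{2\beta\theta t}Z^\alpha_t\bigr)
=\expp{2\beta\theta t}\sqrt{2\beta Z^\alpha_t}\,\rd B_t
+2\beta\expp{2\beta\theta t}\bigl(S^{\alpha,\theta}_t+1\bigr)\,\rd t .
\]
Substituting $t=t(s)$ and using $\rd s=\beta\expp{2\beta\theta t}\,\rd t$, the drift term becomes $2(S'^\alpha_s+1)\,\rd s$ by the very definition of $S'^\alpha$, while the time-changed continuous local martingale $N_s$ arising from $\int_0^{\cdot}\expp{2\beta\theta r}\sqrt{2\beta Z^\alpha_r}\,\rd B_r$ satisfies
\[
\langle N\rangle_s=\int_0^s 2\,\expp{2\beta\theta t(u)}Z^\alpha_{t(u)}\,\rd u=\int_0^s 2\,Y'^\alpha_u\,\rd u .
\]
Hence, after the customary enlargement of the probability space to handle the zero set of $Y'^\alpha$, there is a Brownian motion $\tilde B$ with $N_s=\int_0^s\sqrt{2Y'^\alpha_u}\,\rd\tilde B_u$, so that $\rd Y'^\alpha_s=\sqrt{2Y'^\alpha_s}\,\rd\tilde B_s+2(S'^\alpha_s+1)\,\rd s$ with $Y'^\alpha_0=0$.

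It then remains to identify the joint law of $(\tilde B,S'^\alpha)$. By the mapping theorem, the image of the Poisson random measure $S^{\alpha,\theta}(\rd t)$ of intensity $\alpha\beta\expp{2\beta\theta t}\,\rd t$ under $t\mapsto s(t)$ is a Poisson random measure of intensity $\alpha\,\rd s$ on $[0,1/(2\theta_-))$, i.e. $S'^\alpha$ is a Poisson process of parameter $\alpha$; and since the reparametrisation is deterministic and $Z^\alpha$ is, conditionally on $S^{\alpha,\theta}$, a measurable functional of the Brownian motion $B$ (which is independent of $S^{\alpha,\theta}$), the process $\tilde B$ is, conditionally on $S^{\alpha,\theta}$, a Brownian motion whose conditional law does not depend on $S^{\alpha,\theta}$, so that $\tilde B$ is independent of $S'^\alpha$. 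Therefore $(Y'^\alpha,S'^\alpha)$ solves the system~\eqref{eq:eds-Ya} with the same initial condition and the same law of driving noise as $(Y^\alpha,S^\alpha)$, and since, conditionally on the immigration path, pathwise uniqueness holds for~\eqref{eq:eds-Ya} (the diffusion coefficient $y\mapsto\sqrt{2y}$ is $1/2$-Hölder and the drift does not depend on $y$, so Yamada--Watanabe applies), the two pairs have the same distribution.

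The only genuinely delicate point is the Brownian time-change step --- producing $\tilde B$ as a bona fide Brownian motion that remains independent of $S'^\alpha$, i.e. keeping the diffusive and Poissonian sources of randomness disentangled under the reparametrisation. This is the standard Dambis--Dubins--Schwarz construction combined with the deterministic nature of the time change, and, as noted in the text, the remaining computations are elementary.
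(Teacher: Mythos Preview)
Your argument is correct and is precisely the computation the paper has in mind: in the text the lemma is stated with the single sentence ``it is elementary that this deterministic time change yields the following result,'' i.e.\ no proof is given beyond pointing to the substitution $s=1/c^\theta_t$. You have supplied the details --- It\^o's formula on $\expp{2\beta\theta t}Z^\alpha_t$, the change of variables $\rd s=\beta\expp{2\beta\theta t}\,\rd t$ killing the $\theta$-drift and turning the Poisson intensity into $\alpha\,\rd s$, and the quadratic-variation check $\rd\langle N\rangle_s=2Y'^\alpha_s\,\rd s$ --- all of which are correct. One minor remark: the enlargement for the Dambis--Dubins--Schwarz step is in fact unnecessary here, since between the jump times of $S'^\alpha$ the process $Y'^\alpha$ is a squared Bessel process of dimension $2(S'^\alpha_s+1)\geq 2$ and therefore stays strictly positive for $s>0$; this also makes the independence of $\tilde B$ and $S'^\alpha$ cleaner, as $\tilde B$ is then genuinely a measurable functional of $(B,S^{\alpha,\theta})$ whose conditional law given $S^{\alpha,\theta}$ is that of a Brownian motion.
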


Let $(P_t, t\geq 0)$ be the transition semi-group on $\R_+ \times \N$ of the Markov process
$(Y^\alpha,S^\alpha)$.

\begin{lem}
  \label{lem:Pt=feller}
  The semi-group $(P_t, t\geq 0)$ is Feller, that is for all $t\geq 0$
  and all bounded continuous function $f$ defined on $\R_+\times \N$, the
  function  $P_t(f)$ is also bounded and continuous.
\end{lem}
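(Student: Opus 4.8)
The plan is to establish the (weak) Feller property directly, using the branching property of the Feller diffusion to decompose $(Y^\alpha,S^\alpha)$. Fix $t\geq 0$ and a bounded continuous function $f$ on $\R_+\times\N$; boundedness of $P_t(f)$ is immediate. Since $\N$ carries the discrete topology, a point $(y_k,n_k)\to(y_0,n_0)$ in $\R_+\times\N$ means $y_k\to y_0$ and $n_k=n_0$ eventually, so it suffices to prove that for each fixed $n\in\N$ the map $y\mapsto P_t(f)(y,n)=\E\bigl[f(Y^\alpha_t,S^\alpha_t)\bigr]$ (the process started from $(Y^\alpha_0,S^\alpha_0)=(y,n)$) is continuous. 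Note that when $S^\alpha_0=n$ the Poisson component is simply $S^\alpha_t=n+N_t$, with $N$ a rate-$\alpha$ Poisson process that does not depend on $y$; only the $Y$-component carries the $y$-dependence.

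First I would condition on $N$ (equivalently, on the jump times of $S^\alpha$). Given $N$, the map $s\mapsto 2(n+N_s+1)$ is a deterministic nonnegative càdlàg step function, so $Y^\alpha$ solves the Feller SDE $\rd Y_s=\sqrt{2Y_s}\,\rd B_s+2(n+N_s+1)\,\rd s$ with a deterministic time-dependent immigration rate. By the additive (branching) property I would write, conditionally on $N$,
\[
  Y^\alpha_t\;\overset{\mathrm{(d)}}{=}\;W_t+D^y_t ,
\]
where $W$ solves the same SDE started from $0$ (so the law of $W$ depends on $N$ only) and $D^y$ is an independent Feller diffusion $\rd D^y_s=\sqrt{2D^y_s}\,\rd B'_s$, $D^y_0=y$, also independent of $N$. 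This identity follows by checking that the sum of two independent solutions driven by independent Brownian motions solves the target SDE after a Lévy-type time change of its continuous martingale part (which has quadratic variation $2(W_s+D^y_s)\,\rd s$), together with weak uniqueness for these equations, which one gets e.g. from the explicit conditional Laplace transform $\E\bigl[\expp{-\lambda Y^\alpha_t}\mid N\bigr]=\expp{-y\,u^0(\lambda,t)-\int_0^t 2(n+N_s+1)\,u^0(\lambda,t-s)\,\rd s}$.

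Next I would invoke the branching property once more for $D^y$ alone: for $y'\geq y$, $D^{y'}_t\overset{\mathrm{(d)}}{=}D^y_t+\widetilde D^{\,y'-y}_t$ with $\widetilde D$ an independent Feller diffusion started from $y'-y$, and $\E[\widetilde D^{\,\varepsilon}_t]=\varepsilon$ since the Feller diffusion is a nonnegative martingale. Realising all the $D^y$ on one probability space together with a single copy of $(W,N)$ independent of them (legitimate since the law of $D^y$ does not depend on $N$), one gets $\E\bigl[|D^{y}_t-D^{y_0}_t|\bigr]=|y-y_0|$, hence $D^y_t\to D^{y_0}_t$ in $L^1$, hence in probability, as $y\to y_0$. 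Consequently $W_t+D^y_t\to W_t+D^{y_0}_t$ in probability while $n+N_t$ is unchanged, so $f(W_t+D^y_t,n+N_t)\to f(W_t+D^{y_0}_t,n+N_t)$ in probability by the continuous mapping theorem; since $f$ is bounded, bounded convergence yields $P_t(f)(y,n)\to P_t(f)(y_0,n)$. This proves continuity, and with the trivial case $t=0$ the Feller property follows.

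The hard part is the conditional decomposition $Y^\alpha_t\overset{\mathrm{(d)}}{=}W_t+D^y_t$ given $N$: it must be read as a statement about regular conditional laws (measurable in $N$), and it rests on weak uniqueness for the Feller SDE with a bounded deterministic time-dependent immigration rate. Both are standard — the former by the usual disintegration machinery, the latter from the Laplace-transform identity above or a Yamada–Watanabe argument — but they are the only non-routine ingredients; everything else (boundedness, the discreteness reduction, the $L^1$-continuity of $D^y_t$ in $y$, and the final dominated-convergence step) is elementary.
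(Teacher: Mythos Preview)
Your proof is correct and rests on the same key idea as the paper's: use the branching property to split $Y^{\alpha}$ started from $(y,n)$ into a copy started from $(0,n)$ plus an independent Feller diffusion $D^y$ started from $y$, then exploit continuity of the latter in its initial value. The paper carries out the decomposition \emph{unconditionally} rather than first conditioning on $N$---writing $(Y^{\alpha,(y,n)}_t,S^{\alpha,(y,n)}_t)\overset{(d)}{=}(Y^{\alpha,(0,n)}_t+X^y_t,\,S^{\alpha,(0,n)}_t)$ with $X^y$ an independent Feller diffusion---and then simply cites the known Feller property of the Feller-diffusion semigroup $Q_t$ instead of establishing the $L^1$-continuity $\E|D^y_t-D^{y_0}_t|=|y-y_0|$ by hand. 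Your conditioning on $N$ is harmless but unnecessary (the law of $D^y$ does not depend on $N$, as you note), and your explicit coupling essentially re-proves a special case of the Feller property of $Q_t$; the paper's route is a little shorter, but the two arguments are otherwise the same.
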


\begin{proof}
  Let  $\left((Y_t^{\alpha,(x,s)},S_t^{\alpha,(x,s)}),   t\ge  0\right)$
  denote  the  solution  of  the  SDE  \eqref{eq:eds-Ya}  starting  from
  $(x,s)$. Let $\left(X^x_t, t\ge 0\right)$ be a Feller process starting
  from   $x$   (it  is   distributed   as   a   solution  to   the   SDE
  \eqref{eq:SDE-Feller}),          independent           of          the
  $(Y_t^{\alpha,(x,s)},S_t^{\alpha,(x,s)})_{t\ge   0}$.    By the branching  property, see~\eqref{eq:branching-prop}, we have the equality
  in distribution for the processes:
\[
  \left((Y_t^{\alpha,(x,s)},S_t^{\alpha,(x,s)}),t\ge 0\right)
  \overset{(d)}{=}\left((Y_t^{\alpha,(0,s)}+X^x_t,S_t^{\alpha,(0,s)}),t\ge
    0\right).
\]
Recall  $Q_t$   denote  the  semi-group   of  the  process   $X^x$,  see
Remark~\ref{rem:q=0}. Then  for every $t\ge 0$,  $x,y\in \R_+$, $s\in\N$
and every bounded continuous function $f$ defined on $\R_+\times \N$, we
have:
\begin{align*}
P_tf(x,s)-P_tf(y,s)
& =\E\left[f\left(Y_t^{\alpha,(x,s)},S_t^{\alpha,(x,s)}\right)
-f\left(Y_t^{\alpha,(y,s)},S_t^{\alpha,(y,s)}\right)\right]\\
& =\E\left[f\left(Y_t^{\alpha,(0,s)}+X^x_t,S_t^{\alpha,(0,s)}\right)
-f\left(Y_t^{\alpha,(0,s)}+X^y_t,S_t^{\alpha,(0,s)}\right)\right]\\
& =\E\left[Q_tf_{\left(Y_t^{\alpha,(0,s)},S_t^{\alpha,(0,s)}\right)}(x)
-Q_tf_{\left(Y_t^{\alpha,(0,s)},S_t^{\alpha,(0,s)}\right)}(y)\right]
\end{align*}
where $f_{(y,s)}$ is the continuous map $x\mapsto
f\left(y+x,s\right)$. By the Feller property of the semi-group $Q_t$ and
the dominated convergence theorem, we deduce that $\lim_{y\to
  x}P_tf(x,s)-P_tf(y,s)=0$.
This gives  the Feller property of the kernel $P_t$.
\end{proof}

We now give the density of $(Y^\alpha_t, S^\alpha_t)$. Recall that
$Y^\alpha_0=S^\alpha_0=0$. Let $\rN$ be the counting
measure on $\N$.
\begin{lem}
  \label{lem:densityYS}
  Let $t>0$. The random variable $(Y^\alpha_t, S^\alpha_t)$ has a density $f$ on
  $\R_+\times \N$ with respect
  to $\rd y \otimes \rN(\rd k)$ given by:
\begin{equation}
   \label{eq:def-density}
    f(y,k)=\inv{t^2}\frac{\alpha^k\, y^{k+1}}{k! (k+1)!} \expp{-(\alpha t+t^{-1} y)},\quad y\geq0,\, k\in\N.
\end{equation}
\end{lem}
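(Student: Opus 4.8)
\emph{Plan of proof.} The plan is to compute the joint transform $\E\bigl[\expp{-\lambda Y^\alpha_t}\,\ind_{\{S^\alpha_t=k\}}\bigr]$ for $\lambda\ge 0$ and $k\in\N$, and then to invert it.

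\textbf{Step 1 (conditioning on the immigration).} First I would condition on the whole path of the Poisson process $S^\alpha$. Given $S^\alpha$, the process $Y^\alpha$ defined by \eqref{eq:eds-Ya} is the Feller diffusion \eqref{eq:SDE-Feller} perturbed by the deterministic, time-inhomogeneous immigration drift $\rho_u:=2(S^\alpha_u+1)$, i.e.\ a continuous-state branching process with immigration. Set $g(u)=u^0(\lambda,t-u)=\lambda/(1+\lambda(t-u))$, so that $g'(u)=\psi_0(g(u))=g(u)^2$ (recall $\beta=1$, $\theta=0$). An application of It\^o's formula shows that, conditionally on $S^\alpha$,
\[
  \Bigl(\expp{-g(u)Y^\alpha_u+\int_0^u\rho_v g(v)\,\rd v},\ u\in[0,t]\Bigr)
\]
is a local martingale; given $S^\alpha$ the deterministic integral $\int_0^t\rho_v g(v)\,\rd v$ is finite and $g,Y^\alpha\ge 0$, so this process is bounded, hence a true martingale. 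Evaluating it at $u=0$ (where $Y^\alpha_0=0$) and $u=t$ (where $g(t)=\lambda$) gives
\[
  \E\bigl[\expp{-\lambda Y^\alpha_t}\bigm|\mathcal F^{S}\bigr]
  =\exp\Bigl(-\int_0^t\rho_u\,u^0(\lambda,t-u)\,\rd u\Bigr),\qquad \lambda\ge 0 .
\]
(Alternatively, this is the classical Laplace formula for a continuous-state branching process with immigration, which one may simply quote.)

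\textbf{Step 2 (averaging over the jump times).} Writing $0<T_1<T_2<\cdots$ for the jump times of $S^\alpha$, one has $\rho_u=2+2\sum_i\ind_{\{T_i\le u\}}$; using $\int_0^s u^0(\lambda,v)\,\rd v=\log(1+\lambda s)$ this yields
\[
  \E\bigl[\expp{-\lambda Y^\alpha_t}\bigm|\mathcal F^{S}\bigr]
  =(1+\lambda t)^{-2}\prod_{i:\,T_i\le t}\bigl(1+\lambda(t-T_i)\bigr)^{-2}.
\]
Next I would integrate over $S^\alpha$ on the event $\{S^\alpha_t=k\}$. Since $\P(S^\alpha_t=k,\ T_1\in\rd t_1,\dots,T_k\in\rd t_k)=\alpha^k\expp{-\alpha t}\,\ind_{\{0<t_1<\cdots<t_k<t\}}\,\rd t_1\cdots\rd t_k$, the integrand is symmetric in the $t_i$, and $\int_0^t(1+\lambda(t-s))^{-2}\,\rd s=\int_0^t(1+\lambda u)^{-2}\,\rd u=t/(1+\lambda t)$, one obtains
\[
  \E\bigl[\expp{-\lambda Y^\alpha_t}\,\ind_{\{S^\alpha_t=k\}}\bigr]
  =\frac{(\alpha t)^k\expp{-\alpha t}}{k!}\,(1+\lambda t)^{-(k+2)} .
\]

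\textbf{Step 3 (inversion) and main obstacle.} Recognizing $(1+\lambda t)^{-(k+2)}=\int_0^\infty\expp{-\lambda y}\,\frac{y^{k+1}}{(k+1)!\,t^{k+2}}\,\expp{-y/t}\,\rd y$ (the Laplace transform of a $\mathrm{Gamma}(k+2,1/t)$ density), the previous display becomes $\int_0^\infty\expp{-\lambda y}\,f(y,k)\,\rd y$ with $f$ as in \eqref{eq:def-density}; since a finite measure on $\R_+$ is characterized by its Laplace transform (separately for each $k\in\N$), this identifies the law of $(Y^\alpha_t,S^\alpha_t)$ and proves the lemma. The only point requiring genuine care is Step~1, namely justifying that the exponential local martingale is a true martingale and that conditioning on the entire path of $S^\alpha$ is legitimate (which it is, since given $S^\alpha$ the process $Y^\alpha$ is a strong functional of $B$); the symmetrization in Step~2 (substitution $t_i\mapsto t-t_i$, reduction of the simplex integral to a product) and the inversion in Step~3 are routine.
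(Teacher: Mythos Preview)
Your proof is correct and follows essentially the same route as the paper's. Both arguments condition on the Poisson process $S^\alpha$, compute the conditional Laplace transform of $Y^\alpha_t$ (the paper via the Poisson point measure representation of the CSBP with immigration under its excursion measure, you via the It\^o/martingale argument for the same process), average over the jump times using their conditional uniform distribution on $[0,t]$, arrive at $\E[\expp{-\lambda Y^\alpha_t}\ind_{\{S^\alpha_t=k\}}]=\frac{(\alpha t)^k\expp{-\alpha t}}{k!}(1+\lambda t)^{-(k+2)}$, and then invert using the Gamma density.
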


\begin{proof}
 Conditionally on $S$, by \eqref{eq:eds-Ya}, we can see $Y^\alpha$ as a quadratic
  CSBP (with $\beta=1$) with immigration whose rate is $2 (S^\alpha_t+1)\rd
  t$. This implies that, conditionally on $S^\alpha$, the process $Y^\alpha$  is
  distributed as $\left(\sum_{i\in I}\ind_{\{h_i\leq
     t\}}\, Z^{(i)}_{t-h_i}, t\geq 0\right)$, where  $\sum_{i\in
   I}\delta_{(h_i, Z^{(i)})}(\rd t, \rd Z)$  is  a Poisson point
   measure on $\R_+ \times \cc[0, \R_+)$ with intensity $2 (S^\alpha_t+1)\rd
   t\,  \N[\rd Z]$ and  $\N$ is the excursion measure of a CSBP with
   branching mechanism $\psi(\lambda)=\lambda^2$.

\medskip
We deduce that for $\lambda, \mu\geq 0$:
\[
\E\left[\expp{-\lambda Y^\alpha_t - \mu S^\alpha_t}\right]
=\E\left[\expp{-\mu S^\alpha_t - \int_0^t2 (S^\alpha_r+1)
    \N\left[1-\expp{-\lambda Z_{t-r}}\right]\, \rd r}\right]
=\E\left[\expp{-\mu S^\alpha_t - 2\int_0^t (S^\alpha_r+1)\frac{\lambda}{1+ (t-r)
      \lambda}\, \rd r
    }\right],
\]
where we used  \eqref{eq:NeZ} for the last equality  (with $\beta=1$ and
$\theta=0$).  Denote  by $(\xi_i, i\in\N^*)$ the  increasing sequence of
the  jumping   times  of  the   Poisson  process  $S^\alpha$,   and  set
$\xi_0=0$. Then, we have on $\{S^\alpha_t=k\}$:
\begin{align*}
  \int_0^t (S^\alpha_r+1)\frac{\lambda}{1+ (t-r) \lambda}\, \rd r
  &=\sum_{i=0}^ k (i+1) \int^{\xi_{i+1}\wedge t}_{\xi_i}
    \frac{\lambda}{1+ (t-r) \lambda}\, \rd r\\
  &=-\sum_{i=0}^ k (i+1) \log(1+ (t-r)\lambda)
\bigg{|}^{\xi_{i+1}\wedge t}_{\xi_i}\\
 &=\sum_{i=0}^ k  \log(1+ (t-\xi_i)\lambda).
\end{align*}
Conditionally on $\{S^\alpha_t=k\}$, the random set $\{\xi_1, \ldots, \xi_k\}$
is distributed as $\{tU_1, \ldots, tU_k\}$ (notice the order is
unimportant and is not preserved), where $U_1, \ldots, U_k$
are independent random variables uniformly distributed on $[0, 1]$.
We deduce that:
\begin{align*}
   \E\left[\expp{-\lambda Y^\alpha_t - \mu S^\alpha_t}\right]
  &=\sum_{k\in \N} \frac{(\alpha t)^k\expp{-\alpha t- \mu k}}{k!}\,
    \E\left[\prod_{i=1}^k \bigl(1+t(1-U_i) \lambda\bigr)^{-2}\right] (1+t\lambda)^{-2}\\
  &= \sum_{k\in \N} \frac{(\alpha t)^k\expp{-\alpha t- \mu k}}{k!}\,
    (1+t\lambda)^{-k-2}\\
  &=\sum_{k\in \N} \int_{\R_+} \rd y \,\,   f(y,k) \expp{-\lambda y - \mu k},
\end{align*}
where for the last equality, we used the definition of $f$ given in
\eqref{eq:def-density}. This finishes the proof.
\end{proof}
Let $q'_t$ be the distribution of $Y^\alpha_t$ for $t\in \R_+$. We have
$q'_0=\delta_0$ the Dirac mass at 0,  and for
$t>0$, we deduce from Lemma \ref{lem:densityYS} that  $q'_t(\rd y)$ has a density, also denoted by $q'_t$, on $\R_+$ with
respect to the Lebesgue measure given by:
\[
q'_t(y)= t^{-2}  \expp{- y/t }\ch^{\alpha}(t,\, y),\quad t>0,\, y\geq0, 
\]
where $\ch^\alpha$ is defined in~\eqref{eqn:defhalpha}.
We now give some properties of the conditional law of $S_t$ given
$Y_t$. Recall $\Bz $ defined in~\eqref{eq:def-B0}.
\begin{lem}
  \label{lem:S|Y}
Let $y\in \R_+$.   The law of $S^\alpha_t$ conditionally on $\{Y^\alpha_t=y\}$ does
not depend on $t$. More precisely, we get for all $t\geq 0$, $k\in \N$
and $y\geq 0$:
\begin{equation}
   \label{eq:S|Y}
\P(S^\alpha_t=k|\, Y^\alpha_t=y)
= \inv{\Bz (\alpha y)} \, \frac{(\alpha y)^{k}}{k! (k+1)!}\cdot
\end{equation}
\end{lem}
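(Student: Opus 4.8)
The plan is to read the conditional law straight off the explicit joint density obtained in Lemma~\ref{lem:densityYS}. Recall from there that for $t>0$ the pair $(Y^\alpha_t,S^\alpha_t)$ has density
\[
  f(y,k)=\inv{t^2}\,\frac{\alpha^k\, y^{k+1}}{k!\,(k+1)!}\,\expp{-(\alpha t+t^{-1}y)}
\]
with respect to $\rd y\otimes\rN(\rd k)$, so that the marginal density of $Y^\alpha_t$ is
\[
  q'_t(y)=\sum_{j\in\N}f(y,j)=\inv{t^2}\,\expp{-(\alpha t+t^{-1}y)}\,y\sum_{j\in\N}\frac{\alpha^j y^{j}}{j!\,(j+1)!},\qquad y>0.
\]
For $y>0$ the common factor $t^{-2}\,y\,\expp{-(\alpha t+t^{-1}y)}$ is finite and strictly positive, so the elementary identity $\P(S^\alpha_t=k\mid Y^\alpha_t=y)=f(y,k)/q'_t(y)$ holds, and after cancelling that factor in numerator and denominator one is left with
\[
  \P(S^\alpha_t=k\mid Y^\alpha_t=y)=\frac{(\alpha y)^k/\bigl(k!\,(k+1)!\bigr)}{\sum_{j\in\N}(\alpha y)^j/\bigl(j!\,(j+1)!\bigr)}=C^{-1}\,\frac{(\alpha y)^{k}}{k!\,(k+1)!}.
\]
Since the right-hand side no longer involves $t$, the claimed $t$-independence is immediate, and this is the whole content of the lemma.

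It then remains only to check the two side-assertions. Finiteness and the lower bound for $C$ are routine: $0\le (\alpha y)^j/(j!(j+1)!)\le (\alpha y)^j/j!$ gives $C\le \expp{\alpha y}<\infty$, while the $j=0$ term equals $1$ and all terms are non-negative, whence $C\ge 1$. For the boundary value $y=0$: if $t>0$ then the immigration term $2\beta(S^{\alpha,\theta}_t+1)\,\rd t$ in the SDE for $Y^\alpha$ is strictly positive, so $Y^\alpha_t>0$ almost surely and $\{Y^\alpha_t=0\}$ is a null event on which the conditional law may be taken to be any fixed probability measure; formula~\eqref{eq:S|Y} at $y=0$ reads $C=1$ and $\P(S^\alpha_t=k\mid Y^\alpha_t=0)=\ind_{\{k=0\}}$, which is moreover the genuine (degenerate) conditional law in the remaining case $t=0$, where $Y^\alpha_0=S^\alpha_0=0$. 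Hence \eqref{eq:S|Y} holds for every $t\ge 0$, $k\in\N$, $y\ge 0$.

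There is essentially no real obstacle: the statement is a direct corollary of the explicit densities in Lemma~\ref{lem:densityYS}, the only mild subtlety being the choice of convention on the null set $\{Y^\alpha_t=0\}$, which is harmless since formula~\eqref{eq:S|Y} happens to agree there with the degenerate law at $t=0$.
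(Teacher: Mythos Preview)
Your proof is correct and follows essentially the same route as the paper: for $t>0$ you divide the joint density from Lemma~\ref{lem:densityYS} by the marginal to read off the conditional law, and you handle the degenerate boundary $t=0$ (and the null event $\{Y^\alpha_t=0\}$ for $t>0$) separately. One minor slip: the drift you quote, $2\beta(S^{\alpha,\theta}_t+1)\,\rd t$, is from the SDE for $Z^\alpha$; the SDE~\eqref{eq:eds-Ya} for $Y^\alpha$ has drift $2(S^\alpha_t+1)\,\rd t$, though of course this does not affect the positivity argument.
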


\begin{proof}
Using Lemma \ref{lem:densityYS}, we directly have \eqref{eq:S|Y} for $t>0$.
Notice that for $y=0$, we have $\Bz (0)=1$ and
\[
\inv{\Bz (\alpha y)}  \, \frac{(\alpha y)^{k}}{k! (k+1)!}=\ind_{\{k=0\}}.
\]
As $(Y^\alpha_0,
S^\alpha_0)=(0,0)$, we deduce that  \eqref{eq:S|Y} also holds for $t=0$.
\end{proof}

We can now prove the Markov property of the process $Y=(Y_t, t\geq 0)$.
\begin{lem}
  \label{lem:Y-Markov}
  The process $Y^\alpha$ is Markov, and its transition
semi-group $(Q_t, t\in \R_+)$ is the unique Feller semi-group such that
$q'_t=q'_0 Q_t$ for $t\in \R_+$, with $q'_t$ the distribution of $Y^\alpha_t$.
\end{lem}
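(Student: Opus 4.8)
The plan is to invoke the Markov--function theorem of Rogers and Pitman \cite{rp:mf}, applied to the joint process $X=(Y^\alpha,S^\alpha)$. Recall that $X$ is a time-homogeneous Markov process on $\R_+\times\N$ whose transition semi-group $(P_t)$ is Feller by Lemma~\ref{lem:Pt=feller}. Let $\phi\colon\R_+\times\N\to\R_+$ be the projection $\phi(y,k)=y$, and let $\Lambda$ be the Markov kernel from $\R_+$ to $\R_+\times\N$ defined, in agreement with Lemma~\ref{lem:S|Y}, by
\[
  \Lambda(y,\rd(y',k))=\delta_y(\rd y')\,\kappa(y,k)\,\rN(\rd k),
  \qquad
  \kappa(y,k)=\frac{1}{g(\alpha y)}\,\frac{(\alpha y)^k}{k!\,(k+1)!},
\]
where $g(z)=\sum_{j\in\N}z^j/(j!(j+1)!)$ (so that $g(\alpha y)=C$ in the notation of Lemma~\ref{lem:S|Y}). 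The map $\phi$ pushes $\Lambda(y,\cdot)$ forward to $\delta_y$, so $\Lambda$ admits $\phi$ as a left inverse; and since $\kappa(0,\cdot)=\delta_0$ we have $\Lambda(0,\cdot)=\delta_{(0,0)}$, so the initial law $\delta_{(0,0)}$ of $X$ equals $\delta_0\Lambda$, which is the compatibility of the initial law required by the Rogers--Pitman theorem.

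The crux is to establish the intertwining $\Lambda P_t=Q_t\Lambda$ for a (necessarily unique, and then automatically a Markov semi-group) family of kernels $(Q_t)$ on $\R_+$; concretely, this says that if $X$ is started with $Y^\alpha_0=y$ and $S^\alpha_0\sim\kappa(y,\cdot)$ independent of $B$, then for every $t>0$ the conditional law of $S^\alpha_t$ given $Y^\alpha_t=y'$ is $\kappa(y',\cdot)$, whatever $y$ is. To prove this I would, conditionally on $S^\alpha_0=j$, use the branching property to write $Y^\alpha_t=\xi_t+V^{(0)}_t+V^{(j)}_t$, where $\xi$ is the Feller diffusion \eqref{eq:SDE-Feller} started at $y$, where $(V^{(0)},S^\alpha-j)$ is distributed as $(Y^\alpha,S^\alpha)$ started at $(0,0)$, and where $V^{(j)}$ is a quadratic CSBP ($\beta=1$) started at $0$ with constant immigration rate $2j$, the three ingredients being suitably independent. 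The Laplace transforms of $\xi_t$ and $V^{(j)}_t$ come from \eqref{eq:NeZ} (with $\beta=1$, $\theta=0$), and that of $(Y^\alpha_t,S^\alpha_t)$ from $(0,0)$ was computed in the proof of Lemma~\ref{lem:densityYS}; combining them and averaging over $j\sim\kappa(y,\cdot)$ gives
\[
  \E\!\left[\mu^{S^\alpha_t}\expp{-\lambda Y^\alpha_t}\right]
  =\frac{\expp{-\alpha t}}{g(\alpha y)}\;
  \frac{\expp{-y\lambda/(1+t\lambda)}}{(1+t\lambda)^2}\;
  \expp{\alpha t\mu/(1+t\lambda)}\;
  g\!\left(\frac{\alpha y\,\mu}{(1+t\lambda)^2}\right).
\]
Expanding the $\mu$-dependent factors in powers of $\mu$ and inverting the Laplace transform in $\lambda$ exhibits the joint density of $(Y^\alpha_t,S^\alpha_t)$ in the factorized form $\psi_{y,t}(y')\,(\alpha y')^k/(k!(k+1)!)$ for a function $\psi_{y,t}$ positive on $(0,\infty)$; normalizing in $k$ yields exactly $\kappa(y',\cdot)$. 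This bookkeeping of binomial-type sums is elementary but is the main obstacle in the argument.

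With the intertwining in hand, Rogers--Pitman \cite{rp:mf} yields that $(Y^\alpha_t)=(\phi(X_t))$ is a Markov process with transition semi-group $(Q_t)$ and initial law $\delta_0$, so that $q'_t=q'_0Q_t$ for all $t\ge0$. Moreover $(Q_t)$ is Feller: for bounded continuous $h$ on $\R_+$ we have $Q_t h(y)=\sum_{k\in\N}\kappa(y,k)\,P_t(h\circ\phi)(y,k)$; the map $(y',k)\mapsto P_t(h\circ\phi)(y',k)$ is bounded and continuous since $P_t$ is Feller, and on any compact $[0,M]$ the bound $\kappa(y,k)\le(\alpha M)^k/(k!(k+1)!)$ with $\sum_k(\alpha M)^k/(k!(k+1)!)<\infty$ lets dominated convergence run, so $Q_t h$ is bounded and continuous.

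Finally, for uniqueness, suppose $(\widetilde Q_t)$ is another Feller semi-group with $q'_0\widetilde Q_t=q'_t$. Since $q'_0=\delta_0$, the semi-group property gives $q'_s\widetilde Q_t=q'_{s+t}=q'_sQ_t$ for all $s>0$, hence $\int_0^\infty q'_s(y)\,(\widetilde Q_th-Q_th)(y)\,\rd y=0$ for every bounded continuous $h$. As $q'_s(y)=s^{-2}\expp{-\alpha s}\,\expp{-y/s}\,y\,g(\alpha y)$ with $y\,g(\alpha y)>0$ for $y>0$, this reads $\int_0^\infty\expp{-y/s}\bigl[\,y\,g(\alpha y)\,(\widetilde Q_th-Q_th)(y)\,\bigr]\,\rd y=0$ for all $s>0$; the bracketed function is continuous with sub-exponential growth, so injectivity of the Laplace transform forces it to vanish, whence $\widetilde Q_th=Q_th$ on $(0,\infty)$, and at $0$ by continuity. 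As $h$ was arbitrary, $\widetilde Q_t=Q_t$.
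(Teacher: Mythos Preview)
Your strategy—verifying the full intertwining $\Lambda P_t=Q_t\Lambda$ for every starting point $y$ and then invoking Rogers--Pitman—is sound, and the Laplace--PGF you display is correct. The paper, however, takes a shorter path using a different criterion from \cite{rp:mf} (their Lemma~1 rather than the usual intertwining theorem). That criterion only asks that (i) the conditional law of $S^\alpha_t$ given $Y^\alpha_t$ be independent of $t$ \emph{for the single initial law} $\delta_{(0,0)}$, which is exactly Lemma~\ref{lem:S|Y}, and (ii) that the family $(q'_t)_{t\ge0}$ be \emph{determining}; your Laplace-transform uniqueness argument is precisely this check. Both the Markov property and the uniqueness of the Feller semi-group then fall out at once, and no intertwining for $y>0$ is needed.

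The step you call ``elementary bookkeeping of binomial-type sums'' is the real content of your route and is harder than you indicate. Expanding in $\mu$ and inverting in $\lambda$ term by term does not visibly produce the factorized form; what is actually needed is the identity
\[
\int_0^\infty \expp{-sy'}\, y'\, g(ay')\, g(by')\, \rd y' \;=\; \frac{1}{s^2}\,\expp{(a+b)/s}\, g\!\left(\frac{ab}{s^2}\right),
\]
which is nothing but Chapman--Kolmogorov for the Feller transition density $p_t(x,y')=\frac{x}{t^2}\expp{-(x+y')/t}g(xy'/t^2)$ from Lemma~\ref{lem:density}. Once this is in hand one finds $L(\lambda,\mu)=\int_0^\infty\expp{-\lambda y'}\psi_{y,t}(y')\,g(\alpha y'\mu)\,\rd y'$ with $\psi_{y,t}(y')=\frac{\expp{-\alpha t}y'}{y\,g(\alpha y)}\,p_t(y,y')$, and the factorization follows. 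Note that $Q_t(y,\rd y')=\psi_{y,t}(y')g(\alpha y')\,\rd y'/g(\alpha y')\cdot\ldots$ is then exactly the Doob $h$-transform kernel appearing in Lemma~\ref{lem:Y=Z}; so your computation, once completed, essentially pre-empts that lemma, whereas the paper's route keeps the two steps separate and each one light.
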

\begin{proof}
  We say  a probability kernel $K$  is continuous if for  all continuous
  and bounded function  $f$, $Kf$ is also continuous  (and bounded).  We
  shall check  hypothesis from \cite[Lemma 1]{rp:mf}.  With the notation
  therein ($X=(Y^\alpha,S^\alpha)$ and $\phi(y,s)=y$), the semi-group $(P_t, t\geq 0)$
  is  Feller,  see  Lemma \ref{lem:Pt=feller}.  The  probability  kernel
  $\Lambda(y; \, \rd z, \rd k)=  \P(S^\alpha_t=k|\, Y^\alpha_t=y) \, \delta_y(\rd z) \,
  \rN(\rd   k)$   is   clearly  continuous and
  does not depend on $t$.   The   probability   kernel
  $\Phi(y,k;\, \rd z)=\delta_y(\rd z)$  is also clearly continuous. Lemma
  \ref{lem:S|Y}  gives exactly condition  (i) in  \cite[Lemma 1]{rp:mf}.  We now
  check   condition  (ii)   in  \cite[Lemma   1]{rp:mf},  that   is  the
  one-dimensional marginal distributions of $Y^\alpha$, $(q'_t, t\in \R_+)$, are
  determining, that is  if $h$ and $g$ are  bounded continuous functions
  defined on  $\R_+$, then  $\E[h(Y^\alpha_t)]=\E[g(Y^\alpha_t)]$ for all  $t\in \R_+$
  implies $h=g$. To prove this, notice that:
\[
  t^2 \expp{\alpha t} \, \E\bigl[h(Y^\alpha_t)\bigr]= \int_{\R_+} \expp{-t^{-1} y } H(y) \rd y,
\]
where $H(y)  = h(y) y \, \Bz (\alpha y) $.  As the
Laplace  transform characterizes  the  bounded  continuous function,  we
deduce that if  $\E[h(Y^\alpha_t)]=\E[g(Y^\alpha_t)]$ for all $t\in  \R_+$, then $H=G$
(with $G(y) =g (y) y \, \Bz (\alpha y)$) and thus
$h=g$ on $(0, +\infty )$ and by continuity on $\R_+$.

As the assumption of \cite[Lemma 1]{rp:mf} are satisfied, we deduce that
$Y^\alpha$  is   a  Markov   process,  and   that  its   transition  semi-group
$(Q_t,  t\in   \R_+)$  is  the   unique  Feller  semi-group   such  that
$q'_t=q'_0 Q_t$ for $t\in \R_+$, with $q'_t$ the distribution of $Y^\alpha_t$.
\end{proof}

We now  compare the distribution  of $Y^\alpha$ and the  distribution of
the Feller  diffusion $Y$  defined in Remark  \ref{rem:q=0}, which  is a
CSBP    with    parameter    $\beta=1$   and    $\theta=0$.    Following
\eqref{Mmartingale}, we set for $t>0$:
\[
  M^\alpha_t  =
  \ch^{\alpha}( t ,\, Z_t)
  =  \expp{-\alpha  t} \, Z_t\,   \Bz (\alpha Z_t).
%  \sqrt{\frac{Y_t}{\alpha}}\, \expp{ -\alpha t }  I_1\left( 2 \sqrt{\alpha Y_t} \right).
\]
Let  $\N$ denote the canonical measure of $Y$.

\begin{lem}
  \label{lem:Y=Z}
  Let $\alpha>0$. Let $t_0>0$.
  The process $(Y^\alpha_t, t\in [0, t_0])$ has  the same distribution
  as the process $ (Y_t, t\in [0, t_0])$
  under
  $\N\left[   \bullet   \,   M^{\alpha}_{t_0    }\right]$.
\end{lem}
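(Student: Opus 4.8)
The plan is to show the two processes have the same finite-dimensional distributions by comparing their transition semigroups, exploiting the Markov property of $Y^\alpha$ established in Lemma~\ref{lem:Y-Markov}. First I would observe that the $h$-transformed process $(Y_t, t\in[0,t_0])$ under $\N[\bullet\, M^\alpha_{t_0}]$ is itself Markov on $(0,t_0]$ with a transition semigroup obtained by Doob $h$-transform of the CSBP semigroup by the space-time harmonic function underlying the martingale $M^\alpha$ (this is the $\beta=1$, $\theta=0$ specialization of Proposition~\ref{Prop:Mmartingale} via \eqref{eq:M=f(Y)}); call its one-dimensional time-$t$ law $\nu_t$. Since both $(Y^\alpha_t)$ and this $h$-transform start from $0$ and both are Markov, it suffices by Lemma~\ref{lem:Y-Markov} to check that $\nu_t = q'_t$ for every $t\in[0,t_0]$, because $Q_t$ is characterized as the unique Feller semigroup carrying $q'_0=\delta_0$ to $q'_t$.

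The computation of $\nu_t$ is explicit: by the very definition of the $h$-transform and the normalization $\N[M^\alpha_{t_0}]=1$ (which for general parameters is the $\N^\theta[M^{\alpha,\theta}_t]=1$ identity proven just before Proposition~\ref{Prop:Mmartingale}, here with $\beta=1$, $\theta=0$), we have for nonnegative measurable $g$
\[
  \int g\,\rd\nu_t \;=\; \N\!\left[g(Y_t)\, M^\alpha_t\right]
  \;=\; \N\!\left[g(Y_t)\, Y_t\, \expp{-\alpha t}\sum_{i\in\N}\frac{(\alpha Y_t)^i}{i!(i+1)!}\right],
\]
using \eqref{eq:M=f(Y)} with $\theta=0$ and $\beta=1$ (so $c^0_t = 1/t$, hence $\alpha/c^0_t=\alpha t$) to rewrite $M^\alpha_t$. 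Now plug in the entrance law of $Y$ under $\N$, which is the $\theta=0$, $\beta=1$ case of \eqref{eq:entrance-law}, namely $\N[Y_t\in\rd y,\,\zeta>t] = t^{-2}\expp{-y/t}\,\rd y$; since on $\{Y_t>0\}$ we automatically have $\zeta>t$, the resulting density of $\nu_t$ is
\[
  t^{-2}\expp{-y/t}\cdot y\,\expp{-\alpha t}\sum_{i\in\N}\frac{\alpha^i y^i}{i!(i+1)!}
  \;=\; t^{-2}\expp{-(\alpha t + t^{-1}y)}\sum_{i\in\N}\frac{\alpha^i y^{i+1}}{i!(i+1)!}
  \;=\; q'_t(y),
\]
which is exactly the density of $Y^\alpha_t$ obtained by integrating the joint density of Lemma~\ref{lem:densityYS} over $k$. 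Thus $\nu_t=q'_t$ for all $t\in(0,t_0]$, and the value $t=0$ is trivial since both sides are $\delta_0$.

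It remains to conclude the identity in law of the whole process on $[0,t_0]$. For this I would invoke Lemma~\ref{lem:Y-Markov}: $Q_t$ is the unique Feller semigroup with $q'_t=q'_0Q_t$, and the $h$-transform semigroup is Feller and satisfies the same relation, so the two Markov processes started at $0$ coincide in finite-dimensional distributions, hence in distribution on $C([0,t_0],\R_+)$ (or whatever path space is used) by a standard monotone-class / Kolmogorov extension argument; continuity of paths is clear for both since each is a time-changed / $h$-transformed Feller diffusion. The one point that needs a little care — and which I expect to be the main obstacle — is the justification that the $h$-transformed measure $\N[\bullet\, M^\alpha_{t_0}]$ is genuinely a probability and genuinely Markov with the claimed semigroup on the time interval $[0,t_0]$ (not just $(0,t_0]$), including that it starts from $0$; this follows from $\N[M^\alpha_{t_0}]=1$, the martingale property of $M^\alpha$ from Proposition~\ref{Prop:Mmartingale}, and the fact that $Y_t\to 0$ as $t\to0$ under $\N$ together with $M^\alpha_t\to 1$ (equivalently, consistency of the $\nu_t$ as $t\downarrow0$), but it should be spelled out rather than taken for granted. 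Once that is in place, equality of semigroups plus equality of starting points gives the lemma.
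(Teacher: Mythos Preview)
Your proposal is correct and follows essentially the same route as the paper: match the one-dimensional marginals by writing $\N[g(Y_t)M^\alpha_t]$ against the entrance law $t^{-2}\expp{-y/t}$, observe that the $h$-transform by the martingale $M^\alpha$ yields a Markov process with a Feller semigroup (since $M^\alpha_t$ is a function of $Y_t$ and $Y$ is Feller under $\N$), and then invoke the uniqueness statement of Lemma~\ref{lem:Y-Markov}. The only cosmetic difference is that the paper checks equality of one-dimensional laws via Laplace transforms rather than densities, and it dispatches the Feller property of the $h$-transform in one line rather than flagging it as a separate obstacle.
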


\begin{proof}
  We first check the two processes have the same one-dimensional
  marginals. Clearly $Y^\alpha_0=Y_0=0$.
Let $t>0$. According to Lemma \ref{lem:density}, the entrance law of $Y_t$ under $\N$
has density $y\mapsto t^{-2} \expp{-y/t}$.
We deduce that  for $\lambda\geq 0$:
\[
  \N\left[\expp{-\lambda Y_t} M^{\alpha}_{t}\right]
  =\int_{\R_+}\expp{-\lambda y} \, \ch^\alpha(t,y) \, 
    t^{-2}\expp{-y/t} \,
    \rd y
    = \int_{\R_+}\expp{-\lambda  y}\, q'_t(y)    \,\rd y 
= \E\left[\expp{-\lambda Y^\alpha_t}\right].
\]
Since  the Laplace transform characterizes the probability
distribution on $\R_+$, we obtain that $Y^\alpha_t$ has  the same distribution
  as $ Y_t$
  under   $\N\left[   \bullet   \,   M^{\alpha}_{t   }\right]$.
\medskip

Using    Doob's    $h$-transform,    we    get    that    the    process
$(Y_t,            t\in            [0,            t_0])$            under
$\N\left[ \bullet \, M^{\alpha}_{t_0}\right]$ is Markov.  Using  that
$M^{\alpha}$ is a martingale under $\N$ (see Proposition \ref{Prop:Mmartingale} and
use that $Y$ is distributed as $Z$ when $\beta=1$, $\theta=0$), that
$M^\alpha_t$ is a function of $Y_t$, and that $Y$  is   Feller   under   $\N$,  we   get   that
$(Y_t,            t\in            [0,            t_0])$            under
$\N\left[  \bullet \,  M^{\alpha}_{t_0}\right]$ is  also
Feller.    We   deduce   from   the   uniqueness   property   of   Lemma
\ref{lem:Y-Markov}  and   the  identification  of   the  one-dimensional
marginals from the first step of the proof, that
 $(Y^\alpha_t, t\in [0, t_0])$ has  the same distribution
  as $ (Y_t, t\in [0, t_0])$
  under
  $\N\left[   \bullet   \,   M^{\alpha}_{t_0    }\right]$.
\end{proof}

We can now give the proof of Proposition~\ref{prop:Zaq}.
Let $\beta,  \alpha>0$, $\theta\in  \R$ and $t_0>0$.  Using  the time
changes  given  by   Remark~\ref{rem:q=0}
and 
\eqref{eq:time-changeYa=0},     we    deduce     that    the     process
$(Z^\alpha_t, t\in  [0, t_0])$ under  $\P^\theta$ is distributed  as the
process        $(Z_t,        t\in         [0,        t_0])$        under
$\N^\theta\left[  \bullet  \, M^{\alpha,\theta}_{t_0  }\right]$.   Then,
using Corollary \ref{cor:RM-mart},  we also deduce that  the
process $(Z^\alpha_t,  t\in [0, t_0])$ under  $\P^{-\theta}$ is distributed
as     the     process     $(Z_t,      t\in     [0,     t_0])$     under
$\N^\theta\left[ \bullet \, \tilde M^{\alpha,\theta}_{t_0 }\right]$.
This finishes the proof of Proposition \ref{prop:Zaq}.

\subsection{Proof of Corollary~\ref{cor:Z=Ov}}
\label{sec:proof-cor-Ov}
\begin{proof}
Since $Y^\alpha$ satisfies \eqref{eq:eds-Ya}  for $Y^{\alpha}_0=0$, then
by Lemma \ref{lem:S|Y} and the property of Poisson point process, the
process  $\left(Y_t^{\alpha}\right)_{t\geq 0}$ starting from
$Y_0^{\alpha}=y$
satisfies:
\[
\rd Y_t^{\alpha}= \sqrt{2Y_t^{\alpha}}\, \rd B_t
 +2(S_t^{\alpha}+\xi^{y})\, \rd t,\quad t\geq 0,\quad 
\]
where $S^{\alpha}$ and $\xi^y$ are independent and (since
$S^\alpha_0=0$)  for $k\in \N$: 
\[
  \P(\xi^y=k+1)= \inv{\Bz (\alpha y ) } \, \frac{(\alpha y)^{k}}{k! (k+1)!}
\cdot
\]
By conditioning on $(S^{\alpha}, \xi^y)$ and applying It\^o's
formula, one has
for any $y\geq0$: 
\begin{align*}
    \lim_{t\rightarrow0+}\frac{\E[\expp{-\lambda
  Y_t^{\alpha}}|Y_0^{\alpha}=y]-\expp{-\lambda y}}{t} 
& 
= -2\lambda \E[\xi^y]\expp{-\lambda y}+\lambda^2 \expp{-\lambda y}y
\cr
&
= -2\lambda \expp{-\lambda y} \inv{\Bz (\alpha y)}\,
      \sum_{k=0}^{\infty}\frac{(\alpha y)^{k}}{(k!)^2}+
      \lambda^2\expp{-\lambda y}y. 
\end{align*}
Since $Y^{\alpha }$ is a diffusion  process by Lemma~\ref{lem:Y-Markov},
the above computation implies that for $t\geq 0$:
\begin{align*}
  \rd  Y_t^{\alpha}
  &= \sqrt{2Y_t^{\alpha}}\, \rd B_t
   +\frac{2}{\Bz (\alpha Y_t^{\alpha})} \,
   \sum_{k=0}^{\infty}\frac{(\alpha Y_t^{\alpha})^{k}}{(k!)^2}\, \rd t\\
  &= \sqrt{2Y_t^{\alpha}}\, \rd B_t
   +2 Y^\alpha_t  \, \partial_y \log ( \ch^\alpha (\cdot, Y^\alpha_t) ) \, \rd t,
 \end{align*}
 where  $  \partial_y \log ( \ch^\alpha (t, y) )$ does
 not depend on $t$ as:
 \[
   y\,  \partial_y \log ( \ch^\alpha (\cdot, y) )= \frac{2}{\Bz (\alpha y)} \,
   \sum_{k=0}^{\infty}\frac{(\alpha y)^{k}}{(k!)^2}\cdot
\]
We deduce  $Y^{\alpha  }$ is  a  solution  to  the equation  established  in
\cite[Theorem~3]{Ov94}.   Recall from \eqref{eq:time-changeYa=0} that
 $Z_t^{\alpha}=\expp{-2\beta\theta             t}Y_{s}$             with
 $s=1/c^\theta_t=(\expp{2\beta\theta    t}-1)/2\theta$.     With    this
 deterministic time-change, we deduce that the process $Z^{\alpha}$ also
 satisfies~\eqref{eq:Ov-c>0}.
\end{proof}

% the
%  following equation:
%  \[
%   {\rm d} Z_t^{\alpha}= \sqrt{2\beta Z_t^{\alpha}}{\rm
%     d}B_t-2\beta\theta Z_t^{\alpha}{\rm
%     d}t+\frac{2\beta}{\Bz (\expp{2\beta\theta t}Z_t^{\alpha})}\
%   \sum_{k=0}^{\infty}\frac{(\alpha \expp{2\beta\theta t} Z_t^{\alpha})^{k}}{(k!)^2}{\rm d}t,\quad  t\geq0.
%  \]
% This gives  
% In  a nutshell,  the main  objective of  this section  is to  define the
% grafting  and  splitting  functions,  as  well  as  the  decorating  and
% de-decorating  functions in  a  measurable  way on  the  set of  locally
% compact  rooted real  trees.  An  index of  all the  (numerous) relevant
% notations of this section is provided at the end of the document.

\section{Backbone decomposition}
\label{sec:backbone-intro}
We  introduce basic facts on the space  of real trees in
Section~\ref{sec:tree-notation}.   We  recall  some  properties  of  the
Brownian    CRT    in     Section~\ref{sec:Levy-T}.     We    give    in
Section~\ref{sec:dis-graft}  a recursive  construction of  some discrete
random    trees    using    a     grafting    procedure    defined    in
Section~\ref{sec:planar}.  Let   us  stress  that  the   measurable  and
topological properties of the grafting  procedure, as well as its formal
definition,  are discussed  in detail  in Section~\ref{sec:graft1}.   In
Section~\ref{sec:backbone}, we provide a decomposition of a (sub)critical
Brownian CRT according to $n$ leaves  at a given
distance from the root and  uniformly chosen at random, this is  a generalization of the case $n=1$ from
\cite[Theorem~4.5]{DLG05}.     We    prove    our   main    results    in
Section~\ref{sec:main-result} on  the local convergence of  the Brownian
CRT conditioned to have  a large population at time $t$,  as $t$ goes to
infinity.

%%%%%%%%%%%%%%%%%%%%%%%%%%%%%%%%%%%%%%%%%%%%%%%%%%%%

%\subsection{Notations and definitions for trees}
%\label{sec:tree-notation}

\subsection{Notations for trees}
\label{sec:tree-notation}
\subsubsection{Real trees}
\label{sec:real-trees}
We use the framework of real trees to encode the genealogy of a continuous state branching process. We refer to \cite{ev08} for a detailed introduction to real trees.

A real tree (or simply a tree in the rest of the text) is a metric space $(T,d)$ that satisfies the two following properties for every $u,v\in T$:
\begin{enumerate}[(i)]
\item There is a unique isometric map $f_{u,v}$ from $[0,d(u,v)]$
  into $T$ such that:
\[
f_{u,v}(0)=u\qquad\text{and}\qquad f_{u,v}\bigl(d(u,v)\bigr)=v.
\]
\item If $\varphi$  is a continuous injective map from  $[0,1]$ into $T$
  such  that  $\varphi(0)=u$  and  $\varphi(1)=v$,  then  the  range  of
  $\varphi$ is also the range of $f_{u,v}$.
\end{enumerate}
The range of the map $f_{u,v}$ is denoted by $\lb u,v\rb$. It is the unique
continuous path that links $u$ to $v$ in the tree. We will write $\lb u,v\lb$ (resp. $\rb u,v\rb$, $\rb u,v\lb$) for $\lb u,v\rb\setminus\{v\}$ (resp. $\lb u,v\rb\setminus\{u\}$, $\lb u,v\rb\setminus\{u,v\}$).

A rooted tree is a tree $(T,d)$ with a distinguished vertex denoted by
$\root$ and called the root. We always consider rooted trees in this
work. For an element $x$ of a rooted tree $(T, d, \root)$, we denote by $H(x)=d(\root, x)$
its height, and we set $H(T)=\sup_{x\in T}H(x)$ the height of the tree $T$.

An element $x$ of   $T\setminus\{\root\}$  is a leaf if $T\setminus\{x\}$ has only
one connected component; by convention the root is a leaf if and only if
$T$ is reduced to the root. We denote by  $\leaf(T)$ the
(non-empty) set of leaves of $T$. The skeleton of the
tree is the set $\mathrm{Sk}(T)=T\setminus\leaf(T)$. The set of
branching points (or vertices) $\mathrm{Br}(T)$ is the set of $x\in T$ such that
$T\setminus \{x\}$ has at least 3 connected components if $x\neq \root$
or at least 2 components if $x=\root$.

For a vertex $x\in T$, we define the subtree $T_x$ ``above'' $x$ as:
\[
T_x=\bigl\{y\in T\, \colon\,  x\in \lb\root,y\rb\bigr\}.
\]
The  real tree $T_x$ is  endowed with the distance induced by
$T$ and  will be rooted at $x$.
If $u,v\in T$,
we denote by $u\wedge v$ the most recent common ancestor of $u$ and $v$,
\emph{i.e.} the unique vertex of $T$  such that:
\[
\lb\root,u\rb\cap\lb\root,v\rb=\lb\root,u\wedge v\rb.
\]
If $(T,d,\root)$ is a rooted real tree and $a$ is a positive real number, we define the scaled tree $aT$ as:
\begin{equation}\label{eq:scale}
aT=(T,ad,\root)
\end{equation} 
where all the distances in the tree $T$ are multiplied by the factor $a$.

The trace of the Borel $\sigma$-field
of $T$ on $\mathrm{Sk}(T)$ is generated by the sets $\lb s,s'
\rb,\ s,s' \in \mathrm{Sk}(T)$ (see \cite{EPW06}). Hence, there exists a
$\sigma$-finite Borel measure $\length^{T}$ on $T$, such that:
\[
\length^{T}\bigl(\mathrm{Lf}(T)\bigr) = 0
\quad\text{and}\quad
 \length^{T}\bigl(\lb s,s'
\rb\bigr)=d(s,s').
\]
This measure $\length ^T$ is called the length measure on $T$.
When there is no ambiguity, we
simply write $\length$ for $\length^T$.

\subsubsection{Gromov-Hausdorff distance and sets of trees}\label{sec:sets_of trees}

We endow the set of (isometry equivalence classes) of rooted real tree with the classical Gromov-Hausdorff distance whose definition (with the notion of correspondances) is described below. We refer to \cite{Khe23} for general results on Gromov-Hausdorff metrics.

Let $(T,d,\root)$ and $(T',d',\root')$ be two rooted compact real trees. A correspondence $\cR$ between $T$ and $T'$ is a subset of $T\times T'$ such that:
\begin{enumerate}[(i)]
\item for all $x\in T$, there exists $x'\in T'$ such that $(x,x')\in \cR$,
\item for all $x'\in T'$, there exists $x\in T$ such that $(x,x')\in \cR$,
\item $(\root,\root')\in\cR$.
\end{enumerate}
The distortion of such a correspondence $\cR$ is defined as:
\[
\mathrm{dist}(\cR)=\sup\left\{\bigl|d(x,y)-d'(x',y')\bigr|;\ (x,x'),(y,y')\in\cR\right\}.
\]
For two compact rooted trees $(T,d,\root)$ and $(T',d',\root')$ we set:
\[
d_{GH}(T,T')=\inf\frac{1}{2}\mathrm{dist}(\cR),
\]
where  the  infimum  is  taken   over  all  the  correspodences  between
$(T,d,\root)$ and $(T',d',\root')$. The function $\dgh$ is the so-called
Gromov-Hausdorff pseudo-distance,  see \cite{m:tmrag}.   Furthermore, we
have  that $\dgh(T,  T')=0$ if  and only  if there  exists an  isometric
bijection  from $(T, d)$ to $(T', d')$ which preserves the
root.  The  relation $\dgh(T,  T')=0$  defines  an equivalence  relation
between compact rooted  trees.  The set $\TK$ of  equivalence classes of
compact rooted trees endowed with $\dgh$  is then a metric Polish space,
see \cite[Proposition~9]{m:tmrag}. We shall consider below the trivial  tree
$\Tz\in \TK$ reduced to its root. 

\medskip
We can generalize this definition to compact $n$-pointed rooted trees
where a $n$-pointed rooted tree is a triplet $(T,d,\bv)$ where $(T,d)$
is a rooted real tree and $\bv=(v_0,v_1,\ldots,v_n)$ with that  $v_0=\root$ is the
root of $T$ and $v_1,\ldots,v_n$ are $n$ distinguished (possibly equal)
vertices. A correspondence between two $n$-pointed rooted trees
$(T,d,\bv)$ and $(T',d',\bv')$ is a correspondance between $(T,d,\root)$
and $(T',d',\root')$ which satisfies moreover $(v_i,v'_i)\in\cR$ for all
$i\in\{1,\ldots,n\}$, where $\bv'=(v'_0,v'_1,\ldots,v'_n)$ with $v'_0=\root'$, the root of $T'$. 
The distance $\dghn$ on the space $\TKn$ of equivalence classes of
compact $n$-pointed rooted trees is then defined in the same way as
$\dgh$ on $\TK$, and the metric space $(\TKn,\dghn)$ is Polish; and notice
that $(\TK, \dgh)=(\TKz, \dghz)$. 

\medskip

For a  rooted $n$-pointed tree $(T, d, \bv)$ and $t\geq t_T=\max_{i\in \{0,
  \ldots, n\}} d(\root, v_i)$, we define the rooted $n$-pointed tree  $T$ truncated at level $t$ as $(r_t(T,\bv), d, \bv)$ with:
\begin{equation}
   \label{eq:def-rt0}
  r_t(T, \bv)=\bigl\{x\in T\, \colon\, H(x)\leq t\bigr\}, 
\end{equation}
and the  distance on $r_t(T,  \bv)$ is given  by the restriction  of the
distance   $d$.     We   shall   simply   write    $r_t(T,   \bv)$   for
$(r_t(T, \bv),  d, \bv)$.  A  rooted $n$-pointed  tree $(T, d,  \bv)$ is
locally  compact if  $r_t(T,  \bv)$ is  a compact  rooted  tree for  all
$t\geq   t_T$.    The   locally   compact   trees   $(T,d,   \bv)$   and
$ (T',d', \bv')$ are equivalent if and only if there exists an isometric
one-to-one  map  from  $(T,  d)$  to  $(T',  d')$  which  preserves  the
distinguished vertices.   This defined  indeed an  equivalence relation.
The set $\TLn$ of equivalence classes of locally compact rooted trees is
then endowed with  a distance $\dlghn$ in the spirit  of \cite{adh}, see
Section~\ref{sec:root-Tn}      below    and           more          precisely
Proposition~\ref{prop:TLn-Polish}, so  that it is a  metric Polish space
and $\TKn$ is an open dense subset of $\TLn$. For $n=0$, we simply write
$\TL$ and $\dlgh$ for $\TLn$ and $\dlghn$.
We shall consider below the infinite spine tree $\Tu=(\R_+, |\cdot |,
0)\in \TL$, where $|\cdot |$ is the usual Euclidean distance. 

\subsubsection{Grafting operation}
\label{sec:graft-int}
We recall the grafting  operation of \cite{AD09}.  Let $(T,d,(\root,x))$
be  a locally  compact  rooted 1-pointed  tree  and $(T',d',\root')$  be
locally compact rooted trees. We  define the tree $T\circledast_x T'$ as
the tree obtained  by grafting $T'$ on  the tree $T$ at  vertex $x$.  We
set:
\begin{gather}
  \label{eq:graft-2dis-tree}
  T\circledast_x T'=T\sqcup \bigl(T'\setminus\{\root'\}\bigr),\\
  \label{eq:graft-2dis-dist}
  \forall y,y'\in T\circledast_x T',\ d^\circledast(y,y')=\begin{cases}
d(y,y') & \text{if } y,y'\in T,\\
d'(y,y') & \text{if }y,y'\in T',\\
d(y,x)+d'(\root',y') & \text{if } y\in T,\ y'\in T',
\end{cases}
\end{gather}
where $\sqcup$ denotes  the disjoint union of two  sets. By construction
$(T\circledast_x  T', d^\circledast,  \root)$ is  a locally
compact  rooted  tree.   It is  easy  to  see  that  the
equivalence class  of $T\circledast_x T'$ does not  depend of
the choice of the representatives in  the equivalence classes of $T$ and
$T'$ and hence the grafting operation is well-defined on $\TL$; it is
even continuous, see Lemma~\ref{lem:graft-intro}. 
We also refer to Section~\ref{sec:graft1} for a more general grafting
procedure and  its topological properties.

\medskip

Let $(T,\bv)\in\TLn$ be either the infinite spine tree $\Tu$ (and $n=0$)
or a  discrete tree, that  is, a compact rooted  real tree with  all its
leaves  being distinguished  (see~\eqref{eq:def-TDn} for  a more  formal
definition)        and        then        $n\in        \N^*$.        Let
$\cm=\sum_{i\in  I}\delta_{(x_i,T_i)}$   be  a  point  measure   on  the
$T\times     \TL$.       We     define     intuitively      the     tree
$\graft_n((T,\bv),\cm)\in \TLn$ as the tree:
\begin{equation}
   \label{eq:rev-graft-n}
   \graft_n((T,\bv),\cm)=\bigl(T\circledast _{x_i,i\in I}(T_i,i\in I),\bv\bigr)
\end{equation}
obtained by  grafting each   locally compact rooted  tree $T_i$  on $T$  at point
$x_i$ (and  keeping the $n$ distinguished  elements $\bv$ of $T$).   It is not
clear that the  resulting tree belongs to $\TLn$  (some assumptions must
be added  to $\cm$)  nor that  this infinite  grafting procedure  can be
proceeded in a measurable way (so that  we get indeed a random tree when
the   tree  $T$   and  the   point  measure   $\cm$  are   random).  
We  give  a formal  definition  of  this
procedure in
Section~\ref{sec:appli-graft} and check
that it is well defined (after some  lengthy topological  preliminaries)
with good measurable property in $\TLn$,  where  $\cm$
is a particular Poisson point measure  considered in the context of
the backbone decomposition from Section~\ref{sec:backbone}.
Even if the presentation~\eqref{eq:rev-graft-n} is abusive,   we
  stick to  this informal definition  for simplicity.

\subsection{Brownian CRTs  and Kesten trees}
\label{sec:Levy-T}

Brownian CRTs  are random trees in $\TL$ that code for the genealogy of continuous-state branching processes.

Before recalling the  definition of such trees, we  give some additional
notation.   For a  locally compact  rooted  tree $\bt$,  we define  the
population at level $a$ as the sub-set:
\[
\mathcal{Z}_\bt(a)=\{u\in\bt,\ H(u)=a\}.
\]
We denote by $(\bt^{(i),*},i\in I)$ the connected components of the open
set  $\bt\setminus  r_a(\bt)$. For  every  $i\in  I$, let  $\root_i$  be
 the   MRCA   of  $\bt^{(i),*}$,  which is equivalently characterized by
$\lb \root, \root_i\rb=\cap_{u\in \bt^{(i),*}}  \lb \root, u\rb$; notice
that    $\root     _i\in    {\mathcal{Z}_\bt(a)}$.    We     then    set
$\bt^{(i)}=\bt^{(i),*}\cup\{\root _i\}$ so that $\bt^{(i)}$ is a
locally compact rooted tree with root $\root  _i$, and we consider the point measure
on $\mathcal{Z}_\bt (a)\times \TL$:
\[
\cn_a^\bt=\sum_{i\in I} \delta_{(\root _i,\bt^{(i)})}.
\]

We then recall  the definition of the excursion  measure $\N^\theta$ for
$\beta>0$ and 
$\theta\ge 0$  associated with  a Brownian CRT from  \cite{DLG05}. The
underlying parameter $\beta$ is fixed, and will be omitted from the notation.
There
exists a measure $\N^\theta$ on $\TK$ (and hence on $\TL$) such that:
\begin{itemize}
\item[(i)] \textbf{Existence of a local time.}
For every $a\ge 0$ and for $\N^\theta[\rd \ct]$-a.e. $\ct$, there exists a
finite measure $\Lambda_a$ on $\ct$ such that
\begin{itemize}
\item[(a)] $\Lambda_0=0$ and, for every $a>0$, $\Lambda_a$ is
  supported on $\mathcal{Z}_\ct(a)$.
\item[(b)] For every $a>0$,  $\N^\theta[\rd \ct]$-a.e., we have
  $\{\Lambda_a\ne 0\}=\{H(\ct)>a\}$.
\item[(c)] For every $a>0$, $\N^\theta[\rd \ct]$-a.e., we have  for every  continuous function $\varphi$ on $\ct$:
\begin{align*}
  \langle \Lambda_a,\varphi\rangle
  & =\lim_{\varepsilon\to0+}\frac{1}{c_\varepsilon^\theta}
    \int\cn_a^\ct(\rd u, \rd\ct')\varphi(u)\ind_{\{H(\ct')\ge\varepsilon\}}\\
  & =\lim_{\varepsilon\to0+}\frac{1}{c_\varepsilon^\theta}
    \int\cn_{a-\varepsilon}^\ct(\rd u, \rd\ct')\varphi(u)\ind_{\{H(\ct')\ge\varepsilon\}}.
\end{align*}
\end{itemize}
\item[(ii)] \textbf{Branching property.}
For every $a>0$, the conditional distribution of the point measure
$\cn_a^\ct(\rd u, \rd\ct')$, under the probability measure
$\N^\theta[\rd\ct\,|\, H(\ct)>a]$ and given $r_a(\ct)$, is that of a
Poisson point measure on $\mathcal{Z}_\ct(a)\times \TL$ with intensity
$\Lambda_a(\rd u)\N^\theta[\rd\ct']$.
\item[(iii)] \textbf{Regularity of the local time process.}
We can choose a modification of the process $(\Lambda_a,a\ge 0)$ in such a way that the mapping $a\longmapsto \Lambda_a$ is $\N^\theta[\rd \ct]$-a.e. continuous for the weak topology of finite measures on $\ct$.
\item[(iv)] \textbf{Link with CSBP.}
Under $\N^\theta[\rd\ct]$, the process $(\langle
\Lambda_a,1\rangle,a\ge 0)$ is distributed as a CSBP  under its canonical measure with branching mechanism:
\[
\psi(\lambda)=\beta \lambda^2+2\beta\theta\lambda,\quad \lambda\geq0.
\]
\end{itemize}
\medskip

We now extend the definition of the  measure $\N^\theta$ (only on $\TL$) for
$\theta<0$  by a  Girsanov transformation,  following \cite{ADH14}.  For
$t\geq  0$, set  $\cg_t=\sigma(r_t(\ct))$ and  $Z_t=\Lambda_t(\ct)$, the
latter  notation is  consistent with  Section~\ref{sec:2csbp}. The  CSBP
process  $Z=(Z_t, t\geq  0)$ is  Markov with  respect to  the filtration
$(\cg_t, t\geq 0)$.  For $\theta<0$ and $t>0$, we set:
\begin{equation}
   \label{eq:abs-cont2}
   \N^{-\theta}[\rd \ct] _{|\cg_t}=\expp{2\theta Z_t}  \N^{\theta}[\rd \ct]
  _{|\cg_t}.
\end{equation}
Then properties  (i) to  (iv) still hold  for every  $\theta\in\R$. This
Girsanov transformation  is consistent with the  Girsanov transformation
of CSBP  given by~\eqref{eq:abs-cont}. Let  us stress that  the measure
$\N^\theta$ on $\TL$ depends also on the parameter $\beta>0$.

\medskip
The      so-called      Kesten      tree       with      parameters
$(\beta,  \theta)\in \R_+^*\times  \R_+$ can be defined as the genealogical tree associated with the continuous-state branching process with the same parameters, conditioned on non-extinction (see for instance \cite{L07}). This latter process can also be defined by adding to the initial process a particular immigration. We use this second approach to extend the definition of the Kesten tree for $\theta<0$.

Using    our    framework,    the   Kesten    tree    with    parameters
$(\beta, \theta)\in \R_+^*\times  \R$ is built as a  countable family of
trees defined  by a Poisson point  measure grafted on the  infinite spine
tree $\Tu$:
\begin{equation}
  \label{eq:def-Kesten}
  \ct^*=\graft_0(\Tu, \cm),
\end{equation}
   with $\cm(\rd h, \rd T)$ a Poisson  point measure on $\R_+\times \TL$ with
intensity  $2\beta  \ind_{\{h>0\}}\rd  h \,  \N^{\theta}[\rd  T]$.
We refer to Section~\ref{sec:appli-graft} for a more formal presentation
which in particular implies that the Kesten tree is a $\TL$-valued
random variable, see Lemma~\ref{lem:Kesten}. 

%The Kesten tree infinite backbone is reduced to only one infinite branch.

%belongs a.s. to the space $\TLsb$, meaning that its

\subsection{The set of (planar) discrete trees}
\label{sec:planar}
A  discrete tree  is  a compact  rooted  tree with  a  finite number  of
leaves.  We denote  by $\TDn$  the subset  of $n$-pointed  discrete tree
whose  leaves are  distinguished  (see~\eqref{eq:def-TDn}  for a  formal
definition): for $(\bt, \bv)\in \TDn$  with $\bv=(v_0=\root, \ldots, v_n)$, we
have  that  $\leaf(\bt)\subset  \{v_0,  \ldots,  v_n\}$.   According  to
Lemma~\ref{lem:discrete=closed},  the  set  $\TDn$ is  closed.   We  can
consider  a discrete  tree with  a planar  structure by  enumerating its
leaves, or more precisely its distinguished vertices, ``from the left to
the right''.  This will allow  us to  define on oriented  grafting; this
will  be  used  in  the  next  section.   Intuitively  a  discrete  tree
$(\bt,  \bv)\in \TDn$  is a  planar tree  if for  all $x\in  \bt$, there
exists   $0\leq   i_\mathrm{g}\leq   i_\mathrm{d}\leq   n$   such   that
$v_i\in          \bt_x$         if          and         only          if
$i\in   \{i_\mathrm{g},    \ldots,   i_\mathrm{d}\}$;   we    check   in
Section~\ref{sec:planar-def} that  the set  of (equivalence  classes of)
$n$-pointed planar tree $\TPn \subset \TDn$ is also closed.

\begin{figure}[ht]
\includegraphics[scale=0.5]{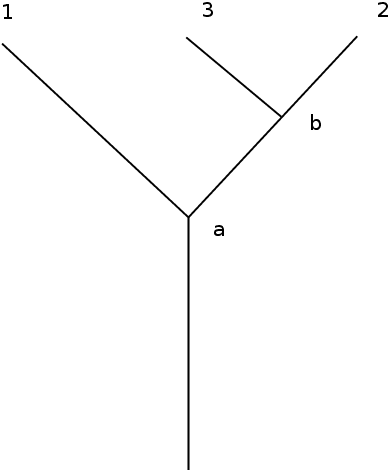}
\caption{A tree $(\bt,\bv)\in \T^{(3)}_{\mathrm{plan}}$ with $\bv=(\root,1,3,2)$}
\end{figure}

\medskip

We  now define  an oriented  grafting  of a  discrete tree  $\bt'$ on  a
discrete tree  $\bt$ at  point $x\in  \bt$; we shall  use later  on this
construction for  planar trees;  this is similar  to the  first grafting
defined  in  Section~\ref{sec:graft-int} but  for  the  ordering of  the
distinguished  vertices.  Formally,  if  $(\bt,\bv)$  be an  $n$-pointed
discrete  tree with  $\bv=(v_0=\root,v_1,\ldots,v_n)$, $(\bt',\bv')$  an
$m$-pointed discrete tree with $\bv'=(v'_0=\root',v'_1,\ldots,v'_m)$ and
$x\in \bt$, we define for $\varepsilon\in\{{\rm g},{\rm d}\}$:
\begin{equation}
   \label{eq:def-gd-graft}
(\bt,\bv)\circledast _x^\varepsilon(\bt',\bv')=(\bt\circledast
_x\bt',\bv\circledast^\varepsilon\bv')\in {\T^{(n+m)}_{\mathrm{dis}}} 
\end{equation}
with $\bt\circledast
_x\bt'$
defined in~\eqref{eq:graft-2dis-tree} and: 
\begin{align}
   \label{eq:def-vg-graft}
  \bv\circledast^{\rm g}\bv'
  & =(v_0,\ldots,v_{i_{\rm g} -1},v'_1,\ldots,v'_m,v_{i_{\rm g}} ,\ldots,v_n),\\
   \label{eq:def-vd-graft}
  \bv\circledast^{\rm d}\bv'
  & =(v_0,\ldots,v_{i_{\rm d} },v'_1,\ldots,v'_m,v_{i_{\rm d} +1},\ldots,v_n),
\end{align}
where:
\begin{equation}
   \label{eq:def-igd-graft}
  i_{\rm g}= \min\{i\in \{0, \ldots, n\}\, \colon \, v_i\in \bt_x\}
  \quad\text{and}\quad
  i_{\rm d}=\max\{i\in \{0, \ldots, n\}\, \colon \, v_i\in \bt_x\},
\end{equation}
and the convention that
if $i_{\rm g}=0$ (that is, $x=\root$), then 
$\bv\circledast^{\rm g}\bv'=(v_0,v'_1,\ldots,v'_m,v_{1} ,\ldots,v_n)$,
and 
if $ i_{\rm d}=n$, then
$\bv\circledast^{\rm d}\bv'=(v_0,\ldots,v_n,v'_1,\ldots,v'_m)$. 
Let us stress that $  i_{\rm g}$ and $ i_{\rm d}$ are well defined as all the leaf are
distinguished.
Notice  also that if $(\bt, \bv)$ and $(\bt', \bv')$ are planar, so is
$(\bt,\bv)\circledast _x^\varepsilon(\bt',\bv')$.

Furthermore, for  $i\in\{1,\ldots,n\}$ and $h\le
H(v_i)$, we shall consider the grafting of $\bt'$ at $x_{i,h}\in \bt$
the point of $\lb \root,v_i\rb$ at height $h$: 
\begin{equation}
  \label{eq:graft}
(\bt,\bv)\circledast _{i,h}^\varepsilon(\bt',\bv')=(\bt,\bv)\circledast
_{x_{i,h}}^\varepsilon(\bt',\bv'). 
\end{equation}
Notice this latter grafting is well defined on the equivalent classes of
discrete trees, and it is measurable thanks to
Lemma~\ref{lem:graft2-gd-cont}.

\subsection{A discrete random tree constructed by successive grafts}
\label{sec:dis-graft}

\subsubsection{A random tree}
\label{sec:bT}
In this section, for $a\geq 0$,
we denote by
$\bigl([0,a],(0, a)\bigr)\in  \TDu$ the (equivalent class of the) tree  $[0,a]$ endowed  with the
usual distance on $\R$, rooted at $\root=0$ and pointed at $a$;
and when there is no possible confusion we simply denote it by $[0,
a]$.

Let $t>0$ and  let $\nu$ be a probability measure  on $[0,t]$.  Let
$\xi=(\xi_k,  \,  k\in  \N^*)$  be  a  sequence  of  independent  random
variables with distribution $\nu$ and  let $\bigl((K_k,\varepsilon_k), \,k\in \N^*\bigr)$ be
a sequence of  independent random variables independent  of the sequence
$\xi$, with $K_k$ uniformly  distributed on $\{1,\ldots,k\}$ independent
of $\varepsilon_k$  uniformly distributed  on $\{{\rm g},  {\rm d}\}$.   For every
integer  $n\ge  2$,  we set  $(\xi_1^{(n)},\ldots,\xi_{n-1}^{(n)})$  the
increasing order statistic of $(\xi_1,\ldots,\xi_{n-1})$. Then we define
the            family             of            pointed planar           trees
$\bigl((\bT_1^{(n)},  \bv_1^{(n)}),\ldots,(\bT_n^{(n)}, \bv_n^{(n)})\bigr)$,  with
$(\bT_k^{(n)}, \bv_k^{(n)}) \in \TPk$, recursively by:
\begin{itemize}
\item  $\bT_1^{(n)}=[0, t]$, that is, $(\bT_1^{(n)}, \bv_1^{(n)})=
  \bigl([0,t],(0, t)\bigr)\in  \TPu$.
\item  For   every  $k\in  \{1,  \ldots,   n-1\}$,  conditionally  given
  the  random variable
  $(\bT_k^{(n)},  \bv_k^{(n)})$ in $\TPk$,  we  define the  $\TLkk$-valued  random
  variable
  $( \bT_{k+1}^{(n)}, \bv_{k+1}^{(n)})$ by grafting a branch of
  length  $t-   \xi_{k+1}^{(n)}$ uniformly on the left or on the
  right of a uniformly chosen   vertex  among
  the $k$ vertices of
  $\bT_k^{(n)}$ at level $\xi_{k+1}^{(n)}$, and the new leaf (which is,
  as all the other leaves,  at
  level $t$) is added to the vector recording the distinguished 
  vertices. Formally, using the grafting
  procedure~\eqref{eq:graft}
  %~\eqref{eq:map-graft-ih} from  Remark~\ref{rem:graft-height-h},
  we set:
  \begin{equation}
   \label{eq:Tk+1}
   (\bT_{k+1}^{(n)}, \bv_{k+1}^{(n)})=(\bT_k^{(n)},  \bv_{k}^{(n)})
   \circledast_{K_k, \, \xi_{k+1}^{(n)}}^{\varepsilon_{k+1}}
   \left[0,t-\xi_{k+1}^{(n)}\right]\cdot
  \end{equation}
% According to Lemma~\ref{lem:graft2-cont}    the grafting
%  procedure is continuous and thus measurable,
%  we deduce that $(\bT_{k}^{(n)}, \bv_{k}^{(n)})$ is a
%  $\TKk$-valued  random
%  variable for every $1\le k\le n$.
\end{itemize}

\begin{figure}[ht]
\includegraphics[scale=0.5]{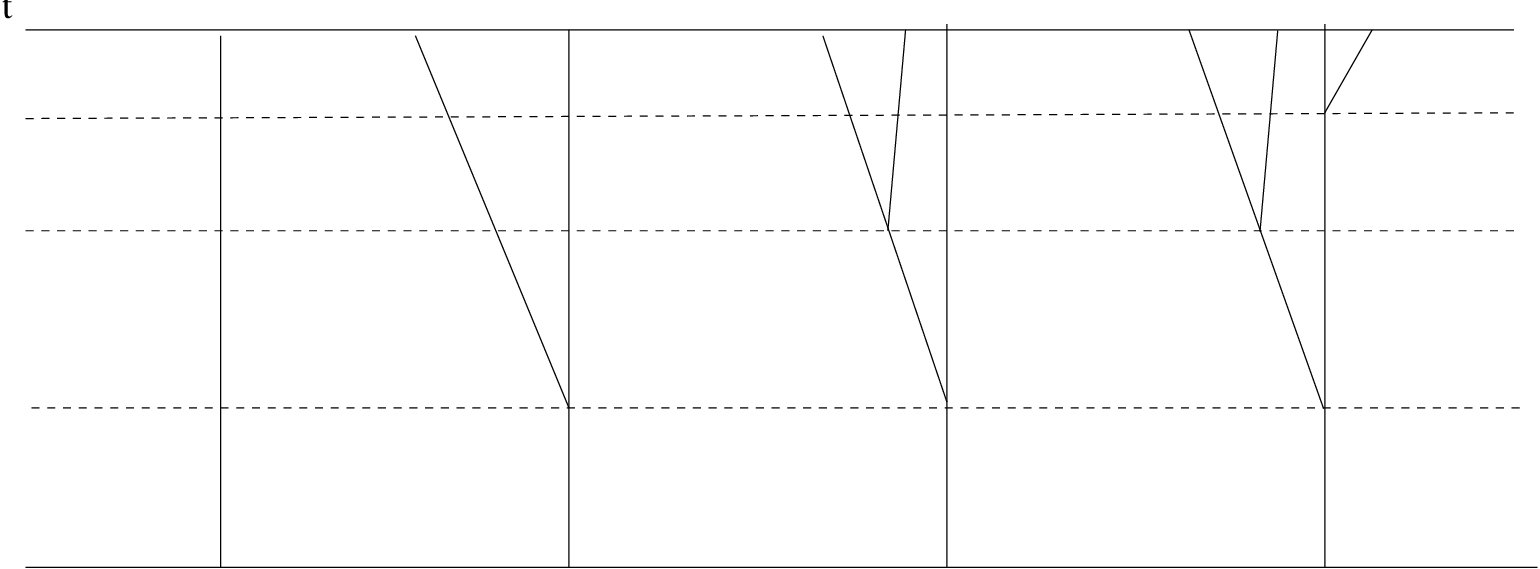}
\caption{The trees $\bT_1^{(4)}$, $\bT_2^{(4)}$, $\bT_3^{(4)}$ and $\bT_4^{(4)}$ obtained from the sequences $(K_1=1,K_2=1,K_3=2)$ and $(\varepsilon_1={\rm g},\varepsilon_2={\rm d},\varepsilon_3={\rm d})$. The dashed lines represent the levels $\xi_1^{(4)},\xi_2^{(4)},\xi_3^{(4)}$.}
\end{figure}

By construction,  we get that  $(\bT_k^{(n)}, \bv_k^{(n)})$ belongs
to  $\TPk$  %, and  is  thus  a $\TDk$-valued  random  variable,  
for  all
$k\in  \{1,   \ldots,  n\}$.    To  simplify   the  notations,   we  set
$\bT_n=(\bT_n^{(n)},   \bv_n^{(n)})$.    
%By construction, we also get that
%$\bT_n$ is a planar  tree with
%the leaves $v_1, \ldots, v_n$ ranked from left to right, where
%$\bv_n^{(n)}=(v_0=\root, \ldots, v_n)$.
\medskip

Recall  that  for a  rooted  tree  $T$,  $\length^T$ denotes  its  length
measure;  and we  simply write  $\length$  when there  is no  ambiguity.
%Informally,  for $x\in  T$ chosen  according to  $\length(\rd x)$,  we
%denote  by $T\circledast_x[0,a]$  the tree  $T$  on which  a segment  of
%length $a$ is  grafted at the vertex  $x$.  It is possible  to make this
%construction  in a  measurable way,  in this  direction see  for example
%\cite[Equation~(1.6)]{adn} and  Section 3 therein. As  we shall consider
%the discrete tree $\bT_n$ for  $T$, we refer to Remark~\ref{rem:comment}
%below for a  direct proof.  
The next lemma relates  the distributions of
$\bT_n$ and of $\bT_{n+1}$; its proof is given in the next section.
Recall that  $\TPn$ is a subset  of $\TDn$. 

\begin{lem}
\label{lem:Tn*=Tn+1}
Let  $t\geq 0$. Assume that the probability distribution $\nu$ has a positive
density $\fd$ with respect to the Lebesgue measure on $[0, t]$. For  $n\in
\N^*$, $G$ a  measurable non-negative  function defined
 on  $\TDo$, and $\varepsilon$ a random
 variable uniformly distributed on $\{{\rm g}, {\rm d}\}$ and independent of $\bT_n$, we have:
\begin{equation}
  \label{eq:Tn*=Tn+1-0}
  \E\left[\int_{\bT_n}\length(\rd x)\, \fd \bigl(H(x)\bigr)\,
    G\Bigl(\bT_n\circledast_x^\varepsilon\bigl[0,t-H(x)\bigr]\Bigr)\right]
  =\frac{n+1}{2}\, \E\left[G\big(\bT_{n+1}\big)\right].
\end{equation}
\end{lem}

\begin{rem}
  \label{rem:comment}
  We  comment  on the  left-hand  side  of \eqref{eq:Tn*=Tn+1-0}.  First
  notice the grafting  on $\bT_n$ is oriented, which justify  to build the
  discrete tree  $\bT_n$ as  a planar  tree. Second,  we check  that the
  integral
  $\ci=\int_{\bT_n}\length(\rd              x)\,             \fd(H(x))\,
  G\big(\bT_n\circledast_x^\varepsilon[0,t-H(x)]\big)$ is a non-negative
  random  variable.   Recall  that  $\bT_n=(\bT_n^{(n)},  \bv_n^{(n)})$.
  Then, we can write $\ci$ as follows:
\[
 \ci= \sum_{k=1}^n \int_{\xi_{k-1}^{(n)}}^t \rd h\, \fd(h) \,
  G\bigl(\bT_n\circledast_{k,h}^\varepsilon [0,t -h]\bigr),
\]
with      the      convention       that      $\xi_{0}^{(n)}=0$.
Therefore, using the continuity of the  grafting
function, see Lemma~\ref{lem:graft2-gd-cont},  we  obtain   that  $\ci$  is  a
non-negative real-valued  random variable,  and thus its  expectation is
well defined.
\end{rem}

\subsubsection{Proof of Lemma~\ref{lem:Tn*=Tn+1}}
The proof is  based on two technical lemmas.  We  first consider the case
$t=1$ and $\nu$  the uniform distribution on $[0,1]$.
Let us denote by  $\bTun$ for $\bT_n$ when $\nu$ is the uniform
distribution on $[0,1]$.

\begin{lem}
\label{lem:Tn*=Tn+1_unif}
For  $n\in
\N^*$, $G$  a measurable non-negative  functional defined
 on  $\TDo$, and $\varepsilon$ a $\{{\rm g}, {\rm d}\}$-valued uniform random
 variable independent of $\bTun$, we have:
\begin{equation}
  \label{eq:Tn*=Tn+1}
  \E\left[\int_{\bTun}\length(\rd x)\,
    G\Bigl(\bTun\circledast_x^\varepsilon\bigl[0,t-H(x)\bigr]\Bigr)\right]
  =\frac{n+1}{2}\, \E\left[G\big(\bTunn\big)\right].
\end{equation}
\end{lem}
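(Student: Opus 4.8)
The plan is to establish \eqref{eq:Tn*=Tn+1} by writing both sides explicitly in terms of the sorted grafting heights and the auxiliary uniform data $\bigl((K_k,\varepsilon_k)\bigr)$, and then matching the two expressions. As a warm-up and as the seed of the induction, consider $n=1$: then $\bTun=[0,1]$, its length measure is Lebesgue measure and $H(x)=x$, so the left-hand side equals $\int_0^1\rd x\,\E_\varepsilon\bigl[G\bigl([0,1]\circledast_x^\varepsilon[0,1-x]\bigr)\bigr]$; on the other hand $\bTunn=\bT_2^{(2)}$ is $[0,1]$ with a branch of length $1-\xi_1$ grafted at height $\xi_1\sim\mathcal U[0,1]$ and a uniform orientation, so $\E[G(\bTunn)]$ is the same integral, and the identity holds (with constant $\tfrac{1+1}{2}=1$).

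For the general case, the two structural ingredients are: (i) adjoining an independent $\mathcal U[0,1]$ variable $\xi$ to the increasing order statistics of $n-1$ i.i.d.\ $\mathcal U[0,1]$ variables and reordering produces, in law, the increasing order statistics of $n$ i.i.d.\ $\mathcal U[0,1]$ variables; and, since the sequential grafting construction depends on the grafting heights only through their relative order, $\bTunn$ has the law of the tree obtained from the \emph{construction} of $\bTun$ by inserting, at the appropriate place, one extra branch grafted at height $\xi$ onto a uniformly chosen leaf-path among those present and with a uniform orientation; (ii) the length-measure weighting. Concretely, on $\bTun$ there are exactly $j$ edges crossing the ``level band'' between the $(j-1)$-th and $j$-th branch points ($j=1,\dots,n$), and integrating a functional of the grafting height against $\length(\rd x)$ over that band equals integrating it uniformly in the height and summing over the $j$ edges. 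This extra factor $j$ is exactly what converts ``graft onto a uniformly chosen leaf-path among the $j$ present'' (probability $1/j$ per edge) into the unweighted sum over edges appearing on the left-hand side; carrying the resulting total mass through the order-statistics bookkeeping produces the constant $\tfrac{n+1}{2}$, which one recognizes as $\E[L(\bTun)]=n-\sum_{j=1}^{n-1}\E[\xi_j^{(n)}]$, with $L$ the total length of the tree.

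The step I expect to be the main obstacle is reconciling the two descriptions of the added branch: on the left-hand side it is grafted at the \emph{end} of the construction of $\bTun$ and is therefore a bare stick, whereas in the construction of $\bTunn$ it is inserted \emph{before} the higher grafts, which may then attach to it and build a sub-bush on it. These give different trees pathwise, but the same law once one averages over the independent uniform data $\bigl((K_k,\varepsilon_k)\bigr)$; proving this cleanly requires the order-independence/exchangeability of the uniform grafting process together with a symmetrization over the planar orderings of the leaves, and it is natural to organise this as an induction on $n$ in which one peels off the last graft of $\bTun$, applies the induction hypothesis to the sub-tree obtained before that graft (after absorbing the height mismatch via ingredient (i)), and then re-inserts the last branch while tracking the combinatorial choices. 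Throughout, one relies on the continuity of the grafting maps in Lemma~\ref{lem:graft2-cont} and on the band decomposition of the integral recorded in Remark~\ref{rem:comment} to ensure that all quantities under the expectations in \eqref{eq:Tn*=Tn+1} are genuinely well-defined non-negative random variables.
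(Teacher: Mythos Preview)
Your base case $n=1$ is correct and matches the paper. You have also correctly put your finger on the real difficulty: on the left-hand side the extra branch is a \emph{bare stick} grafted onto the completed $\bTun$, whereas in the sequential construction of $\bTunn$ the new height is inserted at its sorted position and all later grafts may attach to it, so the added branch carries a sub-bush. These two trees are genuinely different pathwise, and the whole content of the lemma is that their \emph{laws} agree (after the length-measure biasing).

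Your proposed resolution, however, does not close this gap. You suggest an induction that ``peels off the last graft of $\bTun$'' and applies the induction hypothesis to the remaining tree. But the remaining tree is $\bT_{n-1}^{(n)}$, built from the $n-2$ smallest of $n-1$ i.i.d.\ uniforms; this is \emph{not} distributed as $\bT_{n-1}^{\mathrm{unif}}$, which is built from $n-2$ i.i.d.\ uniforms. Your ingredient~(i) (adjoining an extra uniform to order statistics) goes in the wrong direction to fix this, and the appeal to ``order-independence/exchangeability of the uniform grafting process'' is not justified: the construction manifestly depends on the order (grafts occur at increasing heights, and the set of available edges changes at each step). So the inductive step as you describe it does not go through, and the key distributional identity remains unproved.

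The paper's argument avoids this obstacle by decomposing at the \emph{lowest} branching vertex rather than the last graft. Writing $\bT_{n+1}$ as a trunk of height $\xi_1^{(n+1)}$ carrying a left and a right subtree, a P\'olya-urn argument shows that the number $I_{n+1}$ of leaves on the left is uniform on $\{1,\dots,n\}$ and that, conditionally on $\xi_1^{(n+1)}$ and $I_{n+1}=i$, the rescaled left and right subtrees are independent copies of $\bT_i^{\mathrm{unif}}$ and $\bT_{n+1-i}^{\mathrm{unif}}$. The same decomposition is applied to $\bTun$ on the left-hand side, splitting the $\length$-integral into the piece below the lowest branch point and the pieces inside each subtree; the induction hypothesis is then invoked on the (strictly smaller) subtrees, and the three contributions recombine via the Beta$(1,n-1)$ law of $\xi_1^{(n)}$ to give exactly the factor $\tfrac{n+1}{2}$. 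This lowest-branch-point decomposition is what makes the induction tractable: it produces genuinely \emph{independent} subtrees distributed as smaller instances of the same model, which peeling off the top graft does not.
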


\begin{rem}
From \eqref{eq:Tn*=Tn+1}, we see $\frac{n+1}{2}$ is just the mean length of $\bTun$.
\end{rem}

\begin{proof}
  To simplify notation,  we write $\bT_n$ for $\bTun$.  We  give a proof
  by  induction.  For  $n=1$,  this  is  a  direct  consequence  of  the
  construction     of      $\bT_2=\bT_2^{(2)}$     from      $\bT_1^{(2)}=\bT_1=[0,1]$     given
  by~\eqref{eq:Tk+1}. \medskip

Let $n\in \N^*$ and assume that \eqref{eq:Tn*=Tn+1} holds for $n$ replaced
by any  $k\in \{1, \ldots,  n-1\}$.
We will  use for  the proof  a special  representation of  planar binary
trees. Let $T$  be a compact planar binary tree  rooted at $\root$, with
 all leaves at height 1; in particular the tree $T$ has a finite number
 of leaves. If $T$ has at least two leaves, since it is compact with a
 finite number of leaves,  there exists a
lowest branching vertex, say $x$. 
We set $h=H(x)$ and   $\tilde T^{\rm g}$ (resp. $\tilde  T^{\rm d}$) the
left (resp. right) subtree above $x$. In our settings, we have:
\[
h=  H(x) \quad\text{and}\quad
  T=\bigl(\lb\root,x\rb\circledast _x\tilde T^{\rm
    g}\bigr)\circledast_x^{\rm d}\tilde T^{\rm d}
  =\bigl(\lb\root,x\rb\circledast _x\tilde T^{\rm
    d}\bigr)\circledast_x^{\rm g}\tilde T^{\rm g},
\]
where, with a  slight abuse of language (see Lemma~\ref{lem:erase-cont}
for formal justification), one has  removed the vertex $x$
from  the distinguished  vertices after  the graftings.   For convenience,  we
consider      the       scaled      left      and       right      trees
$T^{\rm     g     }=     (1-h)^{-1}    \tilde     T^{\rm     g}$     and
$T^{\rm d }=  (1-h)^{-1} \tilde T^{\rm d}$  (recall \eqref{eq:scale} for
the definition of a scaled tree), so  that $T^{\rm g }$ and $T^{\rm d }$
are rooted bounded  binary planar trees with all their  leaves at height
1.  We call  $(h,T^{\rm  g },  T^{\rm  d })$  the  decomposition of  $T$
according to its lowest branching vertex.

\medskip

Let  $(\xi^{(n+1)}_1,  \bT_{n+1}^{\rm{g}}, \bT_{n+1}^{\rm{d}})$  be  the
decomposition  of $\bT_{n+1}$  according to  its lowest  branching vertex
(which is indeed at height  $\xi^{(n+1)}_1$ by construction).  Denote by
$I_{n+1}$  the  number  of  leaves of  $  \bT_{n+1}^{\rm{g}}$.  Using  a
P\'{o}lya urn starting with two balls of color $\rm{g}$ and $\rm{d}$, we
get that,  by construction, $I_{n+1}$  is the  number of balls  of color
$\rm{g}$  in the  urn  after $n$  draws. Thus  $I_{n+1}$  is uniform  on
$\{1, \ldots,  n\}$ and independent  of $\xi^{(n+1)}_1$. Notice  that if
$U$ is  a uniform random  variable on  $[0,1]$, for every  $h\in (0,1)$,
conditionally given $\{U\ge h\}$,  the random variable $(1-h)^{-1}(U-h)$
is   still  uniformly   distributed   on  $[0,1]$.   This  gives   that,
conditionally  on  $\{\xi^{(n+1)}_1=h\}$  and $\{I_{n+1}=i\}$,  the  two
trees $\bT_{n+1}^{\rm{g}}$ and  $\bT_{n+1}^{\rm{d}}$ are independent and
distributed respectively as $\bT_i$ and $\bT_{n+1-i}$.  \medskip

We consider  a  measurable non-negative
functional $G$ defined on the space of rooted  compact  binary planar
trees with a
finite number of leaves, all of them at height 1 of the form:
\begin{equation}
   \label{eq:F=ggg}
G(T)=g_1(h)\, g_2 ( T^{\rm
  g }) \, g_3(T^{\rm d }),
\end{equation}
where the $g_i$'s are measurable non-negative functionals and  $(h,T^{\rm
  g }, T^{\rm d })$ is the decomposition of $T$ according to its lowest
branching vertex. Setting  $f_j(i)= \E\left[g_j(\bT_i)\right]$ for $j\in
\{2, 3\}$, we have since $\xi_1^{(n+1)}$ is distributed according to a
$\beta(1,n)$ distribution (as the maximum of $n$ independent uniform
random variables):
\begin{align}
\nonumber
  \E\left[G(\bT_{n+1})\right]
&  = \left(\int_0^1 g_1(h) \, n (1-h)^{n-1}\, \rd h \right)
  \, \inv{n}\sum_{i=1}^n f_2(i) f_3(n+1-i)\\
\label{eq:FTn+1}
&  = \left(\int_0^1 g_1(h) \, (1-h)^{n-1}\, \rd h \right)
  \, \sum_{i=1}^n f_2(i) f_3(n+1-i).
\end{align}

\medskip

On the other hand,  let  $(\xi_1^{(n)}, \bT_{n}^{\rm{g}}, \bT_{n}^{\rm{d}})$ be the
decomposition of $\bT_{n}$ according to its lowest
branching vertex. Let $x\in \bT_n$ and set $h=H(x)$.
\begin{itemize}
\item      If      $h<\xi_1^{(n)}$,       the      decomposition      of
  $\bT_n\circledast_x^{\rm{g}}[0,1-h]$ according to its lowest branching
vertex is given  by $(h, [0,1] , (1-h)^{-1} \bT'_n)$  where $\bT'_n$ is
  as  the  tree $\bT_n$  but  for  its  lowest  branch whose  length  is
  $\xi_1^{(n)}-h$ instead  of $\xi_1^{(n)}$.  Notice that the  shapes of
  the tree  $\bT'_n$ and $ \bT_n$  are the same. Then  using again
  the property of  conditioned uniform random variables,  we deduce that
  conditionally on $\{\xi_1^{(n)}\ge h\}$, the tree $ (1-h)^{-1} \bT'_n$
  is distributed as $\bT_n$. Thus, we get:
\begin{equation}
   \label{eq:0Xg}
   \E\left[\int _{\bT_n} \ind_{\{H(x)< \xi_1^{(n)}\}}\,
     \length(\rd x) \, G\bigl(\bT_n\circledast_x^{\rm{g}}[0,1-h]\bigr) \right]
   = \E\left[\int_0^{\xi_1^{(n)}} g_1(h) \, \rd h \right] \, f_2(1)     f_3(n).
\end{equation}

By symmetry, we have:
\begin{equation}
   \label{eq:0Xd}
   \E\left[\int _{\bT_n} \ind_{\{H(x)<\xi_1^{(n)}\}}\,  \length(\rd x)
   \,   G\bigl(\bT_n\circledast_x^{\rm{d}}[0,1-h]\bigr) \right]
   = \E\left[\int_0^{\xi_1^{(n)}} g_1(h) \, \rd h \right] \, f_2(n)
     f_3(1).
\end{equation}

\item For $x\in \tilde \bT_{n}^{\rm{g}}$,
   the
decomposition of $\bT_n\circledast _x^\varepsilon [0,1-h]$ according to its lowest
branching vertex is given by $(\xi_1^{(n)},(1-h)^{-1}\Tau', \bT_{n}^{\rm{d}})$,  where
$\Tau'=\tilde \bT_{n}^{\rm{g}} \circledast_x^\varepsilon[0,
1-h]$. Notice that the length measure on the tree
$\bT_{n}^{\rm{g}}$ is obtained by scaling by  $(1-\xi_1^{(n)})^{-1}$ the
length measure on $\bT_n$ restricted to $\tilde\bT_{n}^{\rm{g}}$.
We deduce that:
\begin{multline*}
   \E\left[\int _{\bT_n} \ind_{\{x\in \tilde \bT_{n}^{\rm{g}}\}}\,
     \length^{\bT_n}(\rd x)\, G\bigl(\bT_n\circledast _x^{\varepsilon}[0,1-h]\bigr)
     \right]\\
   \begin{aligned}
     &   =  \E\left[g_1(\xi_1^{(n)}) g_3(\bT_{n}^{\rm{d}})  \int
       _{\bT_n }
       \ind_{\{x\in \tilde \bT_{n}^{\rm{g}}\}}\,  \length^{\bT_n}(\rd x)\,
    g_2\Bigl((1-\xi_1^{(n)})^{-1}\bigl(\tilde \bT_{n}^{\rm{g}}
   \circledast_x^{\varepsilon}[0,1-H(x)]\bigr)\Bigr)\right] \\
 & =  \E\left[  g_1(\xi_1^{(n)}) g_3(\bT_{n}^{\rm{d}})  \int
   _{\bT_n^{\rm{g}} } (1-\xi_1^{(n)})
   \length^{\bT_n^{\rm g}}(\rd y) \, g_2\Bigl( \bT_{n}^{\rm{g}} \circledast_y^{\varepsilon}\bigl[0, 1
   -H(y)\bigr]\Bigr)  \right] \\
& =  \E\left[ (1-\xi_1^{(n)}) g_1(\xi_1^{(n)})\right]
    \inv{n-1}\sum_{i=1}^{n-1} f_3(n-i)
    \E\left[  \int _{\bT_i}  \length^{\bT_i}(\rd y) \,
        g_2\Bigl(\bT_{i} \circledast_y^{\varepsilon}\bigl[0,1-H(y)\bigr]\Bigr)\right].
\end{aligned}
\end{multline*}
where we used the distribution of $(\bT_n^{\rm g},\bT_n^{\rm d})$ conditionally on $\xi_1^{(n)}$ and $I_{n}$ for the last equality.
Using that, by induction, \eqref{eq:Tn*=Tn+1} holds for $n=i$, we get:
\begin{multline}
   \label{eq:Xg}
  \E\left[\int _{\bT_n} \ind_{\{x\in \tilde \bT_{n}^{\rm{g}}\}}\,
     \length(\rd x)\, G\bigl(\bT_n\circledast _x^{\varepsilon}[0,1-h]\bigr)
     \right]\\
 \begin{aligned}
& =  \E\left[ (1-\xi_1^{(n)}) g_1(\xi_1^{(n)})\right]
\inv{n-1}\sum_{i=1}^{n-1} \frac{i+1}{2}\, f_2(i+1)\,  f_3(n-i)\\
& =  \E\left[ (1-\xi_1^{(n)}) g_1(\xi_1^{(n)})\right]
 \inv{n-1}\sum_{i=2}^{n} \frac{i}{2}\, f_2(i)\,  f_3(n-i+1).
 \end{aligned}
\end{multline}

\item By symmetry, for $x\in \tilde
\bT_{n}^{\rm{d}}$, we get:
\begin{multline}
   \label{eq:Xd}
  \E\left[\int _{\bT_n} \ind_{\{x\in \tilde \bT_{n}^{\rm{d}}\}}\,
     \length(\rd x)\, G(\bT_n\circledast _x^{\varepsilon}[0,1-h])
     \right]\\
 \begin{aligned}
& =  \E\left[ (1-\xi_1^{(n)}) g_1(\xi_1^{(n)})\right]
 \inv{n-1}\sum_{i=2}^{n} \frac{i}{2}\, f_3(i)\,  f_2(n-i+1)\\
&  =  \E\left[ (1-\xi_1^{(n)}) g_1(\xi_1^{(n)})\right]
 \inv{n-1}\sum_{i=1}^{n-1} \frac{n-i+1}{2}\, f_2(i)\,  f_3(n-i+1).
 \end{aligned}
\end{multline}
 \end{itemize}
 Summing     \eqref{eq:0Xg}      times     $\P(\varepsilon=\rm{g})=1/2$,
 \eqref{eq:0Xd}  times  $\P(\varepsilon=\rm{d})=1/2$, \eqref{eq:Xg}  and
 \eqref{eq:Xd},   and   using   that  $\xi_1^{(n)}$   has   distribution
 $\beta(1, n-1)$ so that:
 \[
   \E\left[\int_0^{\xi_1^{(n)}}
     g_1(h) \, \rd h \right] =
\inv{n-1} \E\left[ (1-\xi_1^{(n)}) g_1(\xi_1^{(n)})\right]
= \int_0^1  g_1(h) (1-h)^{n-1}\, \rd h,
\]
we deduce that:
 \begin{multline*}
 \E\left[\int_{\bT_n}\length(\rd x)\,
   G\Bigl(\bT_n\circledast_x^{\varepsilon }\bigl[0,1-H(x)\bigr]\Bigr)\right]\\
     =\left(\int_0^1  g_1(h) (1-h)^{n-1}\, \rd h
     \right)
      \sum_{i=1}^n \frac{n+1}{2} \, f_2(i) f_3(n+1-i).
\end{multline*}
Thanks to \eqref{eq:FTn+1}, we deduce that \eqref{eq:Tn*=Tn+1} holds for
 $G$ given by
\eqref{eq:F=ggg}. Then use a monotone class argument to conclude
that \eqref{eq:Tn*=Tn+1} holds for
 any measurable non-negative $G$. This concludes the proof by induction.
\end{proof}

 We now  consider  $t\geq 0$ and assume that the probability distribution $\nu$ has a positive
density $\fd$ with respect to the Lebesgue measure on $[0, t]$.
Let $F$  denote the  cumulative distribution function  of $\nu$.  By the
assumptions on $\fd$, $F$ is a bijection  from $[0,t]$ onto $[0,1]$ and its
inverse  $F^{-1}$  is  continuous.   For  a  compact  rooted  real  tree
$(T,d,\root)$, we define:
\begin{align*}
\forall x\in T, & \ H^\fd(x)=F^{-1}(H(x)),\\
\forall x,y\in T, & \ d^\fd(x,y)=H^\fd(x)+H^\fd(y)-2H^\fd(x\wedge y).
\end{align*}
  The scaling map $R^\fd: (T,d,\root)\longmapsto (T,d^\fd,\root)$ is then
  well-defined from $\{T\in \TK:\ H(T)\le 1\}$ to $\TK$. We shall now prove it is continuous.

\begin{lem}
   \label{lem:def-Rf}
  The map $R^\fd$ from $\{T\in \TK,\ H(T)\le 1\}$ to $\TK$   is  continuous.
\end{lem}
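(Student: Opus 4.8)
The plan is to run the usual correspondence argument, transferring control of distortions from the metric $d$ to the rescaled metric $d^\fd$ via the uniform continuity of $F^{-1}$. Write $\phi=F^{-1}\colon[0,1]\to[0,t]$; since $F$ is a continuous increasing bijection from $[0,t]$ onto $[0,1]$, the map $\phi$ is a homeomorphism and hence uniformly continuous on the compact interval $[0,1]$. Its modulus of continuity $\omega(s)=\sup\{|\phi(a)-\phi(b)|\,\colon\, a,b\in[0,1],\, |a-b|\le s\}$ is non-decreasing, right-continuous, and satisfies $\omega(s)\to 0$ as $s\to 0+$. The role of the hypothesis $H(T)\le 1$ is precisely that it keeps every height occurring below inside $[0,1]$, the domain of $\phi$; note also that $\phi(0)=0$, since $\nu$ has a density and hence $F(0)=0$, which is what makes $R^\fd(T)$ a rooted tree.

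Next I would fix two elements $(T,d,\root)$ and $(T',d',\root')$ of $\{T\in\TK\colon H(T)\le 1\}$ together with a correspondence $\cR$ between them, and set $\eta=\dist(\cR)$. For pairs $(x,x')$ and $(y,y')$ in $\cR$, the rescaled distances are, by definition, $d^\fd(x,y)=\phi(H(x))+\phi(H(y))-2\phi(H(x\wedge y))$ and $(d')^\fd(x',y')=\phi(H'(x'))+\phi(H'(y'))-2\phi(H'(x'\wedge y'))$, the heights and most recent common ancestors on the primed side being computed in $(T',d',\root')$. Because $(\root,\root')\in\cR$ we have $|H(x)-H'(x')|\le\eta$ and $|H(y)-H'(y')|\le\eta$, while the identity $2H(x\wedge y)=H(x)+H(y)-d(x,y)$ (and its analogue in $T'$) gives $|H(x\wedge y)-H'(x'\wedge y')|\le\tfrac{3}{2}\eta$. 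Applying $\phi$ coordinatewise and using that $\omega$ is non-decreasing,
\[
\bigl|d^\fd(x,y)-(d')^\fd(x',y')\bigr|\le 2\,\omega(\eta)+2\,\omega\!\bigl(\tfrac{3}{2}\eta\bigr)\le 4\,\omega\!\bigl(\tfrac{3}{2}\eta\bigr).
\]
Since $\cR$ is a subset of $T\times T'$ containing $(\root,\root')$ and projecting onto both factors, it is still a correspondence between the rooted trees $R^\fd(T)$ and $R^\fd(T')$, and therefore $\dgh\bigl(R^\fd(T),R^\fd(T')\bigr)\le 2\,\omega\!\bigl(\tfrac{3}{2}\,\dist(\cR)\bigr)$.

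Finally I would let $\cR$ range over all correspondences: since $\dgh(T,T')=\tfrac{1}{2}\inf_\cR\dist(\cR)$ and $\omega$ is non-decreasing and right-continuous, the previous bound becomes
\[
\dgh\bigl(R^\fd(T),R^\fd(T')\bigr)\le 2\,\omega\!\bigl(3\,\dgh(T,T')\bigr),
\]
whose right-hand side tends to $0$ as $\dgh(T,T')\to 0$, since $\omega(0+)=0$. This proves that $R^\fd$ is (in fact uniformly) continuous on $\{T\in\TK\colon H(T)\le 1\}$, and in passing re-establishes that it descends to equivalence classes. I do not expect a genuine obstacle here; the only points needing a little care are the elementary bookkeeping of the three heights $H(x)$, $H(y)$, $H(x\wedge y)$ in the distortion estimate, and the already granted fact that the displayed formula for $d^\fd$ is indeed the distance of the rescaled tree at these representatives.
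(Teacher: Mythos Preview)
Your proof is correct and follows essentially the same approach as the paper: both arguments use uniform continuity of $F^{-1}$, keep the same correspondence $\cR$, and control $|d^\fd(x,y)-(d')^\fd(x',y')|$ via the three height differences $|H(x)-H'(x')|\le\eta$, $|H(y)-H'(y')|\le\eta$, and $|H(x\wedge y)-H'(x'\wedge y')|\le\tfrac{3}{2}\eta$. The only cosmetic difference is that you package the estimate through the modulus of continuity $\omega$ to obtain the quantitative bound $\dgh(R^\fd(T),R^\fd(T'))\le 2\,\omega(3\,\dgh(T,T'))$, whereas the paper runs the same computation in $\varepsilon$--$\delta$ form.
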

\begin{proof}
Let $\varepsilon>0$. As $F^{-1}$ is uniformly continuous with our assumptions, there exists $\delta>0$ such that, for every $x,y\in[0,1]$:
\[
|x-y|<\delta \Longrightarrow \bigl|F^{-1}(x)-F^{-1}(y)\bigr|\le \frac{\varepsilon}{2}\cdot
\]
Let $T,T'\in\TK$ with $H(T)\le 1$ and $H(T')\le 1$  such that $\dgh(T,T')<\delta/8$. Then,  there exists a
correspondence $\cR$  between (elements in the  equivalence classes) $T$
and $T'$ such that $\dist(\cR)\le 2\dgh(T,T')+\delta/4<\delta/2$.

For every $(x,x'),(y,y')\in\cR$, we have:
\begin{align*}
\bigl|d^\fd(x,y)-d'{}^\fd(x',y')\bigr| & =\bigl| H^\fd(x)+H^\fd(y)-2H^\fd(x\wedge
                               y)-H^\fd(x')\\
  & \hspace{4cm} -H^\fd(y')+2H^\fd(x'\wedge y')\bigr|\\
& \le \Bigl|F^{-1}\bigl(H(x)\bigr)-F^{-1}\bigl(H(x')\bigr)\Bigr|+\Bigl|F^{-1}\bigl(H(y)\bigr)-F^{-1}\bigl(H(y')\bigl)\Bigr|\\
&\hspace{4cm}+2\Bigl|F^{-1}\bigl(H(x\wedge y)\bigr)-F^{-1}\bigl(H(x'\wedge y')\bigr)\Bigr|.
\end{align*}
As $(x,x')\in\cR$,  we have $\bigl|H(x)-H(x')\bigr|\le \dist(\cR)<\delta$  and consequently,
$\Bigl|F^{-1}\bigl(H(x)\bigr)-F^{-1}\bigl(H(x')\bigr)\Bigr|<\varepsilon/2$.                 Similarly,
we have $\Bigl|F^{-1}\bigl(H(y)\bigr)-F^{-1}\bigl(H(y')\bigr)\Bigr|<\varepsilon/2$.  We also have:
\begin{align*}
  \bigl|H(x\wedge y)-H(x\wedge y)\bigr|
  & =\frac{1}{2}\bigl|H(x)+H(y)-d(x,y)-H(x')-H(y')+d'(x',y')\bigr|\\
& \le \frac{1}{2}\bigl|H(x)-H(x')\bigr|+\frac{1}{2}\bigl|H(y)-H(y')\bigr|+\frac{1}{2}\bigl|d(x,y)-d'(x',y')\bigr|\\
  & \le \frac{3}{2} \dist(\cR)\\
  &<\delta.
\end{align*}
This gives $\Bigl|F^{-1}\bigl(H(x\wedge y)\bigr)-F^{-1}\bigl(H(x'\wedge y')\bigr)\Bigr|<\varepsilon/2$.

To conclude,  we have $\dist\!\!\!{}^\fd(\cR)<2\varepsilon$  which implies
that $d^\fd_{\mathrm{GH}}(T,T')<\varepsilon$. This gives the continuity of
the map $R^\fd$.
\end{proof}

We now prove Lemma~\ref{lem:Tn*=Tn+1}.  Recall that $\bT_n$ denotes the trees
constructed with the probability measure $\nu(\rd x)=\fd(x)\, \rd x$ and
$\bTun$ the trees  constructed with the uniform  distribution on $[0,1]$
as studied in the first step.  By construction, for all $n\in \N^*$, the
random variables $R^\fd(\bTun)$ and  $\bT_n$ have the same distribution.
Notice also that, for every  $T\in\TK$ and every non-negative measurable
function $g$ on $\R_+\times \TL$, we have:
\[
\int _T \length^T(\rd y)\, g\bigl(H(y), T\bigr)=\int_{R^\fd(T)}
\length^{R^\fd(T)}(\rd x)\,
\fd\bigl(H^\fd(x)\bigr)\, g\bigl(H^\fd(x), R^\fd(T)\bigr).
\]

Let $G$ be a measurable non-negative  functional defined on the space of
rooted compact binary  planar trees with a finite number  of leaves, all
of them at height $t$. We first have:
\begin{multline*}
\E\left[\int_{\bT_n}\length^{\bT_n}(\rd x)\, \fd\bigl(H(x)\bigr) \,
  G\Bigl(\bT_n\circledast_x^\varepsilon \bigl[0,t-H(x)\bigr]\Bigr)\right]\\
\begin{aligned}
&= \E
\left[\int_{R^{\fd}(\bTun)}\length^{R^\fd(\bTun)}(\rd x)\, \fd\bigl(H^\fd(x)\bigr)\,
  G\Bigl(R^\fd(\bTun)\circledast_x^\varepsilon\bigl[0,t-H^\fd(x)\bigr]\Bigr)\right]\\
&=\E \left[\int_{\bTun}\length^{\bTun}(\rd y)\,
\,  G\circ R^\fd\Bigl(\bTun\circledast_y^\varepsilon\bigl[0,1-H(y)\bigr]\Bigr)\right].
\end{aligned}
\end{multline*}
Applying Lemma~\ref{lem:Tn*=Tn+1}, and then that $R^\fd(\bTunn)$ and
$\bT_{n+1}$ have the same distribution, we get the result.

\subsubsection{An infinite tree with no leaves}
\label{sec:tree_ske}

Let $\fint$ be a positive locally integrable function on
$[0,+\infty)$. Let $S$ be a Poisson point measure on $\R_+$ with
intensity $\fint(t)\, \rd t$. We denote by $(\xi_i, i\ge 1)$ the
increasing sequence of the atoms of $S$ and by $N$ the process $\bigl(N_t=S\bigl([0,t]\bigr),\ t\ge 0\bigr)$.

Let $(\varepsilon_n,n\ge  1)$ be independent random  variables uniformly
distributed  on  $\{{\rm  g},{\rm  d}\}$   and  let  $(K_n,n\ge  1)$  be
independent random variables uniformly distributed on $\{1,2,\ldots,n\}$
respectively, all  these variables being independent  and independent of
$S$.

We  define  a tree-valued  process  $(\fT_t,t\ge  0)$ where,  for  every
$t\ge 0$, the random tree $\fT_t$ has height $t$ and $N_t+1$ leaves, all
of them  at  height  $t$.  Before  going into this  construction, we
first define a growing procedure  on rooted $n$-pointed trees
for $n\in \N^*$:
\begin{equation}
   \label{eq:growth}
   \mathrm{Growth}_n((T,  \bv),h)\in \TDn
   \quad\text{with}\quad
   (T, \bv)\in \TDn
   \quad\text{and}\quad
   h\in \R_+, 
 \end{equation}
 as the tree  obtained by grafting on all the  distinguished vertices of
 $T$, but the root (that is,  on $\bv^*=(v_1, \ldots, v_n)$) a branch of
 length  $h$, distinguishing  the new  leaves with  the order  naturally
 induced by $\bv^*$  and removing the vertices $\bv^*$ from  the list of
 distinguished  vertices.   This function is formally defined in
Section~\ref{sec:growth}, see also Lemma~\ref{lem:growth-meas} for its
measurability. 
\medskip

We can now construct the process $(\fT_t,t\ge 0)$ inductively.
For  $0\le t\le \xi_1$, we set
$\fT_{t}=\bigl([0,t],(0,t)\bigr)$ and $N_t=0$.

Let  $n\in \N^*$  and assume  that  $(\fT_{\xi_n},\bv_n)$ is  a tree  of
height  $\xi_n$ with  $n$  leaves, all  of them  at  height $\xi_n$  and
distinguished  (\emph{i.e.}   the  vector  $\bv_n$   is  composed  of  the   root  of
$\fT_{\xi_n}$  and  all its  leaves).  Then,  we  define the  process  on
$(\xi_n,\xi_{n+1}]$ by setting, for every $t\in (\xi_n,\xi_{n+1}]$:
\[
  \fT_{t}=\mathrm{Growth}_n(\fT_{\xi_n},t-\xi_n)
  \circledast_{K_n,\xi_n}^{\varepsilon_n}[0,t-\xi_n]
  \quad\text{and}\quad
  N_t=n.
\]

Standard properties of Poisson processes give the following result.

\begin{lem}\label{lem:T_ske distribution}
For every $n\ge 1$ and every $t>0$, conditionally given $N_t=n-1$, the
tree $\fT_t$ is distributed as the tree $\bT_n$ of Section \ref{sec:bT}
associated with the density $\fd$ on $[0, t]$ given by:
\begin{equation}
   \label{eq:def-fd-fi}
  \fd(u)=\frac{\fint(u)}{F(t)}\ind_{[0,t]}(u)
  \quad\text{with}\quad F(t)=\int_0^t \fd(u)\, \rd u.
\end{equation}
\end{lem}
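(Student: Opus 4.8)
The plan is to reduce the statement to the classical description of the atoms of a Poisson point measure conditioned on their number, after which the whole content becomes the verification that the two recursive constructions --- that of $\fT_t$ in this section and that of $\bT_n$ in Section~\ref{sec:bT} --- are the \emph{same} measurable map applied to the \emph{same} randomness.

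First I would record the Poisson input. Since $\fint$ is locally integrable, $F(t)=\int_0^t\fint(u)\,\rd u$ is finite and $\fd=F(t)^{-1}\fint\ind_{[0,t]}$ is a probability density on $[0,t]$, so $\nu(\rd u)=\fd(u)\,\rd u$ is precisely the law used in the definition of $\bT_n$. The variable $N_t=S([0,t])$ is Poisson with parameter $F(t)$, and the classical property that, conditionally on the number of atoms of $S$ in $[0,t]$, these atoms are i.i.d.\ with law proportional to the intensity gives that, conditionally on $\{N_t=n-1\}$, the atoms $\xi_1<\dots<\xi_{n-1}$ of $S$ in $[0,t]$ are distributed as the increasing order statistics of $n-1$ i.i.d.\ random variables with law $\nu$. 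Since $(\varepsilon_i,i\ge1)$ and $(K_i,i\ge1)$ are independent of $S$, conditionally on $\{N_t=n-1\}$ they remain i.i.d.\ with their prescribed marginals and independent of $(\xi_1,\dots,\xi_{n-1})$.

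Next I would identify the two recursions. On $\{N_t=n-1\}$ one has $\xi_{n-1}\le t<\xi_n$, and unwinding the inductive definition of $(\fT_u,u\ge 0)$ from $u=0$ up to $u=t$ shows that $\fT_t$ is obtained from $\bigl([0,t],(0,t)\bigr)\in\TDu$ by performing, for $k=1,\dots,n-1$ in succession, the operation of grafting a segment of length $t-\xi_k$ at height $\xi_k$, on the left or the right (governed by $\varepsilon_k$) of the $K_k$-th among the $k$ current leaves. The repeated use of $\mathrm{Growth}$ here only extends every existing leaf up to the running height, so that by the time $t$ is reached all leaves sit at height $t$ --- which is exactly the situation built in directly in the construction of $\bT_n$ through~\eqref{eq:Tk+1}, where the $k$-th graft is attached at the $k$-th smallest of the sampled heights. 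One checks that the planar labelling of the successive new leaves (the ``order naturally induced by $\bv$'' in $\mathrm{Growth}$) matches the labelling produced by the grafts $\circledast^{\varepsilon}_{K_k,\cdot}$, so that both recursions realise the same $\TDn$-valued map of the data $\bigl((\xi_k)_{k=1}^{n-1},(K_k),(\varepsilon_k)\bigr)$, resp.\ $\bigl((\xi_k^{(n)})_{k=1}^{n-1},(K_k),(\varepsilon_k)\bigr)$; measurability of this map is ensured by Lemma~\ref{lem:graft2-cont} and Lemma~\ref{lem:erase-cont}.

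Combining the two steps: conditionally on $\{N_t=n-1\}$ the input to this map has the same joint law as the input that produces $\bT_n$ in Section~\ref{sec:bT} with the density $\fd$ of~\eqref{eq:def-fd-fi}, so applying the map yields that $\fT_t$, conditioned on $\{N_t=n-1\}$, is distributed as $\bT_n$. I expect the only delicate step to be the combinatorial bookkeeping inside the second step --- verifying that the ``grow, then graft at height $\xi_k$'' prescription defining $\fT$ produces the same \emph{planar} pointed tree as the direct graft~\eqref{eq:Tk+1} defining $\bT_n$, with all the index conventions ($\xi_0=0$; the already-ordered atoms of $S$ versus the order statistics of an i.i.d.\ sample; the shifts in the $\varepsilon$ and $\xi^{(n)}$ indices) correctly aligned. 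The Poisson part is genuinely routine, as the ``standard properties of Poisson processes'' in the statement suggests.
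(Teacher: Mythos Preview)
Your proposal is correct and follows exactly the approach the paper intends: the paper's own proof is the single sentence ``Standard properties of Poisson processes give the following result,'' and your argument spells out precisely those standard properties (conditional i.i.d.\ description of the atoms given their number, independence of the auxiliary variables) together with the verification that the two recursive constructions coincide as measurable maps of the same input. Your closing remark about the index bookkeeping is well placed; that alignment is the only thing one actually has to check beyond the textbook Poisson fact.
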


%Recall the definition of the  function $\Pi^\circ_n$ which removes the distinguished
%vertices.  
We now view the tree $\fT_t$ as a real-tree of $\TL$ (we forget about
the distinguished leaves which is a continuous operation thanks to
Lemma~\ref{lem:erase-cont}). It  is
easy to  see that the process  $(\fT_t,t\ge 0)$ 
satisfies the Cauchy property in      $\TL$      as
$r_s(\fT_t)=r_s(\fT_{t'})$ for every  $s\le t\le t'$. Thus
this sequence converges a.s. in $\TL$, and we write:
\begin{equation}\label{eq:def_Tske}
\fTs=\lim_{t\to+\infty}\fT_t.
\end{equation}
The  tree $\fTs$  is  a $\TL$-valued random variable which has  no
leaves. %and thus belongs to $\TLnl$, 
%and the process $\bigl(\fTs_t=r_t(\fTs), t\geq 0\bigr)$ is distributed as
%the process $\bigl(\Pi^\circ_{N_t+1}(\fT_t), t\geq 0\bigr)$.
The tree $\fTs$  will serve  as a backbone  for the description  of the
genealogical tree of the conditioned CSBP.

\medskip

We present now an ancillary result which is interesting by itself; it is
a consequence  of Lemma~\ref{lem:Tn*=Tn+1} on two  tree-valued processes
that have the same one-dimensional marginal.

We first consider the process $(\fT_t,t\ge 0)$ associated with the
intensity $\fint\equiv 1$, that is,  $\fint(t)=1$ for all $t\geq 0$.
Then we  construct a sequence  $\bt=(\bt_n, n\ge 1)$ of  increasing real
trees, with  $\bt_n\in \TKn$ for every  $n\ge 1$, all of  them of height
1.  Let  $(\varepsilon_k,  k\ge  1)$  be  independent  random  variables
uniformly distributed on  $\{{\rm g},{\rm d}\}$. We  define the sequence
$\bt$ by induction by setting first $\bt_1=\bigl([0,1],(0,1)\bigr)$. Let $n\geq 1$
and assume  that $(\bt_n,\bv_n)$ is a  tree of $\TKn$ with  height 1 and
with $n$ leaves all of them at height $1$.  Conditionally given $\bt_n$,
let $V_{n+1}$ be a random  element on $\bt_n$ uniformly chosen according
to the length measure; that is $V_{n+1}$ is distributed according to the
measure  $c_n\,  \length$,  with  $\length$  the  length
measure on $\bt_n$ and the normalization $c_n=1/\length(\bt_n)$.
Notice that $V_{n+1}$ is a.s. not a leaf nor the root of $\bt_n$. Then we set:
\[
\bt_{n+1}=\bt_n\circledast_{V_{n+1}}^{\varepsilon_{n+1}}\bigl[0,1-H(V_{n+1})\bigr].
\]
In particular, for every measurable nonnegative function $G$, we have:
\begin{equation}\label{eq:def_bt}
\E\bigl[G(\bt_{n+1})|\, \bt_1, \ldots, \bt_n, \varepsilon_{n+1}\bigr]=\int_{\bt_n}\frac{\length(\rd
  x)}{\length(\bt_n)}\, G\Bigl(\bt_n\circledast
_x^{\varepsilon_{n+1}}\bigl[0,1-H(x)\bigr]\Bigr).
\end{equation}

Recall     the     measurable     function     $\tilde     N_t$     from
Definition~\eqref{eq:def-tildeN} which records the number of vertices at
level       $t$       of        a       tree       without       leaves:
$     \tilde     N_t(T)=\Card\Bigl(\bigl\{     x\in     T\,     \colon\,
H(x)=t\bigr\}\Bigr)$. Let us consider the continuous (see Lemma~\ref{lem:erase-cont}) canonical projection $\Pi_n^\circ : \TKn\longrightarrow \TK$  defined by $\Pi_n^\circ(\bt,\bv)=\bt$.
\begin{prop}
  Let $n\geq 1$ and  $\fint \equiv 1$. For all
  measurable non-negative functional $G$ defined on $\TDn$,
 we have, with $\length$ the length measure on $\bt_n$:
\begin{equation}
   \label{eq:bt=bT}
     \E\left[G(\fT_1)\Bigm| N_1=n-1\right]
    =\frac{2^{n-1}}{n!} \E\left[G(\bt_n) \,
        \prod_{k=1}^{n-1} \length(\bt_{k})\right],
    \end{equation}
    and for all  measurable non-negative functional $G$ defined on $\TK$
    (or on $\TL$):
\begin{equation}
   \label{eq:bt=bT-bis}
    \E\left[G(\fTs_1)\Bigm| \tilde N_1(\fTs)=n\right]
    =   \E\left[G\circ \Pi^\circ_n(\fT_1)\Bigm| N_1=n-1\right].
  \end{equation}
\end{prop}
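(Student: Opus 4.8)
The plan is to prove the two displayed identities separately. Identity \eqref{eq:bt=bT} is the substantive one and is established by induction on $n$, with the induction step driven by Lemma~\ref{lem:Tn*=Tn+1_unif} and the recursion \eqref{eq:def_bt} defining the trees $\bt_n$; identity \eqref{eq:bt=bT-bis} is a bookkeeping consequence of the construction of $\fTs$. For \eqref{eq:bt=bT}: since $\fint\equiv 1$ we have $F(1)=\int_0^1\rd u=1$, so the density in \eqref{eq:def-fd-fi} is $\fd=\ind_{[0,1]}$; thus by Lemma~\ref{lem:T_ske distribution}, conditionally on $\{N_1=n-1\}$ (an event of positive probability, $N_1$ being Poisson with parameter $1$), the tree $\fT_1$ is distributed as $\bTun$, the tree $\bT_n$ built from the uniform law on $[0,1]$. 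So I would reduce \eqref{eq:bt=bT} to proving, for every measurable non-negative $G$ on $\TDn$,
\[
\E\bigl[G(\bTun)\bigr]=\frac{2^{n-1}}{n!}\,\E\Bigl[G(\bt_n)\,\prod_{k=1}^{n-1}\length(\bt_k)\Bigr],
\]
by induction on $n\ge 1$; the base case $n=1$ is trivial, both $\bTun$ (for $n=1$) and $\bt_1$ being the deterministic tree $\bigl([0,1],(0,1)\bigr)$ and the empty product being $1$.

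For the inductive step, suppose the identity above holds at level $n$, let $G$ be a measurable non-negative functional on $\TDo$, and let $\varepsilon$ be $\{{\rm g},{\rm d}\}$-valued uniform, independent of everything. Lemma~\ref{lem:Tn*=Tn+1_unif} gives
\[
\E\bigl[G(\bTunn)\bigr]=\frac{2}{n+1}\,\E\left[\int_{\bTun}\length(\rd x)\,G\Bigl(\bTun\circledast_x^\varepsilon\bigl[0,1-H(x)\bigr]\Bigr)\right].
\]
For each fixed value of $\varepsilon$, the map $T\mapsto\tilde G_\varepsilon(T):=\int_T\length^T(\rd x)\,G\bigl(T\circledast_x^\varepsilon[0,1-H(x)]\bigr)$ is a measurable non-negative functional on $\TDn$ (this is checked as in Remark~\ref{rem:comment}, writing the integral as a finite sum of edge-integrals and using the continuity of the grafting maps, Lemma~\ref{lem:graft2-cont}), so I can apply the inductive hypothesis to $\tilde G_\varepsilon$ for each value of $\varepsilon$ and average, obtaining $\E[G(\bTunn)]=\frac{2^{n}}{(n+1)!}\,\E[\tilde G_{\varepsilon_{n+1}}(\bt_n)\prod_{k=1}^{n-1}\length(\bt_k)]$, where I have identified the averaging variable with the variable $\varepsilon_{n+1}$ of the construction of $\bt_{n+1}$. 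Now \eqref{eq:def_bt} reads $\tilde G_{\varepsilon_{n+1}}(\bt_n)=\length(\bt_n)\,\E[G(\bt_{n+1})\mid\bt_1,\dots,\bt_n,\varepsilon_{n+1}]$, and since $\prod_{k=1}^{n-1}\length(\bt_k)$ is $\sigma(\bt_1,\dots,\bt_n)$-measurable, the tower property gives $\E[\tilde G_{\varepsilon_{n+1}}(\bt_n)\prod_{k=1}^{n-1}\length(\bt_k)]=\E[G(\bt_{n+1})\prod_{k=1}^{n}\length(\bt_k)]$, which is precisely the identity at level $n+1$. This closes the induction.

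For \eqref{eq:bt=bT-bis} I would argue directly: the process $(\fTs_t=r_t(\fTs),t\ge 0)$ is distributed as $(\Pi^\circ_{N_t+1}(\fT_t),t\ge 0)$, and $\fT_t$ has height $t$ with $N_t+1$ leaves, all at level $t$; hence no vertex of $\fT_t$ lies strictly above level $t$, the vertices of $\fT_t$ at level $t$ are exactly its $N_t+1$ leaves, and so $\fTs$ has no leaves and exactly $N_1+1$ vertices at level $1$. Thus $\tilde N_1(\fTs)=N_1+1$, a count that is read off from $\fTs_1=r_1(\fTs)$; evaluating the process identity at $t=1$ and recording this count on both sides yields $\bigl(\fTs_1,\tilde N_1(\fTs)\bigr)\overset{(d)}{=}\bigl(\Pi^\circ_{N_1+1}(\fT_1),N_1+1\bigr)$, with $N_1+1$ on the right the number of leaves of $\fT_1$. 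Conditioning both sides on this common value being $n$ (so $N_1=n-1$ on the right, whence $\Pi^\circ_{N_1+1}(\fT_1)=\Pi^\circ_n(\fT_1)$) gives \eqref{eq:bt=bT-bis}. The only genuinely delicate point in the whole argument is verifying that $\tilde G_\varepsilon$ is a bona fide measurable functional on $\TDn$ (so the inductive hypothesis applies) and that the weight $\prod_{k=1}^{n-1}\length(\bt_k)$ factors correctly through \eqref{eq:def_bt}; once these are settled, \eqref{eq:bt=bT} is entirely driven by Lemma~\ref{lem:Tn*=Tn+1_unif}, and \eqref{eq:bt=bT-bis} is immediate.
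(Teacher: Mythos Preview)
Your proof is correct and follows essentially the same approach as the paper: both arguments reduce \eqref{eq:bt=bT} via Lemma~\ref{lem:T_ske distribution} to an identity for $\bTun$, prove that identity by induction with the inductive step driven by Lemma~\ref{lem:Tn*=Tn+1_unif} (the paper cites the more general Lemma~\ref{lem:Tn*=Tn+1}, but with $\fd\equiv 1$ it is the same) combined with the recursion \eqref{eq:def_bt}, and derive \eqref{eq:bt=bT-bis} directly from the joint distributional identity between $\bigl(\fTs_t,\tilde N_t(\fTs)\bigr)$ and $\bigl(\Pi^\circ_{N_t+1}(\fT_t),N_t+1\bigr)$. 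Your write-up is slightly more explicit about the measurability of $\tilde G_\varepsilon$ and the tower-property step, but the substance is identical.
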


\begin{proof}
  By      construction,      we      have     that      the      process
  $\left(\bigl(\fTs_t,  \tilde   N_t(\fTs)\bigr),  t\geq   0\right)$  is
  distributed                as               the                process
  $\Bigl(\bigl(\Pi^{\circ}_{N_t+1}(\fT_t),       N_t+1\bigr),      t\geq
  0\Bigr)$. This gives~\eqref{eq:bt=bT-bis}.

\medskip

We   now  prove~\eqref{eq:bt=bT}   by   induction.    Thanks  to   Lemma
\ref{lem:T_ske  distribution}, conditionally  given $N_1=n-1$,  the tree
$\fT_1$   is    distributed   as   $\bTun$.    For    $n=1$,   we   have
$\bTun=\bt_1=([0,1],(0,1))$ hence Equation  \eqref{eq:bt=bT} holds.  Let
us  suppose that~\eqref{eq:bt=bT}  holds  for some  $n\ge 1$.   Applying
Lemma \ref{lem:Tn*=Tn+1}, one gets:
\[
  \E\bigl[G(\bT^{\mathrm{unif}}_{n+1})\bigr]=\frac{2}{n+1}\E\left[\length(\bTun)
    \int_{\bTun}\frac{\length^\bTun(\rd x)}{\length^\bTun(\bTun)
  }G\Bigl(\bTun\circledast^\varepsilon _x\bigl[0,1-H(x)\bigr]\Bigr)\right].
\]
Now we apply the induction assumption for the right-hand side of the previous equation to get:
\begin{align*}
  \E\bigl[G(\bT^{\mathrm{unif}}_{n+1})\bigr]
  & =\frac{2}{n+1}\frac{2^{n-1}}{n!}\left[\length^{\bt_n}(\bt_n)\int_{\bt_n}
    \frac{\length^{\bt_n}(\rd x)}{\length^{\bt_n}(\bt_n)
    }G\Bigl(\bt_n\circledast ^{\varepsilon_{n+1}}_x\bigl[0,1-H(x)\bigr]\Bigr) \, \prod_{k=1}^{n-1} \length^{\bt_n}(\bt_{k})\right]\\
  & =\frac{2^n}{(n+1)!}\E\left[G(\bt_{n+1})\, \prod_{k=1}^{n} \length^{\bt_{n+1}}(\bt_{k})\right]
\end{align*}
by definition of the tree $\bt_{n+1}$ and  by \eqref{eq:def_bt}. This
gives that~\eqref{eq:bt=bT} holds with $n$ replaced by $n+1$. This
concludes the proof by induction.
\end{proof}

\subsection{The $n$-leaves decomposition of the Brownian CRT}
\label{sec:backbone}
The decomposition of a (sub)critical Brownian CRT ${\ct}$ according to a
spine $\lb\emptyset, x\rb$,  where $x\in\ct$ is a leaf  picked at random
at level $t>0$, that is according  to the local time $\Lambda_t(\rd x)$,
is given  in Theorem  4.5 in  \cite{DLG05}.  In our  setting, it  can be
rephrased  in the  next theorem.   Notice that,  for $t>0$,  the (planar
discrete) $1$-pointed tree $[0, t]\in \TLu$ denotes the segment $[0, t]$
endowed  with  the  Euclidean  distance,  with  the  root  $0$  and  the
distinguished vertex $t$.  Recall that the grafting operation $\graft_n$
on a $n$-pointed discrete tree  of trees formalized by atoms of a
Poisson   point   measure   $\cm$   has   been   intuitively   presented
in~\eqref{eq:rev-graft-n} (or~\eqref{eq:def-graftn-intuition}) and
formally defined       in         Section~\ref{sec:discrete-app},
see~\eqref{eq:def-graftn-TK} therein,  using the theoretical background
of Section~\ref{sec:graft}, so that $\graft_1([0,t],\cm)$ in~\eqref{eq:Bismut0}
 below is a well defined $\TLu$-valued random variable. 

% has
% \RA{Remark 44 of R1 to do} 

% Recall that $\TLsb$ defined in Section \ref{sec:TLs}  is the set
% of locally  compact rooted  trees with one  infinite marked  branch such
% that     the    root     is    not     a    branching vertex.
% From
% Remark~\ref{rem:graft-Tdis}, we  recall that informally  the $1$-pointed
% tree $(T, (\root, v_1))=\graft_1([0,t],T^*)$, with $T^*\in\TLsb$ is  the tree  where the marked branch
% has been cut at  height $t$ (the part above $t$  being removed), and the
% cut marked branch is identified with  $[0, t]$, so that the root $\root$
% of $T$ is identified with $0$  and the distinguished vertex $v_1$ of $T$
% is also identified with $t$.

\begin{theo}[\cite{DLG05}]
  \label{Thmbis}
  Let $\beta>0$,
  $\theta\geq  0$ and  $t>0$. Let $\cm$ be under $\E$ a Poisson measure
  with intensity $2\beta\,\ind_{[0,t]}(s)\rd s\N^\theta[\rd\ct]$. For every  non-negative  measurable
  functional $F$ on $\TLu$ (or $\TKu$), we have, with $\root$ the root of $\ct$:
\begin{equation}
  \label{eq:Bismut0}
\N^{\theta}\left[\int_{\ct}\Lambda_t(\rd v) \, F\bigl(\ct,(\root,v)\bigr)
\right]
=\expp{-2\beta\theta t}
  \E\left[F\bigl(\graft_1([0,t],\cm)\bigr)\right].
\end{equation}
\end{theo}

We extend this result to the super-critical case $\theta<0$.

\begin{cor}[One-leaf decomposition]
  \label{cor:Thmbis}
  Let $\beta>0$,
  $\theta\in \R$ and  $t>0$. Let $\cm$ be under $\E$ a Poisson measure
  with intensity $2\beta\, \ind_{[0,t]}(s)\rd s\N^\theta[\rd\ct]$. For every  non-negative  measurable
  functional $F$ on $\TLu$, Equation~\eqref{eq:Bismut0} holds.
 \end{cor}

\begin{proof}
Let $(T,\bv)\in \TLu$ with $\bv=(\root,v)$. We denote by $(T_i^\circ,i\in I)$ the connected components of the set $T\setminus\lb\root,v\rb$. For every $x\in T$, there exists a unique $x_i\in T$ such that $\cap_{x\in T_i^\circ}\lb\root,x\rb=\lb\root,x_i\rb$ and we set $T_i=T_i^\circ\cup\{x_i\}$ viewed as a real tree rooted at $x_i$. Then we define the point measure $\cmt(T,\bv)$ on $\R_+\times \TL$ by:
\[
\cmt(T,\bv)=\sum_{i\in I}\delta_{H(x_i),T_i}.
\]
This application is well defined according to
Corollary~\ref{cor:Span-Ma-1}. Even if we shall not use it as such, let us
mention that a.s.\ $\cmt(\graft_1([0,t],\cm))=\cm$; this  can be easily
deduced from Proposition~\ref{prop:meas(T)-mesurable}. 

 We first prove \eqref{eq:Bismut0}
  for functionals $F$ of the form:
\begin{equation}
   \label{eq:F=expo}
  F(T,\bv)=\expp{-\langle\Phi,\cmt(T,\bv)\rangle},
\end{equation}
  where  $(T, \bv)\in  \TLu$, $\Phi$  is  a continuous  non-negative
  function  with   bounded  support  defined on $\R_+\times   \TLz$  (with
  $\TLz=\TL\setminus\{\Tz\}$ where  $\Tz\in \TL$ is the  tree reduced to
  its root, see Section \ref{sec:def-M}).
  %, and the measure $\cmt(T,\bv)$ on
  %$\R_+\times \TLz$ is defined at the end of Section~\ref{sec:def-M}.
  \medskip

For simplicity, we write $(\ct,v)$ for the $1$-pointed tree
$(\ct, (\root, v))$. Let $\theta>0$.  Using \eqref{eq:abs-cont2}, we have for every $s>t$ that:
\[
   \N^{-\theta}\left[\int_{\ct}\Lambda_t(\rd v) \expp{\langle \Phi,\,
       \cmt  (r_s(\ct,v))  \rangle}
  \right]
  =  \N^{\theta}\left[\int_{\ct}\Lambda_t(\rd v) \expp{2\theta Z_s
    -\langle \Phi,\, \cmt(r_s(\ct,v)) \rangle}
  \right].
\]
We apply then \eqref{eq:Bismut0} to get:
\begin{multline*}
     \N^{-\theta}\left[\int_{\ct}\Lambda_t(\rd v) \expp{-\langle \Phi,\,
         \cmt(r_s(\ct, v))  \rangle}
  \right]\\
  \begin{aligned}
&=\expp{-2\beta \theta t}  \E\left[\expp{2\theta Z_s}
    F(r_s(\graft_1([0,t],\cm))\right]\\
&=\exp\left\{-2\beta\theta t-2\beta\int_0^t \rd a\,
  \N^\theta\left[1-\expp{-\Phi(a, r_{s-a}(\ct))+ 2\theta Z_{s-a}}\right]\right\}\\
&=\exp\left\{-2\beta\theta t-2\beta\int_0^t \rd a\,
 \left(\N^{-\theta}\left[1-\expp{-\Phi(a, r_{s-a}(\ct))}\right] +
 \N^{\theta}\left[1-\expp{2\theta Z_{s-a}}\right] \right) \right\}\\
&= \exp\left\{2\beta\theta t-2\beta\int_0^t \rd a\,
 \N^{-\theta}\left[1-\expp{-\Phi(a, r_{s-a}(\ct))}\right]  \right\},
\end{aligned}
\end{multline*}
where we  used  standard property of Poisson point measures  for   the  second equality,
\eqref{eq:abs-cont2}     again    for     the     third one,    and     that
$\N^{\theta}\left[1-\expp{2\theta     Z_a}\right]      =     u(-2\theta,
a)=-2\theta$, see \eqref{eq:tc=c} and \eqref{eq:def-u}, for the last one. As
$\Phi$ has bounded support, we get taking $s$ large enough:
\[
   \N^{-\theta}\left[\int_{\ct}\Lambda_t(\rd v) \expp{-\langle \Phi,\, \cmt(\ct, v)  \rangle}
  \right]
= \exp\left\{2\beta\theta t-2\beta\int_0^t \rd a\,
  \N^{-\theta}\left[1-\expp{-\Phi(a, \ct)}\right]  \right\}.
\]
Then the result follows from the definition of $\graft_1([0,t],\cm)$, that
is~\eqref{eq:Bismut0}
holds for $F$ given by~\eqref{eq:F=expo}.
\medskip

%Recall $\TLnzb$ defined in \eqref{eq:def-T1-no-br}
%is the Borel subset of  $\TLu$ of the trees such that the
%root is not a branching vertex and the distinguished vertex is distinct from the
%root.  The map:
%\[
%  (T, \bv)\mapsto \bigl( d(\root, v), \cm(T, \bv)\bigr),
%\]
%with $\bv=(\root, v)$,  defined on $\TLnzb$ is injective
%and bi-measurable, see Corollary~\ref{cor:Span-Ma-1}. Furthermore the set $\TLnzb$ is of full measure with
%respect    to     the    distribution    of    $(\ct,     \bv$)    under
%$\N^\theta[\rd  \ct]  \, \Lambda_t  (d  v)$,  with $\bv=(\root,  v)$,  as
%$\N^\theta$-a.e. the  root of $\ct$ is  not a branching vertex and
%$d(\root, v)=t>0$.   Thus, as $t>0$ is fixed,  we get  that 
%As $(\ct,  v)$ is  a measurable  function of
%$\cmt(\ct,  v)$.  
As $(\ct, v)$ is a measurable function of $\cmt(\ct, v)$, see
Section~\ref{sec:detail-proof-cor},  we  then conclude  by the  monotone class  theorem that
Equation~\eqref{eq:Bismut0}  holds  for  any  non-negative  measurable
function $F$ defined on $\TLn$.
\end{proof}

Let  $\beta>0$, $\theta\in\R$ and $t>0$. Recall $\tilde c^\theta_t= (2\theta)/(1-\expp{-2\beta\theta t})$
defined in~\eqref{eq:def-cqt}. We consider the probability
measure on $[0, t]$:
\begin{equation}
   \label{eq:def-fd}
   \nu(ds)=\frac{2\beta\theta\expp{2\beta\theta s}}{\expp{2\beta\theta
       t}-1}\, \ind_{[0,t]}(s)\, \rd s=  \beta \, \tilde c_t(\theta)
   \,\expp{-2\beta\theta(t-s)}\, \ind_{[0,t]}(s)\, \rd s .
\end{equation}
 Let $(\bT_{n},\bv_n)$ be, under $\P^{\theta,t}$, the planar tree,
 element  of
 $\TDn$, defined 
 in  Section  \ref{sec:bT}  associated  with  the  measure  $\nu$ and
 $t>0$ (recall that all the distinguished vertices from $\bv_n$ but the
 root are at distance $t$ from the root).
 The
 following  theorem  is  a  generalization of  Theorem  \ref{Thmbis}  when
 picking $n$ leaves uniformly at random at level $t$.

 \begin{theo}[Generalized $n$-leaves decomposition]
   \label{thm:k-Bismut}
   Let $\beta>0$,
  $\theta\in \R$,  $t>0$ and $n\in \N^*$.
For every non-negative  measurable
  function $F$ defined on $\TLn$, we
  have:
\begin{equation}
    \label{eqn:k-Bismut}
\N^{\theta}\left[\int_{\ct^{n}}\!\!\!\Lambda_t^{\otimes n}(\rd
   \bv^*)\, F (\ct, \bv)\right]
=n!\, \left(\tilde c_t^{\theta}\right)^{1-n}
\expp{-2\beta\theta t}\, \E^{\theta,t}\left[F\Bigl(\graft_n\bigl((\bT_{n},\bv_n),\cm\bigr)\Bigr)\right],
\end{equation}
where $\bv=(\root,  \bv^*)\in \ct^{n+1}$, with $\root$ the root of
$\ct$, and, under $\E^{\theta,t}$, conditionally given $(\bT_n,\bv_n)$,
$\cm(\rd x,\rd\ct)$ is a Poisson point measure on $\bT_m\times \TL$ with
intensity $2 \beta \, d\length^{\bT_n}(\rd x)\N^\theta[\rd\ct]$.
% and
%$\ct^*=(\ct_A^*,A\in\cp_n^+)$ is under $\P^{\theta, t}$ a family of independent Kesten trees
%with parameter  $(\beta, \theta)$, independent of $\bT_{n}$.
\end{theo}
We stress again that measurability of  the grafting map $\graft_n$ on a
discrete tree is formally stated in Section~\ref{sec:discrete-app}, so
that $\graft_n\bigl((\bT_{n},\bv_n),\cm\bigr)\Bigr)$ is indeed a
$\TLn$-valued random variable (see in particular
\eqref{eq:def-graftn-intuition} and~\eqref{eq:def-graftn-TK} therein).  
The proof of this theorem is postponed to
Section~\ref{sec:proof-k-Bismut} as it heavily relies on the topological
setting developed in Section~\ref{sec:topo-tree}.

\subsection{Local limit of conditioned Brownian CRT}
\label{sec:main-result}
Let $\beta>0$,  $\theta, \alpha\in\R_+$  and let  $S^{\alpha, \theta}$  be a
Poisson   point  measure   on   $[0,\infty)$   with  intensity   measure
$\fint(t)\, \rd t$, where:
\begin{equation}
  \label{eq:def-fint} 
  \fint(t)=\alpha\beta\expp{2\beta\theta      t},\quad t\geq0.
 \end{equation}
We first consider the case $\alpha>0$.     Denote      by
$(\xi_i,    i\in\N^*)$   the   increasing sequence of  jumping   times
of   the inhomogeneous Poisson   process
$(N^{\alpha, \theta}_t=S^{\alpha, \theta}([0, t]),  t\geq 0)$. We
consider the  $\TDn$-valued random variable $\fT_{\xi_n}$ of Section
\ref{sec:tree_ske} for $n\geq 1$ associated with $\fint$.
In particular, recall that, for every $n\ge 1$, $\fT_{\xi_n}$ is a
discrete tree with $n$ distinguished leaves,  where all of them are at height
$\xi_n$. Recall the construction of the infinite backbone $\fTs$ in
Section \ref{sec:tree_ske} from  the sequence of
trees~$\fT_{\xi_n}$. Notice its distribution depends on $\alpha$ and
$\theta$ (and also $\beta$ which is fixed). 
We  informally  define $\ct^{\alpha,\theta}$  as  the  tree obtained  by
grafting on $\fTs$ (whose distribution depends on $\alpha$ and $\theta$)
a tree  $\ct_i$ at  point $x_i$ where,  conditionally given  $\fTs$, the
family $((x_i,\ct_i),i\in I)$ is the atoms of a Poisson point measure on
$\fTs\times               \TL$              with               intensity
$2\beta\, \length^{\fTs}(\rd  x)\N^\theta(\rd\ct)$. 

%For every $n\ge  1$, let $\ct^{n,*}=(\ct_A,A\in\cp_n^+)$ be  a family of
%independent Kesten  trees with parameter $(\beta,  \alpha)$, independent
%of the tree $\fT_{\xi_n}$.  We define the random marked tree:
%\[
%  \ct^{(n)}=\left( \Pi_n^\circ ( \tilde \ct^{(n)}),
%    \Span^\circ(\tilde \ct^{(n)})\right)
%  \quad\text{with}\quad
%  \tilde \ct^{(n)}=\graft_n \left(
%    \fT_{\xi_n},\ct^{n, *}\right).
%  \]
%
%  Thanks          to          Lemma~\ref{lem:cont-span-loc}          and
%  Lemma~\ref{lem:graftn-meas}  on  the  measurability  of  the  grafting
%  function,  we  deduce  that  $\ct^{(n)}$  is  a  $\TLd$-valued  random
%  variable.  The family of the distributions of the $\TLd$-valued random
%  trees $(\ct^{(n)} ,n\ge 1)$ is consistent in the sense that, for every
%  $n\ge         1$        and         every        $t\le         \xi_n$,
%  $r_t^{[2]}(\ct^{(n)})\overset{(d)}{=}r_t^{[2]}(\ct^{(n+1)})$.   It  is
%  in  particular  a  Cauchy  sequence   in  $\TLd$,  and  we  denote  by
%  $(\ct^{\alpha,\theta}, \fT^{\alpha, \theta})$ its  limit which is thus
%  a     $\TLd$-valued     random      variable.      By     construction,
%  $\fT^{\alpha, \theta}$  and $\fTs$  have the same  distribution.  This
%  construction is a  formal way to define the tree  obtained by grafting
%  on the infinite  discrete tree $\fTs$ (which serves as  a backbone) at
%  $x_i$ a tree  $\ct_i$ where $((x_i,\ct_i),i\in I)$ are the  atoms of a
%  Poisson point measure of intensity $\length(\rd x) \N^\theta(\rd\ct)$,
%  where $\length$ is the length measure on $\fTs$.

For $\alpha=0$,
the infinite backbone rooted tree $\fTs$ has only one branch and is identified
with $(\R_+,0)$, and the tree  $\ct^{0, \theta}$ is then identified with
the Kesten tree with parameter $(\beta, \theta)$ defined in
Section~\ref{sec:Levy-T} and formally in Section~\ref{sec:def-formal-kesten}. 

\medskip

Since we are considering equivalence class  of trees, it is ambiguous to
present $\fTs$ as a subtree of $\ct^{\alpha,\theta}$. This motivate the
introduction  of marked  trees in  Section~\ref{sec:TnSn}; and  to avoid
confusion,  we shall  denote  $ \fT^{\alpha,  \theta}$  the subtree  of
$\ct^{\alpha,\theta}$; it is in
the same equivalent class as $\fTs$ in $\TL$.  We  refer to
Section~\ref{sec:ct-aq}  for a  formal and  more rigorous  definition of
the trees  $(\ct^{\alpha,\theta}, \fT^{\alpha,  \theta}) $.  
  We then define the random process
$(\ct_t^{\alpha,\theta}, t\geq 0)$ by setting:
\[
  \ct_t^{\alpha,\theta}=r_t(\ct^{\alpha,\theta}).
%  \quad\text{and}\quad
%  \fT^{\alpha, \theta}_t=r_t(\fT^{\alpha, \theta}).
\]
%that is $(\ct_t^{\alpha,\theta}, \fT^{\alpha, \theta}_t) =r_t^{[2]}
%\left((\ct^{\alpha,\theta}, \fT^{\alpha, \theta})\right)$.
Recall that the $\TL$-valued
function  $\ct$ is under $\N^\theta$ a L\'evy tree;  and we write
$\ct_t=r_t(\ct)$. We now give the main result of this section.

\begin{prop}[Representation of an $h$-transform of the CRT]
  \label{prop:Taq=TMaq}
  Let $\beta \in \R_+^*$, $\theta,  \alpha\in\R_+$ and $t>0$.  For every
  non-negative measurable functional $F$ on $\TL$ (or $\TK$), we have:
 \[
   \E\left[F\left(\ct^{\alpha,
           \theta}_t\right)\right]
   =\N^{\theta}\left[F\left(\ct_t\right)\, M_t^{\alpha,
       \theta}\right].
 \]
\end{prop}

\begin{rem}[On the $h$ transform of $Z$]
  \label{rem:tree-Z}
  By    considering   the    size   population    at   level    $t$   of
  $\ct^{\alpha, \theta}$, the above
  proposition % Proposition~\ref{prop:Taq=TMaq}
  gives  a  representation  of  the   process  $(Z_t,  t\geq  0)$  under
  $\N^\theta[\cdot \,  M^{\alpha, \theta}]$ as  a quadratic CSBP  with a
  Poisson  immigration given  by  $\fTs$ and  the grafting  intensity
  $2\beta\,   \length^{\fTs}(\rd  x)\N^\theta(\rd\ct)$.
As $(\tilde N_t(\fTs), t\geq 0)$ is distributed as $(N^{\alpha,
  \theta}_t+1,  t\geq 0)$, that is, as $(S^{\alpha, \theta}_t+1, t\geq
0)$, this  provides
  another proof of Proposition~\ref{prop:Zaq}.
\end{rem}

\begin{proof}[Proof of Proposition~\ref{prop:Taq=TMaq}]
  We first  consider the case $\alpha>0$.   Let us fix $t>0$,  and write
  $N_t$  for $N^{\alpha,  \theta}_t$.  Recall  $\tilde N_t  (T)$ is  the
  number    of    vertices    of    $T$    at    level    $t$.     Since
  $\fTs=\fT^{\alpha,     \theta}$    in     $\TL$,    we     get    that
  $\tilde N_t  (\fT^{\alpha, \theta})=\tilde N_t (\fTs)$  is distributed
  as            $N_t+1$.             The            fact            that
  $(\ct^{\alpha,  \theta}_t, \tilde  N_t (\fT^{\alpha,  \theta}))$ is  a
  well   defined   random  variable   is   detailed   at  the   end   of
  Section~\ref{sec:ct-aq}. We shall also consider the truncated backbone
  $\fT^{\alpha, \theta}_t=r_t(\fT^{\alpha, \theta})$ for $t\geq 0$, and
  see $\fT^{\alpha, \theta}_t$ as a subtree of $\ct^{\alpha, \theta}_t$.
  
%  Using
%  Lemma~\ref{lem:Nt-meas} on  the measurability  of $\tilde N_t  $, we
%  get that
%  $F\left(\ct^{\alpha, \theta}_t\right)\ind_{\{\tilde  N_t (\fT^{\alpha,
%      \theta})=n\}}$ is a well defined non-negative random variable.
\medskip

Let $(\bT_n,n\ge 0)$ be the sequence of trees defined in Section \ref{sec:bT} associated with the function:
\begin{equation}
   \label{eq:def-fdLevy}
\fd(s)=\beta \, \tilde c_t(\theta)
   \,\expp{-2\beta\theta(t-s)}\, \ind_{[0,t]}(s).
 \end{equation}
Recall the continuous canonical projection $\Pi_n^\circ :
\TKn\longrightarrow \TK$  defined by $\Pi_n^\circ(\bt,\bv)=\bt$. Set $\graft_k^\circ=\Pi_k^\circ \circ \graft_k$. Then we have,
\begin{align*}
    \E\left[F\left(\ct^{\alpha,
  \theta}_t\right)\right]
  &= \sum_{n\in \N}   \E\left[F\left(\ct^{\alpha,
  \theta}_t\right)\Bigm|\,\tilde  N_t (\fT^{\alpha, \theta})=n+1\right]
    \, \P(N_t^{\alpha, \theta}=n)\\ 
  &= \sum_{n\in \N}  \E\left[F\Bigl(r_t\bigl(\graft_{n+1}^\circ(\fT_t^{\alpha,\theta}, \cm_t)\bigr)\Bigr) \Bigm|  N_t^{\alpha,
      \theta}=n\right]
    \frac{(\alpha /c_t^\theta)^{n} \expp {-\alpha/c_t^\theta}}{n!}\\
    & =\sum_{n\in \N}
      \E\left[F\Bigl(r_t\bigl(\graft_{n+1}^\circ(\bT_{n+1},
      \tilde\cm_t)\bigr)\Bigr)\right]
    \frac{(\alpha /c_t^\theta)^{n} \expp {-\alpha/c_t^\theta}}{n!},
\end{align*}
where we used that $\tilde N_t (\fT^{\alpha, \theta})$ is distributed as
$N_t^{\alpha, \theta}+1$  for the first equality,  that conditionally on
$\tilde   N_t   (\fT^{\alpha, \theta})=n+1$,   the   random   tree
$\ct_t^{\alpha,         \theta}$        is         distributed        as
$r_t\left(\graft_{n+1}^\circ(\fT_{t}^{\alpha,\theta},\cm_t)\right)$
conditionally     on      $N_t^{\alpha,     \theta}=n$  where,
conditionally given $\fT_t$, $\cm_t$ (resp. $\tilde\cm_t$) is a Poisson point measure on
$\fT_t^{\alpha,\theta}\times \TK$  (resp. $\bT_{n+1}\times \TK$)
with intensity
$2 \beta\, \length^{\fT_t^{\alpha,\theta}}(\rd x)\N^\theta(\rd\theta)$
(resp. 
$2 \beta\, \length^{\bT_{n+1}}(\rd x)\N^\theta(\rd\theta)$),   and     that
$N_t^{\alpha,  \theta}$  is  distributed   as  a  Poisson  process  with
intensity $\alpha$  at time $1/c^\theta_t$ (see  Lemma~\ref{lem:YS}) for
the     second    one,     and that    $\fT_{t}^{\alpha,\theta}$ conditionally    on
$N_t^{\alpha,  \theta}=n$  is  distributed  as  $\bT_{n+1}$  with  $\fint$
and $\fd$
in~\eqref{eq:def-fd-fi}     given      by~\eqref{eq:def-fint} and~\eqref{eq:def-fdLevy}     (see
Lemma~\ref{lem:T_ske distribution}) for the  last one. 
%Recall that $r_t$ and $\Pi_{n+1}^\circ$  are measurable.  
Using Theorem  \ref{thm:k-Bismut} and
that $\nu(\rd s)$ in~\eqref{eq:def-fd} is  exactly $\fd(s)\, \rd s$ with
$\fd$ given by~\eqref{eq:def-fdLevy}, we have:
\begin{align*}
  \E\left[F\left(r_t\left(\graft_{n+1}^\circ(\bT_{n+1},
        \tilde\cm_t)\right)\right) \right]
& = \frac{\left(\tilde c_t^{\theta}\right)^{n}
    \expp{2\beta\theta t}}{(n+1)!}\,
    \N^{\theta}\left[\int_{\ct^{n+1}}\!\!\!\Lambda_t^{\otimes (n+1)}(\rd
      \bv^*)\, F \bigl(r_t(\ct)\bigr) \right]\\
& = \frac{\left(\tilde c_t^{\theta}\right)^{n}
    \expp{2\beta\theta t}}{(n+1)!}\,
    \N^{\theta}\left[\int_{\ct^{n+1}}\!\!\!\Lambda_t^{\otimes (n+1)}(\rd
      \bv^*)\, F (\ct_t) \right]\\
& =      \frac{\left(\tilde c_t^{\theta}\right)^{n}
    \expp{2\beta\theta t}}{(n+1)!}\,
    \N^{\theta}\left[ Z_t^{n+1}F(\ct_t)\right],
\end{align*}
as $Z_t=\Lambda_t(\ind)$ is the total local time of $\ct$ at level $t$.
Thus,  using  the  definition  of  $M_t^{\alpha,  \theta}$  in
\eqref{Mmartingale}, we obtain:
\[
    \E\left[F\left(\ct^{\alpha,
          \theta}_t\right)\right]
  = \sum_{n\in \N}  \frac{\left(\tilde c_t^{\theta}\right)^{n}
    \expp{2\beta\theta t}}{(n+1)!}\, \N^\theta\left[Z_t^{n+1}
  F(\ct_t)\right]
    \frac{(\alpha /c_t^\theta)^{n} \expp {-\alpha/c_t^\theta}}{n!}
=\N^{\theta}\left[F\left(\ct_t\right)\, M_t^{\alpha,
       \theta}\right].
 \]

 The simpler case $\alpha=0$, which is left  to the reader,
 can also be handled in a similar way.
\end{proof}

As a conclusion, we deduce the following result for $\alpha>0$.
\begin{theo}[Local limit of CRT in the  Poisson  regime]
  \label{theo:main-result}
  Let $\alpha,\beta>0$,
  $\theta\in \R$. Assume that the function $a$ is such that
 as $t\rightarrow\infty$ :
\[
    a_t\sim \begin{cases}
      \alpha \beta^2 t^2, & \text{if
        $\theta= 0$};\\
        \alpha (2\theta)^{-2} \expp{2 \beta |\theta| t}, & \text{if
        $\theta\neq 0$}.
    \end{cases}
  \]
  For every  non-negative  measurable
  function $F$ on $\TK$ and  $s>0$, we
  have:
 \[
\lim_{t\rightarrow \infty }   \N^{\theta}\left[F\left(\ct_s\right)\,|\,
 Z_t=a_t\right]
   = \E\left[F\left(\ct^{\alpha,
           |\theta|}_s\right)\right].
 \]
\end{theo}
\begin{proof}
Clearly,  Proposition~\ref{prop:cv-At-P}  still  holds if $H_s$ is
$\cg_s=\sigma(r_s(\ct))$ measurable, that is $H_s=F(\ct_s)$ with $F$
non-negative defined on $\TL$, and $Z_t$ is the total local time of $\ct$
at level $t$, see Section~\ref{sec:Levy-T}. We deduce that:
\[
   \lim_{t\rightarrow \infty } \N^\theta[F(\ct_s) |\,
    Z_{t}=a_{t}]
    =\N^{|\theta|} \left[F(\ct_s) \, M^{\alpha, |\theta|}_s\right]
    =\E \left[F(\ct_s^{\alpha, |\theta|}) \right],
\]
where we used Proposition \ref{prop:Taq=TMaq}
for the last equality.
\end{proof}
Similarly, we also get the following result for $\alpha=0$. Recall that
$\ct^{0, \theta}$ is a Kesten tree with parameter $(\beta, \theta)$.
\begin{theo}[Local limit of CRT in the Kesten regime]
  \label{theo:main-result-2}
  Let $\beta>0$,
  $\theta\in \R$. Assume that the function $a$ is positive such that
  as $t\rightarrow\infty$:
\[
    a_t=  \begin{cases}
      o(t^2), & \text{if
        $\theta= 0$};\\
         o( \expp{2 \beta |\theta| t}) & \text{if
        $\theta\neq 0$}.
    \end{cases}
\]
  For every  non-negative  measurable
  function $F$ on $\TK$ and  $s>0$, we
  have:
 \[
\lim_{t\rightarrow \infty }   \N^{\theta}\left[F\left(\ct_s\right)\,|\,
 Z_t=a_t\right]
   = \E\left[F\left(\ct^{0,|\theta|}_s\right)\right].
 \]
\end{theo}

\begin{rem}
  \label{rem:over}
  Using~\cite{Ov94},
  Corollary~\ref{cor:Z=Ov} on the SDE
  for the size-population process $Z$, is a direct consequence of 
  Theorems~\ref{theo:main-result} (for $\alpha\in (0, +\infty )$)
  and~\ref{theo:main-result-2} (for $\alpha=0$) and
  Remark~\ref{rem:tree-Z}. 
\end{rem}

\section{Set of trees, topology  and measurability}
\label{sec:topo-tree}

In  a nutshell,  the main  objective of  this section  is to  define the
grafting  and  splitting  functions,  as  well  as  the  decorating  and
de-decorating  functions in  a  measurable  way on  the  set of  locally
compact  rooted  real   trees,  so  that  we  can   properly  define  in
Section~\ref{sec:appli} the  random variables  used the in  the previous
sections.  An  index of  all the (numerous)  relevant notations  of this
section is provided at the end of the document.

We keep  the basic  definitions and
notations for  rooted real trees from  Section~\ref{sec:tree-notation}.   In
Section~\ref{sec:cont-span} we consider the regularity of the spanning
of subtrees. In Section~\ref{sec:root-Tn}, we study  the
Polish spaces of  equivalent classes of compact  (resp.  locally compact)
rooted   trees   with   distinguished    vertices   endowed   with   the
Gromov-Hausdorff  distance.   We   define  various  grafting  measurable
operations (denoted by  $\circledast_*^*$) of a tree on  an another tree
in  Section~\ref{sec:graft1}.  Motivated  by the  fact that  some random
trees  are  obtained  as  decorated  backbone  trees,  we  introduce  in
Section~\ref{sec:TnSn} the space of marked  trees, that is of trees with
a distinguished subtree (or backbone  tree).  We also establish in this
section  the  measurability  of  various  truncation  maps.   The  short
Section~\ref{sec:TLs} is  devoted to special  case of the  backbone tree
being reduced  to an  infinite spine  (this is the  case for  the Kesten
tree).    In  Section~\ref{sec:def-vA-wA},   we  consider   specifically
discrete  trees which  are spanned  by $n$  distinguished vertices,  and
describe them as  a set of branches indexed by  all the possible subsets
of the  $n$ distinguished  vertices.  This description  is then  used in
Section~\ref{sec:split} to split (with  a function $\Split_n$) a locally
compact tree with  $n$ distinguished vertices as  subtrees supported by
the different branches of the discrete tree spanned by the distinguished
vertices.   Then, we  provide in  a  sense the  inverse construction  in
Section~\ref{sec:graft} where  (with a function $\graft_n$)  we decorate
the    branches   of    a    discrete   trees    with   subtrees.     In
Section~\ref{sec:def-M}, we describe a  measurable procedure to decorate
a  branch with  a family  of subtrees  given by  the atoms  of a  point
measure on  the set  of trees  (the function  $\tree$) and  a measurable
procedure to describe the decoration of a distinguished branch of a tree
(the  function $\cmt$)  through a  point measure  on the  set of  trees.
\medskip

\medskip

We  shall  use   many  times  Lusin's  theorem   from  \cite{purves}  or
\cite[Exercise~6.10.54~p.60]{bogachev2} which  states that, if $f$  is a
measurable function defined on a Borel subset $A$ of a Polish space to a
Polish space, then $f(B)$ is a Borel set for all Borel subsets $B\subset A$
if and only  if the set of  all values $y$, such  that $f^{-1}(\{y\})$ is
uncountable, is at most countable.

\subsection{Continuity of the map Span}
\label{sec:cont-span}
Recall the  definition of the  set $\TKn$ of $n$-pointed  compact rooted
tree in  Subsection \ref{sec:sets_of  trees}, endowed with  the distance
$\dghn$.   Recall  also  the  definition  of  the  tree  spanned  by  $n$
vertices.   For   a  rooted   $n$-pointed   tree   $(T,  d,   \bv)$,   with
$\bv=(\root,v_1 \ldots, v_n)$, we  denote the corresponding spanned tree
$\Span^\circ(T,\bv)$ as:
\begin{equation}
   \label{eq:def-span}
  \Span^\circ(T,\bv)= \bigcup _{k=1}^n \lb \root, v_k\rb.
\end{equation}
The tree  $(\Span^\circ (T, \bv), d,  \root)$ will be simply  denoted by
$\Span^\circ(T, \bv)$,  whereas we  will denote  by $\Span(T,  \bv)$ the
rooted $n$-pointed tree $(\Span^\circ (T, \bv), d, \bv)$.  For $y\in T$,
we    also   define    $p_\bv(y)$,    the   projection    of   $y$    on
$\Span^\circ (T,\bv)$, as  the only point of  $\Span^\circ (T,\bv)$ such
that:
 \begin{equation}\label{eq:def-pv}
 \lb\root,y\rb\cap \Span^\circ (T,\bv)=\lb\root,p_\bv(y)\rb.
 \end{equation}

 Let us state a technical result which will be used  several times in what follows.

 \begin{lem}
   \label{lem:dist-proj}
   Let $n\in\N$.
 Let $(T,d, \bv)$ and $(T', d', \bv')$ be two compact rooted $n$-pointed
 trees  and  let $\cR$  be  a  correspondence  between them.  For  every
 $(x,x')\in\cR$ with $x'\in \Span^\circ(T',\bv')$, we have:
\[
d(x,p_\bv(x))\le \frac{3}{2} \dist (\cR).
\]
 \end{lem}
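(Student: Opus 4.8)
The plan is to bound $d(x, p_\bv(x))$ by relating it to distances that can be controlled through the correspondence $\cR$. Since $x'\in\Span^\circ(T',\bv')$, there is some $k\in\{1,\dots,n\}$ with $x'\in\lb\root',v_k'\rb$, and on this path heights add up: $d'(\root',v_k')=d'(\root',x')+d'(x',v_k')$. The idea is to transport this additivity back to $T$ via $\cR$: picking a partner $y_k$ of $v_k'$ (we may take $y_k=v_k$ since $(v_k,v_k')\in\cR$ by definition of a correspondence), and using that $(\root,\root')\in\cR$ and $(x,x')\in\cR$, each of the three distances $d(\root,x)$, $d(x,v_k)$, $d(\root,v_k)$ differs from its primed counterpart by at most $\dist(\cR)$.

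\emph{First I would} write, using the defining property of $p_\bv$, that $p_\bv(x)$ is the point of $\lb\root,v_k\rb$ closest to $x$ in the appropriate sense; concretely, since $p_\bv(x)$ lies on $\lb\root,x\rb\cap\Span^\circ(T,\bv)$, one has $d(\root,x)=d(\root,p_\bv(x))+d(p_\bv(x),x)$, and also $p_\bv(x)\in\lb\root,v_k\rb$ (for the index $k$ realizing the projection), so $d(\root,v_k)=d(\root,p_\bv(x))+d(p_\bv(x),v_k)$ provided $p_\bv(x)\preceq v_k$; more robustly, in any real tree $d(x,v_k)\ge d(p_\bv(x),v_k)$ is false in general, so the clean route is the four-point computation:
\[
2\,d(x,p_\bv(x)) = d(\root,x) + d(x,v_k) - d(\root,v_k),
\]
which holds because $p_\bv(x)=x\wedge v_k$ when $x'\in\lb\root',v_k'\rb$ forces the geodesics $\lb\root,x\rb$ and $\lb\root,v_k\rb$ to share exactly $\lb\root,p_\bv(x)\rb$. \emph{Then I would} replace each of the three right-hand distances by its image under $\cR$: $d(\root,x)\le d'(\root',x') + \dist(\cR)$, $d(x,v_k)\le d'(x',v_k') + \dist(\cR)$, and $-d(\root,v_k)\le -d'(\root',v_k') + \dist(\cR)$, so that
\[
2\,d(x,p_\bv(x)) \le \bigl(d'(\root',x') + d'(x',v_k') - d'(\root',v_k')\bigr) + 3\,\dist(\cR) = 0 + 3\,\dist(\cR),
\]
where the bracketed term vanishes precisely because $x'$ lies on the geodesic $\lb\root',v_k'\rb$. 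Dividing by $2$ gives $d(x,p_\bv(x))\le \tfrac32\dist(\cR)$.

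\emph{The main obstacle} is the geometric identity $p_\bv(x)=x\wedge v_k$ (equivalently $2\,d(x,p_\bv(x)) = d(\root,x)+d(x,v_k)-d(\root,v_k)$): one must verify that the index $k$ whose partner realizes $x'\in\lb\root',v_k'\rb$ is also an index for which the geodesic $\lb\root,v_k\rb$ passes through $p_\bv(x)$ in $T$. This is not automatic — $p_\bv(x)$ is defined via the \emph{whole} spanned tree, so a priori $p_\bv(x)$ sits on $\lb\root,v_j\rb$ for some $j$ not equal to $k$. However, by definition \eqref{eq:def-pv}, $\lb\root,x\rb\cap\Span^\circ(T,\bv)=\lb\root,p_\bv(x)\rb$, so in particular $p_\bv(x)\in\lb\root,v_j\rb$ for \emph{some} $j$; one then checks that for \emph{this} $j$ the corresponding point $v_j'$ still satisfies that $x'$ lies "below" $p_\bv(x)$'s image in a way that makes the bracket non-negative, and a short case analysis (or directly bounding $d'(\root',x')+d'(x',v_j')-d'(\root',v_j')\ge 0$ using that $x'\in\Span^\circ(T',\bv')$ and the triangle inequality is not enough — one genuinely uses that $T'$ is a tree) closes the argument. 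Handling this index-matching cleanly, rather than with the triangle inequality alone, is where care is needed; everything else is a three-term estimate.
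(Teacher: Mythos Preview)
Your approach is right in spirit, and the three-term estimate is indeed the heart of the matter. The obstacle you flag is real: with $k$ chosen so that $x'\in\lb\root',v_k'\rb$, there is no reason that $p_\bv(x)=x\wedge v_k$ in $T$, and the ``short case analysis'' you allude to for the alternative index $j$ does not close the gap --- the bracket $d'(\root',x')+d'(x',v_j')-d'(\root',v_j')=2d'(x',x'\wedge v_j')$ can be genuinely large.

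However, the fix is much simpler than you suggest: you do not need the \emph{equality} $p_\bv(x)=x\wedge v_k$, only the \emph{inequality} $d\bigl(x,p_\bv(x)\bigr)\le d\bigl(x,x\wedge v_k\bigr)$, and this is automatic. Indeed, $x\wedge v_k$ lies on $\lb\root,x\rb$ and on $\lb\root,v_k\rb\subset\Span^\circ(T,\bv)$, hence on $\lb\root,x\rb\cap\Span^\circ(T,\bv)=\lb\root,p_\bv(x)\rb$, which gives $x\wedge v_k\in\lb\root,p_\bv(x)\rb$ and therefore $d(x,p_\bv(x))\le d(x,x\wedge v_k)$. Then
\[
2\,d\bigl(x,p_\bv(x)\bigr)\le 2\,d\bigl(x,x\wedge v_k\bigr)=d(\root,x)+d(x,v_k)-d(\root,v_k)\le 0+3\,\dist(\cR),
\]
the last step using $(x,x'),(\root,\root'),(v_k,v_k')\in\cR$ and $x'\in\lb\root',v_k'\rb$ exactly as you wrote.

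For comparison, the paper takes a slightly different route: it produces \emph{two} indices $k,\ell\in\{0,\ldots,n\}$ such that simultaneously $p_\bv(x)\in\lb v_k,v_\ell\rb$ and $x'\in\lb v_k',v_\ell'\rb$ (by a short case analysis on whether the sets $\{i:p_\bv(x)\in\lb\root,v_i\rb\}$ and $\{i:x'\in\lb\root',v_i'\rb\}$ share an index), and then uses the exact identity $2d\bigl(x,p_\bv(x)\bigr)=d(x,v_k)+d(x,v_\ell)-d(v_k,v_\ell)$. Your single-index argument, once patched with the inequality above, is arguably cleaner and avoids that case analysis entirely.
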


 \begin{proof}
   % For  $u,v$ elements  of a  rooted tree,  say $(T,d,\root)$,  we write
   % $u\prec_T   v$   if   $u$   is   an  ancestor   of   $v$,   that   is,
   % $ u\in\lb \root ,v\rb$.

   Let $(x,x')\in \cR$ with $x'\in \Span^\circ(T',\bv')$.
   First  remark that  there exist  $k,\ell\in\{0,\ldots,n\}$ such  that
   $p_\bv(x)\in\lb         v_k,v_\ell\rb$          and         $x'\in\lb
   v'_k,v'_\ell\rb$.
   Indeed, let us set:
 \[
 A=\bigl\{v_k\colon\, p_\bv(x)\in \lb \root,
 v_k\rb\bigr\}\quad\text{and}\quad A'=\bigl\{v'_k\colon\, x'\in \lb \root',  v'_k\rb\bigr\}.
 \]
 Notice that $A\ne \emptyset$ and $A'\ne \emptyset$.
 If there  exists $k\ge 1$ such  that $v_k\in A$ and  $v'_k\in A'$, then
 one   can  take   $\ell=0$ so that $v_\ell=\root$ and $v'_\ell=\root'$.  Otherwise,   take  $k$ and $\ell$ with $k\neq  \ell$ such that $v_k\in   A$  and
 $v'_\ell\in A'$ . In this case, we get  $v_\ell\not\in A$.  Clearly we
 have  $p_\bv(x)\in\lb   v_k,v_\ell\rb$  and  by  a   similar  argument,
 $x'\in\lb v'_k,v'_\ell\rb$.  Therefore, we have:
\begin{multline*}
2d\bigl(x,p_\bv(x)\bigr)=d(x,v_k)+d(x,v_\ell)-d(v_k,v_\ell)
\le d'(x',v_k')+d'(x',v'_\ell)-d'(v'_k,v'_\ell)+ 3\, \dist (\cR).
\end{multline*}
Then,  use that $d'(x',v_k')+d'(x',v'_\ell)-d'(v'_k,v'_\ell)=0$, as  $x'\in\lb
   v'_k,v'_\ell\rb$, to conclude.
\end{proof}

If  $(T, \bv)$  and $(T',  \bv')$ belong  to the  same equivalence
class in $\TKn$, then  so do $\Span(T,\bv)$ and  $\Span(T',\bv')$ in $\TKn$. Therefore,   the
function     $(T,    \bv)     \mapsto     \Span     (T,    \bv)$ is  well defined from $\TKn$ to
$\TKn$. A first  consequence of
Lemma~\ref{lem:dist-proj} is that this  function is Lipschitz
continuous; this result will be completed in Lemma~\ref{lem:cont-span}.

\begin{lem}[Continuity of the map $\Span$]
  \label{lem:Lip-Span}
  Let  $n\in\N$.   The  map  $(T,  \bv) \mapsto  \Span  (T,  \bv)$ is $4$-Lipschitz continuous from
  $\TKn$ to $\TKn$.
\end{lem}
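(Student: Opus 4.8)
The plan is to prove the $4$-Lipschitz bound by taking any two compact rooted $n$-pointed trees $(T,d,\bv)$ and $(T',d',\bv')$, an arbitrary correspondence $\cR$ between them, and constructing from $\cR$ an explicit correspondence $\cR_{\Span}$ between $\Span(T,\bv)$ and $\Span(T',\bv')$ whose distortion is at most $4\,\dist(\cR)$; taking the infimum over $\cR$ then yields $\dghn(\Span(T,\bv),\Span(T',\bv'))\le 4\,\dghn(T,T')$.

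The natural candidate is $\cR_{\Span}=\{(p_\bv(x),p_{\bv'}(x'))\colon (x,x')\in\cR\}$, using the projection maps onto the spanned subtrees defined in~\eqref{eq:def-pv}. First I would check this is indeed a correspondence: every point of $\Span^\circ(T,\bv)$ is of the form $p_\bv(x)$ for some $x\in T$ (take $x$ in the spanned tree itself, so $p_\bv(x)=x$), and by surjectivity of $\cR$ there is $x'$ with $(x,x')\in\cR$, giving a partner $p_{\bv'}(x')$; symmetrically on the other side; and the distinguished vertices are matched because $p_\bv(v_k)=v_k$ and $p_{\bv'}(v'_k)=v'_k$ while $(v_k,v'_k)\in\cR$. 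Then the core estimate: for $(x,x'),(y,y')\in\cR$ I must bound $\bigl|d(p_\bv(x),p_\bv(y))-d'(p_{\bv'}(x'),p_{\bv'}(y'))\bigr|$. The idea is to interpolate: compare $d(p_\bv(x),p_\bv(y))$ with $d(x,y)$, then $d(x,y)$ with $d'(x',y')$ (this costs $\dist(\cR)$), then $d'(x',y')$ with $d'(p_{\bv'}(x'),p_{\bv'}(y'))$. The first and third comparisons are the place where Lemma~\ref{lem:dist-proj} enters — but with a twist, since that lemma is stated for a point whose $\cR$-partner already lies in the spanned tree, which is not quite our situation.

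The key technical point — and what I expect to be the main obstacle — is controlling the displacement caused by projection without having $x'$ itself in $\Span^\circ(T',\bv')$. My approach is to argue that $d(x,p_\bv(x))$ is still comparable to $\dist(\cR)$: indeed $p_\bv(x)$ lies on some segment $\lb v_k,v_\ell\rb$, and one can reproduce the computation in the proof of Lemma~\ref{lem:dist-proj} with $x'$ replaced by its own projection $p_{\bv'}(x')$ — observe that $\dist(\cR)$ dominates $|H(x)-H(x')|$ and related quantities, and that $p_{\bv'}(x')$ sits on $\lb v'_k,v'_\ell\rb$ for the same indices, so that $2d(x,p_\bv(x))=d(x,v_k)+d(x,v_\ell)-d(v_k,v_\ell)$ is controlled by $3\,\dist(\cR)$ plus the analogous vanishing expression on the $T'$ side. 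Granting $d(x,p_\bv(x))\le\tfrac32\dist(\cR)$ and the symmetric bound $d'(x',p_{\bv'}(x'))\le\tfrac32\dist(\cR)$, the triangle inequality chain gives
\[
\bigl|d(p_\bv(x),p_\bv(y))-d'(p_{\bv'}(x'),p_{\bv'}(y'))\bigr|
\le \dist(\cR) + 2\bigl(d(x,p_\bv(x))+d(y,p_\bv(y))\bigr)_{\text{avg}},
\]
and a careful bookkeeping of the four projection-displacement terms together with the one $\dist(\cR)$ from transferring between $T$ and $T'$ yields $\dist(\cR_{\Span})\le 8\,\dist(\cR)$, i.e.\ $\dghn(\Span(T,\bv),\Span(T',\bv'))\le 4\,\dghn(T,T')$ after dividing by $2$. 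I would finish by noting that $\Span$ is well-defined on equivalence classes (already observed in the text preceding the lemma), so the inequality descends to $\TKn$ and gives the claimed $4$-Lipschitz continuity.
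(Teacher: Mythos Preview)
There is a genuine gap. Your key claim ``$d(x,p_\bv(x))\le\tfrac32\dist(\cR)$ for arbitrary $(x,x')\in\cR$'' is simply false: take $T=T'$ with the identity correspondence (so $\dist(\cR)=0$) and any $x$ lying far from $\Span^\circ(T,\bv)$; then $d(x,p_\bv(x))$ can be as large as you like. Lemma~\ref{lem:dist-proj} needs the hypothesis $x'\in\Span^\circ(T',\bv')$, and your attempt to ``reproduce the computation with $x'$ replaced by $p_{\bv'}(x')$'' does not yield what you claim: writing $2d(x,p_\bv(x))=d(x,v_k)+d(x,v_\ell)-d(v_k,v_\ell)$ and comparing to the primed quantities via $\dist(\cR)$ gives
\[
2d(x,p_\bv(x))\le 2d'(x',p_{\bv'}(x'))+3\,\dist(\cR),
\]
which bounds the \emph{difference} $d(x,p_\bv(x))-d'(x',p_{\bv'}(x'))$, not $d(x,p_\bv(x))$ itself. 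Your final arithmetic is also off: $\dist(\cR_{\Span})\le 8\,\dist(\cR)$ would yield $8$-Lipschitz, not $4$-Lipschitz, since $\dghn=\tfrac12\inf\dist$ on both sides.

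The paper sidesteps this by using a different correspondence: it only takes pairs $(x,x')\in\cR$ where one of $x,x'$ already lies in the spanned subtree, and projects only the \emph{other} coordinate. This places you squarely in the hypothesis of Lemma~\ref{lem:dist-proj}, and the tree geometry (the geodesic from a point in the span to any $y$ passes through $p_\bv(y)$) gives the exact identity $d(x,p_\bv(y))=d(x,y)-d(y,p_\bv(y))$, from which $\dist(\tilde\cR)\le 4\,\dist(\cR)$ follows with one application of Lemma~\ref{lem:dist-proj} on each side. Your double-projection correspondence can in fact be made to work --- using the difference bound above together with the tree identity $d(p_\bv(x),p_\bv(y))=\max\bigl(0,\,d(x,y)-d(x,p_\bv(x))-d(y,p_\bv(y))\bigr)$ one also reaches $\dist(\cR_{\Span})\le 4\,\dist(\cR)$ --- but not via the absolute bound you asserted.
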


\begin{proof}
  Let $(T,\bv),(T',\bv')$  be two  compact rooted $n$-pointed  trees and
  let $\cR$ be a correspondence between them.  Let us set with obvious notations:
\begin{multline}
   \label{eq:def-Tilde-R}
  \tilde \cR
=\Bigl\{\bigl(x,p'_{\bv'}(x')\bigr)\colon\, (x,x')\in\cR,\ x\in \Span^\circ(T,\bv)\Bigr\}
 \\   \cup  \Bigl\{\bigl(p_\bv(x),x'\bigr)\colon\, (x,x')\in \cR,\ x'\in \Span^\circ(T',\bv')\Bigr\}.
\end{multline}
Clearly,  $\tilde\cR$ is  a  correspondence  between $\Span(T,\bv)$  and
$\Span(T',\bv')$.  We now  compute its distortion. We  consider the case
$x\in \Span(T,\bv)$, $y'\in \Span(T',\bv')$ and $(x,x'), (y,y')\in \cR$,
so that $(x,p'_{\bv'}(x'))$ and $(p_\bv(y),y')$ belong to $\tilde\cR$. We have:
\begin{align*}
\Bigl| d\bigl(x,p_\bv(y)\bigr)-d'\bigl(p'_{\bv'}(x'),y'\bigr)\Bigr| & = \Bigl| d(x,y)-d\bigl(y,p_\bv(y)\bigr)-d'(x',y')+d'\bigl(x',p_{\bv'}(x')\bigr)\Bigr|\\
& \le \bigl|d(x,y)-d'(x',y')\bigr| +d\bigl(y,p_\bv(y)\bigr)+d'\bigl(x',p_{\bv'}(x')\bigr)\\
& \le 4\, \dist(\cR),
\end{align*}
where we used Lemma \ref{lem:dist-proj} for the last inequality. The
other cases can be treated similarly. This implies that $\dist(\tilde  \cR)\leq
4\, \dist (\cR)$ and thus, by definition of $\dghn$:
\[
\dghn\bigl(\Span(T,\bv),\Span(T',\bv')\bigr)\le 4\, \dghn \bigl((T,\bv),(T',\bv')\bigr).
\]
\end{proof}

\subsection{Set of (equivalence classes of) rooted $n$-pointed locally  compact trees}
\label{sec:root-Tn}
Recall the definition of the height $H(x)=d(\root, x)$ of a vertex $x$ in a rooted tree $(T,
d, \rho)$.
For a rooted $n$-pointed  tree $(T,d, \bv)$ and $t\geq 0$, we define the
rooted $n$-pointed tree  $T$ truncated at level $t$ as $(r_t(T,\bv), d, \bv)$ with:
\begin{equation}
   \label{eq:def-rt}
  r_t(T, \bv)=\bigl\{x\in T\, \colon\, H(x)\leq t\bigr\}\, \cup \bigl\{\Span^\circ(T, \bv)\bigr\},
\end{equation}
and the distance on $r_t(T, \bv)$  is given by the restriction of the
distance $d$. We shall simply write $r_t(T, \bv)$ for $(r_t(T, \bv), d,
\bv)$. (Notice that for $t\geq t_T=\max_{i\in \{0,
  \ldots, n\}} d(\root, v_i)$ the truncated operations defined
by~\eqref{eq:def-rt} and~\eqref{eq:def-rt0} coincide.)

If $(T,\bv)$ and $(T', \bv')$ are in the same equivalence class of $\TKn$, so are
$r_t(T, \bv)$ and $r_t(T', \bv')$. Thus the function $r_t$ can be seen as a map
from $\TKn$ to itself. When $n=0$, we shall simply write $r_t(T)$ for
$r_t(T, \root)$.
The next lemma is about the continuity of $r_t$.

\begin{lem}[Continuity of $r_t$]
   \label{lem:cont-rt}
Let $n\in \N$.    For $s,t\geq 0$ and $(T, \bv), (T', \bv')\in \TKn$, we have:
   \begin{equation}
  \label{eq:cont-rt-gh}
    \dghn \left(r_t(T, \bv), r_{t+s}(T', \bv')\right) \leq  4 \,
    \dghn\bigl((T, \bv), (T', \bv')\bigr) +s.
\end{equation}
The map $\bigl(t, (T, \bv)\bigr) \mapsto r_t(T, \bv)$ is continuous from $\R_+\times
\TKn$ to $\TKn$.
\end{lem}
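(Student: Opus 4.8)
The plan is to establish the Gromov-Hausdorff inequality~\eqref{eq:cont-rt-gh} first, and then derive the joint continuity from it by a standard triangle-inequality argument. For the inequality, fix $s, t \geq 0$ and two compact rooted $n$-pointed trees $(T, \bv)$ and $(T', \bv')$, and let $\cR$ be a correspondence between them with distortion close to $2\,\dghn((T,\bv),(T',\bv'))$. The natural candidate for a correspondence between $r_t(T, \bv)$ and $r_{t+s}(T', \bv')$ is obtained by ``projecting'' the pairs of $\cR$ onto the truncated trees. Concretely, given $(x, x') \in \cR$ with $x \in r_t(T, \bv)$, I would pair $x$ with a suitable point of $r_{t+s}(T', \bv')$: if $x' \in r_{t+s}(T', \bv')$ already, keep $x'$; otherwise $H'(x') > t+s$, and since $x \in r_t(T,\bv)$ forces $H(x) \leq t$ or $x \in \Span^\circ(T, \bv)$, the gap in heights is controlled by $\dist(\cR)$, so one replaces $x'$ by the point on $\lb \root', x' \rb$ (or rather the projection of $x'$, which stays in the spanned tree and hence in $r_{t+s}$) at an appropriate height. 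Symmetrically for pairs with $x' \in r_{t+s}(T', \bv')$. One must also be careful to keep all distinguished vertices $v_k, v'_k$ paired — but these lie in $\Span^\circ \subseteq r_t$, so they are automatically retained.

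The key computation is then to bound the distortion of this new correspondence $\cR''$. For a generic pair of elements coming from $\cR''$, write the new distance $d(\tilde x, \tilde y)$ in $r_t(T,\bv)$ in terms of $d(x, y)$ plus error terms of the form $d(\text{point}, \text{its replacement})$; each such error term is a height displacement bounded either by $\dist(\cR)$ (when a point of height $> t+s$ on one side corresponds to a point of height $\leq t$ on the other, the displacement on the far side is at most $\dist(\cR)$, and on the near side one may need to move down to height $t$, costing at most $s + \dist(\cR)$ after comparing heights) or, when projections onto $\Span^\circ$ are involved, by $\tfrac{3}{2}\dist(\cR)$ via Lemma~\ref{lem:dist-proj}. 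Collecting the worst case, the displacement terms sum to at most $4\,\dist(\cR) + 2s$ beyond $|d(x,y) - d'(x',y')| \leq \dist(\cR)$, but since $\dghn$ is one-half the infimal distortion the factor arrangement yields $\dghn(r_t(T,\bv), r_{t+s}(T',\bv')) \leq 4\,\dghn((T,\bv),(T',\bv')) + s$. I expect the bookkeeping of which of the several cases (both points genuinely below level $t$, one point a projection, one point a height-truncated replacement, etc.) produces the extremal constant to be the main obstacle; the structure mirrors the proof of Lemma~\ref{lem:Lip-Span}, so I would organize it the same way, treating one representative case in detail and remarking that the others are analogous.

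Finally, for the joint continuity of $\bigl(t, (T,\bv)\bigr) \mapsto r_t(T,\bv)$ from $\R_+ \times \TKn$ to $\TKn$, I would simply apply~\eqref{eq:cont-rt-gh}: given $(t, (T,\bv))$ and a nearby $(t', (T',\bv'))$ with, say, $t' = t + s$ for $s \geq 0$ (the case $s \leq 0$ being symmetric after exchanging roles, noting $r_t(T,\bv) = r_{t+|s|}(T,\bv)$ need not hold but the bound is symmetric in the two trees),
\[
  \dghn\bigl(r_t(T,\bv), r_{t'}(T',\bv')\bigr) \leq 4\,\dghn\bigl((T,\bv),(T',\bv')\bigr) + |t - t'|,
\]
which tends to $0$ as $(t', (T',\bv')) \to (t, (T,\bv))$. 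This gives the asserted continuity and completes the proof. The only subtlety here is handling the sign of $t - t'$: the inequality~\eqref{eq:cont-rt-gh} as stated has $s \geq 0$, so for $t' < t$ one writes $\dghn(r_t(T,\bv), r_{t'}(T',\bv')) = \dghn(r_{t'+s}(T,\bv), r_{t'}(T',\bv'))$ with $s = t - t' \geq 0$ and applies the bound with the roles of the two trees swapped (using the symmetry of $\dghn$).
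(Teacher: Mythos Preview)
Your approach is correct and essentially the same as the paper's: project onto the truncated tree, invoke Lemma~\ref{lem:dist-proj} for correspondents lying in the spanned subtree and a height comparison for the rest, then bound the distortion as in the proof of Lemma~\ref{lem:Lip-Span}. The paper sidesteps the bookkeeping you anticipate by first noting the trivial bound $\dghn\bigl(r_{t+s}(T,\bv), r_t(T,\bv)\bigr) \leq s$ (same tree, two truncation levels) and reducing via the triangle inequality to the case $s=0$; there it proves the analogue of Lemma~\ref{lem:dist-proj} for the projection $p_t$ onto $r_t(T,\bv)$ (namely $d(x,p_t(x))\leq \tfrac{3}{2}\dist(\cR)$ whenever $x'\in r_t(T',\bv')$, splitting into $x'\in\Span^\circ(T',\bv')$ versus $H'(x')\leq t$) and then repeats verbatim the distortion estimate of Lemma~\ref{lem:Lip-Span}, which makes the constant $4$ immediate.
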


\begin{proof}
  Let  $(T, d, \bv),(T', d', \bv')$  be  two compact  rooted  $n$-pointed  trees. Firstly, notice that $\dghn\bigl(r_{t+s}(T,\bv), r_t(T,\bv)\bigr)\leq  s$.
  Secondly, recall Definition \eqref{eq:def-pv} of the  projection $p_\bv$ on
  $\Span^\circ(T,\bv)$.   For $y\in  T$, we  also define  the projection
  $p_t(y)$ of $y$ on  $r_t(T, \bv)$  as the  only point  of $r_t(T,  \bv)$ such
  that:
 \[
   \lb\root,y\rb\cap r_t (T,\bv)=\lb\root,p_t(y)\rb.
 \]
 We first prove the analogue of Lemma~\ref{lem:dist-proj}.  Let $\cR$ be
 a    correspondence   between    $(T,\bv)$   and    $(T',\bv')$.    Let
 $(x,x')\in \cR$ with  $x'\in r_t(T', \bv')$.  By  construction, we have
 $p_t(x)\in \lb p_\bv(x),  x \rb$.  If $x'\in \Span(T',  \bv')$, then we
 deduce           from          Lemma~\ref{lem:dist-proj}           that
 $d\bigl(x,p_t(x)\bigr)\le   d\bigl(x,p_\bv(x)\bigr)\le   \frac{3}{2}    \dist(   \cR)$.    If
 $x'\in   r_t(T',  \bv')\setminus   \Span(T',  \bv')$,   then  we   have
 $H(x')\leq t$  and thus $H(x)=d(\root, x) \leq  d'(\root', x')+ \dist
( \cR) \leq   t +  \dist (\cR)$, which  implies that
 $d\bigl(x,p_t(x)\bigr)\le     \dist    (\cR)$.      In    conclusion,     we    get
 $d\bigl(x,p_t(x)\bigr)\le  \frac{3}{2} \dist  (\cR)$. Now, arguing  as in  the proof  of
 Lemma~\ref{lem:Lip-Span},   we  deduce   that
 $\dghn   \bigl(r_t(T,   \bv),   r_{t}(T',   \bv)\bigr)   \leq   4   \,
 \dghn\bigl((T,\bv), (T', \bv')\bigr)$. This gives
  the result.
\end{proof}

A rooted $n$-pointed tree $(T, d, \bv)$ is locally compact if $r_t(T, \bv)$ is a
compact rooted tree for all $t\geq 0$. Following \cite{adh}, we set
for two locally compact rooted $n$-pointed  trees $(T, \bv)$ and $(T', \bv')$:
\[
  \dlghn((T, \bv),(T', \bv'))=\int_0^\infty \expp{-t} \rd t \,   \left(1
    \wedge \dghn \bigl(r_t(T, \bv), r_t(T', \bv')\bigr)
  \right).
\]
Furthermore, we  have that $\dlghn\bigl((T,\bv), (T',  \bv')\bigr)=0$ if
and only  if there exists an  isometric bijection  from  $(T, d)$ to
$(T', d')$ which  preserves the distinguished vertices  (this can easily
be proved  with similar  arguments as  for \cite[Proposition~5.3]{adh}).
The  relation  $\dlghn\bigl((T,  \bv), (T',  \bv')\bigr)=0$  defines  an
equivalence relation.  Arguing as in~\cite{adh}  where $n=0$, we get the
following result.  Following the notations in~\cite{adh},  for $n=0$, we
simply write $\TL$ and $\dlgh$ for $\TLn$ and $\dlghn$.

\begin{prop}[$\TLn$ is Polish]
  \label{prop:TLn-Polish}
The set
$\TLn$ of  equivalence classes of  locally compact rooted $n$-pointed  trees endowed
with $\dlghn$ is a metric  Polish space.  Furthermore, the set $\TKn$ of
equivalence classes of compact rooted  $n$-pointed trees  is an
open  dense subset  of $\TLn$. 
\end{prop}

We first provide a short proof for the following  inequalities.

\begin{lem}[Inequalities for $\dghn$ and $\dlghn$]
   \label{lem:cont-lgh-gh}
Let $n\in \N$. For $(T, \bv), (T', \bv')\in \TKn$, we have:
\begin{equation}
  \label{eq:cont-lgh-gh}
  \dlghn\bigl((T, \bv), (T', \bv')\bigr) \leq  1 \wedge 4 \,\dghn\bigl((T,\bv), (T', \bv)\bigr).
\end{equation}
For $(T, \bv), (T', \bv')\in \TLn$ and $s,t\geq 0$, we have:
\begin{align}
  \label{eq:cont-rt-lgh}
  \dlghn \bigl(r_t(T, \bv), r_{t+s}(T', \bv')\bigr)
  &\leq  4 \, \dlghn\bigl((T,\bv),(T', \bv')\bigr)+s,\\
  \label{eq:cont-rt-lgh-gh}
  \dghn \bigl(r_t(T, \bv), r_t(T', \bv')\bigr)
  &\leq  4 \expp{t}\,  \dlghn\bigl((T, \bv), (T', \bv')\bigr).
  \end{align}
The map $\bigl(t, (T, \bv)\bigr) \mapsto r_t(T, \bv)$ is continuous from $\R_+\times
\TLn$ to $\TLn$ (and to $\TKn$).
\end{lem}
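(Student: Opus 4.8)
The plan is to deduce all four assertions (\eqref{eq:cont-lgh-gh}, \eqref{eq:cont-rt-lgh}, \eqref{eq:cont-rt-lgh-gh} and the continuity of $r$) from two ingredients: the contraction estimates already contained in Lemma~\ref{lem:cont-rt}, namely $\dghn\bigl(r_t(T,\bv),r_t(T',\bv')\bigr)\le 4\,\dghn\bigl((T,\bv),(T',\bv')\bigr)$ for compact pointed trees and $\dghn\bigl(r_a(X),r_b(X)\bigr)\le|a-b|$; and the semigroup identity $r_a\circ r_b=r_{a\wedge b}$ for the truncation maps. I would first check this identity straight from~\eqref{eq:def-rt}: because the distinguished vertices, and hence $\Span^\circ(T,\bv)$, always lie inside $r_b(T,\bv)$, truncating $r_b(T,\bv)$ at level $a$ gives $\{x\colon H(x)\le a\wedge b\}\cup\Span^\circ(T,\bv)=r_{a\wedge b}(T,\bv)$. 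Applying Lemma~\ref{lem:cont-rt} (with $s=0$) to the \emph{compact} pointed trees $r_b(T,\bv)$, $r_b(T',\bv')$ at truncation level $a\le b$ then gives the ``almost monotonicity'' bound $g(a)\le 4\,g(b)$ for $a\le b$, where I write $g(u)=\dghn\bigl(r_u(T,\bv),r_u(T',\bv')\bigr)$. Since trivially $1\wedge(4x)\le 4\,(1\wedge x)$ for $x\ge0$, this upgrades to $1\wedge g(a)\le 4\,\bigl(1\wedge g(b)\bigr)$ for $a\le b$, which is the engine behind the three inequalities.

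For~\eqref{eq:cont-lgh-gh}, since here $(T,\bv),(T',\bv')$ are compact, Lemma~\ref{lem:cont-rt} gives $g(t)\le 4\,\dghn\bigl((T,\bv),(T',\bv')\bigr)$ for all $t\ge0$; taking $1\wedge$ and integrating against $\expp{-t}\,\rd t$ (of total mass $1$) yields the claim. For~\eqref{eq:cont-rt-lgh}, I would expand the left-hand side by the definition of $\dlghn$ and the semigroup identity, so that its integrand at level $u$ is $1\wedge\dghn\bigl(r_{u\wedge t}(T,\bv),r_{u\wedge(t+s)}(T',\bv')\bigr)$. A triangle inequality for $\dghn$ together with $\dghn\bigl(r_{u\wedge t}(T',\bv'),r_{u\wedge(t+s)}(T',\bv')\bigr)\le|u\wedge(t+s)-u\wedge t|\le s$ bounds this integrand by $\bigl(1\wedge g(u\wedge t)\bigr)+s\le 4\,\bigl(1\wedge g(u)\bigr)+s$, using $u\wedge t\le u$ and the almost-monotonicity bound. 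Integrating against $\expp{-u}\,\rd u$ gives $\dlghn\bigl(r_t(T,\bv),r_{t+s}(T',\bv')\bigr)\le 4\,\dlghn\bigl((T,\bv),(T',\bv')\bigr)+s$.

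For~\eqref{eq:cont-rt-lgh-gh}, almost-monotonicity gives $1\wedge g(t)\le 4\,\bigl(1\wedge g(u)\bigr)$ for every $u\ge t$; multiplying by $\expp{-u}$ and integrating over $u\in[t,\infty)$ yields $\expp{-t}\,\bigl(1\wedge g(t)\bigr)\le 4\,\dlghn\bigl((T,\bv),(T',\bv')\bigr)$, from which~\eqref{eq:cont-rt-lgh-gh} follows (the truncation by $1$ on the left being absorbed using that $u\mapsto g(u)$ is $2$-Lipschitz, which itself follows from two triangle inequalities and $\dghn(r_a(X),r_b(X))\le|a-b|$). Finally, for the continuity statement, given $(t_k,(T_k,\bv_k))\to(t,(T,\bv))$ in $\R_+\times\TLn$ I would split $\dlghn\bigl(r_{t_k}(T_k,\bv_k),r_t(T,\bv)\bigr)\le\dlghn\bigl(r_{t_k}(T_k,\bv_k),r_t(T_k,\bv_k)\bigr)+\dlghn\bigl(r_t(T_k,\bv_k),r_t(T,\bv)\bigr)$; by~\eqref{eq:cont-rt-lgh} the first term is at most $|t_k-t|\to0$ and the second at most $4\,\dlghn\bigl((T_k,\bv_k),(T,\bv)\bigr)\to0$. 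The same splitting, with~\eqref{eq:cont-rt-lgh-gh} replacing~\eqref{eq:cont-rt-lgh} and noting that $r_t(T,\bv)\in\TKn$ whenever $(T,\bv)\in\TLn$, gives continuity into $\TKn$.

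The only delicate point I anticipate is the semigroup identity $r_a\circ r_b=r_{a\wedge b}$, which must be checked carefully against the ``$\cup\,\Span^\circ$'' clause of~\eqref{eq:def-rt} since the distinguished vertices may have height above the truncation level; once that is granted — and once the bookkeeping of the various $1\wedge$ truncations is carried out consistently so no spurious constant creeps in — every inequality is just a triangle inequality plus $\int_0^\infty\expp{-u}\,\rd u=1$, and I expect no genuine analytic difficulty.
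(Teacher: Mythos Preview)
Your approach is essentially the paper's: both derive everything from Lemma~\ref{lem:cont-rt}, the semigroup identity $r_a\circ r_b=r_{a\wedge b}$, and the resulting almost-monotonicity $g(a)\le 4\,g(b)$ for $a\le b$. Your arguments for \eqref{eq:cont-lgh-gh}, \eqref{eq:cont-rt-lgh} and the continuity statement are correct and match the paper's (the paper is terser but uses exactly the same ingredients).

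There is, however, a genuine gap in your treatment of \eqref{eq:cont-rt-lgh-gh}. You correctly observe that integrating $1\wedge g(t)\le 4\,(1\wedge g(u))$ over $u\ge t$ only yields
\[
1\wedge \dghn\bigl(r_t(T,\bv),r_t(T',\bv')\bigr)\le 4\,\expp{t}\,\dlghn\bigl((T,\bv),(T',\bv')\bigr),
\]
and you then try to drop the $1\wedge$ on the left using that $g$ is $2$-Lipschitz. This patch cannot work, because the stated inequality without the $1\wedge$ is simply false: take $n=1$, $T=[0,100]$ with $\bv=(0,100)$ and $T'=\{0\}$ with $\bv'=(0,0)$. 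Since $\Span^\circ(T,\bv)=T$, one has $r_t(T,\bv)=T$ and $r_t(T',\bv')=T'$ for every $t$, so $g(t)\equiv 50$ (in particular $2$-Lipschitz) while $\dlghn=1$; at $t=0$ the claimed bound would read $50\le 4$. The paper's own proof has the same lacuna---it integrates $g(t)/4\le g(s)$ and tacitly identifies $\int\expp{-s}g(s)\,\rd s$ with $\dlghn$---so the defect lies in the statement rather than in your method. What both arguments actually prove is the version with $1\wedge$ on the left (equivalently, \eqref{eq:cont-rt-lgh-gh} for $\dlghn$ small), which is all that is used downstream; but you should not claim to remove the truncation when it cannot be removed.
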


\begin{proof}
  Equation   \eqref{eq:cont-lgh-gh}   is   a   direct   consequence   of
  \eqref{eq:cont-rt-gh}  with  $s=0$  and the  definition  of  $\dlghn$.
  Equation \eqref{eq:cont-rt-lgh} follows  from similar arguments, using
  also  that  $r_{t'}  \circ  r_u=   r_u\circ  r_{t'}=r_{t'  \wedge  u}$.   For
  $t\leq                  s$,                   we                  have
  $   4^{-1}\,  \dghn\bigl(r_t(T,\bv),   r_t(T',\bv')\bigr)\leq  \dghn\bigl(r_s(T,\bv),
  r_s(T',\bv')\bigr)$. Integrating with respect  to $\expp{-s}\, \rd s$ gives
  \eqref{eq:cont-rt-lgh-gh}.     The     continuity    of     the    map
  $(t, (T,\bv)) \mapsto r_t(T)$ is a direct consequence
  of~\eqref{eq:cont-rt-lgh}.
 \end{proof}

We deduce from \eqref{eq:cont-lgh-gh} and \eqref{eq:cont-rt-lgh-gh} that
all  the measurable  sets  of  $(\TKn, \dghn)$  are  measurable sets  of
$(\TLn, \dlghn)$, and  that a converging sequence in  $(\TKn, \dghn)$ is
also  converging   in  $(\TLn,  \dlghn)$.    We  also  we   deduce  from
\eqref{eq:cont-lgh-gh} that  the restriction  to $\TKn$ of  a continuous
function   defined   on  $(\TLn,   \dlghn)$   is   also  continuous   on
$(\TKn, \dghn)$.

\medskip

Removing from $\bv$ some of the distinguished  vertices (but the
root) is continuous, see the next lemma. For $(T, \bv=(v_0=\root, \ldots, v_n))\in
\TLn$ and $0\in A\subset \{0, \ldots, n\}$, we set:
\begin{equation}
   \label{eq:def-Pi0}
  \Pi_n^{\circ, A}(T, \bv)=(T, \bv_A)
  \quad\text{with}\quad
  \bv_A=(v_i, i\in A).
\end{equation}
For simplicity, we shall write $\Pi_n^\circ$ for $\Pi_n^{\circ, A}$ when
$A$ is reduced to $\{0\}$, so that $\Pi_n^\circ$ corresponds to removing
all the distinguished vertices but the root.
\begin{lem}[Removing some distinguished vertices is continuous]
  \label{lem:erase-cont}
  Let $n\in \N$ and $0\in A\subset\{0, \ldots, n\}$.
  The map $\Pi_n^{\circ, A} $ from $\TLn$ to $\TLk$, with $k$ the
  cardinal of $A$,  is
  $1$-Lipschitz continuous.
\end{lem}

\begin{proof}
First, notice that the
   equivalence class  of $(T, \bv_{A})$ in  $\TLk$ does not depend  of the
   choice of $(T,\bv)$ in its equivalence class in $\TLn$.  Thus the map
   $\Pi_n^{\circ, A} $  is well  defined from  $\TLn$ to
  $\TLk$.  It is  clearly  $1$-Lipschitz continuous since  a correspondence
   between the trees $(T,\bv)$  and $(T',\bv')$  is also a  correspondence between
   $(T,\bv_A)$ and $(T,\bv'_A)$.
\end{proof}

We give an immediate consequence on the continuity of the maps $\Span $
and $\Span^\circ$.

\begin{lem}[Continuity of the maps $\Span$ and $\Span^\circ$]
  \label{lem:cont-span}
  Let  $n\in\N$.   The  map  $(T,  \bv) \mapsto  \Span  (T,  \bv)$ and
$(T,  \bv) \mapsto  \Span ^\circ (T,  \bv)$ are  $4$-Lipschitz continuous from
  $\TLn$ to $\TLn$ and to $\TL$ respectively.
\end{lem}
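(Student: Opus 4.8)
The plan is to reduce the locally compact statement to its compact counterpart, Lemma~\ref{lem:Lip-Span}, using the structure of $\dlghn$ as an exponential average of the Gromov--Hausdorff distances between truncations, together with one structural observation: truncation acts trivially on spanned trees. Concretely, by the definition~\eqref{eq:def-rt} of $r_t$, which always adjoins the whole of $\Span^\circ(T,\bv)$ to the ball of radius $t$, the geodesics $\lb\root,v_k\rb$ lie inside $r_t(T,\bv)$ for every $t\ge 0$; since $r_t(T,\bv)$ with the induced metric is again a real tree containing these segments, one gets $\Span^\circ(r_t(T,\bv),\bv)=\Span^\circ(T,\bv)$, hence $\Span(r_t(T,\bv))=\Span(T,\bv)$. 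In particular $\Span(T,\bv)$ is a compact tree (a finite union of geodesic segments), and the same reasoning gives $r_t(\Span(T,\bv))=\Span(T,\bv)$ for all $t\ge 0$; feeding this into the definition of $\dlghn$ collapses it:
\[
\dlghn\bigl(\Span(T,\bv),\Span(T',\bv')\bigr)=1\wedge \dghn\bigl(\Span(T,\bv),\Span(T',\bv')\bigr).
\]

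Next I would apply Lemma~\ref{lem:Lip-Span} to the compact $n$-pointed trees $r_t(T,\bv)$ and $r_t(T',\bv')$ and use $\Span(r_t(T,\bv))=\Span(T,\bv)$ to obtain, for every $t\ge 0$,
\[
\dghn\bigl(\Span(T,\bv),\Span(T',\bv')\bigr)\le 4\,\dghn\bigl(r_t(T,\bv),r_t(T',\bv')\bigr).
\]
Combining with the elementary inequality $1\wedge 4x\le 4(1\wedge x)$ (valid for $x\ge 0$) and integrating against the probability measure $\expp{-t}\rd t$ on $\R_+$, the right-hand side becomes $4\,\dlghn\bigl((T,\bv),(T',\bv')\bigr)$, while the left-hand side is unchanged and equals $\dlghn\bigl(\Span(T,\bv),\Span(T',\bv')\bigr)$ by the identity above. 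This gives the desired $4$-Lipschitz bound, so $\Span$ is $4$-Lipschitz continuous from $\TLn$ to $\TLn$ (recall $\TKn\subset\TLn$); the case $n=0$ being trivial, with $\Span$ constant.

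For $\Span^\circ$ I would simply note that $\Span^\circ=\Pi_n^\circ\circ\Span$, where $\Pi_n^\circ$ forgets all distinguished vertices but the root; by Lemma~\ref{lem:erase-cont} the map $\Pi_n^\circ$ is $1$-Lipschitz from $\TLn$ to $\TL$, so the composition is $4$-Lipschitz from $\TLn$ to $\TL$.

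I expect the only point requiring a little care to be the verification of the two identities $\Span(r_t(T,\bv))=\Span(T,\bv)$ and $r_t(\Span(T,\bv))=\Span(T,\bv)$ at the level of equivalence classes in $\TKn$ --- that is, checking that the geodesics $\lb\root,v_k\rb$ computed inside the subset $r_t(T,\bv)$ (resp.\ inside $\Span^\circ(T,\bv)$) agree with those in $T$. As noted above this is immediate, since in each case the relevant subset is a real tree containing all these segments. Everything else is the same $\dghn\to\dlghn$ truncation-and-average scheme already used in Lemma~\ref{lem:cont-lgh-gh}, and is routine.
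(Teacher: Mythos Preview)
Your proof is correct and follows essentially the same approach as the paper: both reduce to Lemma~\ref{lem:Lip-Span} via the identity $\Span(r_t(T,\bv))=\Span(T,\bv)$ and then pass to $\dlghn$ by integrating against $\expp{-t}\,\rd t$, with $\Span^\circ=\Pi_n^\circ\circ\Span$ handled by Lemma~\ref{lem:erase-cont}. Your write-up is in fact more careful than the paper's terse version, which asserts $\dlghn(\Span(T,\bv),\Span(T',\bv'))=\dghn(\Span(T,\bv),\Span(T',\bv'))$ without the $1\wedge$; your formulation $\dlghn=1\wedge\dghn$ and the inequality $1\wedge 4x\le 4(1\wedge x)$ make the passage explicit.
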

\begin{proof}
    Notice                                                            that
  $\dlghn\bigl(\Span(T, \bv), \Span(T',  \bv)\bigr)=\dghn\bigl(\Span(T, \bv), \Span(T',
  \bv)\bigr)$,  and   thus  the  map   $\Span$  from  $\TLn$  to   $\TLn$  is
 $4$-Lipschitz continuous, thanks     to     Lemma~\ref{lem:Lip-Span}.
 Then use Lemma~\ref{lem:erase-cont} on the continuity of $\Pi_n^\circ$
 and the fact that $\Span^\circ=\Pi_n^\circ \circ \Span$ to conclude.
\end{proof}

Next, we check  that
 rerooting  or   reordering  the   distinguished  vertices  is   a  continuous
 operation.  For a  vector $\bv=(v_0,  \ldots, v_n)$  and a  permutation
 $\pi$       of       $\{0,       \ldots,       n\}$,       we       set
 $\bv^\pi=(v_{\pi(0)}, \ldots, v_{\pi(n)})$.

 \begin{rem}
 One can see that the map $(T, \bv)\mapsto (T, \bv^\pi)$  is an isometry on $\TKn$. The next lemma is an extension to locally compact case.
 \end{rem}

\begin{lem}[Permuting the distinguished vertices is continuous]
  \label{lem:rerooting}
Let $n\in \N$ and  let $\pi$ be a permutation  on $\{0, \ldots, n\}$. The  map
$(T, \bv)\mapsto (T, \bv^\pi)$  defined on $\TLn$ is  continuous.
\end{lem}

\begin{proof}
   First notice that if $(T, \bv)$ and $(T', \bv')$ are rooted
   $n$-pointed trees belonging to the same equivalence class of $\TLn$,
   so do $(T, \bv^\pi)$ and $(T', \bv'{}^\pi)$. Thus, the map $(T,
   \bv)\mapsto (T, \bv^\pi)$ is indeed well-defined on $\TLn$.
We shall use the following notation: we denote by $r_t^\circ$ the truncation $r_t$
when one forgets about the distinguished vertices (but the root): $r_t^\circ=\Pi_n^\circ\circ r_t$.
(Take care that $\Pi_n^\circ\circ r_t\neq  r_t\circ\Pi_n^\circ$.)
To prove the continuity of the map, we consider two cases.

  \emph{1st case}: No rerooting, $\pi(0)=0$.
  In that case, for every $t\ge 0$ and every $(T,\bv)\in \TLn$, we have
  that  $r_t^\circ(T,\bv)=r_t^\circ(T,\bv^\pi)$ and thus we get that:
  \[
  \dlghn\bigl((T,\bv^\pi),(T',\bv'{}^\pi)\bigr)=\dlghn\bigl((T,\bv),(T',\bv')\bigr).
  \]
This trivially implies the continuity of the map.

  \emph {2nd case}:  With rerooting, $\pi(k_0)=0$ for some  $k_0\ne 0$.  Let
  $(T,\bv),\            (T',\bv')\in\TLn$, with $\bv=(v_0=\root, \ldots,
  v_n)$ and $\bv'=(v'_0=\root', \ldots, v'_n)$,  such            that
  $\dlghn\bigl((T,\bv),(T',\bv')\bigr)<1/2$.   As  $v_{k_0}$  and  $v'_{k_0}$  are
  always in correspondence as well as $\root$ and $\root'$, we have, for
  every $t\ge 0$ that:
\[
\bigl|H(v_{k_0})-H(v'_{k_0})\bigr|\le 2\dghn\bigl(r_t(T,\bv),r_t(T',\bv')\bigr).
\]
Multiplying  by $\expp{-t}$ and integrating yields:
\[
1\wedge \bigl|H(v_{k_0})-H(v'_{k_0})\bigr|\le 2\dlghn \bigl((T,\bv),(T',\bv')\bigr)<1,
\]
and hence:
\[
H(v'_{k_0})\le H(v_{k_0})+1.
\]
We set $h_0= H(v_{k_0})+1$.
Then, for every $t\ge 0$, we have:
\[
r_t^\circ(T,\bv^\pi)\subset r^\circ_{t+h_0}(T,\bv)\quad\text{and thus }\quad
r_t(T,\bv^\pi)=r_{t}\Bigl(r^\circ_{t+h_0}(T,\bv),\bv^\pi\Bigr),
\]
and the same holds for $T'$. Consequently, applying Lemma \ref{lem:cont-rt}, we have:
\begin{align*}
  \dlghn\bigl((T,\bv^\pi),(T',\bv'{}^\pi)\bigr)
  & \le 4\int_0^{+\infty}\rd t \expp{-t} \left(1\wedge
    \dghn\Bigl(\bigl(r^\circ _{t+h_0}(T,\bv), \bv^\pi\bigr),\bigl(r^\circ _{t+h_0}(T',\bv'),
   \bv'{}^\pi\bigr)  \Bigr) \right)\\
  & = 4\int_0^{+\infty}\rd t \expp{-t} \left(1\wedge \dghn\bigl(r_{t+h_0}(T,\bv),r_{t+h_0}(T',\bv')\bigr)\right)\\
& \le 4\expp{h_0}\dlghn\bigl((T,\bv),(T',\bv')\bigr),
\end{align*}
where we used for the second inequality that $\dghn\bigl((\tilde T,\bv^\pi),
(\tilde T',
\bv'{}^\pi)\bigr)=\dghn\bigl((\tilde T,\bv), (\tilde T, \bv')\bigr)$ for $(\tilde T,
\bv), (\tilde T', \bv')\in \TKn$.
The continuity of the map follows.
\end{proof}

We shall also consider the set of trees whose root is not a branching
vertex:
\begin{equation}
  \label{eq:del-TL-br}
  \TLnb=\bigl\{(T, \bv)\in \TLn\, \colon\, \root\not\in \mathrm{Br}(T)\bigr\}.
\end{equation}
We shall simply write $\TLb$ for $\TLnb$ when $n=0$.
 \begin{lem}
   \label{lem:root-no-branching}
   The set $\TLnb$ is a Borel subset of $\TLn$.
 \end{lem}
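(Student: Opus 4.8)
The plan is to write the condition $\root\in\mathrm{Br}(T)$ in terms of quantities that vary measurably along a sequence of truncations, and then take a countable combination. Recall that $\root\in\mathrm{Br}(T)$ means that $T\setminus\{\root\}$ has at least two connected components; equivalently, there exist two points $x,y\in T$ at positive height with $x\wedge y=\root$. I would first reduce to the compact trees $r_t(T,\bv)$: since $\root\in\mathrm{Br}(T)$ if and only if $\root\in\mathrm{Br}(r_t(T,\bv))$ for some (equivalently, all large) $t$, and since for fixed $t$ the map $(T,\bv)\mapsto r_t(T,\bv)$ is continuous from $\TLn$ to $\TKn$ by Lemma~\ref{lem:cont-rt} and Lemma~\ref{lem:cont-lgh-gh}, it suffices to show that $\{(T,\bv)\in\TKn\,\colon\,\root\in\mathrm{Br}(T)\}$ is a Borel (in fact, I expect $G_\delta$ or $F_\sigma$) subset of $\TKn$, and then pull back by the countably many maps $r_k$, $k\in\N^*$, and take a union over $k$.

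For the compact case, the natural measurable gadget is the ``spread at the root'': for $\varepsilon>0$ define
\[
g_\varepsilon(T,\bv)=\sup\bigl\{\,\ell\wedge\varepsilon\,\colon\,\ell=d(\root,x\wedge y)\ \text{can be ``split off''}\bigr\},
\]
but more cleanly I would use the following. For a compact rooted tree $T$ and $\varepsilon>0$, consider the number $N_\varepsilon(T)$ of connected components of $\{x\in T\,\colon\,H(x)>\varepsilon\}$ that actually reach down to within distance — no: better, let $n_\varepsilon(T)$ be the number of points of $T$ at height exactly $\varepsilon$, i.e.\ $\Card\{x\in T\,\colon\, H(x)=\varepsilon\}$. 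Then $\root\in\mathrm{Br}(T)$ iff $n_\varepsilon(T)\ge 2$ for all small enough $\varepsilon>0$, iff $\sup_{\varepsilon>0}\mathbf 1_{\{n_\varepsilon(T)\ge 2\}}$ stabilises at $1$ near $0$, i.e.\ iff $n_{1/m}(T)\ge 2$ for some $m$ large enough — wait, that direction needs care because a branch point at the root forces $n_\varepsilon\ge2$ for \emph{all} small $\varepsilon$, so $\root\in\mathrm{Br}(T)$ iff for every $m$ there is $m'\ge m$ with $n_{1/m'}(T)\ge 2$, equivalently $\limsup_{\varepsilon\to0}n_\varepsilon(T)\ge 2$. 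So the target set is $\bigcap_{m\in\N^*}\bigcup_{m'\ge m}\{T\,\colon\,n_{1/m'}(T)\ge 2\}$. It remains to check that for each fixed height $h>0$ the set $\{T\in\TK\,\colon\, n_h(T)\ge 2\}$ is Borel. This can be established by approximating $n_h$ from below: $n_h(T)\ge 2$ iff there exist $x,y\in T$ with $H(x)=H(y)=h$ and $d(x,y)>0$ and $H(x\wedge y)<h$; replacing $d(x,y)>0$ by $d(x,y)>1/p$ and intersecting over $h$ in a rational window, one writes the set as a countable union of closed (or $G_\delta$) sets defined through the continuous maps $T\mapsto r_h(T)$ and distance functionals, which are Borel.

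Alternatively — and this is the route I would actually prefer — one exploits that $\TKn$ and $\TLn$ are Polish and that $\{(T,\bv)\,\colon\,\root\notin\mathrm{Br}(T)\}$ is easily checked to be a \emph{closed} subset of $\TKn$: if $\root$ is not a branching point of each $T^{(j)}$ and $T^{(j)}\to T$ in $\dghn$, then near the root each $T^{(j)}$ looks like a single segment, and this property passes to the Gromov--Hausdorff limit on the compact truncations; hence in $\TKn$ the complement is open and the set $\TLnb\cap\TKn$ is closed, in particular Borel. Then, again using continuity of the truncations $r_k$ and the identity $\{(T,\bv)\in\TLn\,\colon\,\root\notin\mathrm{Br}(T)\}=\bigcap_{k\in\N^*}r_k^{-1}\bigl(\{(S,\bw)\in\TKn\,\colon\,\root\notin\mathrm{Br}(S)\}\bigr)$, the set $\TLnb$ is a countable intersection of Borel sets, hence Borel. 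The main obstacle, and the one point needing genuine care, is the verification that ``$\root$ is not a branching vertex'' is stable under $\dghn$-limits of compact trees (equivalently, that $T\mapsto\mathbf 1_{\{\root\in\mathrm{Br}(T)\}}$ is lower semicontinuous on $\TKn$): one must rule out that a tree branching at the root degenerates in the limit to one that does not, which is fine, and — more delicately — argue that a sequence of non-branching-at-root trees cannot converge to a branching-at-root tree, which follows because a correspondence with small distortion forces the two near-root segments witnessing the branching in the limit to be matched inside a single geodesic segment of the approximant, a contradiction. Once this semicontinuity is in hand, the rest is the routine pull-back-and-intersect argument sketched above.
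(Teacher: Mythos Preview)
Your preferred Route~2 has a genuine gap: the set $\{(T,\bv)\in\TKn:\root\notin\mathrm{Br}(T)\}$ is \emph{not} closed in $\TKn$. Take $n=0$ and let $T^{(j)}$ consist of a segment $\lb\root,a_j\rb$ of length $1/j$ with two further segments of length $1-1/j$ grafted at $a_j$; then $\root\notin\mathrm{Br}(T^{(j)})$, yet $T^{(j)}\to T$ in $\dgh$ where $T$ is two unit segments meeting at $\root$, so $\root\in\mathrm{Br}(T)$. Your contradiction argument fails because a small-distortion correspondence matches the two near-root segments of $T$ with portions of $T^{(j)}$ meeting at $a_j$ rather than at $\root$, and $H(a_j)=1/j\to0$ gives no lower bound to prevent the fork from sliding down to the root in the limit. (In fact neither $\{\root\in\mathrm{Br}(T)\}$ nor its complement is closed in $\TKn$: for the other direction, let one of two root-branches shrink to a point.)

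The paper instead uses a functional that \emph{is} continuous, the diameter. For a compact rooted tree $S$ with $H(S)>0$, the equality $\diam(S)=2H(S)$ forces $\root\in\mathrm{Br}(S)$ (take $x,y$ realising the diameter; then $x\wedge y=\root$). Applied to $S=r_t\circ\Pi_n^\circ(T,\bv)$, this yields: $\root\in\mathrm{Br}(T)$ if and only if $\diam\bigl(r_t\circ\Pi_n^\circ(T,\bv)\bigr)=2t$ for some (equivalently, all sufficiently small) $t>0$. The sets $D_t$ on which this equality holds are closed by continuity of $\diam$, $r_t$ and $\Pi_n^\circ$, so $(\TLnb)^c=\bigcup_{m\ge1}D_{1/m}$ is $F_\sigma$ and $\TLnb$ is Borel. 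Your Route~1 is salvageable along exactly these lines: for each fixed $T$ and all small enough $h$, the conditions $n_h(T)\ge2$ and $\diam(r_h(T))=2h$ coincide, so the diameter is precisely the continuous surrogate you were missing for the integer-valued $n_h$.
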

\begin{proof}
For a rooted tree $T$, we define its diameter by $\diam(T)=\sup \{d(x,y)\,
\colon\, x, y\in T\}$. Notice that $H(T)\leq  \diam(T)\leq  2 H(T)$.
Clearly the function $\diam$ is constant on all equivalent classes of
$\TKn$ and thus of $\TLn$. If $\diam(T)=2H(T)<+\infty $, then we deduce
that the root is a branching vertex. Recall $\Pi_n^\circ$ for \eqref{eq:def-Pi0}.  More generally, we get that:
\[
  \TLnb=\bigcup_{n\in \N^*} D_{1/n}
  \quad\text{with}\quad
  D_t=\Bigl\{T\in \TLn\, \colon\, \diam\bigl(r_{t}\circ\Pi_n^\circ(T)\bigr)=2t\Bigr\}.
\]
Since the functions $\diam$,  $r_t$ and $\Pi_n^\circ$ are continuous, we deduce that
$D_t$ is closed, and hence $\TLnb$ is a Borel subset of $\TL$.
 \end{proof}

 We now   define  the set  of  discrete
 trees.  We say  that  a rooted  $n$-pointed  tree $(T,  d,  \bv)$ is  a
 discrete  tree if  $T$ is  equal  to the  tree spanned  by its distinguished
 vertices: $T=\Span^\circ(T,  \bv)$. We  define the set  of (equivalence
 classes of) discrete trees with at most $n$ leaves as:
  \begin{equation}
   \label{eq:def-TDn}
   \TDn=\bigl\{(T, \bv)\in \TLn\, \colon\, (T, \bv)=\Span(T,  \bv)\bigr\}.
 \end{equation}
 As a direct consequence of the continuity of the map
 $\Span$ we get the following result.
 \begin{lem}
   \label{lem:discrete=closed}
   Let $n\in \N$. The set of discrete trees $\TDn$ is  a closed subset  of $\TKn$  and of
 $\TLn$.
 \end{lem}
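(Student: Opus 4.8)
The plan is to realize $\TDn$ as the zero set of a continuous function built from the map $\Span$, and then to invoke the continuity of $\Span$ already established in Lemmas~\ref{lem:Lip-Span} and~\ref{lem:cont-span}. First I would record two elementary facts. On the one hand, $\Span^\circ(T,\bv)=\bigcup_{k=1}^n\lb\root,v_k\rb$ is a finite union of compact arcs (each $v_k$ having finite height $H(v_k)$), hence compact; so $\TDn\subset\TKn$ and it makes sense to speak of $\TDn$ as a subset of both $\TKn$ and $\TLn$. On the other hand, since $\dghn$ (resp.\ $\dlghn$) is a genuine distance on the set of equivalence classes $\TKn$ (resp.\ $\TLn$) and $\Span$ is well defined on equivalence classes, we have for $(T,\bv)\in\TKn$ (resp.\ $\TLn$) that $(T,\bv)=\Span(T,\bv)$ if and only if $\dghn\bigl((T,\bv),\Span(T,\bv)\bigr)=0$ (resp.\ $\dlghn\bigl((T,\bv),\Span(T,\bv)\bigr)=0$).

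The core of the argument is then a one-line observation. By Lemma~\ref{lem:Lip-Span} the map $\Span$ is $4$-Lipschitz from $\TKn$ to $\TKn$, and by Lemma~\ref{lem:cont-span} it is $4$-Lipschitz from $\TLn$ to $\TLn$; since a distance is continuous in its two arguments, the maps
\[
(T,\bv)\longmapsto \dghn\bigl((T,\bv),\Span(T,\bv)\bigr)
\quad\text{and}\quad
(T,\bv)\longmapsto \dlghn\bigl((T,\bv),\Span(T,\bv)\bigr)
\]
are continuous on $\TKn$ and on $\TLn$ respectively. The set $\TDn$ is, by \eqref{eq:def-TDn} and the equivalence recorded above, exactly the preimage of $\{0\}$ under the first map (viewing $\TDn\subset\TKn$) and under the second map (viewing $\TDn\subset\TLn$); being the zero set of a continuous real-valued function, it is therefore closed in $\TKn$ and in $\TLn$.

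I do not expect any real obstacle here: the statement is essentially a formal consequence of the continuity of $\Span$, and the only points needing a word of justification — that discrete trees are compact, and that equality of equivalence classes is detected by vanishing of $\dghn$ or $\dlghn$ — are immediate. (One could alternatively phrase it through the idempotence $\Span\circ\Span=\Span$, identifying $\TDn$ with the fixed-point set of the continuous self-map $\Span$, but the zero-set formulation is the most direct.)
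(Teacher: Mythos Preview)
Your argument is correct and is exactly the approach the paper takes: the paper states the lemma ``as a direct consequence of the continuity of the map $\Span$'', and your zero-set (equivalently, fixed-point set) formulation is precisely how one unpacks that remark.
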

We end this section with partial measurability result on the number of
vertices at a given height of a tree.

  \begin{rem}
    \label{rem:cont-T-d}
    It     is      immediate     to      check     that      the     map
    $(T,  \bv)  \mapsto  \bigl(d(v_i,v_j),\,  i,j\in \{0,  \ldots,  n\}\bigr)$  is
    injective  $1/2$-Lipschitz   continuous  from  $(\TLn,   \dlghn)$  to
    $\R^{(n+1)\times (n+1)}$ endowed with the supremum norm (\emph{i.e.}
    the  maximum  of the  distances  between  coordinates). It  is  also
    bi-measurable  thanks  to  Lusin's  theorem.
\end{rem}

Let   $\TLnl$ be  the set of
trees with no leaves:
\[
  \TLnl=\bigl\{T\in \TL\, \colon\, \leaf(T)=\emptyset\bigr\}.
\]
For $T\in \TLnl$ and $t\geq 0$, let $\tilde
N_t(T)$ denotes the number of vertices at height $t$ of $T$:
\begin{equation}
   \label{eq:def-tildeN}
  \tilde N_t(T)=\Card\Bigl(\bigl\{ x\in T\, \colon\,
      H(x)=t\bigr\}\Bigr).
\end{equation}
It is easy to prove (and left as an exercise to the reader)  that
$\tilde N_t(T)$ is finite using that $T$ is 
locally compact without leaves. 
We have the following result.
\begin{lem}[Measurability of $\tilde N_t$]
  \label{lem:Nt-meas}
The set $\TLnl$ is a Borel subset of $\TL$ and the map $(t, T)\mapsto \tilde N_t(T)$ is
measurable from $\R_+ \times\TLnl$ to $\N$.
\end{lem}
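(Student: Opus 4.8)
The plan is to first reduce the statement to a countable family of ``good'' events by approximating a tree in $\TLnl$ by its truncations $r_t(T)$, which are compact, and then exploit the discrete structure that finitely many vertices at a given level impose. First I would record the key topological fact: for $T\in\TL$, the map $t\mapsto r_t(T)$ is continuous (Lemma~\ref{lem:cont-lgh-gh}), and for a fixed compact tree $r_t(T)$ the number of connected components of $r_t(T)\setminus r_{t-\varepsilon}(T)$ equals $\tilde N_t(T)$ once $\varepsilon$ is small enough (no branching occurs strictly between levels $t-\varepsilon$ and $t$). So $\tilde N_t(T)$ is encoded by the local branching structure near level $t$. To make this measurable I would use the fact (Remark~\ref{rem:cont-T-d}) that the pairwise-distance encoding of $n$-pointed trees is bi-measurable, together with the measurability of the truncation and spanning maps established in Section~\ref{sec:Tn}--\ref{sec:root-Tn}.

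The cleaner route for $\TLnl$ being Borel is: a locally compact tree $T$ has no leaves if and only if for every rational $t\geq 0$ and every $x\in r_t(T)$ with $H(x)=t$, the vertex $x$ is not a leaf of $r_{t'}(T)$ for $t'>t$; equivalently, $r_t(T)$ has all its ``tips'' at level exactly $t$ (they continue upward). I would phrase this as: $T\in\TLnl$ iff for all rationals $s<t$, the map $p_s\colon r_t(T)\to r_s(T)$ is surjective onto the set $\{x\in r_t(T): H(x)=s\}$ — more precisely iff every point of $r_s(T)$ at height $s$ lies below some point of height $t$. Since each $r_t$ is continuous from $\TL$ to $\TK$ and the relevant geometric quantities (heights, existence of a point above a given one) are measurable functions of the compact tree via Remark~\ref{rem:cont-T-d}, the set $\TLnl$ is a countable intersection of measurable sets, hence Borel.

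For the measurability of $(t,T)\mapsto\tilde N_t(T)$ on $\R_+\times\TLnl$, I would show that for each $k\in\N$ the set $\{(t,T):\tilde N_t(T)\geq k\}$ is measurable. On $\TLnl$ we have $\tilde N_t(T)\geq k$ iff for all sufficiently small rational $\varepsilon>0$, the compact tree $r_t(T)$ contains $k$ points $x_1,\dots,x_k$ at height $t$ that are pairwise at distance $>0$ and whose common ancestors all lie at height $\leq t-\varepsilon$ — and, crucially, no \emph{more} than $k$ such points, which on $\TLnl$ follows from the fact that $r_t(T)$ is compact so $\tilde N_t(T)$ is automatically finite and the branching points below $t$ are finitely many. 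To express ``there exist $k$ points at height $t$'' measurably I would use a selection/approximation argument: realize the candidate points as limits of $p_t$-images of points of $r_{t+\delta}(T)$ for rational $\delta$, or, more robustly, encode everything through the $n$-pointed trees of Section~\ref{sec:Tn} — namely, $\tilde N_t(T)\geq k$ iff there is a measurable choice of $(v_1,\dots,v_k)$ realizing the configuration, and the set of $n$-pointed trees with a prescribed height/ancestry pattern is Borel by Remark~\ref{rem:cont-T-d}. Combining with the joint continuity of $(t,T)\mapsto r_t(T)$ gives joint measurability.

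The main obstacle I anticipate is the ``no more than $k$'' direction: lower bounds on $\tilde N_t$ are open conditions (exhibit enough well-separated points), but the exact count also needs an upper bound, i.e.\ that there is no additional branch reaching height $t$, which is a closed/complementary-type condition that must be taken care of uniformly as $t$ varies. The resolution is that on $\TLnl$ the function $t\mapsto \tilde N_t(T)$ is finite and piecewise constant with jumps only at the (countably many, and measurably located) branching heights of $T$, so the upper bound can be read off from a single small-$\varepsilon$ comparison $r_t$ versus $r_{t-\varepsilon}$; writing this comparison measurably in $(t,T)$ via the continuity of $r_\cdot$ and Remark~\ref{rem:cont-T-d} is the technical heart of the argument.
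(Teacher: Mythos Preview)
Your plan points in the right direction --- reduce to truncations $r_t(T)$ and detect $\tilde N_t(T)$ through the existence of $k$ well-separated vertices at height $t$ --- but it has a genuine gap at the step where you write ``there is a measurable choice of $(v_1,\dots,v_k)$ realizing the configuration''. You are working on the space $\TL$ of \emph{equivalence classes} of trees, so you cannot simply pick points measurably; and the condition ``$r_t(T)$ admits a lift to a $k$-pointed tree with such-and-such height pattern'' describes $r_t(T)$ as lying in the \emph{image} of a Borel set under the forgetful map $\Pi_k^\circ$. Images of Borel sets under Borel maps are in general only analytic, not Borel, so this step needs an argument you have not supplied. Your appeal to Remark~\ref{rem:cont-T-d} does not help here: that remark gives bi-measurability of the distance-vector map on $\TDn$, not of $\Pi_k^\circ$ on $\TLk$.

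The paper closes exactly this gap, and does so very cleanly. It introduces the Borel set $\Theta_n(t)\subset\TDn$ of discrete $n$-pointed trees whose pointed vertices are $n$ distinct leaves all at height $t$, observes that the restriction of $\Pi_n^\circ$ to $\TDn$ has \emph{finite} fibres over any fixed compact tree, and then invokes Lusin's theorem (Purves) to conclude that $\Pi_n^\circ$ is bi-measurable on $\TDn$, hence $\Pi_n^\circ(\Theta_n(t))$ is Borel in $\TL$. This yields at once
\[
\TLnl=\bigcap_{k\geq 1}\bigcup_{n\geq 0} r_k^{-1}\bigl(\Pi_n^\circ(\Theta_n(k))\bigr)
\quad\text{and}\quad
\{T\in\TLnl:\tilde N_t(T)=n\}=r_t^{-1}\bigl(\Pi_n^\circ(\Theta_n(t))\bigr),
\]
so both assertions for fixed $t$ are immediate. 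Joint measurability in $(t,T)$ then follows because $t\mapsto\tilde N_t(T)$ is non-decreasing and left-continuous (your ``piecewise constant at branching heights'' observation is the same idea). The moral: rather than trying to \emph{select} vertices measurably inside an equivalence class, pass to the Polish space $\TDn$ where the vertices are part of the data, and use Lusin's theorem to push Borel sets forward along the finite-to-one projection $\Pi_n^\circ$.
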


\begin{proof}
  Let $t\geq 0$ and let $ \Theta_n(t)$ be the set of discrete trees such that all the
distinguished vertices (but the root) are leaves at height $t$:
\[
\Theta_n(t)  =\bigl\{T\in \TDn\, \colon\, d(\root, v_i)=t \text{ and } d(v_i,
v_j)> 0 \text{ for all } i,j\in \{1, \ldots, n\}\bigr\}.
\]
Thanks to Remark~\ref{rem:cont-T-d},  $\Theta_n(t)$ is a Borel set of $\TDn\subset\TKn\subset\TLn$.
For $T\in \TL$, we get that $\bigl\{T'\in \TDn\, \colon\, \Pi_n^\circ
(T')=T\bigr\}$ is finite. We deduce from  Lusin's  theorem that  $\Pi_n^\circ$ restricted to $\TDn$ is
bi-measurable. This implies  that the set
$\Pi_n^\circ(\Theta_n(t))$ is a Borel subset of $\TL$. We deduce that the set of
trees with no leaves, $\TLnl$,  which is formally defined by:
\[
 \TLnl=\bigcap _{k\in \N^*} \bigcup
  _{n\in \N} r_k^{-1} \Bigl(\Pi_n^\circ\bigl(\Theta_n(k)\bigr)\Bigr),
\]
is a Borel subset of $\TL$.
We also get that  $\bigl\{T\in \TLnl\, \colon\, \tilde N_t(T)=n\bigr\}= r_t^{-1}
\Bigl(\Pi_n^\circ\bigl(\Theta_n(t)\bigr)\Bigr)$; this implies that the map $\tilde N_t$ is
measurable. Since $t\mapsto \tilde N_t(T)$ is non-decreasing and
left-continuous, we deduce that the map $(t, T)\mapsto \tilde N_t(T)$ is
measurable from $\R_+ \times\TLnl$ to $\N$.
\end{proof}

 \subsection{Grafting a discrete tree on another one}
 \label{sec:graft1}

 We    define,   in    a    slightly   more    general   context    than
 Section~\ref{sec:graft-int}, the  grafting of a locally  compact rooted
 tree at  a distinguished  vertex of an  another locally  compact rooted
 tree.     For
 $(T,    \bv)     \in    \TLn$     and    $(T',\bv')\in     \TLk$    and
 $i\in      \{0,      \ldots,      n\}$,     with      $n,k\geq      0$,
 $\bv=(v_0=\root, \ldots, v_n)$  and $\bv'=(v'_0=\root', \ldots, v'_k)$,
 we  define the  tree $T\circledast_i  T'$ by~\eqref{eq:graft-2dis-tree}
 and the distance $d^\circledast$ by~\eqref{eq:graft-2dis-dist} with $x$
 replaced   by   $v_i$,   and  consider   the   distinguished   vertices
 $ \bv\circledast\bv'=(v_0=\root, \ldots, v_n, v'_1, \ldots, v'_k)$.

\begin{lem}[Continuity of the grafting map]
  \label{lem:graft-meas}
Let $n, k\in \N$ and $i\in \{0, \ldots, n\}$. The  map
$\bigl((T, \bv),(T',\bv')\bigr)\mapsto (T\circledast_i
T',\bv\circledast\bv')$,  is continuous from $\TLn\times \TLk$ to $\TLnk$.
\end{lem}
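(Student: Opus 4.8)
The plan is to mimic the strategy used throughout this section: produce, from a near-optimal correspondence between the input trees, an explicit correspondence between the grafted trees whose distortion is controlled by the distortions of the inputs. Concretely, fix $n,k$ and $i\in\{0,\ldots,n\}$, and recall from Lemma~\ref{lem:cont-rt} that truncation is continuous, so it suffices to bound $\dghn\bigl(r_t(T\circledast_i T',\bv\circledast\bv'),\, r_t(T'\circledast_i T'',\bv'\circledast\bv'')\bigr)$ (with the obvious relabelling; here I will instead work directly with the locally compact distance). Let $(T,\bv),(\hat T,\hat\bv)\in\TLn$ and $(T',\bv'),(\hat T',\hat\bv')\in\TLk$, and for each $t\ge 0$ let $\cR$ be a correspondence between $r_t(T,\bv)$ and $r_t(\hat T,\hat\bv)$ and $\cR'$ a correspondence between $r_t(T',\bv')$ and $r_t(\hat T',\hat\bv')$ (these exist in the compact setting). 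The natural candidate is
\[
  \cR^\circledast
  = \bigl\{(x,\hat x)\in \cR\bigr\}
  \;\cup\; \bigl\{(x',\hat x')\,\colon\, (x',\hat x')\in\cR'\bigr\},
\]
viewed as a subset of $(T\circledast_i T')\times(\hat T\circledast_i \hat T')$, where we identify $T\subset T\circledast_i T'$ and $T'\setminus\{\root'\}\subset T\circledast_i T'$, and similarly on the hatted side. One checks directly that $\cR^\circledast$ is a correspondence between the grafted trees respecting the roots and all $n+k$ marked points, using that $(v_i,\hat v_i)\in\cR$ and $(\root',\hat\root')\in\cR'$, so the grafting points are matched.

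The core computation is bounding $\dist(\cR^\circledast)$. For a pair of points both in $T$ (resp.\ both in $T'$) the relevant distance difference is already controlled by $\dist(\cR)$ (resp.\ $\dist(\cR')$). The only genuinely new case is a point $x\in T$ paired with $\hat x\in\hat T$ and a point $x'\in T'\setminus\{\root'\}$ paired with $\hat x'\in\hat T'\setminus\{\hat\root'\}$: then
\[
  d^\circledast(x,x') = d(x,v_i)+d'(\root',x'),\qquad
  \hat d^\circledast(\hat x,\hat x') = \hat d(\hat x,\hat v_i)+\hat d'(\hat\root',\hat x'),
\]
so
\[
  \bigl|d^\circledast(x,x')-\hat d^\circledast(\hat x,\hat x')\bigr|
  \le \bigl|d(x,v_i)-\hat d(\hat x,\hat v_i)\bigr| + \bigl|d'(\root',x')-\hat d'(\hat\root',\hat x')\bigr|
  \le \dist(\cR)+\dist(\cR').
\]
Hence $\dist(\cR^\circledast)\le \dist(\cR)+\dist(\cR')$, and therefore
\[
  \dghn\bigl(r_t(T\circledast_i T'),\, r_t(\hat T\circledast_i \hat T')\bigr)
  \le \dghn\bigl(r_t(T),r_t(\hat T)\bigr) + \dghn\bigl(r_t(T'),r_t(\hat T')\bigr),
\]
after taking infima over correspondences. (A small point to verify is that $r_t$ of a grafted tree is contained in the grafting of the $r_t$-truncations, up to the spanned parts, so that this per-level comparison is legitimate; this follows from the definition~\eqref{eq:def-rt} of $r_t$ together with the description of $\Span^\circ$ of a grafted tree.) Multiplying by $\expp{-t}$, integrating over $t\ge 0$, and using the elementary bound $1\wedge(a+b)\le (1\wedge a)+(1\wedge b)$ gives
\[
  \dlghnk\bigl(T\circledast_i T',\, \hat T\circledast_i \hat T'\bigr)
  \le \dlghn\bigl((T,\bv),(\hat T,\hat\bv)\bigr) + \dlghk\bigl((T',\bv'),(\hat T',\hat\bv')\bigr),
\]
which is exactly (Lipschitz, hence) continuity of the grafting map from $\TLn\times\TLk$ to $\TLnk$.

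The step I expect to require the most care is not the distortion inequality itself, which is routine, but the bookkeeping showing that truncation commutes appropriately with grafting — i.e.\ that $r_t(T\circledast_i T',\bv\circledast\bv')$ and $r_t(T,\bv)\circledast_i r_t(T',\bv')$ are close enough (or genuinely equal, after accounting for the spanned subtree added by $r_t$) that the per-level correspondence argument goes through. One has to be slightly careful because the marked vertices $v'_1,\ldots,v'_k$ on the grafted copy sit at height $H(x)+\hat\cdot$ measured from $\root$, so their inclusion in the spanned part of $r_t$ interacts with the definition of $r_t$ on the factor $T'$. The cleanest route is to observe that $r_t(T\circledast_i T',\bv\circledast\bv') = r_t(T,\bv)\circledast_i r_{t-H(v_i)\vee 0}^{\text{(with marks kept)}}(T',\bv')$ up to identification — or simply to bound directly using that both sides contain the same spanned tree and compare the remaining ``hanging'' subtrees level by level — and then apply the correspondence construction above within each compact truncation. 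Once that identification is in place the rest is the short computation displayed above.
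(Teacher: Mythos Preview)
Your compact-case argument is exactly the paper's: build $\cR^\circledast=\cR\cup\cR'$ and check $\dist(\cR^\circledast)\le\dist(\cR)+\dist(\cR')$, giving
\[
  \dghnk\bigl((T\circledast_i T',\bv\circledast\bv'),(\hat T\circledast_i\hat T',\hat\bv\circledast\hat\bv')\bigr)
  \le \dghn\bigl((T,\bv),(\hat T,\hat\bv)\bigr)+\dghk\bigl((T',\bv'),(\hat T',\hat\bv')\bigr).
\]

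The gap is in the passage to the locally compact case. You write the per-level bound with $r_t(T')$ and $r_t(\hat T')$ on the right, but the identity you yourself record at the end says
\[
  r_t(T\circledast_i T',\bv\circledast\bv')=r_t(T,\bv)\circledast_i r_{(t-H(v_i))_+}(T',\bv'),
\]
so the second factor is truncated at $(t-H(v_i))_+$, and for the hatted tree at $(t-H(\hat v_i))_+$. These two levels \emph{differ}, by up to $\bigl|H(v_i)-H(\hat v_i)\bigr|$. Your displayed per-level inequality and the final Lipschitz-$1$ conclusion are therefore not correct as stated: plugging the compact bound into the correct identity gives instead
\[
  \dghnk\bigl(r_t(\cdots),r_t(\cdots)\bigr)
  \le \dghn\bigl(r_t(T,\bv),r_t(\hat T,\hat\bv)\bigr)
  + \dghk\bigl(r_{(t-H(v_i))_+}(T',\bv'),\,r_{(t-H(\hat v_i))_+}(\hat T',\hat\bv')\bigr),
\]
and the second term involves truncations at two different levels. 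The paper handles this extra mismatch via Lemma~\ref{lem:cont-rt} (continuity of $r_t$ in both arguments), which produces an additional $\bigl|H(v_i)-H(\hat v_i)\bigr|$ contribution; after integrating in $t$ and bounding $\bigl|H(v_i)-H(\hat v_i)\bigr|\le 2\,\dlghn\bigl((T,\bv),(\hat T,\hat\bv)\bigr)$, one obtains
\[
  \dlghnk\bigl(T\circledast_i T',\hat T\circledast_i\hat T'\bigr)
  \le 3\,\dlghn\bigl((T,\bv),(\hat T,\hat\bv)\bigr)+4\,\dlghk\bigl((T',\bv'),(\hat T',\hat\bv')\bigr),
\]
not the $1+1$ bound you claim. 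So the ``bookkeeping'' you flag as needing care is genuinely the missing step, and it changes the constants.
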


\begin{proof}
  Let      $(T_1,\bv_1),(T'_1,\bv'_1)\in       \TLn$       and
  $(T_2,  \bv_2),(T'_2,  \bv'_2)\in   \TLk$.  Set  $T=T_1\circledast_{i}T_2$, $T'=T_1'\circledast_{i}T'_2$,
  $\bv=\bv_1\circledast\bv_2$,
   and
  $\bv'=\bv'_1\circledast\bv'_2$.
  \medskip

  First     suppose  that  the     trees     are     compact,    that     is
  $(T_1,\bv_1),(T'_1,\bv'_1)\in                 \TKn$
  and
  $(T_2, \bv_2),(T'_2, \bv'_2)\in \TKk$. Let $\cR_1$ be a correspondence
  between (elements  of the  classes) $(T_1,\bv_1)$ and  $(T'_1,\bv'_1)$ and
  let  $\cR_2$ be  a correspondence  between (elements  of the  classes)
  $(T_2, \bv_2)$  and  $(T'_2, \bv'_2)$. We  set  $\cR=\cR_1\cup\cR_2$  with $\root_2$  and
  $\root'_2$ replaced respectively by $v_i$ and  $v'_i$.  It defines a correspondence
  between               $(T,\bv)$              and
  $(T',\bv')$.              For            every
  $(x,x'),(y,y')\in \cR$, we have:
\[
\bigl|d^\circledast(x,y)-d'{}^\circledast (x',y')\bigr|\\
=\begin{cases}
\bigl|d_1(x,y)-d'_1(x',y')\bigr|\le \dist(\cR_1) & \text{if }(x,x'),(y,y')\in\cR_1,\\
\bigl|d_2(x,y)-d'_2(x',y')\bigr|\le \dist(\cR_2) & \text{if }(x,x'),(y,y')\in\cR_2,
\end{cases}
\]
and if $(x,x')\in\cR_1$ and $(y,y')\in\cR_2$, we have:
\begin{align*}
  \bigl|d^\circledast(x,y)-d'{}^\circledast (x',y')\bigr|
  &=\bigl|d_1(x,v_i)+d_2(\rho_2,y)-d'_2(\rho'_2,y')-d'_1(x',v'_i)\bigr|\\
&\le \bigl|d_1(x,v_i)-d_1(x',v'_i)\bigr|+\bigl|d_2(\root_2,y)-d'_2(\root'_2,y')\bigr|\\
&\le \dist(\cR_1)+\dist(\cR_2).
\end{align*}
This gives:
\begin{equation}
\label{eq:dgh-circledast}
  \dghnk\bigl((T,\bv),
  (T',\bv')\bigr) \\
  \le\dghn\bigl((T_1,\bv_1),(T'_1,\bv'_1)\bigr)
  +\dghk\bigl((T_2,\bv_2),(T'_2, \bv'_2)\bigr).
\end{equation}

Now consider  $(T_1,\bv_1),(T'_1,\bv'_1)\in       \TLn$
and  $(T_2,  \bv_2),(T'_2,  \bv'_2)\in   \TLk$.
Without loss of
generality we assume that $H(v'_i )\ge
H(v_i)$. Remark that, for every $t\ge 0$, we have, with $a_+=\max(a, 0)$:
\[
r_t(T,\bv)=r_t(T_1,\bv_1)\circledast_{i}r_{(t-H(v_i))_+}(T_2, \bv_2).
\]
Therefore, we have:
  \begin{multline*}
\dlghnk\bigl((T,\bv),(T',\bv')\bigr) \\
  \begin{aligned}
& =\int_0^{+\infty} \!\! \rd t \expp{-t} \left(1\wedge
\dghnk\bigl(r_t(T,\bv),r_t(T',\bv')\bigr)\right)\\
& =\int_0^{+\infty} \!\!\rd t \expp{-t}\left(1\wedge \dghnk
\left(r_t(T_1,\bv_1)\circledast_{i}r_{(t-H(v_i))_+}(T_2, \bv_2),
  r_t(T'_1,\bv'_1)\circledast_{i}r_{(t-H(v'_i))_+}(T'_2, \bv'_2)\right)\right)\\
& \le \int_0^{+\infty} \!\! \rd t \expp{-t}\left(1\wedge
\dghn\left(r_t(T_1,\bv_1),r_t(T'_1,\bv'_1)\right)\right)\\
& \hspace{4cm} +\int_0^{+\infty} \!\! \rd t \expp{-t}\left(1\wedge
\dghk\left(r_{(t-H(v_i))_+}(T_2, \bv_2),r_{(t-H(v'_i))_+}(T'_2, \bv'_2)\right)\right)\\
& \le \dlghn\bigl((T_1,\bv_1),(T'_1,\bv'_1)\bigr)+4\expp{-H(v'_i)}\dlghk\bigl((T_2,
\bv_2),(T'_2, \bv'_2)\bigr)+H(v'_i)-H(v_i)\\
& \le 3\, \dlghn\bigl((T_1,\bv_1),(T'_1,\bv'_1)\bigr)+4\dlghk\bigl((T_2,
\bv_2),(T'_2, \bv'_2)\bigr),
  \end{aligned}
\end{multline*}
where we used Equation \eqref{eq:dgh-circledast} for the first
inequality and Lemma \ref{lem:cont-rt} for the second one. This completes the proof.
\end{proof}

% \begin{rem}
%   \label{rem:graft-height-h}
% \end{rem}
% We summarize the results from Remark~\ref{rem:graft-height-h} in the
% following lemma.

We shall use a version of the grafting procedure where, instead of
grafting on $v_i$, we shall graft on the branch $\lb \root, v_i\rb$ at
height $h$ provided that $H(v_i)\geq h$.
Let  $n\in  \N$  and  $i\in  \{0,   \ldots,  n\}$  be  given.  For  $h\in \R_+$ and $(T,\bv)\in\TKn$,  we denote by $x_{i,h}$ the unique vertex of $T$ that satisfies $x_{i,h}\in \lb \root, v_i\rb$ and $H(x_{i,h})= H(v_i) \wedge h$. Then, the  map
$\bigl(h,(T, \bv)\bigr) \mapsto \bigl(T,  (\bv,x_{i,h})\bigr)$
is clearly continuous from $\R_+\times \TLn$ to $\TLnn$.  We then define
the grafting map $\circledast_{i,h}$ by:
\begin{equation}
   \label{eq:map-graft-ih}
  \bigl(h,(T, \bv),(T',\bv')\bigr)\mapsto T\circledast_{i,h}
  T'= (T\circledast_{i,h}
  T',\bv\circledast\bv'),
\end{equation}
as  the composition  of
\begin{itemize}
\item[] [adding the vertex $x_{i,h}$]:  $\bigl(h,(T,  \bv)\bigr) \mapsto  (T,  \tilde \bv)$  with
$\bv=(v_0=\root,              \ldots,             v_n)$              and
$\tilde \bv=(\bv,x_{i,h})=(\tilde v_0=\root,  \ldots, \tilde v_n=v_n, \tilde
v_{n+1}=x_{i,              h})$,\\
\item[] [grafting]:
$\bigl((T,   \tilde \bv),    (T',\bv')\bigr)   \mapsto   (T\circledast_{n+1}
T',\tilde \bv\circledast \bv')$ and
\item[] [removing the $(n+1)$-th distinguished vertex]:    $(T''=T\circledast_{n+1}
T',\tilde \bv\circledast \bv')  \mapsto (T'', \bv\circledast \bv')$.
\end{itemize}
 Since all those maps
 are continuous, we get the following result.

 \begin{lem}[Continuity of the grafting map $\circledast_{i,h}$]
  \label{lem:graft2-cont}
Let $n,k\in \N$, $i\in \{0, \ldots, n\}$.   The map $\bigl(h, (T,\bv),(T',\bv')\bigr) \mapsto T\circledast_{i,h}
  T'$ is continuous from
$\R_+\times \TLn\times \TLk$ to $\TLnk$.
\end{lem}

\subsection{Set of (equivalence classes of) marked trees}
\label{sec:TnSn}
We shall consider trees with a marked infinite branch; for this reason
we introduce the notion of marked trees. In this part, we do not record
an order on the marked vertices as in the $n$-pointed trees.

We say that $(T, S, d, \root)$ is a marked rooted
tree if $(T, d, \root)$ is  a  rooted tree and
 the set of  marks $S$ is a subtree of $T$ with the same root (that is $\root \in S$)
 endowed with the restriction of the distance $d$.
 A correspondence between two compact marked rooted  trees $(T,S,d,\root)$ and $(T', S', d',
 \root')$ is a set $\cR\subset T\times T'$ such that $\cR$ is a correspondence
 between $(T, d, \root)$ and $(T', d', \root')$ and $\cR\cap
 (S\times S')$ is also  a correspondence
 between $(S, d, \root)$ and $(S', d', \root')$. Then, we set:
 \[
   \dghd\bigl((T,S),(T',S')\bigr)=\inf \, \inv{2}\, \dist(\cR)  ,
\]
where the  infimum is taken  over all the correspondences  $\cR$ between
$(T, S,  d, \root)$ and  $(T', d', S',  \root')$.  An easy  extension of
\cite{adh}   gives  that   $\dghd$  is   a  pseudo-distance,   and  that
$\dghd(T, T')=0$ if and only if there exists an isometric one-to-one map
$\varphi$ from $(T, d)$ to $(T', d')$ which preserves the root and which
is    also    one-to-one   from    $S$    to    $S'$.    The    relation
$\dghd((T,S),  (T',S'))=0$ defines  an  equivalence  relation.  The  set
$\TKd$   of  equivalence   classes  of   compact  marked   rooted  trees
$(T, S, d,\root)$ endowed with $\dghd$ is then a metric Polish space. We
simply  write $(T,  S)$ for  $(T, S,  d, \root)$,  and unless  specified
otherwise, we shall denote also  by $(T,S)$ its equivalence class. Since
\begin{equation}
  \label{eq:majo-pour-P}
 \dgh(T,  T') \vee  \dgh(S, S')\leq  \dghd((T,S), (T',  S')),
\end{equation}
we deduce
that the  map $(T,S)  \mapsto (T,S)$ from  $\TKd$ to  $(\TK)^2$ (endowed
with  the  maximum distance  on  the  coordinates) is  continuous.
For
$t\geq 0$,  we define  the truncation function  $r_t^{[2]}$ of  a marked
rooted   tree   $(T,S,   d,   \root)$  as   the   marked   rooted   tree
$r_t^{[2]}(T,S)=(r_t(T), r_t(S),  d,\root)$, where we recall that
$r_t(T)=\{x\in T\, \colon\, H(x)\leq  t\}$.
 If $(T,S)$ and $(T',S')$ are in the
same  equivalence   class  of   $\TKd$,  so  are   $r_t^{[2]}(T,S)$  and
$r_t^{[2]}(T',S')$; thus  the function  $r_t^{[2]}$ can be seen  as a
map from $\TKd$ to itself.  Similarly to \eqref{eq:cont-rt-gh}, we have
for $t,s\geq 0$ and $(T,S), (T', S')\in \TKd$:
\begin{equation}
  \label{eq:cont-rt-gh2}
    \dghd \left(r_t^{[2]}(T,S), r_{t+s}^{[2]}(T', S')\right) \leq  4 \,
    \dghd\bigl((T,S), (T',S')\bigr)+s.
\end{equation}
This implies
that the map $(t, (T,S)) \mapsto r_t^{[2]}(T,S)$ is continuous from $\R_+\times
\TKd$ to $\TKd$.

A marked rooted tree $(T, S,d, \root)$ is locally compact if $r_t^{[2]}(T,S)$ is a
compact marked rooted tree for all $t\geq 0$. Following \cite{adh}, we consider
for two locally compact marked rooted trees $(T,S)$ and $(T',S')$:
\begin{equation}
   \label{eq:def-dlghd}
  \dlghd((T,S),(T',S'))=\int_0^\infty \expp{-t} \rd t   \left(1 \wedge
    \dghd \left(r_t^{[2]}(T,S), r_t^{[2]}(T',S')\right)
  \right).
\end{equation}
Furthermore,  we have  that $\dlghd((T,S),  (T',S'))=0$ if  and only  if
there  exists an  isometric one-to-one  map $\varphi$  from $(T,  d)$ to
$(T',  d')$ which  is  one-to-one from  $S$ to  $S'$  and preserves  the
roots.   Thus the   relation   $\dlghd((T,S),  (T',S'))=0$   defines   an   equivalence
relation, see \cite[Proposition 5.3]{ADH14}. The set $\TLd$ of equivalence classes of locally compact
marked rooted
trees  $(T, S,d,  \root)$ endowed  with $\dlghd$  is then  a metric  Polish
space. Furthermore,  $\TKd$ is  an open dense  subset of  $\TLd$.
Combining~\eqref{eq:majo-pour-P} and the definition of $\dlghd$, we get
the elementary following result.

\begin{lem}[Regularity of the projection]
  \label{lem:projo-cont}
  The map ${\rm P} \, \colon\, (T,S) \mapsto T$  from  $\TLd$ to  $\TL$
  is 1-Lipschitz. 
\end{lem}

Similar
equations   to    \eqref{eq:cont-lgh-gh},   \eqref{eq:cont-rt-lgh}   and
\eqref{eq:cont-rt-lgh-gh} holds with $\dlghn$ and $\dghn$ replaced by
$\dlghd$ and $\dghd$. For future use, let us give the equations
corresponding to~\eqref{eq:cont-rt-lgh}
and~\eqref{eq:cont-rt-lgh-gh}. For  $(T,S), (T', S')\in
\TLd$ and $s,t\geq 0$, we have:
\begin{align}
  \label{eq:cont-rt-lgh-2}
  \dlghd \left(r_t^{[2]}(T, S), r_{t+s}^{[2]}(T', S')\right)
  &\leq  4 \, \dlghd\bigl((T,S),(T', S')\bigr)+s,\\
  \label{eq:cont-rt-lgh-gh-2}
    \dghd \left(r_t^{[2]}(T, S), r_t^{[2]}(T',S')\right)
&\leq  4 \expp{t}\,  \dlghd\bigl((T, S), (T', S')\bigr).
\end{align}
We also we have the following result consequences of
\eqref{eq:cont-rt-gh2} and
\eqref{eq:cont-rt-lgh-2}.

\begin{lem}[Continuity of the truncation map]
   \label{lem:cont-rt-loc}
Let $n\in \N$.
The map $\bigl(t, (T,S)\bigr) \mapsto r_t^{[2]}(T, S)$ is continuous from $\R_+\times
\TKd$ to $\TKd$ and from $\R_+\times
\TLd$ to $\TLd$ (and to $\TKd$).
\end{lem}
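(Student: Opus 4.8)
The plan is to reproduce, in the marked‑tree setting, the scheme of the proof of Lemma~\ref{lem:cont-rt}: the analytic content has already been isolated in the three inequalities \eqref{eq:cont-rt-gh2}, \eqref{eq:cont-rt-lgh-2} and \eqref{eq:cont-rt-lgh-gh-2} recorded just above the statement, so what remains is pure bookkeeping.

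First I would treat the compact case. Fix $(t,(T,S))\in\R_+\times\TKd$ and let $(t_k,(T_k,S_k))\to (t,(T,S))$; put $s_k=\val{t_k-t}$, so that $s_k\to 0$ and $\dghd\bigl((T_k,S_k),(T,S)\bigr)\to 0$. Since $\dghd$ is symmetric, for each $k$ I apply \eqref{eq:cont-rt-gh2} with the larger of $t$ and $t_k$ playing the role of ``$t+s$'', obtaining $\dghd\bigl(r_t^{[2]}(T,S),r_{t_k}^{[2]}(T_k,S_k)\bigr)\le 4\,\dghd\bigl((T,S),(T_k,S_k)\bigr)+s_k$ (and its mirror image when $t_k<t$). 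The right‑hand side tends to $0$, which is exactly the claimed joint continuity from $\R_+\times\TKd$ to $\TKd$. The locally compact case with target $\TLd$ is handled word for word in the same way, with $\dghd$ replaced by $\dlghd$ and \eqref{eq:cont-rt-gh2} replaced by \eqref{eq:cont-rt-lgh-2}.

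It remains to treat the map from $\R_+\times\TLd$ into $\TKd$. Note first that for locally compact $(T,S)$ the truncation $r_t^{[2]}(T,S)=\bigl(r_t(T),r_t(S)\bigr)$ has height at most $t$, hence is compact, so the map does land in $\TKd$; the point is to check continuity for the finer distance $\dghd$. Fixing $(t,(T,S))$ and letting $(t',(T',S'))\to (t,(T,S))$ in $\R_+\times\TLd$, I would split, via the triangle inequality for $\dghd$,
\[
\dghd\bigl(r_t^{[2]}(T,S),r_{t'}^{[2]}(T',S')\bigr)
\le \dghd\bigl(r_t^{[2]}(T,S),r_t^{[2]}(T',S')\bigr)
+\dghd\bigl(r_t^{[2]}(T',S'),r_{t'}^{[2]}(T',S')\bigr).
\]
The first term is at most $4\expp{t}\,\dlghd\bigl((T,S),(T',S')\bigr)$ by \eqref{eq:cont-rt-lgh-gh-2}, and $\expp{t}$ is a harmless fixed constant because $t$ is the point at which continuity is being tested; the second term is at most $\val{t'-t}$ by the routine bound obtained from applying \eqref{eq:cont-rt-gh2} to the compact tree obtained by truncating $(T',S')$ at the larger of $t,t'$, together with $r_u^{[2]}\circ r_{u'}^{[2]}=r_{u\wedge u'}^{[2]}$. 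Both terms tend to $0$, which gives continuity into $\TKd$.

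There is no genuine obstacle here beyond this bookkeeping; the only mildly delicate point is the last step, where one must remember that $r_t^{[2]}$ automatically produces compact trees, and that the exponential factor coming from \eqref{eq:cont-rt-lgh-gh-2} is controlled precisely because the truncation level $t$ is held fixed, so that $\dlghd$‑convergence of the tree argument suffices.
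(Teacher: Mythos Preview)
Your proof is correct and follows the same approach as the paper, which simply records the lemma as a direct consequence of \eqref{eq:cont-rt-gh2} and \eqref{eq:cont-rt-lgh-2}. You have made the bookkeeping explicit, including the extra step (via \eqref{eq:cont-rt-lgh-gh-2} and the composition identity $r_u^{[2]}\circ r_{u'}^{[2]}=r_{u\wedge u'}^{[2]}$) needed for continuity into $\TKd$ with the $\dghd$ metric, which the paper leaves implicit.
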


We give in the next lemma an example of a $\TKd$ and $\TLd$ valued function.
\begin{lem}[Continuity of $\Span^\circ$]
  \label{lem:cont-span-loc}
  Let             $n\in            \N$.              The            map
  $(T, d, \bv)  \mapsto \bigl(\Pi_n^\circ(T), \Span^\circ(T, \bv), d,  \root\bigr)$ from $\TLn$ to $\TLd$
  (resp. from $\TKn$
  to $\TKd$) is  injective, bi-measurable and
  $16$-Lipschitz (resp.  $4$-Lipschitz) continuous.
\end{lem}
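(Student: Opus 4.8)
The plan is to prove the $4$-Lipschitz estimate on $\TKn\to\TKd$ first, by a correspondence argument, then to bootstrap to $\TLn\to\TLd$ through the level-$t$ truncations (this is where the only real difficulty appears), and finally to read off injectivity and bi-measurability. Throughout I write $\Phi_n$ for the map, so that $\Phi_n(T,\bv)$ is the marked tree $(T,S)$ with $S=\Span^\circ(T,\bv)$.

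\emph{Compact case.} Given a correspondence $\cR$ between $(T,\bv),(T',\bv')\in\TKn$, I would let $\tilde\cR$ be the correspondence between $\Span(T,\bv)$ and $\Span(T',\bv')$ built from $\cR$ in~\eqref{eq:def-Tilde-R}, and set $\hat\cR=\cR\cup\tilde\cR$. Since $\cR\subseteq\hat\cR$, this is a correspondence between $(T,\root)$ and $(T',\root')$; and since by its very definition every pair of $\tilde\cR$ lies in $S\times S'$ (the projections $p_\bv,p'_{\bv'}$ taking values in the spanned subtrees), $\hat\cR\cap(S\times S')\supseteq\tilde\cR$ is a correspondence between $(S,\root)$ and $(S',\root')$; hence $\hat\cR$ is a correspondence between the marked trees. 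Its distortion is at most $4\,\dist(\cR)$: the proof of Lemma~\ref{lem:Lip-Span} already gives $\dist(\tilde\cR)\le 4\,\dist(\cR)$, while a mixed pair — one from $\cR$, one of the form $(y,p'_{\bv'}(y'))\in\tilde\cR$ with $(y,y')\in\cR$ and $y\in\Span^\circ(T,\bv)$ — is controlled via Lemma~\ref{lem:dist-proj} by $\dist(\cR)+\tfrac32\dist(\cR)$. Taking the infimum over $\cR$ yields $\dghd(\Phi_n(T,\bv),\Phi_n(T',\bv'))\le 4\,\dghn((T,\bv),(T',\bv'))$.

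\emph{Locally compact case.} The hard part will be that $\Phi_n$ does not commute with truncation: $r_t(T,\bv)$ keeps the whole spanned tree, whereas $r_t^{[2]}$ cuts $S$ at level $t$. To get around this I would introduce the auxiliary map $\sigma_t(T,\bv)=\bigl(r_t(\Pi_n^\circ(T)),(\root,p_t(v_1),\dots,p_t(v_n))\bigr)$, where $p_t$ is the projection onto $\{x:H(x)\le t\}$; that is, $\sigma_t$ truncates $T$ at level $t$ and drags each distinguished vertex down onto the truncation. Unwinding the definitions, both $r_t^{[2]}(\Phi_n(T,\bv))$ and $\Phi_n(\sigma_t(T,\bv))$ equal the marked tree with underlying tree $r_t(\Pi_n^\circ T)$ and marked set $\bigcup_k\lb\root,p_t(v_k)\rb$, so $r_t^{[2]}\circ\Phi_n=\Phi_n\circ\sigma_t$; moreover $\sigma_t=\sigma_t\circ r_t$. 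Pushing a correspondence forward by $p_t$, exactly as in the proof of Lemma~\ref{lem:cont-rt}, one checks $\sigma_t\colon\TKn\to\TKn$ is Lipschitz with constant at most $4$. Combining with the compact case,
\begin{multline*}
\dghd\bigl(r_t^{[2]}\Phi_n(T,\bv),r_t^{[2]}\Phi_n(T',\bv')\bigr)
=\dghd\bigl(\Phi_n\sigma_t r_t(T,\bv),\Phi_n\sigma_t r_t(T',\bv')\bigr)\\
\le 16\,\dghn\bigl(r_t(T,\bv),r_t(T',\bv')\bigr).
\end{multline*}
Multiplying by $\expp{-t}$, using $1\wedge 16x\le 16(1\wedge x)$ and integrating over $\R_+$ then gives $\dlghd(\Phi_n(T,\bv),\Phi_n(T',\bv'))\le 16\,\dlghn((T,\bv),(T',\bv'))$.

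\emph{Injectivity and bi-measurability.} Injectivity is checked directly — from $\Phi_n(T,\bv)$ one reads off the tree $T$ and the subtree $S$, hence $\bv$ — and then, $\Phi_n$ being a continuous injection between the Polish spaces $\TLn$ and $\TLd$ (resp.\ $\TKn$ and $\TKd$), Lusin's theorem (as used in the proof of Lemma~\ref{lem:Nt-meas}, see~\cite{purves}) yields bi-measurability. So the one genuinely delicate point is the truncation incompatibility in the locally compact case; once the map $\sigma_t$ and the identity $r_t^{[2]}\circ\Phi_n=\Phi_n\circ\sigma_t$ are set up, the whole estimate collapses back onto the compact correspondence bound.
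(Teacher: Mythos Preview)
Your Lipschitz arguments are correct. The compact case is exactly the paper's argument (their $\cR^{[2]}$ is your $\hat\cR$); you are simply more explicit about the mixed pairs, which the paper folds into ``from the proof of Lemma~\ref{lem:Lip-Span}''. For the locally compact case your route differs: the paper applies~\eqref{eq:cont-rt-gh2} to the marked tree $(r_t(T,\bv),\Span^\circ(T,\bv))$ --- using $\Span^\circ(r_t(T,\bv))=\Span^\circ(T,\bv)$ --- and then invokes the compact bound~\eqref{eq:double-rt-span} on $r_t(T,\bv)$, for a total factor $4\times4=16$. You instead introduce $\sigma_t$ and the commutation $r_t^{[2]}\circ\Phi_n=\Phi_n\circ\sigma_t\circ r_t$ (writing $\Phi_n$ for the map in question), which is a clean conceptual picture; just note that the correspondence in the proof of Lemma~\ref{lem:cont-rt} does not literally transfer, because the pointed vertices of $\sigma_t(T,\bv)$ have moved: you want the two-sided projection $\{(p_t(x),p'_t(x')):(x,x')\in\cR\}$ rather than the one-sided construction used there.

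There is, however, a genuine gap in the injectivity step --- and it is shared with the paper, which also only asserts it. Your sentence ``from $S$ one reads off $\bv$'' is the problem: $S=\Span^\circ(T,\bv)$ is an unlabelled subtree, whereas $\bv$ is an ordered tuple. For $n\ge 2$ take a Y-shaped tree $T$ with root $\root$ and leaves $a,b$ at different heights; then $(T,(\root,a,b))$ and $(T,(\root,b,a))$ are distinct in $\T_{\mathrm K}^{(2)}$ (no root-preserving isometry swaps $a$ and $b$), yet both map to the same marked tree $(T,T)\in\TKd$. So the map is not injective for $n\ge 2$. It \emph{is} injective for $n\le 1$ (for $n=1$ the vertex $v_1$ is the tip of $S$), and the paper only ever uses the injectivity with $n=1$, in Step~2 of the proof of Lemma~\ref{lem:cnt-cck}.
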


\begin{proof}
  We first consider the compact  case.  Let $(T,\bv)$ and $(T',\bv')$ be
  rooted $n$-pointed  compact trees and let $\cR$  be a correspondence
  between them.   Recall the definition of  $p_\bv$ in \eqref{eq:def-pv}
  as  the  projection  on $\Span^\circ(T,\bv)$  and  the  correspondence
  $\tilde      \cR$       from~\eqref{eq:def-Tilde-R}.       We      set
  $\cR^{[2]}=\cR \cup  \tilde \cR$.  By  construction $\cR^{[2]} $  is a
  correspondence    between     $\bigl(T,    \Span^\circ(T,     \bv)\bigr)$    and
  $\bigl(T',   \Span^\circ(T',   \bv')\bigr)$.    From    the   proof   of   Lemma
  \ref{lem:Lip-Span},              we              get              that
  $\dist(\cR^{[2]}) \leq 4 \, \dist(\cR)$. This directly implies that:
 \begin{equation}
   \label{eq:double-rt-span}
   \dghd\Bigl(\bigl(T, \Span^\circ (T, \bv)\bigr), \bigl(T',  \Span^\circ (T',
   \bv')\bigr)\Bigr)\leq  4\, \dghn\bigl((T, \bv), (T', \bv')\bigr).
 \end{equation}
  This gives that the map $(T, d, \bv) \mapsto \bigl(T, \Span^\circ(T, \bv), d, \root\bigr)$ from
  $\TKn$ to $\TKd$ is $4$-Lipschitz   continuous.
  \medskip

We now consider the locally compact case. Let $(T,\bv)$ and $(T',\bv')$
belong to $\TLd$. We have:
\begin{multline*}
  \dlghd \Bigl(\bigl(T, \Span^\circ (T, \bv)\bigr), \bigl(T',  \Span^\circ
    (T',\bv')\bigr)\Bigr)\\
\begin{aligned}
  &= \int_0^\infty \expp{-t} \rd t   \left(1 \wedge
    \dghd \left(r_t^{[2]}\bigl((T, \Span^\circ (T, \bv)\bigr), r_t^{[2]}\bigl(T',
    \Span^\circ (T',\bv')\bigr)\right)   \right)  \\
  &\leq  4 \int_0^\infty \expp{-t} \rd t   \left(1 \wedge
    \dghd \Bigl(\bigl(r_t(T, \bv), \Span^\circ (T, \bv)\bigr), \bigl(r_t(T',\bv'),
    \Span^\circ (T',\bv')\bigr)\Bigr)   \right)  \\
  &\leq  16 \int_0^\infty \expp{-t} \rd t   \left(1 \wedge
    \dghn \left(r_t(T, \bv),r_t(T',\bv')\right)\right)   \\
  &=  16 \, \dlghn \left((T, \bv),(T',\bv')\right),
\end{aligned}
\end{multline*}
where we used~\eqref{eq:cont-rt-gh2}  (with
$T$ and $S$ replaced respectively by $r_t(T, \bv)$ and $ \Span^\circ (T,
\bv)$ and similarly for $T'$ and $S'$) for the first inequality,
and~\eqref{eq:double-rt-span}  (with $(T, \bv)$ replaced
by $r_t(T, \bv)$) as well as  the relation
$\Span^\circ(r_t(T, \bv))=\Span^\circ(T, \bv)$ for the second.    This gives that the map $(T, d, \bv) \mapsto (T, \Span^\circ(T, \bv), d, \root)$ from
$\TLn$ to $\TLd$ is $16$-Lipschitz   continuous.

Clearly those maps are injective  and thus bi-measurable thanks to   Lusin's theorem.
\end{proof}

  \begin{rem}
  \label{rem:incoherent}
  Let us  stress that for  $(T,\bv)$ a rooted $n$-pointed  compact tree,
  the                            rooted                            tree
  $r_t^{[2]}\bigl(T,  \Span^\circ(T,  \bv)\bigr)= \Bigl(  r_t(T),  r_t\bigl(\Span^\circ(T,
  \bv)\bigr)\Bigl)$ and the rooted tree $ \Bigl(r_t(T), \Span^\circ\bigl(r_t(T,
  \bv)\bigr)\Bigr)=\bigl(r_t(T), \Span^\circ(T, \bv)\bigr)$ differ if
  and only if $t$ is smaller than the height of $\Span^\circ(T, \bv)$.
\end{rem}

\medskip
Let $(T, S, d,  \root)$ be a marked locally
compact rooted tree. To simplify, we shall only write $(T,S)$ for $(T, S, d,
\root)$. We define the projection of  $z\in T$ on $S$,
$p_S(z) \in S$, as the element of $S$ uniquely defined by:
  \[
    \lb  \root, p_S(z)\rb=  \lb  \root,  z \rb  \cap  S.
  \]
  Now, we  consider the truncation of  a marked tree at  a given height,
  say   $t$,   of    the   marked   subtree.   For    $t\geq   0$   and
  $\varepsilon\in  \{-, +\}$,  we  set:
\begin{equation}
   \label{eq:def-r2+}
  r_{t}^{[2], \varepsilon}(T, S)=\left( r_{t,1}^{[2], \varepsilon}(T,S),  r_{t}(S)\right)
\end{equation}
with:
\begin{align*}
     r_{t,1}^{[2],+}(T,S) & =\Bigl\{x\in T\, \colon\, H\bigl(p_S(x)\bigr)\leq t \Bigr\},\\
   r_{t,1}^{[2], -}(T,S) & = \Bigl\{x\in T\, \colon\,  H\bigl(p_S(x)\bigr)< t
                           \Bigr\}\cup\bigl\{x\in S\,\colon\,H(x)=t\bigr\}.\\
\end{align*}

See Figure~\ref{fig:r2T} for an instance of $ r_{t}^{[2],
  \varepsilon}(T, S)$, where $S$ is an infinite branch.
For    $\varepsilon\in     \{+,    -\}$,     we    also     denote    by
$r_{t}^{[2],\varepsilon}(T,     S)$    the     marked    rooted     tree
$\bigl(r_{t}^{[2],\varepsilon}(T, S), d, \root\bigr)$ endowed with the restriction
of  the  distance   $d$  and  the  root  $\root$.
Furthermore, if  $(T,S)$ and $(T',  S')$ belong to the  same equivalence
class of $\TLd$ or $\TKd$,  then so do $r_{t}^{[2],\varepsilon}(T, S)$
and     $r_{t}^{[2],\varepsilon}(T',     S')$.      Thus     the     map
$\bigl(t, (T,S)\bigr) \mapsto r^{[2],\varepsilon}_{t} (T,S)$ is a well defined map
from $\R_+\times \TLd$ to $\TLd$ for $\varepsilon\in \{+, -\}$.

\begin{figure}[H]
\includegraphics[scale=0.5]{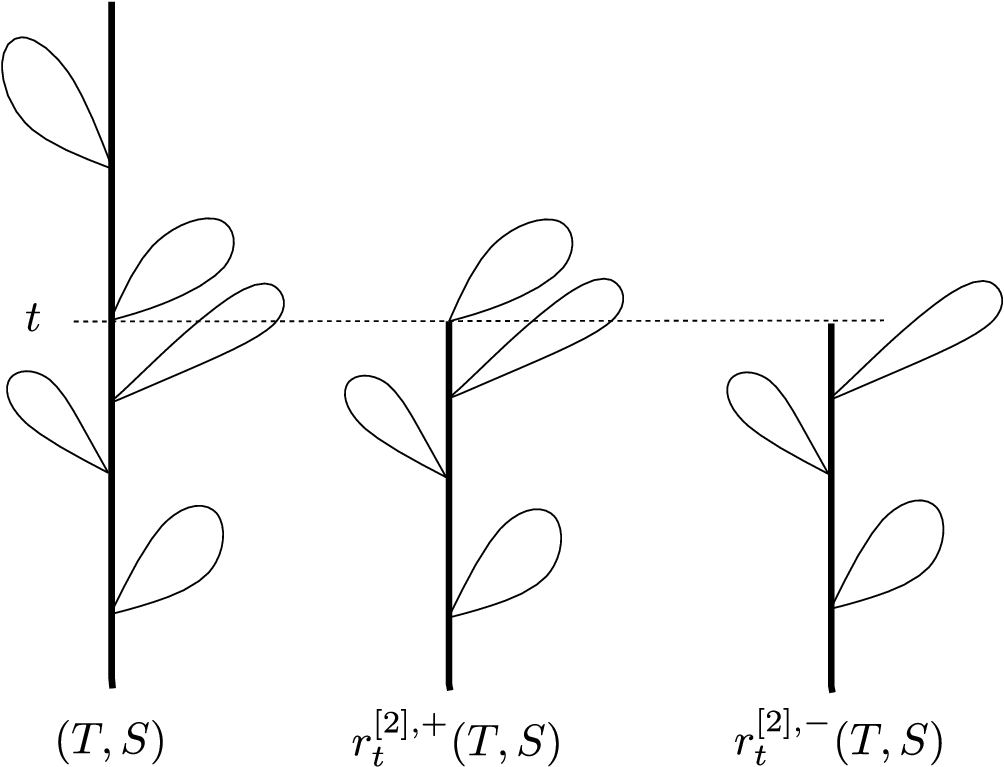}
\caption{Example of restrictions of a tree $T$ with a marked spine $S$
  (in bold).}
\label{fig:r2T}
\end{figure}

\begin{rem}[Examples]
   \label{rem:exple-rt2}
We give  elementary examples. For
$\varepsilon\in    \{+,     -\}$    and    $t>0$,    we     have    that
$r_{t}^{[2],\varepsilon}\bigl(T,\{\root\}\bigr)=\bigl(T,         \{\root\}\bigr)$        and
$r_{0}^{[2],-}\bigl(T,\{\root\}\bigr)=\bigl(\{\root\},    \{\root\}\bigr)$   as    well   as
$r_{0}^{[2],+}\bigl(T,\{\root\}\bigr)=\bigl(T,   \{\root\}\bigr)$.    We   also   have   for
$t\in   \R_+$   that  $r_{t   }^{[2],\varepsilon}(T,T)=\bigl(r_t(T),r_t(T)\bigr)$.
\end{rem}

\begin{rem}[The map $r_t^{[2], \varepsilon}$ is not continuous]
   \label{rem:cont-r2}
 Let $\varepsilon\in \{+, -\}$ and $t>0$. The function $r_{t}^{[2],\varepsilon}$ is
 not continuous from $\TLd$ to itself. Indeed take $t=1$ without loss of
 generality and consider
 $T=[0, 2]$ and $S_\delta=[0, \delta]$, with $\delta\in [0, 2]$,  $\root=0$ and
 the Euclidean distance. Notice that $\bigl([0, 1], [0, 1]\bigr)=(S_1, S_1)\neq (T, S_1)$. Then we have that
 $\lim_{\delta\rightarrow 1} \dghd\bigl((T,S_\delta), (T, S_1)\bigr)=0$,
 $ r_{1}^{[2],\varepsilon}(T, S_\delta)=(T,S_\delta)$ for $\delta<1$,
 $ r_{1}^{[2],\varepsilon}(T, S_\delta)=(S_1, S_1)$ for $\delta>1$,
 $ r_{1}^{[2],-}(T, S_1)=(S_1, S_1)$
 and
$ r_{1}^{[2],+}(T, S_1)=(T, S_1)$.
\end{rem}

We  have  the  following  measurability   result.

\begin{lem}[Measurability  of some truncation maps]
  \label{lem:rt2}
Let $\varepsilon\in \{+, -\}$.  The map $\bigl(t, (T,S)\bigr) \mapsto
r^{[2], \varepsilon}_{t} (T,S)$ is measurable from
  $\R_+\times \TLd$ to $\TLd$.
\end{lem}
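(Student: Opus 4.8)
The plan is to establish joint measurability in two reduction steps followed by the essential combinatorial step. The map is genuinely discontinuous (Remark~\ref{rem:cont-r2}), so the target is measurability, obtained by exhibiting the map as a pointwise limit of simpler (measurable) maps.

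\emph{Reduction to a single level.} For fixed $(T,S)\in\TLd$ the set $r_{t,1}^{[2],+}(T,S)=\{x\in T\colon H(p_S(x))\le t\}$ is nondecreasing in $t$, and one checks the elementary identity $r_u^{[2]}\circ r_q^{[2],+}=r_q^{[2],+}\circ r_u^{[2]}$ for $u\ge q$. Since $r_u^{[2]}(Y)\to Y$ in $\TLd$ as $u\to\infty$ (immediate from the definition of $\dlghd$), and since the truncations $r_u\bigl(r_q^{[2],+}(T,S)\bigr)$ are compact and decrease to $r_u\bigl(r_{t,1}^{[2],+}(T,S)\bigr)$ as $q\downarrow t$ (resp. increase, with closure $r_u\bigl(r_{t,1}^{[2],-}(T,S)\bigr)=\overline{\{H(p_S)<t\}}\cap\{H\le u\}$, as $q\uparrow t$), monotone convergence of compacts to the intersection / the closure of the union in the Hausdorff distance together with dominated convergence in the definition of $\dlghd$ gives $r_t^{[2],+}(T,S)=\lim_{q\downarrow t}r_q^{[2],+}(T,S)$ and $r_t^{[2],-}(T,S)=\lim_{q\uparrow t}r_q^{[2],+}(T,S)$. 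Hence, if for each $q\ge 0$ the map $(T,S)\mapsto r_q^{[2],+}(T,S)$ is measurable, then for $\varepsilon\in\{+,-\}$ the map $(t,(T,S))\mapsto r_t^{[2],\varepsilon}(T,S)$ is the pointwise limit of $(t,(T,S))\mapsto r_{q_n(t)}^{[2],+}(T,S)$ where $q_n(t)$ is the dyadic number of level $n$ approximating $t$ strictly from above ($\varepsilon=+$) or below ($\varepsilon=-$); each of these is measurable, being, on the Borel set $\{t\colon q_n(t)=q\}\times\TLd$, the composition of $r_q^{[2],+}$ with a projection.

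\emph{Reduction to compact trees.} Fix $q\ge 0$. By the same commutation identity and $r_u^{[2]}(Y)\to Y$, one has $r_q^{[2],+}(T,S)=\lim_{u\to\infty}r_q^{[2],+}\bigl(r_u^{[2]}(T,S)\bigr)$ for every $(T,S)\in\TLd$. Since $(t,(T,S))\mapsto r_u^{[2]}(T,S)$ is continuous from $\TLd$ to $\TKd$ (Lemma~\ref{lem:cont-rt-loc}) and measurable sets of $\TKd$ are measurable in $\TLd$, it suffices to prove that $(T,S)\mapsto r_q^{[2],+}(T,S)$ is measurable from $\TKd$ to $\TKd$.

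\emph{The single level, compact case.} Write $r_q^{[2],+}(T,S)=\bigl(r_{q,1}^{[2],+}(T,S),r_q(S)\bigr)$ and note that $r_{q,1}^{[2],+}(T,S)=\{x\in T\colon \lb\root,x\rb\cap(S\setminus r_q(S))=\emptyset\}$, i.e. $r_q^{[2],+}$ keeps $r_q(S)$ together with all subtrees of $T$ branching off $S$ at heights $\le q$, and deletes those branching off above height $q$. The plan is to partition $\TKd$ into countably many Borel pieces indexed by the ``combinatorial type of $S$ at level $q$'' — the finite number of vertices of $S$ at height exactly $q$ and which of them are leaves of $S$ versus skeleton points of $S$ — and to check that $r_q^{[2],+}$ is continuous on each piece. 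That the pieces are Borel is obtained as in the proof of Lemma~\ref{lem:Nt-meas}, via the injective bi-measurable distance-matrix embedding of Remark~\ref{rem:cont-T-d} and Lusin's theorem \cite{purves}, now applied to the marked subtree $S$ (which may carry leaves). On a given piece one can select continuously the finitely many height-$q$ vertices of $r_q(S)$ and reconstruct $r_q^{[2],+}(T,S)$ from $r_q^{[2]}(T,S)$ by regrafting there the full hanging subtrees of $T$, using only the continuous operations $r_t$, $\Pi_n^\circ$, $\Span$ and the graftings $\circledast_{i,h}$ of Lemma~\ref{lem:graft2-cont}; continuity on the piece follows, and hence $r_q^{[2],+}$ is Borel on $\TKd$.

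The main obstacle is exactly this last step: verifying that the combinatorial-type strata are Borel and that $r_q^{[2],+}$ is given by an explicit continuous reconstruction formula on each of them. It is precisely on the boundaries between these strata that the discontinuity of $r_q^{[2],\varepsilon}$ noted in Remark~\ref{rem:cont-r2} is located; the two reduction steps are routine consequences of the continuity and commutation properties of the truncation maps.
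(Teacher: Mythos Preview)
Your two reduction steps are fine: the commutation $r_u^{[2]}\circ r_q^{[2],+}=r_q^{[2],+}\circ r_u^{[2]}$ for $u\ge q$ is straightforward to check, and the one-sided limits in $q$ do recover $r_t^{[2],\pm}$ by monotone Hausdorff convergence inside a fixed ambient tree.

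The gap is in your third step, and it is not just a matter of missing detail. You propose to reconstruct $r_q^{[2],+}(T,S)$ from $r_q^{[2]}(T,S)=(r_q(T),r_q(S))$ by grafting at the finitely many height-$q$ vertices of $S$ the ``full hanging subtrees of $T$''. But $r_{q,1}^{[2],+}(T,S)\setminus r_q(T)$ consists of all points of $T$ at height $>q$ whose projection onto $S$ lies at height $\le q$; these live in the decorations branching off $S$ at \emph{every} height in $[0,q]$, not only at the height-$q$ vertices of $S$. In the example $S=[0,2q]$, $T=S$ with one tall decoration grafted at height $q/2$, the missing piece sits above a level-$q$ point of the decoration, not of $S$. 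So finitely many graftings at the level-$q$ points of $S$ cannot recover $r_q^{[2],+}$ from $r_q^{[2]}$. The alternative reading --- start from $(T,S)$ and \emph{remove} the pieces above the height-$q$ points of $S$ --- is exactly the operation whose measurability you are trying to prove, and is not expressible as a finite composition of the maps $r_t$, $\Span$, $\Pi_n^\circ$, $\circledast_{i,h}$ you list. Even the continuity of $r_q^{[2],+}$ on a fixed stratum is not immediate: a correspondence between $(T,S)$ and $(T',S')$ need not restrict to one between the truncations, since $H(p_S(x))$ is not controlled by $H(p_{S'}(x'))$ through the distortion alone.

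The paper bypasses all of this with a one-line geometric trick. Define the partial dilatation $R_a(T,S)=(T,S,d_a,\root)$ by $d_a(x,y)=a\,d(x,p_S(x))+d(p_S(x),p_S(y))+a\,d(y,p_S(y))$, which scales distances \emph{off} $S$ by $a$; this is a homeomorphism of $\TLd$ with inverse $R_{1/a}$. Then
\[
r_t^{[2],-}=\lim_{a\to 0+}R_{1/a}\circ r_t^{[2]}\circ R_a,
\]
a pointwise limit of continuous maps (using Lemma~\ref{lem:cont-rt-loc} for $r_t^{[2]}$), hence measurable; and $r_t^{[2],+}=\lim_{s\downarrow t}r_s^{[2],-}$. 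The intuition is that after shrinking the off-$S$ parts, the ordinary height truncation $r_t^{[2]}$ is governed by $H\circ p_S$ up to an error that vanishes with $a$, and un-shrinking restores the geometry. No stratification is needed.
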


\begin{proof}
  Let $a>0$. For  a marked tree $(T,S)=(T, S, d,  \root)$, we define its
  partial dilatation  $R_a(T, S)=(T,S, d_a,  \root)$ as the  marked tree
  with   $d_a(x,y)=a   d\bigl(x,p_S(x)\bigr)+   d\bigl(p_S(x),   p_S(y)\bigr)   +   a   d\bigl(y,
  p_S(y)\bigr)$ if $p_S(x)\neq p_S(y)$ and
 $d_a(x,y)=a d(x,y)$ if $p_S(x)= p_S(y)$.
Intuitively  the distances  on  $T$  are multiplied  by  $a$
  outside $S$.  The equivalence class of $R_a(T, S) $ in $\TLd$ does not
  depend of the choice of $(T,S)$ in its equivalence class in $\TLd$; so
  the map $R_a$ is well defined on $\TLd$ to itself. Notice that the map
  $R_a$  is continuous  and one-to-one  with inverse  $R_{1/a}$.  It  is
  immediate to check that, for $t\geq 0$:
\[
  r^{[2],-}_{t}=\lim_{a\rightarrow 0+ } R_{1/a} \circ r_t^{[2]} \circ R_a.
\]
This and Lemma \ref{lem:cont-rt-loc} imply the
measurability of the map  $\bigl(t, (T,S)\bigr) \mapsto r^{[2], -}_{t} (T,S)$.
Then, notice that $\lim_{s \downarrow t} r^{[2], -}_{s}=r^{[2],+}_{t}$ to
get the
measurability of the map  $(t, (T,S)) \mapsto r^{[2],+}_{t} (T,S)$.
\end{proof}

We end  this section  by proving (in  a very similar  way) that  the map
$ r_{*}^{[2]}$ below,  which consists in cleaning the root, that is, in erasing the bushes  at the root
of a marked  tree is measurable.  For $(T,S)=(T, S,  d, \root)$ a marked
locally compact rooted tree, we set:
\begin{align}
\label{eq:r*[2]}
  r_{*}^{[2]}(T, S)=\left( r_{*,1}^{[2]}(T,S),  S\right)
  \quad\text{with}\quad
  r_{*,1}^{[2]}(T,S)=\bigl\{x\in T\, \colon\, p_S(x)\neq \root\bigr\}\cup\{\root\}.
\end{align}
We    also     denote    by
$r_{*}^{[2]}(T,     S)$    the     marked    rooted     tree
$\bigl(r_{*}^{[2]}(T, S), d, \root\bigr)$ endowed with the restriction
of  the  distance   $d$  and  the  root  $\root$.
Furthermore, if  $(T,S)$ and $(T',  S')$ belong to the  same equivalence
class of $\TLd$,  then so do $r_{*}^{[2]}(T, S)$
and     $r_{*}^{[2]}(T',     S')$.      Thus     the     map
$ r^{[2]}_{*}$ is well-defined
from $\TLd$ to $\TLd$.
\begin{lem}[Measurability  of the root cleaning  map]
  \label{lem:rt*}
  The map $r_*^{[2]}$  is measurable from
  $\TLd$ to $\TLd$.
\end{lem}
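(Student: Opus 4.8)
The plan is to follow, in the spirit of the proof of Lemma~\ref{lem:rt2}, a reduction of $r_*^{[2]}$ to already established measurable maps, the key new input being a ``re-gluing'' map to which Lusin's theorem applies. The starting observation is that putting $t=0$ in the definition of $r_t^{[2],+}$ gives
\[
  r_0^{[2],+}(T,S)=\bigl(\{x\in T\,\colon\, p_S(x)=\root\},\{\root\}\bigr),
\]
so that the map $r_0^{[2],+}$, which is measurable by Lemma~\ref{lem:rt2}, extracts the ``bush at the root'' $S_\root:=\{x\in T\,\colon\,p_S(x)=\root\}$ as a stand-alone rooted tree. Consequently the set $\ce:=\bigl\{(T,S)\in\TLd\,\colon\, r_0^{[2],+}(T,S)=(\{\root\},\{\root\})\bigr\}$ of marked trees carrying no bush at the root is the preimage of a closed singleton under a measurable map, hence a Borel subset of $\TLd$.

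Next I would introduce the re-gluing map $\Gamma\colon\ce\times\TL\to\TLd$ sending $\bigl((T^*,S),T'\bigr)$ to $(T^*\circledast_\root T',S)$, that is, grafting the rooted tree $T'$ at the root of $T^*$ while keeping $S$ as the marked sub-tree; as usual this is well defined on equivalence classes. Its continuity is obtained exactly as in Lemma~\ref{lem:graft-meas}: since the grafting takes place at the root, heights are unchanged, so $r_t^{[2]}(T^*\circledast_\root T',S)=\bigl(r_t(T^*)\circledast_\root r_t(T'),r_t(S)\bigr)$; one bounds the resulting Gromov--Hausdorff distance in the compact case by $\dghd\bigl((T^*_1,S_1),(T^*_2,S_2)\bigr)+\dgh(T'_1,T'_2)$, and integrates against $\expp{-t}\rd t$ to pass to $\dlghd$.

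I would then check that $\Gamma$ is a bijection whose inverse is $(T,S)\mapsto\bigl(r_*^{[2]}(T,S),S_\root\bigr)$. The crucial point is the decomposition $T=r_{*,1}^{[2]}(T,S)\circledast_\root S_\root$: the root separates $S_\root$ from the remainder $\{x\,\colon\,p_S(x)\neq\root\}\cup\{\root\}$, so the two pieces meet only at $\root$ and cross-distances are additive through $\root$, which is precisely a grafting at the root; moreover $\bigl(r_{*,1}^{[2]}(T,S),S\bigr)\in\ce$ because that tree has no bush at its root. Conversely, for $\bigl((T^*,S),T'\bigr)\in\ce\times\TL$ the bush at the root of $T^*\circledast_\root T'$ is exactly $T'$ — here one uses that $(T^*,S)\in\ce$, i.e. $T^*$ adds no root-bush of its own — so $\Gamma$ is injective and the stated inverse is correct. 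Since $\ce\times\TL$ and $\TLd$ are standard Borel spaces and $\Gamma$ is a continuous injection, Lusin's theorem \cite{purves} yields that $\Gamma$ is bi-measurable; hence $\Gamma^{-1}$ is measurable, and $r_*^{[2]}=\pi_1\circ\Gamma^{-1}$, with $\pi_1\colon\ce\times\TL\to\TLd$ the (continuous) first projection, is measurable.

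The main obstacle is the injectivity of $\Gamma$: on all of $\TLd\times\TL$ the grafting-at-root map is not injective, because the bush at the root of $T^*\circledast_\root T'$ merges those of $T^*$ and of $T'$; this is exactly why the first factor must be cut down to $\ce$. The two technical points to settle are therefore (i) that $\ce$ is Borel, which is the content of the first paragraph, and (ii) that the re-gluing map is continuous in the locally compact marked setting, which is routine via truncation and reduction to the compact case as in Lemma~\ref{lem:graft-meas}.
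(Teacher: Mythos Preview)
Your proof is correct and takes a genuinely different route from the paper's argument. The paper proceeds by a dilatation--limit trick analogous to its proof of Lemma~\ref{lem:rt2}: it introduces a continuous invertible rescaling $R'_a$ of $(T,S)$ that shrinks the bushes whose projection on $S$ is close to the root, checks that $R'_{1/a}\circ r_t^{[2]}\circ R'_a\to r_*^{[2]}$ pointwise as $a\to 0+$, and concludes measurability as a pointwise limit of measurable maps. Your argument instead identifies $r_*^{[2]}$ as the first coordinate of the inverse of a continuous bijection $\Gamma\colon\ce\times\TL\to\TLd$ and invokes Lusin's theorem. The key step you isolate --- that restricting the first factor to $\ce$ is exactly what makes $\Gamma$ injective, because then the bush at the root of $T^*\circledast_\root T'$ is precisely $T'$ --- is correct, and your verification of the decomposition $T=r_{*,1}^{[2]}(T,S)\circledast_\root S_\root$ (cross-distances additive through the root) goes through since $p_S(x)\neq\root$ and $p_S(y)=\root$ force $x\wedge y=\root$.

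Each approach has its merits. The paper's limit argument is more elementary in that it avoids Lusin and stays within the circle of ideas already used for $r_t^{[2],\varepsilon}$, at the cost of designing a somewhat ad hoc rescaling $R'_a$. Your approach is structurally cleaner --- it exhibits $r_*^{[2]}$ as one half of a canonical splitting of $(T,S)$ into ``cleaned tree plus root bush'' --- and as a by-product also yields measurability of the root-bush map $(T,S)\mapsto S_\root$ without separately invoking $r_0^{[2],+}$; but it leans on Lusin's theorem and on checking that $\ce$ is Borel.
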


\begin{proof}
  Let $a>0$. For  a marked tree $(T,S)=(T, S, d,  \root)$, we define its
  partial dilatation $R'_a(T, S)=(T,S, d'_a,  \root)$ as the marked tree
  with     $d'_a(x,y)=F_a(t)d(x,y)$      if     $p_S(x)=p_S(y)$     with
  $t=H\bigl(p_S(x)\bigr)$,                 and                 otherwise
  $d'_a(x,y)=F_a(t)    d\bigl(x,    p_S(x)\bigr)+    a    d\bigl(p_S(x),
  p_S(y)\bigr)+      F_a(s)       d\bigl(y,      p_S(y)\bigr)$      with
  $t=H\bigl(p_S(x)\bigr)$,  $s=H\bigl(p_S(y)\bigr)$,  and  the  function
  $F_a$ defined for $t\geq 0$ by $  F_a(t)= t \wedge a + a^{-2}(a-t)_+ $
  if $a\leq  1$, and $F_a(t)=1/F_{1/a}(at)$  if $a>1$.  Notice  that for
  $x\in T\setminus\{\root\}$,  we have, as  $a$ goes down to  $0$, that:
  $d'_a(x,    \root)    \sim    a     d(x,    \rho)$    as    well    as
  $d'_{1/a}(x, \root) \sim a^{-1} d(x, \rho)$ if $p_S(x)\neq \root$; and
  $d'_a(x,    \root)   \sim    a^{-1}   d(x,    \rho)$   as    well   as
  $d'_a(x, \root) \sim a d(x, \rho)$ if $p_S(x)= \root$.

 The  equivalence  class  of
  $R'_a(T, S) $ in $\TLd$ does not depend of the choice of $(T,S)$ in its
  equivalence  class in  $\TLd$; so  the map  $R'_a$ is  well defined  on
  $\TLd$  to  itself.  Notice  that  the map  $R'_a$  is  continuous  and
  one-to-one with inverse $R'_{1/a}$.  It is immediate to check that for
  $t>0$:
\[
  r^{[2]}_{*}=\lim_{a\rightarrow 0+ } R_{1/a} \circ r_t^{[2]} \circ R_a.
\]
This and Lemma \ref{lem:cont-rt-loc} imply the
measurability of the map  $ r^{[2]}_{*}$.
\end{proof}

\subsection{Set of (equivalence classes of) trees with one infinite marked branch}
 \label{sec:TLs}
Let us denote by $\Tz=(\root, \{\root\})$ the rooted tree reduced to its root.  Notice that
$r_{0}^{[2],+}(T,     S)=\{(\Tz,\Tz)\}$      if     and      only     if
$\lb \root ,x \rb \cap S  = \{\root\}$ implies $x=\root$.    Let
$T_1=([0, \infty ),  d, 0)$ be the tree consisting  of only one infinite
branch. We  consider the
set (of  equivalence classes) of  locally compact rooted trees  with one
infinite marked branch and its subset of trees whose  root is not a branching vertex:
\begin{align}
  \label{eq:def-spine}
\TLs&=\left\{(T, S)\in \TLd\, \colon\, S=T_1 \text{ in } {\mathbb T}_{\text{loc-K}}\right\},\\
  \label{eq:def-spine-br}
  \TLsb
 & =\left\{(T, S)\in \TLs\, \colon\,
   \root \not\in \Br(T) \right\}.
\end{align}

\begin{lem}
  \label{lem:mes-T*}
  The sets $\TLs$ and  $\TLsb$ are  Borel subsets of $\TLd$.
\end{lem}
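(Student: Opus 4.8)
The strategy is to exhibit each of $\TLs$ and $\TLsb$ as a countable combination (intersections and unions) of sets already known to be Borel, using the truncation and projection maps introduced above together with Lemma~\ref{lem:mes-T*}'s ambient space $\TLd$ and Lemma~\ref{lem:cont-rt-loc}, Lemma~\ref{lem:rt2}, Lemma~\ref{lem:rt*} for the measurability of the relevant maps. The key observation is that "$S$ is isometric to $[0,\infty)$" can be detected at every finite level: $(T,S)\in\TLs$ if and only if for every $t\geq 0$ the truncated marked sub-tree $r_t(S)$ is a single branch $[0,t]$, equivalently $r_t^{[2]}(T,S)$ has its $S$-component equal to a segment of length exactly $t$. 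Since $S$ is locally compact and a sub-tree, being a single infinite branch is equivalent to $H(r_t(S))=t$ and $r_t(S)$ having no branching point and a unique leaf, for all $t$; it suffices to check this along $t\in\N^*$ by monotonicity of truncation.

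\textbf{First step: a level-wise characterization.} First I would make precise the claim that $S=T_1$ in $\TLd$ iff $\diam(r_t(S))=t$ for all $t>0$ (using $\diam(r_t(S))\le 2H(r_t(S))\le 2t$ and equality $\diam = 2H$ forcing a branching at the root, exactly as in the proof of Lemma~\ref{lem:root-no-branching}, while $\diam=H$ together with local compactness forces a single branch). Concretely, set
\[
  D_t=\bigl\{(T,S)\in\TLd\, \colon\, \diam\bigl(r_t(S)\bigr)=t\bigr\},
\]
and observe that $\TLs=\bigcap_{n\in\N^*}D_{n}$. The map $(T,S)\mapsto r_t(S)$ is the composition of $r_t^{[2]}$ (continuous, Lemma~\ref{lem:cont-rt-loc}) with the continuous coordinate projection $\TLd\to\TL$, $(T,S)\mapsto S$ (noted after the definition of $\dghd$), and $\diam$ is continuous on $\TL$ (constant on equivalence classes, continuous as in Lemma~\ref{lem:root-no-branching}); hence each $D_t$ is closed, and $\TLs$ is a Borel (indeed $G_\delta$) subset of $\TLd$.

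\textbf{Second step: the no-branching-at-the-root condition.} For $\TLsb$, I would intersect $\TLs$ with the set $\{(T,S)\in\TLd\, \colon\,\root\notin\Br(T)\}$ and show the latter is Borel by the same device as Lemma~\ref{lem:root-no-branching}: writing $\Pi^{[2]}:\TLd\to\TL$, $(T,S)\mapsto T$ for the (continuous) first projection, we have
\[
  \bigl\{(T,S)\in\TLd\, \colon\,\root\notin\Br(T)\bigr\}
  =\bigcup_{n\in\N^*}\bigl\{(T,S)\in\TLd\, \colon\,\diam\bigl(r_{1/n}(T)\bigr)=2/n\bigr\}^{c}\cap(\cdots),
\]
or, more cleanly, reuse Lemma~\ref{lem:root-no-branching} directly via $(\Pi^{[2]})^{-1}(\TLb)$, which is Borel since $\Pi^{[2]}$ is continuous and $\TLb=\TLnb$ with $n=0$ is Borel in $\TL$. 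Then $\TLsb=\TLs\cap(\Pi^{[2]})^{-1}(\TLb)$ is Borel. (One should double-check that in $\TLs$ the root is never a branching vertex of $S$ itself, which is automatic, so the only constraint is on $T$.)

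\textbf{Main obstacle.} The only delicate point is the first step: verifying rigorously that, within the space of \emph{locally compact} rooted trees, the condition "$\diam(r_t(S))=t$ for all $t>0$" is genuinely equivalent to $S\cong[0,\infty)$ — in particular that it rules out $S$ being a half-line with finitely many short side-branches far out, or a ray that stops. Ruling out premature termination uses local compactness plus $H(r_t(S))=t$ for all $t$; ruling out side-branches uses that any branch point at height $h<t$ would give two leaves in $r_t(S)$ at the same or different heights and push $\diam(r_t(S))$ strictly above... actually this needs care, since a branch point at height $h$ with two sub-branches of lengths $a,b$ gives $\diam\ge h+\max(a,b)$ but also $\ge a+b$ between the two tips; choosing $t$ large one gets $\diam(r_t(S))\ge (t-h)+(\text{length of the other branch})>t$ whenever the other branch is nonempty. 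I would spell this elementary metric argument out carefully; once it is in place, all measurability assertions follow from continuity of $\diam$, $r_t$, $r_t^{[2]}$ and the coordinate projections, exactly as above.
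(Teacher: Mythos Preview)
Your approach to $\TLsb$ (pulling back $\TLb$ along the continuous first projection $(T,S)\mapsto T$) is correct and is a legitimate alternative to the paper's route, which instead uses the measurable map $r_0^{[2],+}$ and the observation that, on $\TLs$, the root fails to be a branching vertex if and only if $r_0^{[2],+}(T,S)=(T_0,T_0)$.

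However, your first step contains a genuine error. The claimed equivalence ``$S=T_1$ in $\TL$ iff $\diam(r_t(S))=t$ for all $t>0$'' is false. Take $S$ to be the half-line $[0,\infty)$ with a single side-branch of length $\varepsilon$ attached at height $h$, where $0<\varepsilon\le h$. For $t\le h$ one has $r_t(S)=[0,t]$ and $\diam=t$. For $t>h$, the two extremal leaves of $r_t(S)$ are at heights $t$ and $\min(h+\varepsilon,t)$, and the three candidate distances are $t$ (root to main tip), $\min(h+\varepsilon,t)$ (root to side tip), and $(t-h)+\min(\varepsilon,t-h)$ (tip to tip). Since $\varepsilon\le h$, one checks $(t-h)+\varepsilon\le t$, so $\diam(r_t(S))=t$ for every $t>0$, yet $S\neq T_1$. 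Your sketched fix in the ``Main obstacle'' paragraph asserts $(t-h)+\varepsilon>t$, which is exactly what fails when $\varepsilon\le h$.

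The paper sidesteps this entirely: the projection $\tilde\Pi:(T,S)\mapsto S$ from $\TLd$ to $\TL$ is $1$-Lipschitz (this was noted right after the definition of $\dghd$), and $\{T_1\}$ is a singleton hence closed in $\TL$, so $\TLs=\tilde\Pi^{-1}(\{T_1\})$ is closed. No level-wise characterization is needed. If you want to salvage your countable-intersection strategy, the correct level-wise condition is $r_t(S)=[0,t]$ in $\TL$ (a singleton preimage under the continuous map $(T,S)\mapsto r_t(S)$), not a diameter condition; but at that point you are essentially reproving continuity of $\tilde\Pi$.
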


\begin{proof}
  Consider the projection $\tilde  \Pi:(T,S) \mapsto S$ from $\TLd$
  to    $\TL$,     which    is  by construction  $1$-Lipschitz and thus
  continuous.  As
  $\TLs=\tilde \Pi^{-1}\bigl(\{T_1\}\bigr)$, we get that $\TLs$ is Borel.

  Notice that for $(T, S)\in \TLs$, then, by definition of $r^{[2],
    +}_{t}$, we get that the root is not a branching vertex of $(T, S)$ if and only if
$r_{0}^{[2],+}(T, S)=(\Tz,
  \Tz)$.
  Then, the set  $\TLsb=\TLs\cap  (r_{0}^{[2],+})^{-1}\Bigl(\bigl\{(\Tz,
  \Tz)\bigr\}\Bigr)$ is Borel as the map $r^{[2],  +}_{0}$ is
  measurable according to Lemma~\ref{lem:rt2}.
\end{proof}

We shall be mainly consider elements of $\TLsb$ in what follows.
For simplicity, we shall write $T^*=(T,S)$ for an element of $\TLsb$.
For $t\geq 0$ and $T^*=(T,S)$ in $\TLsb$, we have $r^{[2], +}_t
(T^*)=\bigl(r^{[2], +}_{t,1}(T), r_t(S)\bigr)$ where the rooted tree $r_t(S)$ is given by $\bigl(\lb\root,x
\rb, \root\bigr)$ with $x\in S$ uniquely characterized by  $d(\root, x)=t$.
We shall consider a slight modification of $r^{[2], +}_t$ on $\TLsb$, say
$\tilde r^{[2], +}_t$,
where one keeps track only of $(\root,x)$ instead of $r_t(S)$:
\begin{equation}
   \label{eq:def-tilde-rt2}
  \tilde r^{[2], +}_t(T^*)=\bigl(r^{[2], +}_{t,1}(T), (\root,x)\bigr).
\end{equation}
It is  left to the reader  to check that  $\tilde r^{[2],  \varepsilon}_t$ is
defined   on    $\TLsb$   and    $\TLu$-valued.   Similarly to
Lemma~\ref{lem:rt2},   we get the following result.

\begin{lem}
   \label{lem:rt2*}
   The  function $(t,  T^*) \mapsto  \tilde r^{[2],  +}_{t} (T^*)$  from
   $\R_+\times \TLsb$ to $\TLu$ is measurable.
\end{lem}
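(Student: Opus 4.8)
The plan is to factor $\tilde r^{[2],+}_t$ through the truncation map $r^{[2],+}_t$ of Lemma~\ref{lem:rt2} and the span embedding of Lemma~\ref{lem:cont-span-loc}, so that the desired measurability follows purely by composition; a self-contained alternative would be to imitate the partial-dilatation argument of Lemma~\ref{lem:rt2} directly on $\TLsb$, but the factorization is shorter.

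First I would record the case $n=1$ of Lemma~\ref{lem:cont-span-loc}: the map
\[
  \iota\colon \TLu\to\TLd,\qquad \bigl(T,d,(\root,v_1)\bigr)\longmapsto \bigl(T,\lb\root,v_1\rb,d,\root\bigr)
\]
(which is $(T,\bv)\mapsto(\Pi_1^\circ(T),\Span^\circ(T,\bv),d,\root)$) is injective and bi-measurable. Hence its image $\cb:=\iota(\TLu)$ --- the set of (equivalence classes of) marked locally compact rooted trees $(T,S)$ whose marked subtree $S$ is a segment $\lb\root,x\rb$ issued from the root --- is a Borel subset of $\TLd$, and the inverse $\iota^{-1}\colon\cb\to\TLu$, which maps $(T,\lb\root,x\rb)$ to $(T,(\root,x))$, is measurable.

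Next I would invoke Lemma~\ref{lem:rt2}: $(t,(T,S))\mapsto r^{[2],+}_t(T,S)$ is measurable from $\R_+\times\TLd$ to $\TLd$, and since $\TLsb$ is Borel in $\TLd$ by Lemma~\ref{lem:mes-T*}, the restriction to $\R_+\times\TLsb$ stays measurable. For $T^*=(T,S)\in\TLsb$ one has $S=T_1$, so that $r_t(S)=\lb\root,x\rb$ is the segment of length $t$, with $x$ the unique point of $S$ at height $t$; hence $r^{[2],+}_t(T^*)=\bigl(r^{[2],+}_{t,1}(T),\lb\root,x\rb\bigr)\in\cb$. Thus $(t,T^*)\mapsto r^{[2],+}_t(T^*)$ is a measurable map from $\R_+\times\TLsb$ into the Borel set $\cb$.

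Finally, comparing with the definition \eqref{eq:def-tilde-rt2}, $\iota^{-1}\bigl(r^{[2],+}_t(T^*)\bigr)=\bigl(r^{[2],+}_{t,1}(T),(\root,x)\bigr)=\tilde r^{[2],+}_t(T^*)$, so $\tilde r^{[2],+}_t=\iota^{-1}\circ r^{[2],+}_t$ on $\R_+\times\TLsb$, a composition of measurable maps, and hence measurable into $\TLu$. I expect the only non-routine points to be checking that the restricted $r^{[2],+}_t$ indeed lands in $\cb$ (so that $\iota^{-1}$ applies) and the identification $\tilde r^{[2],+}_t=\iota^{-1}\circ r^{[2],+}_t$; both follow immediately from the definitions of $r^{[2],+}_t$, $\Span^\circ$ and $\tilde r^{[2],+}_t$, so there is no genuine obstacle beyond unwinding notation.
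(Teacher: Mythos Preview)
Your proof is correct. The paper does not give an explicit argument here; it only says ``Similarly to Lemma~\ref{lem:rt2}, we get the following result'', which suggests redoing the partial-dilatation limit directly on $\TLsb$. You instead factor $\tilde r^{[2],+}_t=\iota^{-1}\circ r^{[2],+}_t$, using Lemma~\ref{lem:rt2} for the measurability of $r^{[2],+}_t$ and the $n=1$ case of Lemma~\ref{lem:cont-span-loc} for the bi-measurability of the embedding $\iota\colon(T,(\root,v_1))\mapsto(T,\lb\root,v_1\rb)$. This is a genuinely different and shorter route than re-running the dilatation argument; it buys you the result as a pure composition of already-established measurable maps, at the cost of checking that $r^{[2],+}_t$ restricted to $\TLsb$ lands in $\iota(\TLu)$, which you do. It is worth noting that the paper itself uses exactly this factorization trick later, in Step~2 of the proof of Lemma~\ref{lem:cnt-cck}, so your approach is entirely in the spirit of the paper --- you have simply applied it one lemma earlier.
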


\subsection{Another representation for discrete trees}
\label{sec:def-vA-wA}
Let    $n\in     \N$    be     fixed.     Let    $(T,     \bv)$,    with
$\bv=(v_0=\root, \ldots, v_n)$, be  a locally compact rooted $n$-pointed
tree. We will decompose the tree $\Span(T,\bv)$ as a sequence of edges.
To    do   so,   we   introduce    some   notations.  Let
$A  \subset  \{0, \ldots,  n\}$ be non-empty. We  set  $\bv_A=(v_i, i\in  A)$. We denote by $v_A$
the most recent common ancestor  of $\bv_A$, which is the only
element of $T$ such that:
\begin{equation}
   \label{eq:def-vA}
  \lb \root, v_A\rb=  \bigcap _{k\in A} \lb \root, v_k\rb.
\end{equation}
Notice that  $v_{\{i\}}=v_i$.  Recall  that for $x\in  T$, $T_x$  is the
subtree of $T$ above  $x$ and rooted at $x$.  Let  $\cp^+_n$ be the set
of all subsets  $A\subset\{1, \ldots, n\}$ such  that $A\neq \emptyset$.
For                  $A\in                 \cp^+_n$,                  if
$T_{v_A}   \cap  \Span^\circ(T,   \bv_{A^c})\neq   \emptyset$ with $A^c=\{0,1,2,\cdots,n\}\setminus A$,  we   set
$w_A=v_A$, otherwise  we define $w_A\in  \lb \root, v_A\rb$ as  the only
element of $T$ such that:
\begin{equation}
  \label{eq:def-wA}
 \lb \root, w_A\rb=   \Span^\circ(T, \bv_{A^c}) \cap  \Span^\circ\bigl(T,
 (\root, \bv_{A})\bigr).
\end{equation}
Equivalently $w_A$ is the only element  in $\lb \root, v_A\rb$ such that
$w_A=v_{A\cup \{k_0\}}$ for some $k_0\in A^c$ and for all $k\in A^c$, we
have   $v_{A\cup   \{k\}}\in   \lb    \root,   w_A\rb$.    Notice   that
$w_{\{1,  \ldots, n\}}=\root$.  We  also record  the
  lengths of all the branches $\lb w_A, v_A\rb$:
\begin{equation}
  \label{eq:bL=lA}
\bL_n(T, \bv)=\bigl(\ell_A(T, \bv), A\in
\cp^+_n\bigr)
\quad\text{with}\quad \ell_A(T, \bv)=d(w_A, v_A).
\end{equation}

\begin{figure}[ht]
  \includegraphics[scale=0.5]{discret.eps}
  \caption{A discrete trees spanned by the leaves $\{1, 2, 3\}$.}
    \label{fig:123}
\end{figure}

\begin{table}[ht]
\caption{Quantities of interest for the discrete tree from
  Figure~\ref{fig:123}.}
\label{tab:123}
   \begin{tabular}
   {|c||c|c|c|c|c|c|c|}
\hline
$A\subset\cp^+_3$ & $\{1\}$ & $\{2\}$ & $\{3\}$ & $\{1,2\}$ & $\{1,3\}$
     & $\{2,3\}$ & $\{1,2,3\}$ \\
\hline
\hline
$v_A$ & 1& 2& 3& $a$ & $a$ &$b$ &$a$\\
\hline
$w_A$ &$a$& $b$& $b$& $a$ & $a$ &$a$ &$\root$\\
\hline
 $\ell_A$ & $d(a,1)$ & $d(b,2)$ & $d(b,3)$ & 0 & 0 & $d(a,b)$ & $d(\root,a)$\\
\hline
\end{tabular}
\end{table}

For instance, we record the quantity of interest in
Table~\ref{tab:123} for the discrete tree spanned by the leaves $\{1,2,3\}$
from Figure~\ref{fig:123}.
We can see that each branch of the discrete tree appears (through their
length) once and only
once in  $\bL_3(T,\bv)$.
\medskip

Set
$\hat \bv =\bigl(\hat \bv_0=\root, ( v_A, A\in \cp^+_n)\bigr) \in T^{2^{n}}$, so
that $(T, \hat  \bv)$ is a locally  compact rooted $(2^{n}-1)$-pointed
tree with  the same root  $\root$ as $T$. Notice  that all the  vertices in
$\bv$ appear in $\hat \bv$ (possibly more than once), and that $w_A$
also appears in $\hat \bv$ for all $A\in \cp_n^+$.  Recall the set of
discrete trees defined at the end of Section \ref{sec:root-Tn}. The next
lemma states  that $\bL_n$  encodes discrete  trees continuously.  Set
$\Im (\bL_n)\subset \R_+^{\cp^+_n}$ (with $\R_+^{\cp^+_n}=\R_+^{2^n-1}$) for the image of $\bL_n$.

\begin{lem}[Regularity of the branch lengths as a function of the  tree]
  \label{lem:bL-cont}  Let  $n\in \N^*$. The  map $(T, \bv)  \mapsto (T, \hat  \bv)$ is
  well defined from $\TLn$  to $\T^{(2^{n}-1)}_{\mathrm{loc-K}}$, and it
  is continuous.  The function  $\bL_n$ is well  defined from  $\TLn$ to
  $\Im  (\bL_n)\subset\R_+^{\cp^+_n}$  and is  continuous.  Furthermore,
  $\Im (\bL_n)$ is closed and  $\bL_n$ is a one-to-one bi-measurable map
  from $\TDn$ to $\Im (\bL_n)$. 
\end{lem}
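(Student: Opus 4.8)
The plan is to establish the statements in the order given. All the well‑definedness claims are immediate from invariance: the vertices $v_A$ (hence, by the characterisation following \eqref{eq:def-wA}, the vertices $w_A$) and the numbers $\ell_A=d(w_A,v_A)$ are defined through purely metric and genealogical data, so any isometry between two representatives of a class in $\TLn$ that fixes the root and the $v_i$ sends $v_A$ to $v'_A$; since $\Card(\cp^+_n)=2^{n}-1$ and $T$ is locally compact we get $(T,\hat\bv)\in\T^{(2^{n}-1)}_{\mathrm{loc-K}}$, and $\ell_A\ge 0$ because $w_A\preceq v_A$. To prove that $(T,\bv)\mapsto(T,\hat\bv)$ is continuous I would write it as a finite composition of maps already known to be continuous. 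The basic block is the meet map
\[
\mu_{i,j}\colon\bigl(T,d,(w_0=\root,\dots,w_m)\bigr)\longmapsto\bigl(T,d,(w_0,\dots,w_m,w_i\wedge w_j)\bigr)
\]
from $\T^{(m)}_{\mathrm{loc-K}}$ to $\T^{(m+1)}_{\mathrm{loc-K}}$: since each $v_A=\bigwedge_{i\in A}v_i$ is obtained by iterating pairwise meets, $(T,\bv)\mapsto(T,\hat\bv)$ is a composition of finitely many maps $\mu_{i,j}$, followed by the deletion of the auxiliary distinguished vertices (continuous by Lemma~\ref{lem:erase-cont}) and a reordering of the remaining ones (continuous by Lemma~\ref{lem:rerooting}).

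The continuity of $\mu_{i,j}$ rests on the following identity in a rooted real tree: writing $q$ for the projection of a point $x$ onto the geodesic $\lb\root,w_i\rb$, one has
\[
d\bigl(x,w_i\wedge w_j\bigr)=d(x,q)+\bigl|H(q)-H(w_i\wedge w_j)\bigr|,
\]
and $d(x,q)$, $H(q)$ and $H(w_i\wedge w_j)$ are each Gromov products, hence expressible using only the pairwise distances among $\{x,\root,w_i,w_j\}$. Therefore, given a correspondence $\cR$ between two compact representatives and $(x,x')\in\cR$, enlarging $\cR$ by the single pair $(w_i\wedge w_j,\,w'_i\wedge w'_j)$ increases the distortion by at most an absolute multiplicative constant, so $\mu_{i,j}$ is Lipschitz on the corresponding spaces of compact $m$-pointed trees. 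In the locally compact case one observes that $\Span^\circ(T,(w_0,\dots,w_m,w_i\wedge w_j))=\Span^\circ(T,(w_0,\dots,w_m))$, so $r_t$ commutes with $\mu_{i,j}$ up to the extra marked vertex, and $q$, lying on $\lb\root,w_i\rb\subset\Span^\circ$, always belongs to $r_t(T,(w_0,\dots,w_m))$; applying the previous $\dghn$-bound at each level $t$ and integrating against $\expp{-t}\,\rd t$ gives the corresponding $\dlghn$-bound, so $\mu_{i,j}$ is Lipschitz, in particular continuous, on $\T^{(m)}_{\mathrm{loc-K}}$.

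Continuity of $\bL_n$ then follows from a closed formula. From the characterisation of $w_A$, it is the deepest among the vertices $v_{A\cup\{k\}}$, $k\in A^c$, which all lie on $\lb\root,v_A\rb$ and hence satisfy $v_{A\cup\{k\}}\preceq v_A$; consequently
\[
\ell_A(T,\bv)=d(w_A,v_A)=\min_{k\in A^c}d\bigl(v_{A\cup\{k\}},v_A\bigr),
\]
where each $v_{A\cup\{k\}}$ (with $v_{A\cup\{0\}}=\root$) and $v_A$ is one of the coordinates of $\hat\bv$. As the map sending a pointed tree to the matrix of distances between its distinguished vertices is continuous (Remark~\ref{rem:cont-T-d}), each $\ell_A$ is a minimum of finitely many continuous functions of $(T,\hat\bv)$; composing with the continuous map $(T,\bv)\mapsto(T,\hat\bv)$ shows that $\bL_n\colon\TLn\to\R_+^{\cp^+_n}$ is continuous.

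Finally, the same computation yields the downward recursion
\[
H(v_A)=\max_{k\in A^c}H\bigl(v_{A\cup\{k\}}\bigr)+\ell_A,
\]
with base case $H(v_{\{1,\dots,n\}})=\ell_{\{1,\dots,n\}}$; running it on decreasing values of $\Card(A)$ recovers every $H(v_A)$ from the vector $\bL_n(T,\bv)$, hence all $d(v_i,v_j)=H(v_i)+H(v_j)-2H(v_{\{i,j\}})$, and for a discrete tree this distance matrix determines the tree spanned by $\bv$; this gives injectivity of $\bL_n$ on $\TDn$. For closedness: if $\bL_n(T_m,\bv_m)\to\ell$ in $\R_+^{\cp^+_n}$, the recursion shows that the matrices $\bigl(d(v_i^{(m)},v_j^{(m)})\bigr)$ converge and stay bounded, so $(T_m,\bv_m)$ ranges in a relatively compact family of $\TKn$ (discrete $n$-pointed trees of bounded height); a subsequential limit lies in $\TDn$ (closed by Lemma~\ref{lem:discrete=closed}) and has $\bL_n$-image $\ell$ by continuity of $\bL_n$, so $\ell\in\Im(\bL_n)$. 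Hence $\Im(\bL_n)$ is closed, and $\bL_n|_{\TDn}$ is a continuous bijection onto it between Polish spaces, therefore bi-measurable by Lusin's theorem \cite{purves}. The delicate points are all in this last paragraph: the relative compactness of discrete $n$-pointed trees of bounded height (a standard Gromov-compactness estimate, using that such a tree is a union of at most $2^{n}-1$ segments of bounded length), and checking the recursion and the formula for $\ell_A$ against the definition of $w_A$ in all the degenerate configurations where several of the $v_A$ coincide.
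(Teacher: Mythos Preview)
Your proof is correct and reaches the same conclusions as the paper, but the route is genuinely different. The paper works algebraically: it introduces an explicit formula
\[
L_A(\rd)=\tfrac14\inf\bigl\{(\rd_{ii'}+\rd_{ij'}+\rd_{ji'}+\rd_{jj'}-2\rd_{ij}-2\rd_{i'j'})_+:\,i,j\in A,\ i',j'\in A^c\bigr\}
\]
for the branch lengths directly in terms of the pairwise distances $\rd_{ij}=d(v_i,v_j)$, together with an explicit left inverse $D_{ij}(\ell)=\sum_A\ell_A(\ind_{i\in A,j\notin A}+\ind_{i\notin A,j\in A})$, and shows that $L$ is a bijection from the four-point-condition cone $\cq^{(n)}$ onto its image. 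Continuity of $\bL_n$ is then immediate since $\bL_n(T,\bv)=L\bigl(d(v_i,v_j)\bigr)$, and closedness/injectivity follow from the explicit inverse $D$ and the four-point characterisation of tree metrics. By contrast, you first prove continuity of $(T,\bv)\mapsto(T,\hat\bv)$ via the meet map $\mu_{i,j}$ (a point the paper only asserts), then read off $\ell_A=\min_{k\in A^c}d(v_{A\cup\{k\}},v_A)$ and recover injectivity and closedness from the height recursion $H(v_A)=\max_{k\in A^c}H(v_{A\cup\{k\}})+\ell_A$ combined with a Gromov-compactness argument on $\TDn$. Your argument is more geometric and gives a clean proof of the continuity of the expansion map as a by-product; the paper's is more self-contained in that the closed-form $L$ and $D$ avoid any compactness step. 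One small point you leave implicit in the closedness argument: since $\bL_n(T,\bv)=\bL_n(\Span(T,\bv))$, you may and should choose the approximating sequence $(T_m,\bv_m)$ in $\TDn$ before invoking relative compactness.
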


\begin{proof}
If $(T, \bv)$ and $(T', \bv')$ belong to  the same equivalence class in
$\TLn$, then we deduce from \eqref{eq:def-vA} and \eqref{eq:def-wA} that
$(T, \hat \bv) $ and $(T', \hat \bv')$ belong also to the same equivalence
class. This implies that the function $(T, \bv) \mapsto (T,\hat \bv) $
is well defined from $\TLn$ to $\T_{\mathrm{loc-K}}^{(2^{n}-1)}$.
We deduce from  \eqref{eq:def-vA} and
\eqref{eq:def-wA} that this function is in fact continuous on $\TLn$. We
also get that the function $\bL_n$ is well defined from  $\TLn$ to
$\R_+^{\cp^+_n}$.
\medskip

We shall now precise the image of the function $\bL_n$ and prove its
continuity.
 Recall $x_+=\max(x,
0)$ denotes the positive part of $x\in \R$.
We define the
function $L$ from $\R_+^{(n+1)\times(n+1)}$ to $\R_+^{\cp^+_n}$
by, for $\rd=(\rd_{ij}, 0\leq i,j\leq n)$ and $A\in \cp^+_n$:
\[
  L_A(\rd)=
\inv{4} \inf \left\{ \left( \rd_{ii'} + \rd_{ij'}+ \rd_{ji'}+d_{jj'} - 2
    \rd_{ij} - 2\rd_{i'j'}\right)_+\,\colon\, i,j\in A \text{ and } i',j'\in
  A^c\right\},
\]
where $A^c=\{0, \ldots, n\}\setminus A$.
We also define the
function $D$ from  $\R_+^{\cp^+_n}$ to $\R_+^{(n+1)\times(n+1)}$
by, for $\ell=(\ell_A, A\in \cp^+_n)$ and $i,j\in \{0, \ldots, n\}$:
\begin{equation}
   \label{eq:def-Dd}
  D_{ij}(\ell)=\sum_{A\in \cp^+_n} \ell_A \left(
\ind_{\{i\in A, j\not \in
    A\}} + \ind_{\{i\not\in A, j \in
    A\}}
  \right).
\end{equation}
The functions $L$ and $D$ are  continuous.   Consider the closed subset
$\cq^{(n)}$ of $\R_+^{(n+1)\times(n+1)}$ satisfying the so-called four-point
condition, that is the set of all $(\rd_{ij}, 0\leq i,j\leq n)\in
\R_+^{(n+1)\times(n+1)}$ such that:
\[
    \rd_{ij}+\rd_{i'j'}\leq  \max(\rd_{ii'}+ \rd_{jj'}, \rd_{ij'}+ \rd_{ji'})
    \quad\text{for all}\quad
    i,j, i', j'\in \{0, \ldots, n\}.
  \]
  Notice that the four-point condition is also used to characterize metric spaces which are real trees, see \cite{ev08}.
  Then, one can check  that the  function $L$ is one-to-one
  from $\cq^{(n)}$ to $L(\cq^{(n)})$ with inverse $D$.  We also get that
  $L(\cq^{(n)})$ is closed (indeed if $(\ell ^k=L(\rd^k), k\in \N)$ is a
  sequence  of elements  of $L(\cq^{(n)})$  converging to  a limit,  say
  $\ell$, then it is bounded and thus  the sequence $(\rd ^k, k\in \N)$ is
  also bounded. Hence there is  a converging sub-sequence, and denote by
  $\rd$ its  limit which belongs to  $\cq^{(n)}$ as this set  is  closed. Since
  $L$ is continuous,  we get that $L(\rd)=\ell$ and  thus $\ell$ belongs
  to $L(\cq^{(n)})$, which gives  that $L(\cq^{(n)})$ is closed).  Since
  for       $(T,      \bv)\in       \TLn$,       we      have       that
  $\bL_n(T, \bv)=L\bigl(d(v_i, v_j),  0\leq i, j\leq n\bigr)$, we  deduce that the
  function  $\bL_n$   is  continuous  from  $\TLn$   to  $L(\cq^{(n)})$.
  \medskip

  We  now prove  that  $\Im(\bL_n)= L(\cq^{(n)})$  and  that $\bL_n$  is
  one-to-one      from      $\TDn$     to      $L(\cq^{(n)})$.       Let
  $\ell=(\ell_A, A\in \cp^+_n) \in L(  \cq^{(n)})$. Thus, there exists a
  sequence   $\rd=(\rd_{ij},  0\leq   i,j\leq  n)\in   \cq^{(n)}$  which
  satisfies the four-point condition  and such that $L(\rd)=\ell$. Since
  $\rd$ satisfies the  four-point condition, we get that  there exists a
  discrete tree $(T,  d, \bv)\in \TDn$ such  that $d(v_i, v_j)=\rd_{ij}$
  for   all   $i,   j\in   \{0,  \ldots,   n\}$.    This   proves   that
  $\Im(\bL_n)=  L(\cq^{(n)})$.  Then  use  that $L$  is one-to-one  from
  $\cq^{(n)}$ to $L(\cq^{(n)})$  with inverse $D$ and  that two discrete
  trees $(T,  d,\bv)$ and $(T',  d', \bv')$ are  equal in $\TDn$  if and
  only if $d(v_i, v_j)=d'(v'_i,v'_j)$ for all $i, j\in \{0, \ldots, n\}$
  to deduce that $\bL_n$ is one-to-one from $\TDn$ to $L(\cq^{(n)})$ and
  thus bi-measurable thanks to Lusin's theorem.
\end{proof}

\subsection{The splitting operator for a pointed tree}
\label{sec:split}

We want now to decompose the pointed tree $(T,\bv)$ along the branches of $\Span^\circ(T,\bv)$.
We keep notations from Section~\ref{sec:def-vA-wA}.

Let  $(T,  \bv)$, with  $\bv=(v_0=\root,  \ldots,  v_n)$, be  a  locally
compact rooted $n$-pointed tree.  Recall Definition \eqref{eq:def-pv} of
the projection $p_\bv$ on  $\Span(T,\bv)$.  For $A\in \cp^+_n$, consider
the rooted $1$-pointed  tree:
\begin{equation}
  \label{eq:def-hat-T_A}
  \hat T_A(T,\bv)=\bigl(T_A(T, \bv), (\root_A, v_A)\bigr)\in \TLu,
\end{equation}
with root $\rho_A=w_A$ and
\[
T_A(T, \bv)=\bigl\{x\in T\, \colon \, p_\bv(x)\in \rb w_A, v_A \rb\bigr\}\cup\{w_A\}.
\]
By construction, we have that $ \ell_A(T, \bv)=d(\root_A, v_A)$.
\medskip

Notice that $\ell_A(T, \bv)=0$ if and only if
$\hat T_A(T, \bv)$ is reduced to its root, that is, $\bigl(\{\root_A\}, (\root_A,
\root_A)\bigr)$. Notice also that $\ell_A(T, \bv)>0$ implies that $\hat T_A$
belongs to $\TLunnb$, the set of trees in $\TLu$ such that the root is
not a branching point (see Definition~\eqref{eq:del-TL-br}).
We also define the rooted $1$-pointed tree $\hat T_{\{0\}}(T, \bv)\in \TLu=\left(T_{\{0\}}(T, \bv), (\root, \root)\right)$ by:
\[
T_{\{0\}}(T, \bv)=\bigl\{x \in T\, \colon \, \rb \root, x\rb \cap \Span^\circ(T,
  \bv)=\emptyset\bigr\},
\]
with root  $\root$ and distinguished vertex also $\root$.
If $(T, \bv)$ and $(T', \bv')$ belong to  the same equivalence class in
$\TLn$, then we get that $\hat T_A(T, \bv)$ and $\hat T_A(T', \bv')$
belong also to the same equivalent class in $\TLu$ for $A\in \cp_n=\cp_n^+\cup\{\{0\}\}$. Thus,  the map
$\Split_n$ defined on $\TLn$ by:
\begin{equation}
   \label{eq:def-split}
 \Split_n(T, \bv)=\left(
  \hat T_A(T, \bv), A\in \cp_n\right)
\end{equation}
takes values in $ \left(
  \TLu\right)^{2^n}$. We give an instance of the function $\Split_n$ in
Figure~\ref{fig:split}.

  \begin{figure}[ht]
  \includegraphics[scale=0.5]{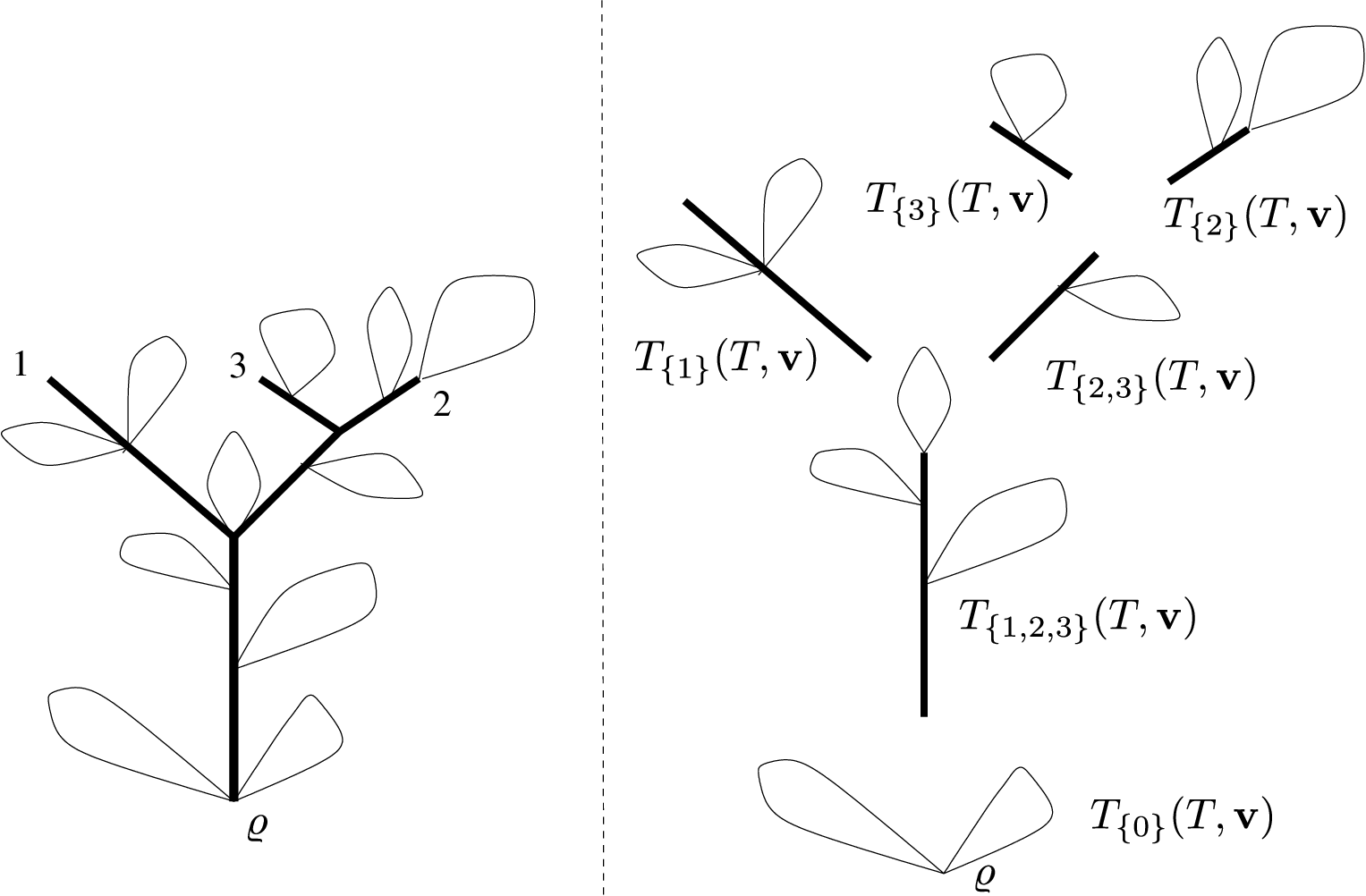}
  \caption{The splitting of the left hand tree with respect to
    $\bv=\{\root,1,2,3\}$. In this instance, $T_{\{1,2\}}$ and $T_{\{1,3\}}$ are reduced to their own root.}
  \label{fig:split}
  \end{figure}

\begin{lem}[Measurability of the splitting map]
  \label{lem:cnt-cck}
Let $n\in \N^*$.   The map $ \Split_n$ from $\TLn$ to $\left(\TLu\right)^{2^n}$ is
measurable.
\end{lem}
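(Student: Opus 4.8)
The plan is to reduce the claim to the measurability of each of the $2^n$ coordinate maps $\varphi_A\colon(T,\bv)\mapsto\hat T_A(T,\bv)$, $A\in\cp_n$, from $\TLn$ to $\TLu$: since $\TLu$ is Polish and $\cp_n$ is finite, a map into $\left(\TLu\right)^{2^n}$ is Borel measurable if and only if all its coordinates are.

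The case $A=\{0\}$ is short. Because $\root$ lies on $\Span^\circ(T,\bv)$ and $\lb\root,x\rb\cap\Span^\circ(T,\bv)=\lb\root,p_\bv(x)\rb$, one has $T_{\{0\}}(T,\bv)=\{x\in T\colon p_\bv(x)=\root\}=\{x\in T\colon H(p_\bv(x))\le 0\}$, which coincides, as a rooted tree, with the first component of $r_{0}^{[2],+}\bigl(\Pi_n^\circ(T),\Span^\circ(T,\bv)\bigr)$. Hence $\varphi_{\{0\}}$ is the composition of the map $(T,\bv)\mapsto\bigl(\Pi_n^\circ(T),\Span^\circ(T,\bv)\bigr)$, continuous from $\TLn$ to $\TLd$ by Lemma~\ref{lem:cont-span-loc}, of the measurable map $r_{0}^{[2],+}$ of Lemma~\ref{lem:rt2}, and of the continuous map sending a marked tree whose marked subtree is reduced to the root to the associated one-pointed tree with the root as distinguished vertex; it is therefore measurable.

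Fix now $A\in\cp_n^+$. First, arguing exactly as in the proof of Lemma~\ref{lem:bL-cont} (which establishes continuity of $(T,\bv)\mapsto(T,\hat\bv)$ from~\eqref{eq:def-vA} and~\eqref{eq:def-wA}), I would observe that $(T,\bv)\mapsto\bigl(T,(\root,w_A,v_A)\bigr)$ is continuous from $\TLn$ to $\T^{(2)}_{\mathrm{loc-K}}$; combined with Lemma~\ref{lem:cont-span-loc} this gives a continuous map recording, for each $(T,\bv)$, the marked tree $\bigl(T,\Span^\circ(T,\bv)\bigr)$ together with the two vertices $w_A\preceq v_A$ of its marked subtree. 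It then remains to recover $\hat T_A(T,\bv)=\bigl(T_A(T,\bv),d,(w_A,v_A)\bigr)$, rooted at $w_A$, from this data. Now $T_A(T,\bv)$ is exactly the part of $T$ that projects, along $p_\bv$, onto the sub-arc $\rb w_A,v_A\rb$ of the span, with its root adjoined; it is not a continuous function of $(T,\bv)$ (a large bush may migrate between neighbouring pieces when a branch length $\ell_B$ crosses $0$), but I expect it to be extracted by the dilatation-limit device used in the proofs of Lemmas~\ref{lem:rt2} and~\ref{lem:rt*}. Namely, for $a\in(0,1]$ I would let $\Delta_a(T,\bv)\in\TLu$ be the one-pointed tree obtained from $T$ by keeping $w_A$ as root and $v_A$ as distinguished vertex, forgetting all other distinguished vertices, and multiplying by $a$ every distance measured outside $T_A(T,\bv)$ — i.e. collapsing towards $w_A$ (resp. towards $v_A$) the part of $T$ projecting strictly below $w_A$ (resp. strictly above $v_A$) on the span together with the bushes hanging off those parts, and dragging the old root $\root$ onto $w_A$. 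Each $\Delta_a$ should be continuous on $\TLn$ (extending the partial dilatations $R_a$, $R'_a$ of the quoted proofs; the degenerate locus $\{\ell_A(T,\bv)=0\}$, Borel by Lemma~\ref{lem:bL-cont}, where $\hat T_A$ is the one-point tree, being harmless), and $\Delta_a(T,\bv)\to\hat T_A(T,\bv)$ in $\dlghu$ as $a\to 0^+$. Since a pointwise limit of continuous, hence Borel, maps is Borel, $\varphi_A$ is measurable, which finishes the proof.

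The delicate point — and the step I expect to be the main obstacle — is this last construction: one must set up the family $\Delta_a$ so that it is genuinely continuous on all of $\TLn$, including near configurations with coincident distinguished vertices, vanishing branch lengths $\ell_B$, or $w_A=v_A$, and verify that $\lim_{a\to 0^+}\Delta_a(T,\bv)$ is $\hat T_A(T,\bv)$ rather than some further collapse; this is precisely where the careful bookkeeping in the style of Lemmas~\ref{lem:rt2} and~\ref{lem:rt*} is required.
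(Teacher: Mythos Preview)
Your treatment of the case $A=\{0\}$ is essentially the paper's Step~1.

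For $A\in\cp_n^+$ your proposal has a genuine gap, which you yourself flag: the family $\Delta_a$ is described only informally, and its continuity is not established. This is not a detail one can wave through. The map $\Delta_a$ you describe must decide, for each vertex $x\in T$, whether $p_\bv(x)$ lies in $\rb w_A,v_A\rb$ or not, and rescale accordingly; but this dichotomy is discontinuous in $(T,\bv)$ precisely at the configurations (coinciding distinguished vertices, vanishing $\ell_B$) you list. The dilatation devices in Lemmas~\ref{lem:rt2} and~\ref{lem:rt*} succeed because they rescale relative to a marked subtree $S$ that is already part of the data; adapting them to excise an \emph{internal} arc $\rb w_A,v_A\rb$ sandwiched between two other variable pieces of the span, while simultaneously rerooting, would require a genuinely new construction, and you have not supplied one.

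The paper sidesteps this difficulty by a reduction you did not spot. It first handles only the special set $A=\{1,\ldots,n\}$ (Step~2): here $w_A=\root$ and $v_A$ is the MRCA of all the pointed vertices, so $\hat T_{\{1,\ldots,n\}}(T,\bv)$ is obtained by applying $r_*^{[2]}\circ r_{H(v_A)}^{[2],+}$ to $\bigl(T,\Span^\circ(T,\bv)\bigr)$ and then reading off the marked segment as a $1$-pointed tree --- a composition of maps already shown to be measurable. For a general $A\in\cp_n^+$ (Step~3), the paper precomposes with continuous operations already in hand: expand $\bv$ to $\hat\bv$ (Lemma~\ref{lem:bL-cont}, so that $w_A$ becomes a pointed vertex), reroot at $w_A$ (Lemma~\ref{lem:rerooting}), forget the pointed vertices $v_{A'}$ and $w_{A'}$ for $A'\subset A^c$ (Lemma~\ref{lem:erase-cont}), and then apply the Step~2 map. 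After rerooting and forgetting, the branch from the new root to the MRCA of the surviving pointed vertices is exactly $\lb w_A,v_A\rb$, so Step~2 returns $\hat T_A(T,\bv)$. No new dilatation is needed.
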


\begin{proof}
The proof is divided into three steps.

\emph{Step   1}:  The   map   $\hat  T_{\{0\}}$   is  measurable.    Let
$(T,    \bv)\in    \TLn$.     By     construction,    we    have    that
$ r_0^{[2],  +}\bigl(T, \Span^\circ  (T, \bv)\bigr)=\bigl(T_{\{0\}}(T, \bv),  \Tz\bigr)$. We
deduce   from    Lemma   \ref{lem:rt2}    on   the    measurability   of
$r_t^{[2],          \varepsilon}$,         that          the         map
$(T, \bv)\mapsto \hat T_{\{0\}}=\bigl(T_{\{0\}}(T,  \bv), (\root, \root)\bigr)$ is
measurable.  \medskip

   \emph{Step 2}: A measurable truncation  function. Let $n\geq 1$.  Let
   $(T, \bv)$  be a rooted  $n$-pointed tree.  Recall the  definition of
   $\hat   T_A(T,    \bv)$   from   \eqref{eq:def-hat-T_A}.     We   set
   $q(T,  \bv)=\hat T_{\{1,2,\ldots,n\}}(T,\bv)$  so that  $q$ is  a map
   from $\TLn$ to $\TLu$. Recall the measurable truncation functions $r_t^{[2],+}$
   and $r_*^{[2]}$ from \eqref{eq:def-tilde-rt2} and \eqref{eq:r*[2]}, respectively. 
   
We set:
   \[
     q'(T, \bv)= r^{[2]}_* \circ r^{[2], +}_{d(\root, w_{\{1, \ldots,
         b\}})}\,  \bigl(T, \Span^\circ(T, \bv)\bigr).
   \]
   Thanks      to       Lemma~\ref{lem:cont-span-loc},      the      map
   $(T, \bv) \mapsto \bigl(T, \Span^\circ(T, \bv)\bigr)$ is continuous from $\TLn$
   to     $\TLd$.     Thanks      to     Lemma~\ref{lem:bL-cont}     and
   Remark~\ref{rem:cont-T-d},      we     get      that     the      map
   $(T,\bv) \mapsto  d(\root, w_{\{1, \ldots, b\}})$  is continuous from
   $\TLn$ to $\R_+$. Then, use Lemmas~\ref{lem:rt2} and~\ref{lem:rt*} on
   the  measurability of  $r_t^{[2],  \varepsilon}$  and $r_*^{[2]}$  to
   conclude that the map $q'$ from $\TLn$ to $\TLd$ is measurable and it
   has        the       same        image        as       the        map
   $\bigl(T,  (\root,  v)\bigr)\mapsto  \bigl(  T,  \lb\root,  v\rb\bigr)$  from  $\TLu$  to
   $\TLd$. According  to Lemma~\ref{lem:cont-span-loc} (with $n=1$),  this latter  map is
   injective and measurable. Hence the map $q$, which is the composition of
   $q'$ and this latter map, is measurable.  \medskip

\emph{Step  3}:   Conclusion.   Let  $A\subset  \{1,   \ldots,  n\}$  be
non-empty. Notice  that $\hat  T_A$ is  the image of  $(T, \bv)$  by: the
expansion procedure $(T, \bv)  \mapsto (T, \hat  \bv)$  from the first  part of Lemma \ref{lem:bL-cont}, the
rerooting  at $w_A$ from Lemma~\ref{lem:rerooting}, the
reducing procedure from Lemma~\ref{lem:erase-cont} where
one forgets about all $w_{A'}$ and $v_{A'}$ for $A'\subset A^c$, and then the
function   $q$  from   Step   2.   This   implies   that  the   function
$(T, \bv) \mapsto \hat T_A(T, \bv)$ is measurable from $\TLn$ to $\TLu$.
\end{proof}

\subsection{The grafting procedure}
\label{sec:graft}
Let $n\in \N^*$.
Let $\ell=(\ell_A,  A\in \cp^+_n) \in  \Im (\bL_n)$. According  to Lemma
\ref{lem:bL-cont},  there exists  a  unique (up to the
equivalence in $\TKn$) rooted $n$-pointed  discrete
tree  $(S, \bv)$ (that is    $S=\Span^\circ(S,    \bv)$)    such    that
$\bL_n(S,   \bv)=\ell$.
Recall     $v_A$      and      $w_A$     defined      in
Section~\ref{sec:def-vA-wA} for $A\in \cp^+_n$ so that:
\begin{equation}
   \label{eq:def-S-SA}
  S=\bigcup_{A\in \cp^+_n}  \lb w_A, v_A\rb,
\end{equation}
where the sets $(\rb w_A, v_A\lb, A\in \cp^+_n)$ are pairwise disjoint.

Recall that $\TLsb$  denotes the set (of equivalence  classes) of locally
compact  rooted trees  with  one  infinite marked  branch  such that
the root is not a branching vertex.
Let $T^*=(T_A ^* , A\in \cp^+_n)$ be a family of elements of equivalence
classes     in     $\TLsb$.      Then,     we     define     the     tree
$(T,\bv)=\graft_n (\ell,  T^*)$,  where $T$  is the
tree $S$ with that the branches $\rb  w_A, v_A\rb$ are replaced by the trees
given by the first component of $r^{[2],+}_{\ell_A} ( T_A^*)$ (where the
second component has been identified to $\lb w_A, v_A\rb$).
\medskip

We     now     provide     a     more     formal     construction     of
$\graft_n (\ell, T^*)$. Let  $\ell\in \Im (\bL_n)$,
and     consider     the     rooted    $n$-pointed     discrete     tree
$(S, \bv)=\bL_n^{-1} (\ell)\in \TDn$ and $\bv=(v_0=\root, \ldots, v_n)$.
Set
$\hat \bv  =(\hat \bv_0=\root, (  v_A, A\in \cp^+_n))  \in T^{2^{n}}$,
with $v_A$ the most recent common ancestor of $(v_i, i\in A)$ defined in
\eqref{eq:def-vA}.   Thus, we  get that  $(S, \hat  \bv)\in \TDnn$  is a
rooted $(2^{n}-1)$-pointed discrete  tree with the same  root $\root$ as
$S$.
\medskip

In a  first step,  we build  by a  backward induction  an ``increasing''
sequence                of                 discrete                trees
$\bigl((S_k,  \bv_k),   k\in  \{0,  \ldots,  2^n-1\}\bigr)$   such  that
$(S_k,    \bv_k)\in     \TDk$    with    root    $\root$.      We    set
$(S_{2^n -1}, \bv_{2^n -1})=(S, \hat \bv)$. Recall that $x$ is a leaf of
a tree $T$ with root $\root$  if $x\in \lb \root, y\rb\subset T$ implies
$y=x$.   Assume   that  $(S_{k+1},  \bv_{k+1})$  is   defined  for  some
$k\geq 0$.  We consider the  lexicographical order on the non-empty sets
of $\N$ defined recursively as follow:  for $A, B \subset \N$ non empty,
we write $A< B$: if $\min A < \min B$; or if $\min A =\min B$ and $A$ is
a singleton  but not $B$; or  if $\min A =\min  B$, $A$ and $B$  are not
singletons and $A'< B'$ where  $A'=A \setminus \{\min A\}$ and similarly
for $B'$. Notice this order is total. We set:
\[
  A_{k+1}=\max   \bigl\{A\in\cp_n^+,v_{A}\in   \bv_{k+1}\   \mbox{and}\   v_A\
  \mbox{is a  leaf of}\  (S_{k+1}, \bv_{k+1})\bigr\}.
\]
Then, we define $\bv_k$ as  the sequence $\bv_{k+1}$ where $v_{A_{k+1}}$
has been removed (notice that the  first element of $\bv_k$ is still the
root $\root$),  and we set  $(S_k, \bv_k)=\Span(S, \bv_k)\in  \TDk$.  We
also  set   $B_k=\max\{B\in  \cp_n\,  \colon\,   v_B=w_{A_{k+1}}\}$.  By
construction, $v_{B_k}=w_{A_{k+1}}$ belongs to  the sequence $\bv_k$ and
is therefore  an element  of $\bv$  for some index,  and, with  a slight
abuse of notation, we simply denote this index by $B_k$.  We have, using
the grafting operation from Section~\ref{sec:graft1} that:
\begin{equation}
  \label{eq:Sk-k+1}
  (S_{k+1}, \bv_{k+1})=(S_k, \bv_k)\circledast_{B_k} [0, \ell_{A_{k+1}}],
\end{equation}
where the equality  holds in $\TLkk$ (and in $\TDkk$)  and by convention
$[0, t]$  denotes the discrete  $1$-pointed tree $\bigl([0, t],  (0,t)\bigr)$ with
root   $0$.    Notice   that   $\ell_{A_{k+1}}=0$   if   and   only   if
$ \Span^\circ(S, \bv_k)=\Span^\circ(S,  \bv_{k+1})$.  Eventually, notice
that $(S_0, \bv_0)=\bigl(\{\root\}, \root\bigr)$ is the rooted tree reduced to its
root  $\root=v_{\{0\}}$   and  $B_0=\{0\}$.  Let  us   stress,  that  in
Section~\ref{sec:graft1}, the  vector $\bv_{k+1}$  is obtained  by adding
the distinguished  vertex $\ell_{A_{k+1}}$ of $[0,  \ell_{A_{k+1}}]$ to $\bv_k$.
However     here     we      identify     $[0,     \ell_{A_{k+1}}]$     with
$\lb  v_{B_k}=w_{A_{k+1}},  v_{A_{k+1}}\rb$  and add  the  distinguished  vertex
$v_{A_{k+1}}$ to $\bv_k$ in order to obtain $\bv_{k+1}$.

For instance, we give in Table \ref{tab:deconstruction} the sequences $(A_k,1\le k\le 2^n-1)$ and $(B_k,0\le k\le 2^n-2)$ for the tree of Figure \ref{fig:123}.

\begin{table}[ht]
\caption{The sequences $(A_{k+1},0\leq k\leq  6)$,  $(B_k,0\leq k\leq
  6)$ and $(\ell_{A_{k+1}},0\leq k\leq  6)$ for the tree of Figure \ref{fig:123}.}
\label{tab:deconstruction}
   \begin{tabular}
   {|c||c|c|c|c|c|c|c|c|}
\hline
$k$ & $0$ & $1$ & $2$ & $3$ & $4$ & $5$ & $6$ \\
\hline
$A_{k+1}$ & $\{1,2\}$ & $\{1,2,3\}$ & $\{1,3\}$ & $\{1\}$ & $\{2,3\}$ & $\{2\}$ & $\{3\}$\\
\hline
$B_{k}$ &$\{0\}$& $\{1,3\}$& $\{1,2\}$& $\{1,2,3\}$ & $\{1,2,3\}$ & $\{2,3\}$ &$\{2,3\}$ \\
\hline
     $\ell_{A_{k+1}}$ &$d(\root, a)$& $0$& $0$& $d(1, a)$ & $d(a,b)$
                                  & $d(2, b)$ &$d(3, b)$ \\
\hline
\end{tabular}
\end{table}

\begin{rem}
   \label{rem:Ak-Bk}
   The  family  $\bigl\{A_k,k\in  \{1,  2^n  -1\}\bigr\} $  is  exactly  equal  to
   $\cp_n^+$. Furthermore  the sequence  $\ell\in \Im (\bL_n)\subset \R_+^{2^n-1}$ provides  implicitly two
   unique ordered sequences $\ca(\ell)=\bigl(A_k,k\in \{1, 2^n -1\}\bigr)$ (of all
   elements of  $\cp_n^+$) and $\cb(\ell)=(B_k,k\in \{0,  2^n -2\})$ (of
   elements   of  $   \cp^n=  \cp_n^+   \cup\{\{0\}\}$),  and   an
   ``increasing'' way to built  $\bL_n^{-1}(\ell)$ recursively by adding
   at step $k\in \{0, 2^n -2\}$ a branch of length $\ell_{A_{k+1}}$ (and
   graft it on $v_{B_k}$ chosen among  $\bv_k$).  It is obvious from the
   construction  that  if  $\ell$  and  $\ell'$  are  two  sequences  in
   $\Im (\bL_n)$ with the same zeros (that is, $\ell_A=0$ if and only if
   $\ell'_A=0$),    then    we     have    $\ca(\ell)=\ca(\ell')$    and
   $\cb(\ell)=\cb(\ell')$.  Thus, the  sets $\ca(\ell)$  and $\cb(\ell)$
   are implicitly coded by the zeros of $\ell$.
\end{rem}
\medskip

In   a   second   step,   given   $\ca(\ell)$   and   $\cb(\ell)$   from
Remark~\ref{rem:Ak-Bk} and  a sequence $T^*=(T^*_A, A\in \cp_+^n)$ in $\TLsb$,   we   build   by   a   forward   induction   an
``increasing''    sequence    of    marked   locally    compact    trees
$\bigl((T_k, \bv_k),  k\in \{0,  \ldots, 2^n-1\}\bigr)$  such that  $(T_k, \bv_k)$
belongs to  $\TLk$, has root  $\root$, and  the components of  the vector
$\bv_k$ can  be ranked as the root $\root=v_{\{0\}}$ and $(v_{A_i},
1\leq i\leq k)$.  Recall  also the
truncation     function    $\tilde     r^{[2],     +}_t$    given     in
\eqref{eq:def-tilde-rt2}.  We set  $(\Tz, \bv_0)=\bigl(\{\root\}, \root\bigr)$ and
for $k\in \{0, 2^n -2\}$:
 \begin{equation}
  \label{eq:Tk-k+1}
  (T_{k+1}, \bv_{k+1})=(T_k, \bv_k)\circledast_{B_k} \tilde r^{[2],
    +}_{\ell_{A_{k+1}}} (T^*_{A_{k+1}}),
\end{equation}
where the
distinguished vertex of
$\tilde r^{[2], +}_{\ell_{A_{k+1}}} (T^*_{A_{k+1}})$ is  identified with
$v_{A_{k+1}}$ (and its root with $v_{B_k}$). Then, we set:
\begin{equation}
  \label{eq:def-graftn}
  \graft_n (\ell, T^*)=(T_{2^n -1}, \bv)
\quad\text{with}
\quad \bv=(v_{\{k\}}, 0\leq  k\leq  n).
\end{equation}

\begin{figure}[ht]
\includegraphics[scale=0.4]{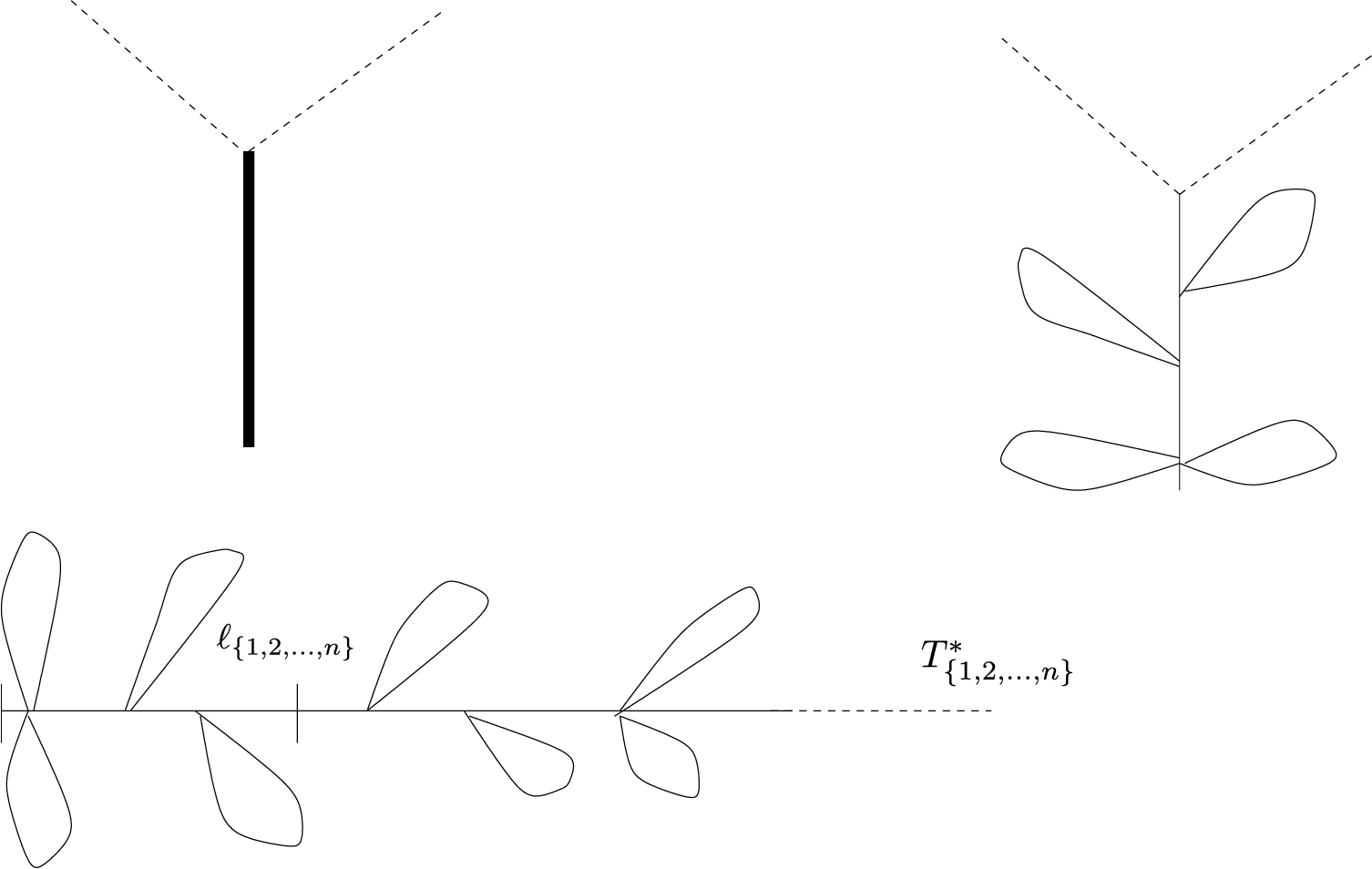}
\caption{Example of a replacement of the branch $\rb w_{\{1,\ldots,n\}},v_{\{1,\ldots,n\}}\rb$. \protect\\\hspace{\textwidth}
Upper left: The tree $S$ with the branch $\rb
w_{\{1,\ldots,n\}},v_{\{1,\ldots,n\}}\rb$ in bold.
\protect\\\hspace{\textwidth}
Upper right: The  branch $\rb w_{\{1,\ldots,n\}},v_{\{1,\ldots,n\}}\rb$
replaced by the first component of the marked tree
$r_{\ell_{\{1,\ldots,n\}}}^{[2],+}(T^*_{\{1,\ldots,n\}})$.
\protect\\\hspace{\textwidth}
Lower: The tree $T^*_{\{1,\ldots,n\}}$ with its marked infinite branch.
}
\label{fig:graft-dis}
\end{figure}

It is easy
to       check       that       the      equivalence       class       of
$(T_{2^n -1},\bv)$  in $\TLn$
does  not  depend  on  the  choice   of
$T^*=(T_A ^*   , A\in \cp^+_n)$ in their own equivalence class. Thus, the
map $\graft_n$ defined by:
\[
   \left(\ell, T^*\right)  \mapsto \graft_n  (\ell,  T^*)
\]
is well  defined from $ \Im(\bL_n) \times \left(\TLsb\right)^{\cp^+_n}$  to $\TLn$.
   The  main result  of  this  section is  the
measurability of the map $\graft_n$.

\begin{lem}[Measurability of the grafting  map]
  \label{lem:graftn-meas}
  Let     $n\in     \N^*$.      The     map     $\graft_n$     from
  $   \Im(\bL_n)\times   \left(\TLsb\right)^{\cp^+_n}$   to   $\TLn$   is
  measurable.
\end{lem}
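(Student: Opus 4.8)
The plan is to reduce the measurability of $\graft_n$ to the measurability and continuity of the building-block maps already at our disposal, after cutting the domain into finitely many Borel pieces on which the combinatorial recipe used in the construction is constant.

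\textbf{Step 1: stratify by the zero pattern of $\ell$.} For $Z\subset \cp^+_n$ I would set $E_Z=\{\ell\in\Im(\bL_n)\,\colon\, \ell_A=0 \text{ for } A\in Z,\ \ell_A>0 \text{ for } A\notin Z\}$. Since $\Im(\bL_n)$ is closed (Lemma~\ref{lem:bL-cont}) and each of the conditions $\{\ell_A=0\}$, $\{\ell_A>0\}$ is Borel, each $E_Z$ is a Borel subset of $\Im(\bL_n)$, and $(E_Z, Z\subset\cp^+_n)$ is a finite Borel partition of $\Im(\bL_n)$. By Remark~\ref{rem:Ak-Bk}, the ordered sequences $\ca(\ell)=(A_1,\dots,A_{2^n-1})$ and $\cb(\ell)=(B_0,\dots,B_{2^n-2})$ depend on $\ell$ only through its set of zeros, hence are constant on $E_Z$; I will write $\ca_Z,\cb_Z$ for their common value. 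Because a map that is measurable on each member of a finite Borel cover of its domain is measurable, it then suffices to show $\graft_n$ is measurable on $E_Z\times(\TLsb)^{\cp^+_n}$ for each fixed $Z$.

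\textbf{Step 2: a fixed composition on each stratum.} Fix $Z$, so that the indices $A_1,\dots,A_{2^n-1}$ and $B_0,\dots,B_{2^n-2}$ are now constants. I would run the forward induction of the construction: $(T_0,\bv_0)=(\{\root\},\root)$ is constant, and $(T_{k+1},\bv_{k+1})=(T_k,\bv_k)\circledast_{B_k}\tilde r^{[2],+}_{\ell_{A_{k+1}}}(T^*_{A_{k+1}})$ for $k\in\{0,\dots,2^n-2\}$. On $E_Z\times(\TLsb)^{\cp^+_n}$ the map $(\ell,T^*)\mapsto(\ell_{A_{k+1}},T^*_{A_{k+1}})$ is a continuous coordinate projection into $\R_+\times\TLsb$; composing it with the measurable map of Lemma~\ref{lem:rt2*} shows $(\ell,T^*)\mapsto\tilde r^{[2],+}_{\ell_{A_{k+1}}}(T^*_{A_{k+1}})$ is measurable into $\TLu$. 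Since $B_k$ is a fixed index, $\circledast_{B_k}$ is the continuous grafting operation of Lemma~\ref{lem:graft-meas}, whose concatenation convention encodes exactly the identification required in \eqref{eq:Tk-k+1} (root of $\tilde r^{[2],+}_{\ell_{A_{k+1}}}(T^*_{A_{k+1}})$ glued at $v_{B_k}$, distinguished vertex becoming $v_{A_{k+1}}$). Iterating over $k$ and using stability of measurability under composition with continuous maps, $(\ell,T^*)\mapsto(T_{2^n-1},\bv_{2^n-1})$ is measurable on the stratum. Finally $\graft_n(\ell,T^*)=(T_{2^n-1},\bv)$ where $\bv=(v_{\{k\}},0\le k\le n)$ is read off from $\bv_{2^n-1}=(v_{\{0\}},v_{A_1},\dots,v_{A_{2^n-1}})$ (recall $\{A_1,\dots,A_{2^n-1}\}=\cp^+_n$, so the singletons occur among the $A_i$) by erasing the non-singleton pointed vertices and reordering the survivors; this is continuous by Lemmas~\ref{lem:erase-cont} and~\ref{lem:rerooting} (no rerooting, the root is preserved). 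Hence $\graft_n$ is measurable on $E_Z\times(\TLsb)^{\cp^+_n}$.

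\textbf{Step 3: conclusion, and the main obstacle.} Patching the finitely many strata together yields the measurability of $\graft_n$ on all of $\Im(\bL_n)\times(\TLsb)^{\cp^+_n}$. The only non-continuous ingredient used is the truncation $\tilde r^{[2],+}_t$ from Lemma~\ref{lem:rt2*}; everything else is a composition of continuous maps established earlier. I expect the main obstacle to be the combinatorial bookkeeping of Step~1: verifying carefully that freezing the zero pattern of $\ell$ really does freeze the entire recursive scheme $(\ca_Z,\cb_Z)$ for building $\bL_n^{-1}(\ell)$ — which is the content of Remark~\ref{rem:Ak-Bk} — so that on each stratum $\graft_n$ genuinely becomes the finite composition of previously-treated maps described in Step~2, rather than an $\ell$-dependent procedure.
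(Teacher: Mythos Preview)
Your proposal is correct and follows essentially the same approach as the paper: stratify $\Im(\bL_n)$ by the zero pattern of $\ell$ into finitely many Borel pieces (the paper calls them $I_J$, you call them $E_Z$), invoke Remark~\ref{rem:Ak-Bk} to freeze the combinatorial data $\ca,\cb$ on each piece, and then compose Lemma~\ref{lem:graft-meas} (continuity of $\circledast_{B_k}$) with Lemma~\ref{lem:rt2*} (measurability of $\tilde r^{[2],+}_t$) along the recursion~\eqref{eq:Tk-k+1}. Your Step~2 is slightly more explicit than the paper in spelling out the final extraction of the singleton pointed vertices via Lemmas~\ref{lem:erase-cont} and~\ref{lem:rerooting}, but the argument is the same.
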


\begin{proof}
  For           $J\subset          \cp_+^n$,           we          write
  $I_J=\bigl\{\ell \in \Im (\bL_n)\, \colon\,  \ell_A=0 \text{ if and only if
  } A\in  J\bigr\}$. Thus, the  closed set $\Im (\bL_n)$  of $\R_+^{\cp_+^n}$
  can be  written as  the union  of $I_J$  over all  the subsets  $J$ of
  $\cp_+^n$. Furthermore,  the sets  $(I_J, J\subset\cp_+^n) $  are Borel
  sets (as $\Im (\bL_n)$ is a Borel set),
  and they are pairwise disjoint. Thanks to Remark~\ref{rem:Ak-Bk},
  the  maps  $\ell\mapsto  \ca(\ell)$ and  $\ell\mapsto  \cb(\ell)$  are
  constant over $I_J$. We deduce from Equation~\eqref{eq:def-graftn} and
  recursion~\eqref{eq:Tk-k+1},    Lemma~\ref{lem:graft-meas}   on    the
  continuity of  the grafting procedure and  Lemma~\ref{lem:rt2*} on the
  measurability of $(t,  T) \mapsto \tilde r^{[2],  +}_{t} (T)$ that
  the function $\graft_n$  from $ I_J\times \left(\TLsb\right)^{\cp^+_n}$
  to $\TLn$ is measurable (as long  as $I_J$ is not empty).  Since there
  is a  finite number  of such  sets $I_J$, we  deduce that  the function
  $\graft_n$  from  $ \Im(\bL_n)\times  \left(\TLsb\right)^{\cp^+_n}$  to
  $\TLn$ is measurable.
\end{proof}

\begin{rem}
  \label{rem:graft-Tdis}
  Since the map $\bL_n$ is continuous one-to-one from $\TDn$ to $\Im
  (\bL_n)$, we deduce that the map:
  \[
   \left(T, T^*\right)  \mapsto \graft_n
   \bigl(\bL_n(T),  T^*\bigr)
 \]
 from $\TDn\times  \left(\TLsb\right)^{\cp^+_n}$  to
  $\TLn$ is measurable. Without ambiguity, we shall simply write $\graft_n
   (T,  T^*)$ for $\graft_n
   \bigl(\bL_n(T),  T^*\bigr)$.
\end{rem}

\begin{rem}\label{rem:graft-split=id}
  Intuitively, the maps $\graft_n$ and  $\Split_n$ should be the inverse
  one of  the other. More precisely,  we have the following  result. For
  every     $(T,(\root,v))\in    \TLu$,     we    define     the    tree
  $\mathrm{Sp}(T)=(T',            S')\in            \TLsb$            by
  $T'=\Pi^\circ_1\left(T\circledast_1 [0,\infty)\right)$ with the marked
  spine
  $S=\Pi^\circ_1\left(\lb
    \root,v\rb\circledast_1[0,\infty)\right)$.  Then then  we have,  for
  every $(T,\bv)\in\TLnb$ (that  is, the root of $T$ is  not a branching
  vertex,   see  Definition~\eqref{eq:del-TL-br}),   that  the   following
  equality hold in $\TLn$:
\begin{equation}
   \label{eq:graft-split=id}
\graft_n\Bigl(\Span_n(T, \bv),
\mathrm{Sp}\bigl(\Split_n(T,\bv)\bigr)\Bigr)=(T,\bv),
\end{equation}
where $\mathrm{Sp}(T_A, A\in \cp_n)=(\mathrm{Sp}(T_A), A\in \cp_n^+)$.
\end{rem}

\subsection{A measure associated with trees in $\TLsb$ or $\TLu$}
\label{sec:def-M}

Recall  $\Tz=(\{\root\},  \root)\in \TL$  is  the  tree reduced  to  its
root. We define
\begin{align}\label{eq:T*loc-K}
  \TLz=\TL\setminus\{\Tz\}
\end{align}
endowed with the distance:
\[
 \dlghz(T, T')=\dlgh(T, T')+ \val{H(T)^{-1} - H(T')^{-1}}.
\]
Clearly $(\TLz,  \dlghz)$ is  Polish with the topology induced by the
topology on $\TL$ (as $H$ is continuous on $\TL$), and  for all  $\varepsilon>0$, the
sets $B_\TLz(\varepsilon)=\{T\in \TLz\, \colon\, H(T)\geq \varepsilon\}$
are closed  and bounded. Furthermore,  every bounded  set is a  subset of
$B_\TLz(\varepsilon)$     for    $\varepsilon>0$     small    enough.      Set
$E=\R_+\times      \TLz$       endowed      with       the      distance
$d_E((u,T), (u',  T'))=|u-u'|+ \dlghz(T, T')$,  so that $(E, d_E)$  is a
Polish   space.    Every  bounded   set   of   $E$   is  a   subset   of
$B_E(\varepsilon)=[0,  \varepsilon^{-1}]\times B_\TLz(\varepsilon)$  for
$\varepsilon>0$  small enough.   We  define $\M(E)$,  the  set of  point
measures on  $E$ which  are bounded  on bounded sets,  that is  finite on
$B_E(\varepsilon)$  for   all  $\varepsilon>0$.    We  say  that a  sequence
$(\cm_n, n\in \N)$ of elements of $\M(E)$ converges to a limit $\cm$, if
$\lim_{n\rightarrow  \infty   }  \cm_n(f)=\cm(f)$  for   all  continuous
functions    on    $E$    with    bounded    support.     According    to
\cite[Proposition~9.1.IV]{dv2} the space $\M(E)$ is Polish and the Borel
$\sigma$-field is the smallest  $\sigma$-field such that the application
$\cm\mapsto  \cm(A)$  is  measurable  for  every Borel  set  $A$  of  $E$.
\medskip

We build a tree from  a point
measure $\cm=\sum_{i\in I}\delta _{(h_i, T_i)}\in \M(E)$ by grafting $T_i$ at
height $h_i$ on an infinite spine.
Recall the infinite spine $\Tu=(\R_+, 0)$ endowed with the Euclidean distance is an
element of $\TLsb\subset \TL$.
For $T\in \TL$, let  $(\tilde T, d,  \root)$ denote a rooted locally compact tree in the equivalent
class $T$.
With obvious notation, we define the tree $T'$ as follow:
\begin{gather*}
T'=\Tut\sqcup_{i\in I} (\tilde T_i\setminus\{\root_i\}),\\
\forall x,x'\in T',\ d(x,x')=
\begin{cases}
d_i(x,x') & \text{if } x,x'\in \tilde T_i, i\in I\\
|x-x'| & \text{if }x,x'\in  \Tut,\\
d_i(x,\root_i)+ |h_i - x| & \text{if } x\in \tilde
T_i,\ x'\in \Tut, i\in I,\\
d_i(x,\root_i)+d_j(x',\root_j)+ |h_i - h_j| & \text{if } x\in \tilde
T_i,\ x'\in \tilde T_j \text{ with } i\neq j, \, i,j\in I,\\
\end{cases}
\end{gather*}
where $\sqcup$ denotes  the disjoint union. By construction
$T'$ is  a tree rooted at $\root=\root_1$, the root of $\Tut$. Because $\cm$ is finite on bounded sets
of $E$, it is not difficult  to check that $T'$ is locally compact.
  It is  easy  to  see  that  the
equivalence class  of $\tree(\cm)=(T' , \Tut)$ in  $\TLd$ does not  depend of
the choice of the representatives in  the equivalence classes of $\Tu$ and
$T_i$ for $i\in I$. Hence, identifying $\tree(\cm)$ with its equivalence
class, we get that the             map
$\tree$     is     well    defined     from
$\M(E)$  into $\TLd$.

\begin{lem}[Regularity of the map $\tree$]
  \label{lem:tree-meas}
The map $\tree$ from $\M(E)$ to $\TLd$  (or $\TLs$) is continuous.
\end{lem}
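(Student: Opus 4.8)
The plan is to prove that $\tree$ is \emph{sequentially} continuous, which suffices since $\M(E)$ is metrizable. Let $(\cm_n, n\in \N)$ converge to $\cm=\sum_{i\in I}\delta_{(h_i, T_i)}$ in $\M(E)$, and write $\tree(\cm)=(T', S')$ with $S'$ the infinite spine. By~\eqref{eq:def-dlghd}, $\dlghd(\tree(\cm_n),\tree(\cm))=\int_0^\infty\expp{-t}\,\rd t\,(1\wedge\dghd(r_t^{[2]}(\tree(\cm_n)),r_t^{[2]}(\tree(\cm))))$, and since the integrand is bounded by $1$, dominated convergence reduces the problem to showing that for Lebesgue-almost every $t>0$ one has $\dghd(r_t^{[2]}(\tree(\cm_n)),r_t^{[2]}(\tree(\cm)))\to 0$. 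Fix such a $t$, and also require (a co-countable condition) that $\cm$ has no atom whose first coordinate equals $t$. From the construction of $\tree$, the compact marked tree $r_t^{[2]}(\tree(\cm))$ is the segment $[0,t]$ (the truncated, fully marked spine) carrying, for each $i\in I$ with $h_i\le t$, the compact bush $r_{t-h_i}(T_i)$ grafted at the point $h_i$; the same description holds for each $\cm_n$.

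I would next exploit that every bounded subset of $E$ lies in some $[0,\varepsilon^{-1}]\times B_{\TLz}(\varepsilon)$. Choose $\delta\in(0,1/t]$ outside the countable set $\{H(T_i)\colon h_i\le t\}$; then $R_\delta:=[0,t]\times B_{\TLz}(\delta)$ is bounded in $E$, so $\cm$ has only finitely many atoms $(h_i,T_i)$, $i\in A_\delta$, inside $R_\delta$, and, since $\partial R_\delta\subset(\{t\}\times\TLz)\cup([0,t]\times\{H=\delta\})$, the choices of $t$ and $\delta$ give $\cm(\partial R_\delta)=0$. Isolating the finitely many atoms of $\cm$ in $R_\delta$ by disjoint small balls with null boundary and applying $\cm_n\to\cm$, one obtains that for $n$ large $\cm_n$ has exactly $\#A_\delta$ atoms in $R_\delta$, which after relabelling are $(h_{n,i},T_{n,i})$, $i\in A_\delta$, with $h_{n,i}\to h_i$ and $T_{n,i}\to T_i$ in $\TLz$, hence in $\TL$ (the topology of $\TLz$ being the one induced from $\TL$); and every other atom $(h_{n,j},T_{n,j})$ of $\cm_n$ with $h_{n,j}\le t$ satisfies $H(T_{n,j})<\delta$.

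The core step is then to build, for $n$ large, a correspondence $\cR$ between $r_t^{[2]}(\tree(\cm_n))$ and $r_t^{[2]}(\tree(\cm))$ of small distortion: identify the two truncated spines $[0,t]$ via the identity map; for each $i\in A_\delta$, pair the bush $r_{t-h_{n,i}}(T_{n,i})$ of one with the bush $r_{t-h_i}(T_i)$ of the other through an almost optimal correspondence for $\dgh$, whose distortion is at most $2\dgh(r_{t-h_{n,i}}(T_{n,i}),r_{t-h_i}(T_i))+2^{-n}$ and therefore tends to $0$ by continuity of $(s,T)\mapsto r_s(T)$ from $\R_+\times\TL$ to $\TK$ (Lemmas~\ref{lem:cont-rt} and~\ref{lem:cont-lgh-gh}); and collapse every bush coming from an atom not in $A_\delta$ — necessarily of height $<\delta$ — to its grafting point on the spine. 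Since each truncation $r_s(T)$ is a subtree (so distances within a bush are unchanged), a routine case analysis on the type, spine / macroscopic bush / small bush, of the two points of a generic pair of $\cR$ gives $\dist(\cR)\le 4\delta+\eta_n$ with $\eta_n\to 0$: the $4\delta$ comes from collapsing bushes of diameter $<2\delta$ on the two sides, and $\eta_n$ from the finitely many macroscopic bushes and the height shifts $|h_{n,i}-h_i|$. Hence $\dghd(r_t^{[2]}(\tree(\cm_n)),r_t^{[2]}(\tree(\cm)))\le\tfrac12\dist(\cR)\le 2\delta+o(1)$, so $\limsup_n$ of the left side is $\le 2\delta$; as $\delta$ may be taken arbitrarily small, this limit is $0$, and dominated convergence concludes. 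Since $\tree$ is $\TLs$-valued and $\TLs$ carries the subspace topology from $\TLd$, $\tree$ is also continuous into $\TLs$. I expect the main difficulty to be precisely this last step — controlling the correspondence uniformly so that the (possibly infinitely many) small bushes contribute only $O(\delta)$, while the finitely many macroscopic bushes and the grafting levels are dealt with via continuity of $r_t$ and the convergence of atoms coming from the vague topology of $\M(E)$.
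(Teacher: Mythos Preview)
Your proof is correct and follows essentially the same strategy as the paper's (very sketchy) argument: isolate the finitely many ``macroscopic'' atoms lying in a bounded set of $E$ with null boundary for $\cm$, use convergence in $\M(E)$ to match these atoms with converging atoms of $\cm_n$, build a correspondence by pairing the matched bushes via near-optimal Gromov--Hausdorff correspondences and collapsing the remaining small bushes (height $<\delta$) onto the spine, and conclude by letting $\delta\to 0$. Your reduction via dominated convergence to almost every fixed level $t$ is a clean way to organize this, and your explicit distortion analysis (the $4\delta+\eta_n$ bound, with $\eta_n$ absorbing the finitely many bush correspondences and the height shifts $|h_{n,i}-h_i|$) fills in details the paper only gestures at.
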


\begin{proof}
  We   only  give   the   principal  arguments   of   the  proof.    Let
  $(\cm_n, n\in \N)$ a sequence  of point measures, elements of $\M(E)$,
  which  converges to  $\cm$.  Let $\varepsilon>0$  be  fixed such  that
  $\cm(\partial  B_E(\varepsilon))=0$. For  $n$  large  enough, we  have
  $\cm_n(B_E(\varepsilon))=\cm(B_E(\varepsilon))$   and  the   atoms  of
  $\cm_n$  in  $B_E(\varepsilon)$ converge  to  the  atoms of  $\cm$  in
  $B_E(\varepsilon)$. Using  correspondence between  the representations
  of   the  atoms,   and  similar   arguments   as  in   the  proof   of
  Lemma~\ref{lem:graft-meas},  we  deduce   that  the  distance  between
  $\tree(\cm_n)$   and   $\tree(\cm)$   (in    $\TLd$)   is   small   if
  $\varepsilon>0$ is small  (to prove this statement in  detail, one can
  use       the      distance       on       $\M(E)$      given       in
  \cite[Equation~(A2.6.1)]{dv1}).         This        means         that
  $\lim_{n\rightarrow \infty }  \dlghd(\tree(\cm_n), \tree(\cm))=0$, and
  thus the map $\tree$ is continuous on $\TLd$. \medskip
\end{proof}

We shall now prove that the restriction of the map $\tree$ to a subset of
$\M(E)$ is injective and bi-measurable. For this reason, we consider the  subset of $\TL$ of (equivalence classes of) trees
not reduced to their root and such that the root is not a branching
vertex (recall Definitions~\eqref{eq:T*loc-K} and \eqref{eq:del-TL-br} with $n=0$):
\begin{equation}
   \label{eq:def-TLzb}
  \TLzb=\TLz\cap \TLb.
\end{equation}
As a direct consequence of Lemma~\ref{lem:root-no-branching}, $\TLzb$ is
a Borel subset of $\TL$ and thus of $\TLz$.
In particular,  the following subset  of $\M(E)$ is a
 Borel set (recall $E=\R_+\times      \TLz$):
\begin{equation}
   \label{eq:def-ME}
\ImM =\Bigl\{\cm\in \M(E)\, \colon\, \cm\bigl(\R_+\times (\TLzb)^c\bigr)=0\Bigr\}.
\end{equation}

We now introduce a map $\cmt$ from $\TLs$ to $\M(E)$ as follow.
Let  $T^*=(T,\Tu)$ be  a rooted  locally compact  tree with  an infinite
marked spine. In
particular, we have $\Tu\subset T$ and  $\Tu$ is equivalent to $(\R_+, d, 0)$.  Let $(T^\circ_i,i\in I)$ be the family
of the  connected components of  $T\setminus \Tu$.  For every  $i\in I$,
let us  denote by  $x_i$ the  MRCA of $T^\circ_i$,  that is,  the unique
point    of   $\Tu$    such   that    for   every    $x\in   T^\circ_i$,
$\lb\root,x\rb\cap \Tu=\lb\root,x_i\rb$.   We    then   set
$T_i=T_i^\circ\cup\{x_i\}$ viewed  as a  locally compact  tree rooted
at $x_i$. Then, we define the point measure $\cmt(T^*)$ on $\R_+\times
\TLz\subset\R_+\times
\TL $ by:
\begin{equation}
   \label{eq:def-cmt}
  \cmt(T^*)=\sum_{i\in I}\delta_{(H(x_i),T_i)}.
  \end{equation}  
As $\cmt(T^*)$ does not depends on the representatives chosen in the
equivalence class of $T^*$ in $\TLs$, we deduce that  $\cmt:  T^*\mapsto \cmt(T^*)$ is a map from $\TLs$ to $\M(E)$.
We now give the main result of this section.

\begin{prop}[Regularity of the maps $\tree$ and $\cmt$]
  \label{prop:meas(T)-mesurable}
  The  map  $\cmt$  is    bi-measurable  from $\TLs$
  to $\ImM$ with $\ImM= \mathrm{Im} (\cmt)$. The map $\tree$  is
  bi-measurable  from $\tilde \M(E)$
to $\TLs$. Furthermore, the map
  $\tree\circ \cmt$     is      the     identity     map     on
  $\TLs$  and $\cmt\circ\tree$ is the
  identity map on $\ImM$.
\end{prop}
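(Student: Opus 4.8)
The plan is to reduce everything to the two reconstruction identities $\tree\circ\cmt=\mathrm{id}$ on $\TLs$ and $\cmt\circ\tree=\mathrm{id}$ on $\ImM$; once these are in place, the bi-measurability of both maps follows formally from Lusin's theorem \cite{purves} together with the continuity of $\tree$ (Lemma~\ref{lem:tree-meas}), with no further construction needed.

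First I would verify $\tree\circ\cmt=\mathrm{id}$ on $\TLs$. Given $T^{*}=(T,T_{1})$, recall that $\cmt$ records the connected components $(T^{\circ}_{i},i\in I)$ of $T\setminus T_{1}$, their common projection $x_{i}=p_{T_{1}}$ on $T_{1}$, the rooted trees $T_{i}=T^{\circ}_{i}\cup\{x_{i}\}$ and the heights $H(x_{i})$. Before grafting back I would check that $\cmt(T^{*})$ genuinely lies in $\M(E)$: each $T_{i}$ is not reduced to its root, so $T_{i}\in\TLz$, and since $T_{i}\setminus\{x_{i}\}=T^{\circ}_{i}$ is a single connected component it is connected, so $x_{i}\notin\Br(T_{i})$ and hence $T_{i}\in\TLzb$, giving $\cmt(T^{*})\in\ImM$; finiteness on a bounded set $B_{E}(\varepsilon)$ holds because each contributing component has (by connectedness) a point at height $H(x_{i})+\varepsilon\le\varepsilon^{-1}+\varepsilon$, and these points lie in distinct components, hence pairwise at distance at least $2\varepsilon$ inside the compact tree $r_{\varepsilon^{-1}+\varepsilon}(T)$, so there are finitely many. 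Then I would compare distances term by term in $\tree(\cmt(T^{*}))$ and in $(T,T_{1})$: distances within a component, between a component and the spine, and between two components all reduce to the defining formulas of the grafting construction, using only that $x_{i}$ sits on $T_{1}$ and is an ancestor of $T^{\circ}_{i}$. This yields an isometry of marked trees, hence the identity; in particular $\tree(\ImM)\supseteq\tree(\cmt(\TLs))=\TLs$, and since $\tree$ is $\TLs$-valued, $\tree(\ImM)=\TLs$.

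Next I would verify $\cmt\circ\tree=\mathrm{id}$ on $\ImM$. For $\cm=\sum_{i\in I}\delta_{(h_{i},T_{i})}\in\ImM$ every atom satisfies $T_{i}\in\TLzb$, so in $(T',T_{1})=\tree(\cm)$ each grafted piece $T_{i}\setminus\{\root_{i}\}$ is connected, and for $i\ne j$ any geodesic joining a point of the $i$-th piece to a point of the $j$-th piece runs through the spine point at height $h_{i}$ (which belongs to $T_{1}$), even when $h_{i}=h_{j}$. Hence the connected components of $T'\setminus T_{1}$ are exactly these pieces, their projections on $T_{1}$ are the spine points at heights $h_{i}$, and the associated rooted trees are again the $T_{i}$; thus $\cmt(\tree(\cm))=\cm$. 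This is the step where the restriction to $\TLzb$ is indispensable: an atom with a branching root would fragment into several components and $\tree$ would fail to be injective. From the two identities I immediately obtain $\ImM=\mathrm{Im}(\cmt)$, injectivity of $\tree$ on $\ImM$, and injectivity of $\cmt$ on $\TLs$.

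Finally, for the measurability statements I would argue as the paper does elsewhere: $\tree$ is continuous from $\M(E)$ to $\TLs\subset\TLd$ by Lemma~\ref{lem:tree-meas}, $\ImM$ is a Borel subset of the Polish space $\M(E)$, and $\TLs$ is a Borel subset of the Polish space $\TLd$ by Lemma~\ref{lem:mes-T*}; therefore the restriction $\tree|_{\ImM}$ is an injective Borel map between standard Borel spaces, and by Lusin's theorem \cite{purves} it maps Borel sets to Borel sets and has a Borel-measurable inverse, which by the identities above is exactly $\cmt$. Since $\tree|_{\ImM}$ is moreover continuous, $\cmt$ is bi-measurable from $\TLs$ onto $\ImM$, and symmetrically $\tree$ is bi-measurable from $\ImM$ onto $\TLs$, together with $\tree\circ\cmt=\mathrm{id}_{\TLs}$ and $\cmt\circ\tree=\mathrm{id}_{\ImM}$. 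I expect the real obstacle to be the geometric bookkeeping of the first two steps: showing that $i\mapsto T^{\circ}_{i}$ is a bijection between the atoms of the point measure and the connected components of $T\setminus T_{1}$ — the delicate cases being several atoms grafted at the same height, and the precise role played by the non-branching-root condition defining $\TLzb$ — and checking that $\cmt$ preserves local compactness so that it truly takes values in $\M(E)$.
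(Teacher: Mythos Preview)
Your proposal is correct and follows essentially the same route as the paper: establish the two identities $\tree\circ\cmt=\mathrm{id}_{\TLs}$ and $\cmt\circ\tree=\mathrm{id}_{\ImM}$, then invoke Lusin's theorem. The paper dispatches both identities with the phrase ``by construction'' and then applies Lusin; you fill in the geometric bookkeeping (that $\cmt(T^*)$ is locally finite, that the components of $T'\setminus T_1$ are exactly the grafted pieces, and the role of the non-branching-root condition), which is fine and indeed clarifies what ``by construction'' entails. Your application of Lusin is in fact slightly cleaner than the paper's: you apply it first to the continuous injective map $\tree|_{\ImM}$ to obtain measurability of its inverse $\cmt$, whereas the paper's wording applies Lusin directly to $\cmt$ without first isolating its measurability.
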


\begin{proof}
By construction, the roots of all the
trees $T_i$ in the point measure $\cmt(T^*)$ are  not branching vertices,
so that $\cmt(T^*)$  belongs to $\tilde \M(E)\subset \M(E)$. We
also get by construction that $\tree(\cmt(T^*))=T^*$. This implies that $\cmt$ is
injective and thus bi-measurable thanks to Lusin's theorem.

We  also have  by construction  that $\cmt\circ\tree(\cm)=\cm$  for
$\cm\in       \tilde       \M(E)$.         This       implies       that
$ \mathrm{Im}  (\cmt)=\ImM$  and also  that $\tree$  restricted to
$\ImM$   is  injective   and  thus   bi-measurable  thanks   to  Lusin's
theorem.
\end{proof}

We  extend the  map $T^*\mapsto  \cmt(T^*)$  to $\TLu$  in the  following
way. For  $\bigl(T, \bv=(\root, v_1)\bigr)\in  \TLu$, we graft the  infinite spine
$\Tu$  on   $v_1$  and  consider   the  rooted  locally   compact  tree with an infinite marked spine $\mathrm{Sp}(T)\in\TLs$ defined in Remark \ref{rem:graft-split=id}. Then,  we define  $ \cmt(T,  \bv)$ as
$\cmt(\mathrm{Sp}(T))$.   From  the  continuity   of  the  grafting  procedure,  see
Lemma~\ref{lem:graft-meas}   and  the   continuity  of   $\Pi^\circ_1$,  see
Lemma~\ref{lem:erase-cont},   and   the   measurability   of   the   map
$\cmt$, we  deduce that  the map  $(T, \bv)\mapsto  \cmt(T, \bv)$,
which we  still denote by $\cmt$ is measurable. In  fact, we have
the stronger following  result.  Consider the set  of (equivalent classes
of) $n$-pointed rooted locally compact tree  such that the root is not a
branching vertex and the distinguished vertices are not equal to the root:
\begin{equation}
  \label{eq:def-T1-no-br}
  \TLnzb=\bigl\{(T, \bv)\in \TLnb\, \colon\, d(\root, v_i)>0 \text{ for all
    $i\in \{1, \ldots, n\}$} \bigr\},
\end{equation}
where  $\bv=(\root, v_1, \ldots, v_n)$.
According to Lemma~\ref{lem:root-no-branching} and Remark~\ref{rem:cont-T-d}, the set $ \TLnzb$ is a Borel subset of
$\TLn$.
Recall from~\eqref{eq:def-TLzb} that the  Borel set $\TLzb$ is the set
of (equivalence class  of) $1$-pointed rooted locally  compact trees such
that the  root is not  a branching vertex and  the distinguished vertex  is not
equal to the root.
\begin{cor}[Recovering $(T,\bv)$ from $\cmt(T, \bv)$]
  \label{cor:Span-Ma-1}
  The following map from $\TLu$ to
  $\R_+\times \M(E)$ defined  by:
\[
  (T, \bv)\mapsto \bigl( d(\root, v), \cmt(T, \bv)\bigr)
\]
is measurable and its restriction to $\TLuzb$ is injective and bi-measurable.
\end{cor}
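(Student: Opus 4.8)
The measurability of the map $(T,\bv)\mapsto\bigl(d(\root,v),\cmt(T,\bv)\bigr)$ is immediate: the second coordinate is measurable since $\cmt$ from $\TLu$ to $\M(E)$ is measurable (this was just established above, from the continuity of $\circledast_1$ and of $\Pi^\circ_1$ together with the measurability of $\cmt$ on $\TLs$), and the first coordinate is continuous by Remark~\ref{rem:cont-T-d}; a map into the product of two Polish spaces is measurable as soon as each of its coordinates is. The substance of the corollary is therefore the injectivity, and then the bi-measurability, of the restriction to $\TLuzb$, and the plan is to exhibit an explicit measurable left inverse there.

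Fix $(T,\bv)\in\TLuzb$ and set $h=d(\root,v)>0$. Since $\root\notin\Br(T)$, Remark~\ref{rem:graft-split=id} gives $\mathrm{Sp}(T,\bv)=(T',S')\in\TLsb\subset\TLs$, and, by the very definition of the extension of $\cmt$ to $\TLu$, we have $\cmt(T,\bv)=\cmt\bigl(\mathrm{Sp}(T,\bv)\bigr)\in\tilde\M(E)$. Proposition~\ref{prop:meas(T)-mesurable} (which asserts $\tree\circ\cmt=\mathrm{id}$ on $\TLs$) then yields $\tree\bigl(\cmt(T,\bv)\bigr)=\mathrm{Sp}(T,\bv)$. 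The geometric heart of the argument is the identity
\[
  \tilde r^{[2],+}_{h}\bigl(\mathrm{Sp}(T,\bv)\bigr)=(T,\bv)\qquad\text{in }\TLu.
\]
To see it, recall that $T'=T\circledast_1[0,\infty)$ is $T$ with a half-line grafted at $v$, and the marked ray $S'$ is $\lb\root,v\rb$ continued along that grafted half-line; hence for $x\in T'$ one has $H\bigl(p_{S'}(x)\bigr)\le h$ exactly when $x$ does not lie strictly above $v$ on the grafted half-line, so $r^{[2],+}_{h,1}(T',S')=T$ (in particular the whole of $T$ lying above $v$ is retained), while the unique point of $S'$ at height $h$ is $v$; this is precisely the output of $\tilde r^{[2],+}_h$. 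Consequently the map $(h,\cm)\mapsto\tilde r^{[2],+}_h\bigl(\tree(\cm)\bigr)$, which is measurable on $\R_+$ times the Borel set $\{\cm\in\tilde\M(E)\colon\tree(\cm)\in\TLsb\}$ by Lemma~\ref{lem:tree-meas}, Lemma~\ref{lem:rt2*} and the fact that $\TLsb$ is Borel (Lemma~\ref{lem:mes-T*}), is a left inverse of $(T,\bv)\mapsto\bigl(d(\root,v),\cmt(T,\bv)\bigr)$ on $\TLuzb$.

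It follows at once that the map is injective on $\TLuzb$. Since $\TLuzb$ is a Borel subset of the Polish space $\TLu$ and the map is measurable, Lusin's theorem from \cite{purves} then gives that it is bi-measurable onto its image. Let us stress that the two defining conditions of $\TLuzb$ are used exactly to remain inside the spaces on which the operators of Sections~\ref{sec:TnSn}--\ref{sec:def-M} are well behaved: $\root\notin\Br(T)$ ensures $\mathrm{Sp}(T,\bv)\in\TLsb$, which is the space on which $\tilde r^{[2],+}$ (Lemma~\ref{lem:rt2*}) is defined, while $d(\root,v)>0$ keeps us away from the degenerate tree $T_0$.

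The main obstacle is the verification of the identity $\tilde r^{[2],+}_{d(\root,v)}\bigl(\mathrm{Sp}(T,\bv)\bigr)=(T,\bv)$: one must follow carefully the projection $p_{S'}$ onto the marked ray and check that truncating $T'$ at $\{H\circ p_{S'}\le d(\root,v)\}$ deletes exactly the open half-line grafted above $v$ and nothing belonging to $T$ — in particular that the entire subtree of $T$ above $v$, which is spread (over possibly several connected components) among the atoms of $\cmt(T,\bv)$ sitting at height $d(\root,v)$, is fully recovered. Everything else is a routine assembly of continuity and measurability statements already proved in Section~\ref{sec:topo-tree}.
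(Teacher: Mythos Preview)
Your proof is correct and follows essentially the same strategy as the paper: build a measurable left inverse on the image by composing $\tree$ with a truncation, then invoke Lusin's theorem. The only difference is cosmetic: the paper writes the inverse as $g(a,\cm)=\graft_1\bigl([0,a],\tree(\cm)\bigr)$ and appeals to the general identity~\eqref{eq:graft-split=id} with $n=1$, whereas you write it as $(h,\cm)\mapsto\tilde r^{[2],+}_h\bigl(\tree(\cm)\bigr)$ and verify the identity $\tilde r^{[2],+}_{d(\root,v)}\bigl(\mathrm{Sp}(T,\bv)\bigr)=(T,\bv)$ directly. Since for $n=1$ the construction of $\graft_1$ reduces precisely to grafting $\tilde r^{[2],+}_a(T^*)$ onto a point (see~\eqref{eq:Tk-k+1} with $k=0$), the two inverses coincide; your route simply bypasses the $\graft_n$ formalism, which is arguably cleaner here but buys nothing new.
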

\begin{proof}
  Set
  $\M   ^*(E)=\bigl\{\cm\in    \M(E)\,   \colon\,   \cm\bigl(\{0\}\times
  \TLz\bigr)=0\bigr\}$.  For $\cm\in \M^*(E)$,  we get that $\tree(\cm)$
  belongs   to   $\TLsb$.   Write 
  $[0, a]\in \TLu$ for the tree $[0, a]$ with root  $0$ and
  distinguished vertex  $a\geq 0$. We   define  a  map   $g$   on
  $\R_+\times                         \M^*(E)$                        by
  $g(a,  \cm)=\graft_1\bigl([0, a],  \tree(\cm)\bigr)$. Thanks  to the
  continuity of the  grafting procedure, see Lemma~\ref{lem:graftn-meas}
  and of the function  $\tree$, see Lemma~\ref{lem:tree-meas}, we deduce
  that $g$ is continuous.

  Let $(T, \bv)\in \TLuzb$. As the root  of $T$ is not a branching vertex,
  we   get  that   $\cmt(T,\bv)$  belongs   to  $\M   ^*(E)$,  and   thus
  $g\bigl(d(\root, v), \cmt(T, \bv)\bigr)$, where $\bv=(\root, v)$, is well defined
  and     in    fact     equal     to    $(T,     \bv)$    thanks
  to~\eqref{eq:graft-split=id} with $n=1$.   This   implies   that   the   map
  $ (T, \bv)\mapsto  \bigl( d(\root, v), \cmt(T, \bv)\bigr)  $ defined on
  $\TLuzb$  is  injective,  and  thus bi-measurable  by  Lusin's  theorem.
\end{proof}

We extend this result to $n$-pointed  trees.
 Recall from~\eqref{eq:def-split} that, for $(T, \bv)\in \TLn$,  we have
 $\Split_n (T, \bv)=\left(
  \hat T_A(T, \bv), A\in \cp_n\right)$ and set $\cm_A[T,
\bv]=\cmt\bigl(\hat T_A(T, \bv)\bigr)$ for $A\in \cp_n^+$.

\newcommand{\grr}{{\cm_A[T, \bv]}}
\begin{cor}[Recovering $(T,\bv)$ from the $\grr$]
  \label{cor:Span-Ma-n}
 The following map from $\TLn$ to
  $\TDn\times \M(E)^{\cp_n^+}$ defined  by:
\[
  (T, \bv)\mapsto \Bigl( \Span_n(T, \bv), \bigl( \cm_A[T, \bv], A\in
    \cp_n^+\bigr)\Bigr)
\]
is measurable and its restriction to $\TLnzb$ is injective  and bi-measurable.
\end{cor}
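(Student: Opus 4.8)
The plan is to reduce Corollary~\ref{cor:Span-Ma-n} to Corollary~\ref{cor:Span-Ma-1} by working branch by branch along the spanned tree $\Span_n(T, \bv)$. First I would record that the map $(T,\bv)\mapsto \Span_n(T,\bv)$ is measurable (indeed $4$-Lipschitz continuous into $\TDn$ by Lemma~\ref{lem:cont-span} combined with Lemma~\ref{lem:bL-cont}), and that for each fixed $A\in\cp_n^+$ the map $(T,\bv)\mapsto \hat T_A(T,\bv)\in\TLu$ is measurable by Lemma~\ref{lem:cnt-cck} (the measurability of $\Split_n$). Composing $\hat T_A$ with the measurable map $\cmt$ from $\TLu$ to $\M(E)$ (the extension discussed after Proposition~\ref{prop:meas(T)-mesurable}), we get that each $(T,\bv)\mapsto \cm_A[T,\bv]$ is measurable, and hence so is the product map displayed in the statement, into $\TDn\times\M(E)^{\cp_n^+}$. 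This disposes of the measurability assertion.

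For the injectivity and bi-measurability of the restriction to $\TLnzb$, the key point is that on $\TLnzb$ the data $\bigl(\Span_n(T,\bv),(\cm_A[T,\bv],A\in\cp_n^+)\bigr)$ is exactly enough to reconstruct $(T,\bv)$ via the grafting procedure. Concretely, by Remark~\ref{rem:graft-split=id} (Equation~\eqref{eq:graft-split=id}) we have, for $(T,\bv)\in\TLnb$,
\[
\graft_n\Bigl(\Span_n(T,\bv),\mathrm{Sp}\bigl(\Split_n(T,\bv)\bigr)\Bigr)=(T,\bv),
\]
and on $\TLnzb$ each $\hat T_A(T,\bv)$ has $\ell_A>0$, so that the root of $\hat T_A(T,\bv)$ is not a branching vertex and $\mathrm{Sp}(\hat T_A(T,\bv))=\tree(\cm_A[T,\bv])$ by the identity $\tree\circ\cmt=\mathrm{id}$ on $\TLsb$ from Proposition~\ref{prop:meas(T)-mesurable}. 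Hence on $\TLnzb$ one recovers $(T,\bv)$ as the measurable composition
\[
(T,\bv)=\graft_n\Bigl(\Span_n(T,\bv),\bigl(\tree(\cm_A[T,\bv]),A\in\cp_n^+\bigr)\Bigr),
\]
using the measurability of $\graft_n$ (Lemma~\ref{lem:graftn-meas}, via Remark~\ref{rem:graft-Tdis}) and the measurability of $\tree$ (Lemma~\ref{lem:tree-meas}). This explicit left inverse shows the restricted map is injective, and since both $\TLn$ and $\TDn\times\M(E)^{\cp_n^+}$ are Polish, Lusin's theorem \cite{purves} upgrades injectivity of a measurable map to bi-measurability, which closes the argument.

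The main obstacle I anticipate is bookkeeping rather than depth: one must check carefully that when some $\ell_A=0$ for $A\in\cp_n^+\setminus\{A:\ v_A\text{ is a distinguished }v_i\}$ the reconstruction still works on $\TLnzb$. This is where the restriction to $\TLnzb$ (rather than merely $\TLnb$) is essential — it forces $\ell_{\{i\}}=d(\root,v_i)>0$ for each $i$, so the branches carrying the distinguished leaves are genuinely present, but one still has $\ell_A=0$ allowed for larger $A$ and must confirm that $\graft_n$, fed the degenerate inputs $\tree(\cm_A)$ on those branches (which are then the one-point tree), correctly collapses them; this is exactly the content of Equation~\eqref{eq:Sk-k+1} and the remark following it, that $\ell_{A_{k+1}}=0$ iff $\Span^\circ(S,\bv_k)=\Span^\circ(S,\bv_{k+1})$, together with the analogous statement for the forward induction \eqref{eq:Tk-k+1}. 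Once this consistency is verified, the chain of measurable maps and Lusin's theorem finish the proof exactly as in Corollary~\ref{cor:Span-Ma-1}.
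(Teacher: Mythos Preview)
Your approach matches the paper's: measurability by composition of $\Span_n$, $\Split_n$, and $\cmt$; injectivity on $\TLnzb$ via the explicit reconstruction $\graft_n\bigl(\Span_n(T,\bv),(\tree(\cm_A),A\in\cp_n^+)\bigr)=(T,\bv)$; and Lusin's theorem for bi-measurability. The paper routes the injectivity through an auxiliary map $g_1$ that also records the lengths $\ell_A$ and then observes these are redundant (being functions of $\Span_n(T,\bv)$), but your more direct route via Remark~\ref{rem:graft-split=id} is equally valid.

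Two corrections to your bookkeeping, however. It is \emph{not} true that $\ell_A>0$ for all $A\in\cp_n^+$ on $\TLnzb$: already in the paper's Figure~\ref{fig:123} and Table~\ref{tab:123} one has $\ell_{\{1,2\}}=\ell_{\{1,3\}}=0$. Moreover $\ell_{\{i\}}=d(w_{\{i\}},v_i)$, not $d(\root,v_i)$, and it too can vanish on $\TLnzb$ (whenever $v_i\in\Span^\circ(T,\bv_{\{i\}^c})$, for instance if $v_i=v_j$ for some $j\neq i$). Fortunately none of this matters for the argument: the identity $\mathrm{Sp}(\hat T_A)=\tree(\cm_A)$ that you need holds \emph{uniformly}, simply because by definition $\cm_A=\cmt(\hat T_A)=\cmt\bigl(\mathrm{Sp}(\hat T_A)\bigr)$ and $\tree\circ\cmt=\mathrm{id}$ on $\TLs$ by Proposition~\ref{prop:meas(T)-mesurable}. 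No case analysis on $\ell_A$ is required; when $\ell_A=0$ the input $\tree(\cm_A)=(T_1,T_1)\in\TLsb$ is exactly the bare spine that $\graft_n$ expects on a degenerate branch. With this cleanup your argument is complete.
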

\begin{proof}
 Using the measurability of  the functions $\Span$ from $\TLn$ to
    $\TLn$  (see Lemma~\ref{lem:cont-span}), $\bL_n$ from $\TLn$ to
    $\R_+^{\cp_n^+}$  (see Lemma~\ref{lem:bL-cont}), $\Split_n$  from $\TLn$
    to $\left(\TLu\right)^{2^n}$  (see Lemma~\ref{lem:cnt-cck}) and  the map
    $(T,   \bv)   \mapsto   \cmt(T,   \bv)$  from   $\TLu$   to   $\M(E)$
    (see Corollary~\ref{cor:Span-Ma-1}), we  deduce that the following  map, say
    $g_1$, from $\TLn$ to  $\TLn\times \bigl( \R_+\times \M(E)\bigr)^{\cp^+_n}$ is
    measurable:
\[
g_1:  (T, \bv)\mapsto \Bigl( \Span(T, \bv), \bigl( (\ell_A(T, \bv) ,\cm_A[T, \bv]
), A\in
    \cp_n^+\bigr)\Bigr).
\]
Notice that $(T, \bv)\in \TLnzb$ implies that $\hat T_{\{0\}}$ is reduced
to its  root. Using the measurable  functions $\graft_n$ and the  map
defined in
Corollary~\ref{cor:Span-Ma-1},  we easily  deduce that  $g_1$ restricted  to
$\TLnzb$  is   injective  and  thus  bi-measurable   by  Lusin's  theorem.     Since   $\bL_n(T,    \bv)$   is    also   equal    to
$\bL_n\bigl(\Span(T, \bv)\bigr)$,  we deduce  that the  following map  $g_2$, from
$\TLn$ to $\TLn\times \M(E)^{\cp^+_n}$ is measurable:
\[
g_2:  (T, \bv)\mapsto \Bigl( \Span(T, \bv), \bigl( \cm_A[T, \bv], A\in
    \cp_n^+\bigr)\Bigr).
\]
Furthermore, its restriction to $\TLnzb$ is also  injective and thus bi-measurable.
\end{proof}

\section{Formal definitions  of the objects informally introduced in
  Section~\ref{sec:backbone-intro}}
\label{sec:appli}
In this section we check that the topological and measurability  results
obtained in the previous section allows to precisely define the objects which are
introduced in Section~\ref{sec:backbone-intro}. 

\subsection{The elementary grafting operation}
\label{sec:graft-intro}

In Section~\ref{sec:graft-int}, we considered the map:
\begin{equation}
   \label{eq:elementary-graft}
\bigl((T, (\root, x)),(T',\root')\bigr)\mapsto (T\circledast_x
 T',\rho). 
\end{equation}
\begin{lem}
  \label{lem:graft-intro}
  The map~\eqref{eq:elementary-graft} 
  from $\TLu\times \TL$ to $\TL$ is continuous.
  \end{lem}
\begin{proof}
  The   map~\eqref{eq:elementary-graft}  is   the  composition   of  the
  continuous  grafting  the  map from  Lemma~\ref{lem:graft-meas}  (with
  $n=i=1$,  $k=0$  and  $v_i=x$)  with the  map  $\Pi_1^{\circ}$
  defined in~\eqref{eq:def-Pi0} which removes $x$ from the distinguished
  vertices,    as   this    latter   map    is   also    continuous   by
  Lemma~\ref{lem:erase-cont}.
\end{proof}

\subsection{The  grafting operation~\eqref{eq:rev-graft-n}}
\label{sec:appli-graft}
In this section  we give a precise definition of  the grafting procedure
given in~\eqref{eq:rev-graft-n}.  Recall $\Tz$ is the tree reduced to
its root and the infinite spine tree  $\Tu\in \TL$
is identified as the set $\R_+$  with the usual Euclidean distance and
root  $\root=0$. We also recall that $\TLz=\TL\setminus\{\Tz\}$,
see~\eqref{eq:T*loc-K}. 

Unfortunately, it  is not  possible to prove  in general  the regularity
property  of  the  grafting   procedure  $\graft_n$  defined  informally
by~\eqref{eq:rev-graft-n}.  To stay close to this informal presentation,
we consider the case where $n=0$ and $(T, \bv)=\Tu$ is just the infinite
spine  and the  case where  $(T, \bv)$  is a  discrete tree,  element of
$\TDn$.

\subsubsection{The spine case: $(T, \bv)=\Tu$}
\label{sec:def-formal-kesten}
This  case  appear  in  the  definition  of  the  Kesten  tree
in~\eqref{eq:def-Kesten}.    Let   $\cm$   be   a   point   measure   on
$E= \R_+\times  \TLz $  (or equivalently on  $\Tu\times \TLz$)  with the
restriction that $\cm $ belongs to $\M(E)$, the set of point measures on
$E$    which   are    bounded    on   bounded    sets   introduced    in
Section~\ref{sec:def-M}.       Then      the     grafting      procedure
$\graft_0(\Tu,\cm)$ is precisely defined by:
\[
  \graft_0(\Tu,\cm)={\rm P} \circ \tree(\cm),
\]
where the reconstruction map $\tree$ is continuous, see
Lemma~\ref{lem:tree-meas}  and the projection map ${\rm P} $ is also continuous, see
Lemma~\ref{lem:projo-cont}.
More precisely, seeing $\Tu$ as a distinguished spine of
$\graft_0(\Tu,\cm)$, we also have:
\[
  \left(\graft_0(\Tu,\cm), \Tu\right)= \tree(\cm)
  \quad\text{in}\quad \TLd.
\]

It is then elementary to check that the Kesten tree is well defined.
\begin{lem}[The Kesten tree is well defined]
  \label{lem:Kesten}
  Let $\cm(\rd h, \rd T)$ be a Poisson  point measure on $\R_+\times \TL$ with
intensity  $2\beta  \ind_{\{h>0\}}\rd  h \,  \N^{\theta}[\rd  T]$. Then
the Kesten tree $ \ct^*=\graft_0(\Tu, \cm)$ is a $\TL$-valued random
variable. 
\end{lem}

\begin{proof}%[Proof of Lemma~\ref{lem:Kesten}]
   Since ${\rm P} \circ \tree$ is continuous, it is enough to check that
   a.s.\ the random variable $\cm$ belongs to $\M(E)$. Keeping the
   notations from Section~\ref{sec:def-M}, we get:
   \[
     \E\left[\cm(B_E(\varepsilon))\right]=2\beta \varepsilon^{-1}
     \N^\theta [H(T)\geq  \varepsilon]=2\beta \varepsilon^{-1} cq(\varepsilon)<+\infty .
   \]
Thus the point measure $\cm$ is a.s.\ bounded     on   bounded    sets
of $E$. 
\end{proof}

Let us notice  that $(\ct^*, \Tu)=\tree(\cm)$ is  a $\TLd$-valued random
variable, which we call the Kesten tree with its distinguished spine; by
definition~\eqref{eq:def-spine} and~\eqref{eq:def-spine-br},  it is also
a $\TLs$-valued  and a  $\TLsb$-valued random  variable.  Let  us stress
that the Kesten tree has a unique spine (which is then distinguished) if
$\theta>0$ and a countable number of  spines if $\theta<0$ with only one
of them being distinguished.

\subsubsection{The discrete case: $(T, \bv)\in \TDn$}
\label{sec:discrete-app}

For $n\geq 1$, the construction is much more technical (even though the case
$n=1$ could be still handled by hand), and we shall only
consider grafting on a discrete tree,   using the theoretical background
of Section~\ref{sec:graft}. 
First recall  the 
  measurable   application  $\cmt$  defined
in~\eqref{eq:def-cmt} which intuitively from  a locally compact rooted  tree with a
marked infinite spine $(T, \Tut)$ (in the sense of
Section~\ref{sec:TnSn}, with $\Tut$ equivalent to $\Tu$ and seen as a
subset of $T$) 
gives a point measure recording the heights $h_i$ and
the locally compact trees  $T_i\neq \Tz$ such that $(T, \Tut)$ is in
the same equivalence class  
as the infinite spine tree $\Tu$ on which the $T_i$ are grafted 
at $h_i$. See  Proposition~\ref{prop:meas(T)-mesurable} for the
measurable property
of the application $\cmt$.
From the proof of Lemma~\ref{lem:Kesten}, we deduce from
Proposition~\ref{prop:meas(T)-mesurable} that,  if $\cm(\rd h, \rd
T)$ is  a Poisson  point measure on $\R_+\times \TL$ with
intensity  $2\beta  \ind_{\{h>0\}}\rd  h \,  \N^{\theta}[\rd  T]$, then:
\[
  \cmt(\tree(\cm))=\cm.
\]
For this reason, it is natural to identify $\cm$ with the $\TLsb$-valued
random variable $(\ct^*, \Tu)=\tree(\cm)$. 

From Lemma~\ref{lem:graftn-meas} and 
Remark~\ref{rem:graft-Tdis}, we get that the map:
 \[
   \left(T, T^*\right)  \mapsto \graft_n
   \bigl(T,  T^*\bigr)
   \quad\text{with}\quad
\graft_n
   \bigl(T,  T^*\bigr)    =\graft_n
   \bigl(\bL_n(T),  T^*\bigr)
 \]
 from $\TDn\times  \left(\TLsb\right)^{\cp^+_n}$  to
  $\TLn$, which consist in replacing the branches of the discrete tree
  $T$ with truncated  part of locally compact tree with a distinguished
  spine is measurable. Now for $A\in\cp^+_n$,   identifying the locally compact tree with a distinguished
  spine  $T^*_A$ with the point measure $\cmt_A=\cmt(T^*_A)$ allow the
  following identification:
  \[
    \graft_n \left(T, (\cmt_A)_{A\in \cp^+_n}\right) =
\graft_n
\bigl(T,  T^*\bigr).
\]

We   shall    consider   the    case   where   the    random   variables
$(\cmt_A)_{A\in \cp^+_n}$  are independent Poisson point  measure on $E$
with                 the                  same                 intensity
$2\beta \ind_{\{h>0\}}\rd  h \, \N^{\theta}[\rd  T]$. In this  case, the
locally         compact          $n$-pointed         random         tree
$  \graft_n  \left(T,   (\cmt_A)_{A\in  \cp^+_n}\right)$  is  informally
obtained  by   grafting, for all $i\in I$,   on   $x_i\in  T$ the   tree  $T_i\in   \TL$,  where
$\cm'(\rd  x, \rd  \ct)=\sum_{i\in I}  \delta _{(x_i,  T_i)}(\rd x,  \rd
\ct)$   is,  conditionally   on  $T$,   a  Poisson   point  measure   on
$T            \times            \TL$           with            intensity
$2 \beta\, d\length^{T}(\rd x)\N^\theta[\rd \ct]$; and we shall 
write:
\begin{equation}
   \label{eq:def-graft(T,M)}
  \graft_n  (T,   \cm')
    \quad\text{for}\quad
    \graft_n  \left(T,   (\cmt_A)_{A\in  \cp^+_n}\right).
\end{equation}
We shall  stress here  that the  definition of  $\graft_n (T,  \cm')$ is
abusive because the  measure $\cm'$ is not clearly defined  as $T$ is an
equivalent  class  of trees  and  that  furthermore  there is  no  clear
measurability property in $T$, which is mandatory as we want to consider
$T$  a random  variable  in the  $n$-leaves generalized  decomposition  from
Theorem~\ref{thm:k-Bismut}. So in conclusion, the notation:
\begin{equation}
   \label{eq:def-graftn-intuition}
  \graft_n
  (T,   \cm')
\end{equation}
where, conditionally   on  $T$, the random measure 
$\cm'$     a  Poisson   point  measure   on
$T            \times            \TL$           with            intensity
$2 \beta\, d\length^{T}(\rd x)\N^\theta[\rd \ct]$ is an abusive shortcut
for:
\begin{equation}
   \label{eq:def-graftn-TK}
  \graft_n
  \bigl(T,  \ct^*\bigr)
\end{equation}
with $\ct^*=(\ct'_A)_{A\in  \cp^+_n}$ independent Kesten trees with their  
distinguished spine.

Thanks to the measurability property of $\graft_n$ in its two arguments
given in Lemma~\ref{lem:graftn-meas},
the discrete tree $T$ in~\eqref{eq:def-graftn-TK} can be a $\TDn$-valued
random  variable.  In  the  setting  of the  present  paper  the  random
variables $T$ and $\ct^*$ will be independent.

\subsection{Planar trees (Section~\ref{sec:planar})}
\label{sec:planar-def}
Recall $\TDn\subset \TLn$  is the closed subset of (equivalence classes of) discrete trees, that
is,    compact   trees    with   all    the   leaves    being   distinguished,
see~\eqref{eq:def-TDn}.      Let    $(\bt,     \bv)\in    \TDn$     with
$\bv=(v_0=\root, \ldots, v_n)$. (Notice that  the tree $\bt$ has at most
$n$  leaves.)  For  $k\in  \{1,  \ldots, n-1\}$,  let  $p_{k+1}$ denote  the
projection of $v_{k+1}$ on $\Span(\bt, (v_0, \ldots, v_k))$, that is the
only     point    on     $\lb    \root,     v_{k+1}\rb$    such     that
$\lb \root, p_{k+1} \rb= \lb \root, v_{k+1}\rb \cap \Span(\bt, (v_0, \ldots,
v_k))$.     The   discrete    tree   $(\bt,    \bv)$   is    planar   if
$p_{k+1}\in \lb \root, v_k\rb$ for all $k\in \{1, \ldots, n-1\}$. It is easy
to  check  this  condition  is  equivalent to  the  condition used in
Section~\ref{sec:planar}:  for  all
$x\in  \bt$, there  exists $0\leq  i_\mathrm{g}\leq i_\mathrm{d}\leq  n$
such      that     $v_i\in      \bt_x$      if      and     only      if
$i_\mathrm{g}\leq  i\leq   i_\mathrm{d}$.

Let  $\TPn\subset  \TDn$   be  the  set  of   (equivalence  classes  of)
$n$-pointed planar trees.  It is elementary to check that for a discrete
tree  $(\bt, \bv)\in  \TDn$ there  exists  a permutation  (which is  not
unique) $\pi$ such  that the discrete tree $(\bt, \bv  ^\pi)$ is planar.
Arguing as in the proof of Lemma~\ref{lem:Lip-Span}, on get that the map
$(\bt, \bv)\mapsto (\bt, \bv_k)$ with $\bv_k=(v_0, \ldots, v_n, p_k)$ is
5/2-Lipschitz  from  $\TDn$  to  $\TDn$.  Then,  since  the  application
$ (\bt, \bv_k)  \mapsto d(\root, p_k) + d(p_k, v_k)  - d(\root, v_k)$ is
clearly  continuous and  the  latter quantity  is zero  if  and only  if
$p_k\in \lb \root, v_k\rb$, we deduce  that $\TPn$ is a closed subset of
$\TDn$ and thus a closed subset of $\TKn$.

\subsection{Oriented grafting on discrete trees (Section~\ref{sec:planar})}

When considering planar trees  in Section~\ref{sec:planar-def}, we shall
also  be interested  in  a grafting  on  the  left or  on  the right  of
$i\in  \{1,   \ldots,  n\}$,   which  is  the   same  as   the  grafting
\eqref{eq:map-graft-ih}, but  for the  order of  the coordinates  of the
vector   $\bv\circledast  \bv'$.    Recall  that   for  $h\geq   0$  and
$(T, \bv)\in \TDn$,  the vertex $x_{i,h}$ the unique vertex  of $T$ that
satisfies       $x_{i,h}\in      \lb       \root,      v_i\rb$       and
$H(x_{i,h})=     H(v_i)    \wedge     h$,     see
Section~\ref{sec:graft1}.  For  $\epsilon\in \{{\rm  g}, {\rm  d}\}$, we
define      the     grafting      map     $\circledast^{\epsilon}_{i,h}$
by~\eqref{eq:def-gd-graft} with $x=x_{i,h}$ and~\eqref{eq:def-vg-graft},
\eqref{eq:def-vd-graft}    and~\eqref{eq:def-igd-graft},    using    the
convention   stated   thereafter   when    $i_{\rm   g}=0$   (that   is,
$x_{i,h}=\root$)   and   $   i_{\rm   d}=n$.    Let   us   recall   that
$  i_{\rm   g}=  \min\{j\in   \{0,  \ldots,   n\}\,  \colon   \,  v_j\in
T_{x_{i,h}}\}                         $                         (resp.
$   i_{\rm   d}=\max\{j\in  \{0,   \ldots,   n\}\,   \colon  \,   v_j\in
T_{x_{i,h}}\}$) is the left  (resp; right) most distinguished vertices
being a descendant of $x_{i,h}$.

\begin{lem}[Measurability of the left/right grafting maps]
  \label{lem:graft2-gd-cont}
  Let     $n,k\in    \N$,     $i\in     \{0,     \ldots,    n\}$     and
  $\epsilon\in      \{{\rm     g},      {\rm     d}\}$.      The     map
  $\bigl(h, (T,\bv),(T',\bv')\bigr)  \mapsto T\circledast_{i,h}^\epsilon
  T'$ is measurable from $\R_+\times \TLn\times \TLk$ to $\TLnk$.
\end{lem}

\begin{proof}
  We             recall             that             the             map
  $\bigl(h,(T,  \bv)\bigr)  \mapsto   \bigl(T,  (\bv,x_{i,h})\bigr)$  is
  continuous     from    $\R_+\times     \TLn$    to     $\TLnn$,    see
  Section~\ref{sec:graft1},     and     that    the     grafting     map
  $\circledast_{i,h}$  is  continuous,  see  Lemma~\ref{lem:graft2-cont}
  therein.   Thanks  to  the  continuity of  the  permutation  of  the
  distinguished  vertices (so  that
  $i_{\rm g}$  and $ i_{\rm d}$  play a similar role  by considering the
  permutation  $\pi$ on  $\{0,, \ldots,  n\}$ such  that $\pi(0)=0$  and
  $\pi(j)=n+1-j$  otherwise)  and  of the
  removing of distinguished  vertices (so that $x_{i,h}$  can be removed
  from  the distinguished  vertices of  $\bigl(T, (\bv,x_{i,h})\bigr)$),
  see Lemmas~\ref{lem:rerooting} and~\ref{lem:erase-cont},  we only need
  to prove  that the  map $(T,\bv)  \mapsto i_{\rm  g}$, with  $i=n$ and
  $h=0$  or equivalently  $x_{i,h}=v_n$, is  a measurable  function from
  $\TDn$ to $\{0, \ldots, n\}$ for $n\in \N^*$.
This latter result is obvious as 
$\{ i_{\rm  g} >k\}= \bigcap_{j=0}^k \{ v_j \not \in T_{v_n}\}$
  and  as $ v_j$ belongs to $ T_{v_n}$ if and only if $d(\root, v_i)=d(\root, v_n)+
  d(v_n, v_i)$ and the map $(T, \bv) \mapsto (d(v_i, v_j), 0\leq i\leq
  j\leq  n)$ is trivially continuous. 
\end{proof}

\subsection{The $\mathrm{Growth}_n$ function from~\eqref{eq:growth}}
\label{sec:growth}
Let $n\in \N^*$. We consider the function $\mathrm{Growth}_n$ defined
in~\eqref{eq:growth}, which formally is written as first attaching successively
a branch $([0, h], (0, h))\in \TDu$ simply denoted $[0,h]$ to each
distinguished vertices $\bv^*$ of $(T, \bv)$, but the root, (notice that
there is then $2n+1 $ distinguished vertices) and then forgetting 
all the $n$ distinguished vertices $\bv^*$ so that there are only $n+1$
distinguished vertices:
\[
\mathrm{Growth}_n \bigl((T,
  \bv),h\bigr)=  \Pi^{\circ, A_n}_{2n} \circ \mathrm{Growth}'_{n,n} \bigl((T,
  \bv),h\bigr),
\]
where  $  \Pi_{2n}^{\circ,A_n}$  is defined  in~\eqref{eq:def-Pi0}  with
$A_n=(0,                n+1,                \ldots,                2n)$
and for  $i=1, \ldots, n$:
\[
  \mathrm{Growth}'_{n,i} \bigl((T, \bv),h\bigr)= \mathrm{Growth}'_{n,i-1}
  \bigl((T, \bv),h\bigr) \circledast_i[0,h],
\]
with                            the                           convention
$\mathrm{Growth}'_{n,0}  \bigl((T, \bv),h\bigr)=  (T, \bv)$.   Using the
continuity of  the grafting procedure  (see Lemma~\ref{lem:graft2-cont})
and      the     continuity      of     $\Pi_{2n}^{\circ,A_n}$      (see
Lemma~\ref{lem:erase-cont}), we get the following result.

\begin{lem}[Continuity of the map $\mathrm{Growth}_n$]
  \label{lem:growth-meas}
Let $n\in \N^*$.   The map $\mathrm{Growth}_n$ is continuous from
$\TLn\times \R_+$ to  $\TLn$. 
\end{lem}

\subsection{A detail of the proof of Corollary~\ref{cor:Thmbis}}
\label{sec:detail-proof-cor}
Recall $\TLuzb$ defined in \eqref{eq:def-T1-no-br}
is the Borel subset of  $\TLu$ of the trees such that the
root is not a branching vertex and the distinguished vertex is distinct from the
root.  The map $ g\, \colon\, (T, \bv)\mapsto \bigl( d(\root, v), \cmt(T,
\bv)\bigr)$, 
with   $\bv=(\root,  v)$,   defined   on  $\TLuzb$   is  injective   and
bi-measurable,  see Corollary~\ref{cor:Span-Ma-1}.
We deduce that $ (T, \bv)$ is a measurable function of 
$( d(\root, v), \cmt(T,
\bv)\bigr)$ on the image of $\TLuzb$ by $g$. 

Furthermore the  set $\TLuzb$  is of  full measure  with respect  to the
distribution           of          $(\ct,           \bv$)          under
$\N^\theta[\rd  \ct] \,  \Lambda_t  (d v)$,  with  $\bv=(\root, v)$,  as
$\N^\theta$-a.e.   the root  of  $\ct$  is not  a  branching vertex  and
$d(\root, v)=t>0$.  Thus, as $t>0$ is fixed, we get that $(\ct, \bv)$ is a
measurable function of $\cmt(\ct, \bv)$.

\subsection{Construction of the continuum random tree $\ct^{\alpha,
    \theta}$}
\label{sec:ct-aq}

Let $\beta>0$, $\theta, \alpha\in\R_+$ and  let $S^{\alpha, \theta}$ be a
Poisson   point  measure   on   $[0,\infty)$   with  intensity   measure
$\fint(t)\, \rd t$ and $\fint$ given by~\eqref{eq:def-fint}.
% , where:
% \begin{equation}
%   \label{eq:def-fint}                                      
%   \fint(t)=\alpha\beta\expp{2\beta\theta      t},\quad t\geq0.
% \end{equation}
We first  consider the case  $\alpha>0$.  Denote by  $(\xi_i, i\in\N^*)$
the increasing  sequence of jumping  times of the  inhomogeneous Poisson
process $(N^{\alpha, \theta}_t=S^{\alpha, \theta}([0, t]), t\geq 0)$. We
consider  the $\TDn$-valued  random  variable  $\fT_{\xi_n}$ of  Section
\ref{sec:tree_ske} for $n\geq 1$  associated to $\fint$.  In particular,
recall that, for  every $n\ge 1$, $\fT_{\xi_n}$ is a  discrete tree with
$n$ distinguished leaves, where all of them are at height $\xi_n$.

For every $n\ge  1$, let $\ct^{n,*}=(\ct_A,A\in\cp_n^+)$ be  a family of
independent Kesten  trees with parameter $(\beta,  \alpha)$, independent
of the tree $\fT_{\xi_n}$.  We define the random marked tree:
\[
  \ct^{(n)}=\left( \Pi_n^\circ  ( \tilde  \ct^{(n)}), \Span^\circ(\tilde
    \ct^{(n)})\right)
  \quad\text{with}\quad  \tilde  \ct^{(n)}=\graft_n
  \left( \fT_{\xi_n},\ct^{n, *}\right).
\]

Thanks to  Lemma~\ref{lem:cont-span-loc} and Lemma~\ref{lem:graftn-meas}
on  the  measurability   of  the  grafting  function,   we  deduce  that
$\ct^{(n)}$  is a  $\TLd$-valued  random variable.   The  family of  the
distributions of the $\TLd$-valued random trees $(\ct^{(n)} ,n\ge 1)$ is
consistent in the sense that, for every $n\ge 1$ and every $t\le \xi_n$,
$r_t^{[2]}(\ct^{(n)})\overset{(d)}{=}r_t^{[2]}(\ct^{(n+1)})$.  It  is in
particular   a   Cauchy  sequence   in   $\TLd$,   and  we   denote   by
$(\ct^{\alpha,\theta}, \fT^{\alpha, \theta})$ its  limit which is thus a
$\TLd$-valued random variable.   By construction, $\fT^{\alpha, \theta}$
and $\fTs$  have the same  distribution.  This construction is  a formal
way to  define the tree  obtained by  grafting on the  infinite discrete
tree $\fTs$ (which  serves as a backbone) at $x_i$  a tree $\ct_i$ where
$((x_i,\ct_i),i\in  I)$ are  the atoms  of  a Poisson  point measure  of
intensity  $2 \beta \length(\rd x)  \N^\theta(\rd\ct)$, where  $\length$ is  the
length measure on $\fTs$.

For           $\alpha=0$,           we           simply           define
$(\ct^{\emptyset,  \theta}, \fT^{0,  \theta})$ as  the Kesten  tree with
parameter $(\beta, \alpha)$.

\medskip
We then define the $\TLd$-valued random process
$\left((\ct_t^{\alpha,\theta}, \fT^{\alpha, \theta}_t), t\geq 0\right)$
by setting:
\[
  \ct_t^{\alpha,\theta}=r_t(\ct^{\alpha,\theta})
  \quad\text{and}\quad
  \fT^{\alpha, \theta}_t=r_t(\fT^{\alpha, \theta}_t),
\]
In particular, thanks to Lemma~\ref{lem:Nt-meas}, the random variable
$(\ct_t^{\alpha,\theta}, \tilde N_t(\fT^{\alpha, \theta}))$ is well
defined.

\section{Proof of Theorem~\ref{thm:k-Bismut}}
\label{sec:proof-k-Bismut}  
 We prove Formula \eqref{eqn:k-Bismut} by induction.
For $n=1$, as $\bT_{1}=[0, t]$ (with root $\root=0$ and distinguished
vertex $v_1=t$), this is Corollary  \ref{cor:Thmbis}.

\medskip
Let $k\in \N^*$. Recall the maps $\bL_k$, from~\eqref{eq:bL=lA}
in Section~\ref{sec:def-vA-wA},
and $\Split_k$ from~\eqref{eq:def-split} in Section~\ref{sec:split}.  For $(T,
\bv)\in \TLk$ and $A\in \cp_k^+$, we write $ \cm_A[T, \bv](\rd h, \rd \bt)$ for
the measure $\cmt(\hat T_A(T, \bv))$ on $E= \R_+\times \TLz$, where
$(\hat T_A(T, \bv), A\in \cp_k)=\Split_k(T, \bv)$ and the
measure $\cmt(T,\bv)$  is defined at the end of  Section~\ref{sec:def-M}. We
also recall the notation   $(\ell_A(T,   \bv),    A\in
\cp_k^+)=\bL_k(T,   \bv)$, and notice
that  $\ell_A(T, \bv)=0$ implies that  $\cm_A[T, \bv]=0$. Let $n\in \N^*$
and $(\Phi_A,  A\in \cp^+_n)$  be a  family of  non-negative measurable
functions defined on $E$. Let  $f$ be a bounded non-negative measurable
function defined  on $\TLn$ (or more  simply on $\TDn$). We  shall first
prove \eqref{eqn:k-Bismut}  for a  non-negative function $F$  defined on
$\TLn$ of the form:
\[
  F(T, \bv)=f(\Span(T, \bv))\, \exp\Bigl\{- \sum_{A\in \cp_n^+} \bigl\langle \Phi_A,
    \cm_A[T, \bv] \bigr\rangle\Bigr\}.
\]

Let $n\geq  2$ and  suppose that  \eqref{eqn:k-Bismut} holds  for $n-1$.
For     $k\in      \{1,     \ldots,      n\}$,     we      denote     by
$\ct^{[k]}$ the tree $\Span(\ct,\bv_k)\in  \TLk$, where  $ \bv_k=(v_0=\root,  \bv_k^*)$
and $\bv_k^*=(v_1,\ldots, v_{k})$; and we simply write $\cm_A^{[k]}$ for
$ \cm_A[\ct,  \bv_k] $ and  $\ell_A^{[k]}$ for $\ell_A(\ct,  \bv_k)$, so
that under $\N^\theta[\rd \ct]\, \ell_t^{\otimes n}(\rd \bv^*)$:
\[
  F(\ct, \bv_n)=f(\ct^{[n]})\,  \exp\Bigl\{- \sum_{A\in \cp_n^+} \bigl\langle \Phi_A,
    \cm_A^{[n]} \bigr\rangle\Bigr\}.
\]
We also write $v^{[k]}_A$ and  $w_A^{[k]}$ for $v_A$ and $w_A$ from
\eqref{eq:def-vA} and \eqref{eq:def-wA} with $(T, \bv)$ replaced by
$(\ct^{[k]}, \bv_k)$; and thus we have $\ell_A^{[k]}=d(w_A^{[k]},
v^{[k]}_A)$.
\medskip

Similarly, under $\E^{\theta,t}$,
for $k\geq 2$, we write also
 $\hat\cm_A^{[k]}$ for the measure $\cmt(T ^*_A)$
 restricted to $[0, \ell_A(\bT_{k})]\times \TLz$,
 $\hat v^{[k]}_A$ and  $\hat w_A^{[k]}$ for $v_A$ and $w_A$ from
\eqref{eq:def-vA} and \eqref{eq:def-wA} with $(T, \bv)$ replaced by
$(\bT_{k}, \bv_k)$,  and $\hat \ell_A^{[k]}=d(\hat w_A^{[k]},\hat v^{[k]}_A)=\ell_A(\bT_k)$.
For $n\geq 2$, simply writing $\bT_n$ for $(\bT_n, \bv_n)$,  we have:
\[
F\bigl(\graft_n(\bT_{n},\ct^*)\bigr)=  f(\bT_{n}) \, \exp \Bigl\{- \sum_{A\in \cp_n^+} \bigl\langle \Phi_A,
\hat\cm_A^{[n]} \bigr\rangle\Bigr\}.
\]
Using  the definition of the Kesten tree via Poisson point measures
  and the definition of the function $\graft_n$, we obtain  in particular that:
\begin{equation}
   \label{eq:EF=EF'}
  \E^{\theta, t}\Bigl[ F\bigl(\graft_n(\bT_{n},\ct^*)\bigr)\Bigr]
=\E^{\theta, t}\left[ F'(\bT_{n})\right],
\end{equation}
where
\begin{equation}
   \label{eq:def -F'}
    F'(\bT_{n})=f(\bT_{n}) \, \exp\Bigg\{- 2 \beta \sum_{A\in
        \cp_{n}^+}
      \int_0^{\hat \ell_A^{[n]}} \rd a \, \N^\theta
      \left[1-\expp{-\Phi_{A}( a, \ct)}\right]  \Bigg\}.
\end{equation}

\medskip

Recall \eqref{eq:def-pv}. Set   $p_n=p_{\bv_{n-1}}(v_n)$   for   the  projection   of   $v_n$   on
$\ct^{[n-1]}$.  Since $\N^\theta$-a.e. $p_n\neq  \root$, we deduce that there
exists $\N^\theta$-a.e.  a unique $B\in  \cp_{n-1}^+$ such that
$p_n\in \rb  w_B^{[n-1]}, v_B^{[n-1]}\rb\subset \ct^{[n-1]}$,  and write
$h_n=d(p_n,  w_B^{[n-1]})$.  Recall  the  function  $\tree$,  defined  in
Section~\ref{sec:def-M} just before Lemma~\ref{lem:tree-meas},
from  $\M(E)$  into  $\TLd$ and  the projection
$\tilde \Pi$  from  $\TLd$  to  $\TL$, defined just before
Lemma~\ref{lem:mes-T*},
which forgets about the marked subtree
defined in Section~\ref{sec:TLs}. We simply write $\tree'=\tilde\Pi\circ \tree$.
On the one hand, we have:
\begin{align}
\nonumber  \ct^{[n]}
  &=\ct^{[n-1]} \circledast_{\min B, H(p_n)} \bigl[ 0, t-H(p_n)\bigr],\\
  \ell_{B}^{[n-1]}
  \label{eq:B:ln=ln-1}
  &= \ell_{B}^{[n]}+  \ell_{B\cup\{n\}}^{[n]},\\
\nonumber   \cm_{B}^{[n-1]}
  &=  \cm_{B\cup\{n\}}^{[n]}+ \cm_{B}^{[n]}(\cdot + h_n,
    \cdot)+ \delta_{\left(h_n, \tree'\left(\cm_{\{n\}}^{[n]}\right)\right)};
\end{align}
and, to fix notation, we shall write:
  \[
   \cm_{B}^{[n-1]}=  \cm_{B}[\ct,
\bv_{n-1}]=
    \sum_{i\in I_{n-1}^B} \delta_{\rh^{[n-1],B}_i, \ct^{[n-1],B}_i} .
  \]
On the other hand, for $A\in \cp_{n-1}^+ $ and $A\neq B$, we have:
\begin{align}
\label{eq:BinA:ln=ln-1}
  B\subset  A
  &\,\Longrightarrow \, \cm_A^{[n-1]}= \cm_{A\cup\{n\}}^{[n]}, \quad
 \cm_A^{[n]} =0, \quad \ell_{A}^{[n-1]}=\ell_{A\cup\{n\}}^{[n]}
    \quad\text{and}\quad    \ell_{A}^{[n]}=0,\\
  \label{eq:AinB:ln=ln-1}
 A\cap B \in \{\emptyset, A\} 
  &\,\Longrightarrow \, \cm_A^{[n-1]}= \cm_A^{[n]}, \quad
  \cm_{A\cup\{n\}}^{[n]}=0, \quad \ell_{A}^{[n-1]}=\ell_{A}^{[n]}
    \quad\text{and}\quad    \ell_{A\cup\{n\}}^{[n]}=0,\\
  \label{eq:BA=0:ln=ln-1}
  A\cap B \not\in \{\emptyset, B, A\}
  &\,\Longrightarrow \, \cm_A^{[n-1]}
    =\cm_{A}^{[n]}= \cm_{A\cup\{n\}}^{[n]}=0
      \quad\text{and}\quad    \ell_{A}^{[n-1]}= \ell_{A}^{[n]}=  \ell_{A\cup\{n\}}^{[n]}=0.
\end{align}
It is also easy to rebuild $(\cm^{[n]}_A, A\in \cp_n^+)$ from
$(\cm^{[n-1]}_A, A\in \cp_{n-1}^+)$ and $v_n$.
\medskip

Set \[F_n  =\N^{\theta} \left[\int_{\ct^{n}}  \Lambda_t^{\otimes n}(\rd
   \bv^*_n)\, F \Big(\ct,\bv_n\Big)\right].\] Considering that
 $\ct_i^{[n-1],B}$ is a subset of $\ct$, we have:
\begin{align*}
  F_n
&    =\N^\theta\Bigg[\int_{\ct^{n-1}}\Lambda_t^{\otimes (n-1)}(\rd
        \bv^*_{n-1})\sum_{B\in \cp_{n-1}^+}\sum_{i\in I_{n-1}^B}
        \int_{\ct_i^{[n-1],B}}\ell_{t }(dv_n)\, F(\ct,\bv_n)\Bigg]\\
& =\N^\theta\Bigg[\int_{\ct^{n-1}}\Lambda_t^{\otimes (n-1)}(\rd
\bv^*_{n-1})\\
  &\hspace{2cm}
    \sum_{B\in \cp_{n-1}^+}\sum_{i\in I_{n-1}^B}
 \Gamma_B\left( \ct^{[n-1]}, H(w_B^{[n-1]}),\cm_{B,i}^{[n-1]},  H(w_B^{[n-1]})+\rh_i^{[n-1],B},\ct_i^{[n-1],B}\right)\\
&\hspace{2cm}  \times \exp\biggl\{
     -\sum_{A\in\cp_{n-1}^+\setminus \{B\}}
     \Bigl\langle \ind_{ \{B\subset A\}}\,  \Phi_{A\cup\{n\}}+ \ind_{ \{A\cap
     B=\emptyset\text{ or }A \}}\, \Phi_{A} ,\cm_A^{[n-1]} \Bigr\rangle \biggr\}
 \Bigg],
\end{align*}
where the measure  $\cm_{B,i}^{[n-1]}$ is the measure $\cm_B^{[n-1]}$
without its atom at $(\rh_i^{[n-1], B}, \ct_i^{[n-1], B})$:
\[
  \cm_{B,i}^{[n-1]}=\cm_B^{[n-1]}-
  \delta_{(\rh_i^{[n-1], B}, \ct_i^{[n-1], B})},
\]
and, for $(T,\bw)\in\TLnm$, $(T', \root')\in\TL$, $\nu\in \M(E)$ and $\rh'\geq h\geq 0$:
\begin{multline*}
\Gamma_B\bigl((T,\bw),h,\nu,  \rh', T'\bigr)=f\bigl(T\circledast_{\min B,
  \rh'}[0,t-\rh']\bigr)\, \exp\left\{-\langle \Phi_{B,\rh'-h},
  \nu\rangle \right\}  \\
\times \int_{T'}\Lambda_{t-\rh'}(dv)\,\exp\biggl\{ - \Bigl\langle
  \Phi_{\{n\}},\cmt\bigl(T',(\root',v)\bigr)\Bigr\rangle\biggr\},
\end{multline*}
with:
\begin{equation}
   \label{eq:def-Phi-h''}
  \Phi_{B,\rh''}(s, \bt)
  =\ind_{\{s\leq   \rh''\}} \Phi_{B\cup\{n\}}( s,\bt)
  + \ind_{\{s>  \rh''\}} \Phi_B(s-\rh'',\bt).
\end{equation}
\medskip

For   $B\in   \cp_{n-1}^+$,     using   the   notation
$    \hat   \cm^{[n]}_B=\sum_{i\in    \hat   I_{n-1}^B}    \delta_{(\hat
  \rh_i^{[n-1],    B},    \hat     \ct_i^{[n-1],    B})}$,   we set    for
$i\in \hat I_{n-1}^B$:
\[
  \hat   \cm_{B,i}^{[n-1]}=\hat \cm_B^{[n-1]}-
  \delta_{\left(\hat \rh_i^{[n-1], B}, \hat \ct_i^{[n-1], B}\right)}.
\]

We deduce from the induction assumption (\emph{i.e.} Equation
\eqref{eqn:k-Bismut} with $n-1$ instead of $n$) and the definition of Kesten tree, with $F_n=(n-1)!(\tilde c_t^\theta)^{2-n}\expp{-2\beta\theta
  t} G_n$ that:
\begin{multline*}
G_n=\E^\theta\Biggl[\sum_{B\in \cp_{n-1}^+}\sum_{i\in \hat I_{n-1}^B}
 \Gamma_B\left( \bT_{n-1}, H(\hat w_{[n-1],B}),\hat \cm_{B,i}^{[n-1]},  H(\hat w_{[n-1],B})+\hat
 \rh_i^{[n-1],B},\hat \ct_i^{[n-1],B}\right)\\
 \times \, \exp\biggl\{
     -\sum_{A\in\cp_{n-1}^+\setminus \{B\}}
     \Bigl\langle \ind_{ \{B\subset A\}}\,  \Phi_{A\cup\{n\}}+ \ind_{ \{A\cap
     B=\emptyset\text{ or }A \}}\, \Phi_{A} ,\hat \cm_A^{[n-1]} \Bigr\rangle
   \biggr\}\Biggr].
\end{multline*}
Since for $A\in \cp_{n-1}^+$, the random measure $\cmt(T^*_A, \hat
\ell_A^{[n-1]})(\rd h', \rd \ct')$ is conditionally given $\hat
\ell_A^{[n-1]}$ a Poisson point measure on $[0,\hat \ell_A^{[n-1]}]\times \TL$ with
intensity $2 \beta \rd h' \, \N^\theta[\rd \ct']$, we deduce from
Palm formula that:
\begin{align*}
  G_n
  &=\E^\theta\Biggl[\sum_{B\in \cp^+_{n-1}} 2\beta \int_0^{\hat
    \ell_B^{[n-1]}} \rd r\,
 \int \N^\theta[\rd \ct] \, \Gamma_{B}\left(\bT_{n-1},H(\hat w_B^{[n-1]}), \hat
 \cm_{B}^{[n-1]}, H(\hat w_B^{[n-1]})+r , \ct\right)\\
 &\hspace{2cm}\times \, \exp\biggl\{
     -\sum_{A\in\cp_{n-1}^+\setminus \{B_x\}}
     \Bigl\langle \ind_{ \{B_x\subset A\}}\,  \Phi_{A\cup\{n\}}+ \ind_{ \{A\cap
     B_x=\emptyset\text{ or }A \}}\, \Phi_{A} ,\hat \cm_A^{[n-1]} \Bigr\rangle
   \biggr\}\Biggr]\\
  &=\E^\theta\biggl[2\beta \int_{\bT_{n-1,t}}\!\!\! \length(\rd x)\,
 \int \N^\theta[\rd \ct] \, \Gamma_{B_x}\left(\bT_{n-1},H(\hat w_{B_x}^{[n-1]}), \hat
 \cm_{B_x}^{[n-1]}, H(x), \ct\right)\\
 &\hspace{2cm}\times \, \exp\biggl\{
     -\sum_{A\in\cp_{n-1}^+\setminus \{B_x\}}
     \Bigl\langle \ind_{ \{B_x\subset A\}}\,  \Phi_{A\cup\{n\}}+ \ind_{ \{A\cap
     B_x=\emptyset\text{ or }A \}}\, \Phi_{A} ,\hat \cm_A^{[n-1]} \Bigr\rangle
   \biggr\}\Biggr],
\end{align*}
where $B_x$ is the only element $B$ of $\cp_{n-1}^+$ such that $x$
belongs to the branch $B$ of $\bT_{n-1}$: $x\in \rb \hat w_{B}^{[n-1]}, \hat v_B^{[n-1]}\rb$,
 where, as $\bT_{n-1}$ is discrete,  we recall that $\Split_{n-1} (\bT_{n-1})=
\bigl(\lb \hat w_{A}^{[n-1]}, \hat v_A^{[n-1]} \rb, A\in \cp_{n-1}\bigr)$ with $\cp_{n-1}=\cp_{n-1}^+\cup\bigl\{\{0\}\bigr\}$.
Using \eqref{eqn:k-Bismut} again for $n=1$ (or Corollary \ref{cor:Thmbis})
gives:
\begin{multline*}
 \int \N^\theta[\rd \ct] \, \Gamma_B(\bT_{n-1,t},h, \nu, \rh', \ct)
 =  f\bigl(\bT_{n-1} \circledast_{\min B, \rh'} [0,t-\rh']\bigr) \expp{-\langle \Phi_{\rh'-h}, \nu\rangle} \\
    \times
\exp\biggl\{ -2\beta \theta(t-
    \rh') -  2\beta\int_0^{t-\rh'}\rd a\,
  \N^\theta \left[1-\expp{-\Phi_{\{n\}}(a, \ct)}\right] \biggr\}.
\end{multline*}
With $x$ chosen according to the length measure
$\length(\rd x)$ on ${\bT_{n-1}}$,
the tree $\bT_{n-1}  \circledast_{\min B_x, H(x)}
\bigl[0,t-H(x)\bigr]$ is obtained by  grafting a
branch of length $t-H(x)$ at $x$ on $\bT_{n-1}$ and thus will simply be denoted
as $\bT_{n-1}  \circledast_x \bigl[0,t-H(x)\bigr]$ (see also
Remark~\ref{rem:comment} for similar notation).
Therefore, we obtain:
\begin{align*}
  G_n
  & =    \E^\theta\Biggl[
    2\beta \int_{\bT_{n-1}}\!\!\! \length(\rd x)\, f\Bigl(\bT_{n-1}
    \circledast_x \bigl[0,t-H(x)\bigr]\Bigr)\,  \exp\Bigl\{-2\beta \bigl(t- H(x)\bigr) \Bigr\}\\
   & \hspace{2cm}
     \times \exp\Biggl\{
     -2 \beta \sum_{A\in\cp_{n-1}^+\setminus \{B_x\}}
   \ind_{ \{B_x\subset A\}}\, \int _0^{\hat
    \ell_A^{[n-1]}} \rd a\, \N^\theta \left[ 1- \expp{-
    \Phi_{A\cup\{n\}}( a, \ct)} \right] \Biggr\}\\
  &\hspace{2cm}
     \times \exp\Biggl\{
     -2 \beta \sum_{A\in\cp_{n-1}^+\setminus \{B_x\}}
   \ind_{ \{A\cap
     B_x=\emptyset\text{ or } A\}}\,\int _0^{\hat
    \ell_A^{[n-1]}} \rd a\, \N^\theta \left[ 1- \expp{-
    \Phi_{A}( a, \ct)} \right] \Biggr\}\\
 &\hspace{2cm}
     \times  \exp\Biggl\{ -  2\beta\int_0^{H(x)-H\left(w^{[n-1]}_{B_x}\right)}\rd a\,
  \N^\theta \left[1-\expp{-\Phi_{B_x\cup \{n\}}(a, \ct)}\right] \Biggr\}\\
 &\hspace{2cm}
     \times  \exp\Biggl\{ -  2\beta\int_0^{H\left(v^{[n-1]}_{B_x}\right)-H(x)}\rd a\,
  \N^\theta \left[1-\expp{-\Phi_{B_x}(a, \ct)}\right] \Biggr\}\\\
 &\hspace{2cm}
     \times  \exp\biggl\{ -  2\beta\int_0^{t-H(x)}\rd a\,
  \N^\theta \left[1-\expp{-\Phi_{\{n\}}(a, \ct)}\right] \biggr\}\Biggr].
\end{align*}

We deduce from Lemma \ref{lem:Tn*=Tn+1} with the density:
\[
\fd(s)=\frac{2\beta\theta\expp{2\beta\theta s}}{\expp{2\beta\theta
    t}-1}\, \ind_{[0,t]}(s)
= \tilde c^\theta _t \, \beta\, \expp{-2\beta\theta (t-s)}\, \ind_{[0,t]}(s)
\]
that for a non-negative measurable function $F''$ defined on $\TLn$ (or $\TDn$):
\[
       \E^{\theta,t}\Bigg[2 \beta \int_{\bT_{n-1}} \length (\rd x)\,
    F''\Bigl(\bT_{n-1}\circledast_x [ 0,t-H(x)]\Bigr)
    \, \expp{- 2\beta\theta (t-H(x))} \Bigg]
    = (\tilde c^\theta _t)^{-1}\, n\,  \E^{\theta,t}\left[F''(\bT_{n})\right].
  \]
Using  similar equations as \eqref{eq:B:ln=ln-1},
  \eqref{eq:BinA:ln=ln-1}, \eqref{eq:AinB:ln=ln-1} and \eqref{eq:BA=0:ln=ln-1}
  stated with $\bT_n$ instead of $(\ct, \bv_n)$ as well as an obvious
  choice of $F''$, we obtain that:
\[
  G_n= (\tilde c^\theta _t)^{-1}\, n \,
  \E^{\theta,t}\left[F'(\bT_{n})\right],
\]
where  $ F'(\bT_{n})$ is given by~\eqref{eq:def -F'}.
Then, we deduce from~\eqref{eq:EF=EF'} that:
\[
  G_n= (\tilde c^\theta _t)^{-1}\, n \,
\E^{\theta,t}\Bigl[F\bigl(\graft_n(\bT_{n},\ct^*)\bigr)\Bigr].
\]
This gives:
\begin{align*}
\N^{\theta} \left[\int_{\ct^{n}}\Lambda_t^{\otimes n}(\rd
    \bv^*_n)\, F (\ct,\bv)\right]
    =F_n    &= (n-1)!(\tilde c_t^\theta)^{2-n}\expp{-2\beta\theta
  t} G_n\\
   & =n!\, \left(\tilde c_t^{\theta}\right)^{1-n}
\expp{-2\beta\theta
     t}\E^{\theta,t}\Bigl[F\bigl(\graft_n(\bT_{n},\ct^*)\bigr)\Bigr].
\end{align*}
Thus,  Equation \eqref{eqn:k-Bismut}  holds for  the functionals  $F$ we
considered.
\medskip

Recall that $\TLnzb$ is the Borel subset of  $\TLn$ of the trees such that the
root is not a branching vertex  and the point vertices (but the root) are distinct from
the root.  The map:
\[
  (T, \bv)\mapsto \Bigl( \Span(T, \bv), \bigl( \cm_A[T, \bv], A\in
    \cp_n^+\bigr)\Bigr)
\]
defined on $\TLnzb$  is one-to-one onto its image  and bi-measurable, see
Corollary~\ref{cor:Span-Ma-n}. Furthermore the set  $\TLnzb$ is of full measure
with   respect    to   the   distribution   of    $(\ct,   \bv$)   under
$\N^\theta[\rd   \ct]  \,   \Lambda_t^{\otimes  n}   (d  \bv^*)$,   with
$\bv=(\root,  \bv^*)$, as  $\N^\theta$-a.e. the  root of  $\ct$ is  not a
branching  vertex.  Thus,  $(\ct,  \bv)$  is  a  measurable  function  of
$\left(\ct^{[n]}, \big( \hat\cm_A^{[n]}  , A\in \cp_n^+\big)\right)$. We
then    conclude     by    the     monotone    class     theorem    that
Equation~\eqref{eqn:k-Bismut}  holds  for  any  non-negative  measurable
function $F$ defined on $\TLn$.

\medskip
\subsubsection*{Acknowledgement}
    We thank the two referees for their precious work and whose comments allowed to considerably improve the presentation of the results.

\newpage
\cuthere \medskip
\begin{center}
 \textbf{Index of notation}
\end{center}
\vspace{1em}

\renewcommand\labelitemi{-}
\setlength{\columnseprule}{1pt}
\begin{multicols}{2}
\noindent
\hrulefill

\begin{center}
   \textbf{Trees and pointed trees}
\end{center}

\begin{itemize}[leftmargin=*,itemsep=0.5em]
\item $T$, $\bt$, $\bT$, $\ct$: generic notations for  trees (or class of
  equiv. trees).
\item $d$: generic distance on a tree.
\item  $\root$: generic notation for the root of trees.
\item $H(x)=d(\root, x)$: height of the vertex $x$.
\item $H(T)$: height of the tree $T$.
  \item $T_x$: subtree of $T$ above the vertex $x\in T$.
\item  $\lb x, y \rb$: the branch  joining the vertices $x$ to $y$.
\item  $\Tz$: the rooted tree reduced to its root.
\item  $\Tu$: the rooted infinite branch.
\item  $\length$ or $\length^T$: length measure on the tree $T$.
\item  $\bv=(v_0=\root, v_1, \ldots, v_n)$: generic notation for
  distinguished vertices of a tree.
\item  $(T, \bv)$ a  (or a class of
  equiv. of) rooted $n$-pointed tree.
\item $(T,
  S)=(T,S, d, \root)$ a  (or a class of
  equiv. of) marked tree  with $\root\in S\subset T$.
  \end{itemize}

 \hrulefill
  \begin{center}
   \textbf{Grafting a tree on a tree}
\end{center}

  \begin{itemize}[leftmargin=*,itemsep=0.5em]

\item    $ (T\circledast_i
T',\bv\circledast\bv')$,  also denoted by $ T\circledast_i
  T'$, is the tree obtained by grafting $T'$ on $T$ at the distinguished vertex $v_i\in T$ and
identifying  the root $\root'$ of $T'$ with $v_i$. The distinguished vertices
$\bv\circledast\bv'$ are the concatenation of the distinguished vertices $\bv$
of $T$ and the distinguished vertices $\bv'$ (but for the root) of $T'$.

\item   $ T\circledast_{i,h}
  T'$, is the tree obtained by grafting $T'$ on $T$ at level $h$ on the
  branch $\lb \root, v_i \rb$.

\item  $ T\circledast_{i, h}^\epsilon T'$, with $\epsilon \in \{\rm{g},
  \rm{d}\}$, same as above but for the distinguished vertices of $T'$ which
  are inserted on the left (if $\epsilon=\rm{g}$) or on the right of
  $v_i$  (if $\epsilon=\rm{d}$).
  \end{itemize}

  \hrulefill
  \begin{center}
   \textbf{Spanning and  truncation}
\end{center}

  \begin{itemize}[leftmargin=*,itemsep=0.5em]

\item  $\Span^\circ(T, \bv)$: the discrete rooted subtree of $T$ spanned  by the distinguished
  vertices $\bv$.
\item  $\Span(T, \bv)$: the rooted tree $ ( \Span^\circ(T, \bv), \bv)$ with the distinguished
  vertices $\bv$.
\item  The map $\Pi^\circ_n$ removes the distinguished vertices (but the root) from
  an $n$-pointed tree: $\Pi^\circ_n(T, \bv)=(T, \root)$. Thus:
  \[
    \Pi^\circ_n(\Span(T, \bv))=\Span^\circ(T, \bv).
  \]
\item $r_t(T, \bv)$: the tree $T$ truncated at level $t$ with the spanned
    tree $\Span^\circ(T, \bv)$, and the distinguished vertices $\bv$.
\item $r_t^{[2]}$,  $r_t^{[2], +}$,     $r_t^{[2], -}$,     $r_*^{[2]}$, $\tilde
  r_t^{[2], +}$: various truncation on marked trees (see
  Sect. \ref{sec:TnSn} and \ref{sec:TLs}).
 \end{itemize}

  \hrulefill
  \begin{center}
   \textbf{Splitting and grafting}
\end{center}

  \begin{itemize}[leftmargin=*,itemsep=0.5em]

\item $\bL_n(T, \bv)$ record the lengths of all the branches of the subtree
  $\Span(T,\bv)$ spanned by the $n$ distinguished vertices:
  \[
  \bL_n(T, \bv)=(\ell_A(T, \bv), A\in
  \cp^+_n),
  \]
with $\cp^+_n$ the set
of all subsets  $A\subset\{1, \ldots, n\}$ such  that $A\neq \emptyset$.
\item    $\Split_n(T, \bv)$ record the subtrees of $T$ associated to  all the branches of
  $\Span(T,bv)$:
\begin{equation}
   \label{eq:def-split-not}
 \Split_n(T, \bv)=\left(
  \hat T_A(T, \bv), A\in \cp_n\right)
  \end{equation}
  with $\cp_n=\cp_n^+\cup\{\{0\}\}$.
\item $\graft_n(T',(T_A^*,A\in\cp_n^+))$: replace the branches labeled
  by $A$,  of the
  discrete $n$-pointed tree $T'$ by the trees $T^*_A$ with a marked
  infinite branch cut at the length $\ell_A(T, \bv)$. (The discrete tree
  $(T', \bv')$ can be coded/replaced by $\bL_n(T', \bv')$.)
\item Intuitively, we have for $(T, \bv)$ a $n$-pointed tree whose root
  is not a branching vertex (see~\eqref{eq:graft-split=id}):
  \[
  (T, \bv)=\graft_n\Bigl(\Span_n(T, \bv), \Split_n(T,\bv)\Bigr).
\]
  \end{itemize}

  \hrulefill
  \begin{center}
   \textbf{Set of (equiv. classes of)  trees}
\end{center}

  \begin{itemize}[leftmargin=*,itemsep=0.5em]
\item  $\TK$ set   of (equiv. classes of) rooted  compact trees.
\item  $\TKn$ set   of (equiv. classes of) rooted  $n$-pointed compact
  trees; $\TKz=\TK$.
\item  $\dghn$ the distance on $\TKn$; $\dghz\equiv\dgh$.
\item  $\TL$ set of (equiv. classes of) rooted loc. compact trees.
\item  $\TLz=\TL\backslash \{\Tz\}$.
\item  $\TLb$ subset of $\TL$ of trees whose root is not a branching
  vertex.
\item      $\TLzb=\TLb \cap \TLz$.
\item  $\TLn$ set of (equiv. classes of) rooted $n$-pointed
      loc. compact trees;  $\TLzer=\TL$.
\item  $\dlghn$ the distance on $\TLn$; $\dlghzer\equiv\dlgh$.
\item  $\TLnb$       subset of $\TLn$ of trees whose root is not a branching
vertex.
\item $\TLnz$ subset of $\TLn$ of trees whose all distinguished vertices (but
  the root) are distinct from the root.
\item $\TLnzb=\TLnb\cap \TLnz$.
\item  $\TDn$ subset of $\TKn\subset \TLn$ of discrete trees.

\item  $\TLd$ set of (equiv. classes of) rooted loc. compact marked trees.

\item $\TLs$ subset of $\TLd$ of  marked trees $(T,S)$ such that
  $S=\Tu$, with $\Tu$  the infinite branch.

  \end{itemize}

  \hrulefill
  \begin{center}
   \textbf{Trees with a marked  branch and point measures}
\end{center}

\begin{itemize}[leftmargin=*,itemsep=0.5em]
\item $E=\R_+\times \TLz$.
\item $\M(E)$ set of point measures on $E$  which are
  bounded on bounded sets of $E$.
\item $\tree: \M(E) \rightarrow \TLs$ maps the measure $\cm=\sum_{i\in
    I} \delta_{h_i, T_i}$ to the marked tree $(T, \Tu)$, with the
rooted  tree $T$  obtained  by grafting the trees $T_i$ on the rooted
infinite branch $\Tu$ at level $h_i$.
\item $\cmt: \TLs \rightarrow \M(E)$ maps the marked tree $(T, \Tu)$
  to the measure $\sum_{i\in
    I} \delta_{h_i, T_i}$ where $T_i\backslash\{\root_i\}$ are the connected
  component of $T\backslash \Tu$ with root $\root_i\in \Tu$ and
  $h_i=d(\root, \root_i)$, where $\root$ is the common root of $T$ and
  $\Tu$.

\item $\cmt$ is  also  defined on $\TLu$.

 \end{itemize}

  \hrulefill
  \begin{center}
   \textbf{Reconstruction results}
\end{center}

\begin{itemize}[leftmargin=*,itemsep=0.5em]

\item With $\mathrm{Id}$  the identity map:
\begin{align*}
  \tree\circ \cmt&=\mathrm{Id}\quad\text{on $\TLs$},\\
    \cmt \circ
    \tree&=\mathrm{Id}\quad\text{on $\ImM= \mathrm{Im} (\cmt)$}.
\end{align*}

\item  $(T,\bv)\in \TLuzb$ can be recovered in a measurable way from
  $\left( d(\root, v), \cmt(T, \bv)\right) $.

\item $(T, \bv) \in \TLnzb$ can be recovered in a measurable way from
  $\left( \Span_n(T, \bv) , (\cm_A[T, \bv], A\in \cp_n^+)\right)
  $, where $\cm_A[T, \bv]=\cmt(\hat T_A(T, \bv))$, with $\hat
  T_A(T, \bv)\in \TLu$ defined by the splitting operation  in~\eqref{eq:def-split-not}.
\end{itemize}

  \hrulefill
\end{multicols}
\cuthere

\bibliographystyle{abbrv}
\bibliography{biblio}

 \end{document}